\documentclass{amsart}
\usepackage{color,url}

\ifx\pdfpageheight\undefined
   \usepackage[dvips,colorlinks=true,linkcolor=blue,citecolor=red,%
      urlcolor=green]{hyperref}
   \usepackage[dvips]{graphicx}
   \makeatletter
   \edef\Gin@extensions{\Gin@extensions,.mps}
   \DeclareGraphicsRule{.mps}{eps}{*}{}
   \makeatother
\else
 \usepackage[pdftex]{graphicx}
   \usepackage[bookmarksopen=false,pdftex=true,breaklinks=true,%
      backref=page,pagebackref=true,plainpages=false,%
      hyperindex=true,pdfstartview=FitH,colorlinks=true,%
      pdfpagelabels=true,colorlinks=true,linkcolor=blue,%
      citecolor=red,urlcolor=green,hypertexnames=false%
      ]%
   {hyperref}
\fi

\makeatletter
\let\orgdescriptionlabel\descriptionlabel
\renewcommand*{\descriptionlabel}[1]{%
  \let\orglabel\label
  \let\label\@gobble
  \phantomsection
  \edef\@currentlabel{#1}%
  \let\label\orglabel
  \orgdescriptionlabel{#1}%
}
\makeatother

\newcommand{\hide}[1]{}
\newtheorem{theorem}{Theorem}[section]
\newtheorem{lemma}[theorem]{Lemma}
\newtheorem{corollary}[theorem]{Corollary}
\newtheorem{proposition}[theorem]{Proposition}

\theoremstyle{definition}
\newtheorem{definition}[theorem]{Definition}

\newtheorem{notation}[theorem]{Notation}

\theoremstyle{remark}
\newtheorem{remark}[theorem]{Remark}

\hide{
\usepackage{graphicx,color,url}
\documentclass{amsart}
\usepackage[english]{babel}
\usepackage{amssymb,graphicx,enumerate,
hyperref,makeidx,xcolor
,theorem
}
\usepackage{geometry}
\geometry{portrait}
}

\catcode`\<=\active \def<{
\fontencoding{T1}\selectfont\symbol{60}\fontencoding{\encodingdefault}}
\catcode`\>=\active \def>{
\fontencoding{T1}\selectfont\symbol{62}\fontencoding{\encodingdefault}}
\catcode`\|=\active \def|{
\fontencoding{T1}\selectfont\symbol{124}\fontencoding{\encodingdefault}}
\newcommand{\assign}{:=}
\newcommand{\nin}{\not\in}
\newcommand{\subindex}[2]{\index{#1!#2}}
\newcommand{\tmdummy}{$\mathrm{}$}
\newcommand{\tmem}[1]{{\em #1\/}}
\newcommand{\tmname}[1]{\textsc{#1}}

\newcommand{\tmrsub}[1]{\ensuremath{_{\textrm{#1}}}}
\newcommand{\tmstrong}[1]{\textbf{#1}}

\newcommand{\tmtextrm}[1]{{\rmfamily{#1}}}

\newcommand{\um}{-}
\newcommand {\D}     {\mbox{\rm D}}
\newcommand {\R}     {\mbox{\rm R}}
\newcommand {\ZZ}     {\mbox{\rm Zer}}
\newcommand {\C}     {\mbox{\rm C}}
\newcommand {\RM}     {\mbox{\rm RM}}
\newcommand {\eps}     {\varepsilon}
\newcommand {\Ext}     {\mbox{\rm Ext}}
\newcommand {\la}     {\langle}
\newcommand {\ra}     {\rangle}
\newcommand {\Def}      {\mbox{\rm Def}}
\newcommand {\PP}      {\mathbb{P}}
\newcommand {\Der}      {\mbox{\rm Der}}
\newcommand {\Cc}{\mathrm{Cc}}
\newcommand {\card}{\mathrm{card}}

\theoremstyle{remark}
\newenvironment{descriptioncompact}{\begin{description} }{\end{description}}


\begin{document}

\title{Divide and conquer roadmap for algebraic sets}

\author[Basu]{Saugata Basu}
\address{Department of Mathematics\\
Purdue University\\
West Lafayette, IN 47907\\
USA
}
\email{sbasu@math.purdue.edu}
\urladdr{\url{http://www.math.purdue.edu/~sbasu}}

\author[Roy]{Marie-Fran{\c c}oise Roy}
\address{IRMAR (UMR CNRS 6625)\\
Universite de Rennes 1\\
Campus de Beaulieu\\
35042 Rennes, cedex\\
France}
\email{marie-francoise.roy@univ-rennes1.fr}
\urladdr{\url{http://perso.univ-rennes1.fr/marie-francoise.roy/}}

\keywords{Real algebraic varieties, Roadmaps, Divide and conquer algorithm}
\subjclass[2010]{Primary 14Q20; Secondary 14P05, 68W05.}  
\thanks{The first author was partially supported by NSF grants
  CCF-0915954,  CCF-1319080 and DMS-1161629.  }

{\maketitle}

\begin{abstract}
 Let $\R$ be a real closed field, and $\D \subset \R$ an ordered domain. We
describe an algorithm that given as input a polynomial $P \in \D [ X_{1} ,
\ldots ,X_{k} ]$, and a finite set, $\mathcal{A}= \{ p_{1} , \ldots ,p_{m}
\}$, of points contained in $V= \ZZ ( P, \R^{k})$ described by
real univariate representations, computes a roadmap of $V$ containing
$\mathcal{A}$. The complexity of the algorithm, measured by the number of
arithmetic operations in $\D$ is bounded by $\left( \sum_{i=1}^{m} D^{O (
\log^{2} ( k ) )}_{i} +1 \right) ( k^{\log ( k )} d )^{O ( k\log^{2} ( k
))}$, where $d= \deg ( P )$, and $D_{i}$ is the degree of the real
univariate representation describing the point $p_{i}$. The best previous
algorithm for this problem had complexity $\mathrm{card} ( \mathcal{A} )^{O (
1 )} d^{O ( k^{3/2} )}$ {\cite{BRMS10}}, where it is assumed that the
degrees of the polynomials appearing in the representations of the points in
$\mathcal{A}$ are bounded by $d^{O ( k )}$. As an application of our result
we prove that for any real algebraic subset $V$ of $\mathbb{R}^{k}$ defined
by a polynomial of degree $d$, any connected component $C$ of $V$
contained in the unit ball, and any two points of $C$, there exist a
semi-algebraic path connecting them in $C$, of length
at most $( k ^{\log (k )} d )^{O ( k\log
( k ) )}$,
consisting of at most $( k ^{\log (k )} d )^{O ( k\log ( k )
)}$ 
curve segments of degrees bounded by $( k ^{\log ( k )} d )^{O ( k \log ( k) )}$.
While it was known previously, by a result
of D'Acunto and Kurdyka {\cite{Kurdyka}}, that there always exists a path of
length $( O ( d ) )^{k-1}$ connecting two such points, there was no upper
bound on the complexity of such a path.
\end{abstract}

\tableofcontents

\section{Introduction}

Let $\R$ be a fixed real closed field and $\D \subset \R$ an ordered domain.
We will denote by $\C$ the algebraic closure of $\R$. We consider in this
paper the algorithmic problem of, given a polynomial $P \in \D [ X_{1} ,
\ldots ,X_{k} ]$, determining the number of semi-algebraically connected
components of the set, $\ZZ(P, \R^{k})$, of zeros of $P$ in
$\R^{k}$. Moreover, given two points $x,y \in \ZZ ( P, \R^{k})$,
described by real univariate representations (see 
 below for precise definition), we would like to
decide if $x,y$ belong to the same semi-algebraically connected component of
$\ZZ ( P, \R^{k} )$, and if so, to compute a semi-algebraic path
with image contained in $\ZZ ( P, \R^{k} )$, connecting them. We
measure the complexity of an algorithm by the number of arithmetic operations
performed in the ring $\D$.

The problem of designing an efficient algorithm for solving the problem
described in the previous paragraph is very well studied in algorithmic
semi-algebraic geometry. It follows from Collins' algorithm {\cite{Col}} for
computing cylindrical algebraic decomposition {\cite{Col}} that this problem
can be solved with complexity $d^{2^{O ( k )}}$, where $d= \deg ( P )$ \
{\cite{SS}}. Notice that this complexity is doubly exponential in $k$. Singly
exponential algorithms for solving this problem were introduced by Canny in
{\cite{Canny87}}, and successively completed and refined in {\cite{GV90}},
{\cite{GHRSV90}}, {\cite{HRS90}},
{\cite{HRS93}},{\cite{GR92}},{\cite{GV92}},{\cite{BPR99}}, the best complexity
bound being $d^{O ( k^{2} )}$ {\cite{BPR99}}. However, these results remained
unsatisfactory from the complexity point of view for the following reason. It
is a classical result due to Ole$\breve{\text{{\i}}}$nik and
Petrovski$\breve{\text{{\i}}}$ {\cite{OP}}, Thom {\cite{T}} and
Milnor{\cite{Milnor2}} that the number of semi-algebraically connected
components of a real algebraic variety in $\R^{k}$ defined by polynomials of
degree at most $d$ (in fact, the sum of all the Betti numbers of the variety)
is bounded by $d ( 2d-1 )^{k-1} =O ( d )^{k}$. Indeed, the Morse-theoretic
proof of this fact had inspired the so called ``critical point'' method, that
is at the base of many algorithms in semi-algebraic geometry. The best
algorithms using the critical point method often have complexity $d^{O ( k )}$
when applied to real algebraic varieties in $\R^{k}$ defined by polynomials of
degree $d$. It is the case for testing emptiness, computing at least one point
in every connected component, optimizing a polynomial and computing the
Euler-Poincar{\'e} characteristic (see for example, {\cite{BPRbook2}}). In
contrast, for counting the number of semi-algebraically connected components
and computing semi-algebraic paths, the best complexity bound remained $d^{O (
k^{2} )}$.

All known singly exponential algorithms for deciding connectivity of a
semi-algebraic set $S$ rely on computing a certain one dimensional
semi-algebraic subset, which is referred to as a {\tmem{roadmap}} 
of $S$. The definition of a roadmap of an arbitrary semi-algebraic set $S$
(not just a real variety) is as follows.

\begin{definition}
\label{def:roadmap-classical} A roadmap for $S$ is a semi-algebraic set $M$
of dimension at most one contained in $S$ such that $M$ satisfies the
following conditions:
\begin{itemize}
\item $\RM_{1}$ For every semi-algebraically connected component $D$ of
$S$, $D \cap M$ is non-empty and semi-algebraically connected.

\item $\RM_{2}$ For every $x \in \R$ and for every semi-algebraically
connected component $D'$ of $S_{x} =S \cap \pi_{1}^{-1} (
\{ x \} )$, $D' \cap M \neq \emptyset$, where $\pi_{1} : \R^{k}
\rightarrow \R$ is the projection on the first co-ordinate.
\end{itemize}
\end{definition}

Once roadmaps are computed with singly exponential complexity, questions about
connectivity are reduced to the same questions in a finite graph, and can be
answered with complexity no greater than polynomial in the size of the roadmap
itself.

All known algorithms for computing roadmaps follow a certain paradigm which
can be roughly described as follows. Given a semi-algebraic set $V \subset
\R^{k}$ (might be assumed to satisfy certain additional properties, such as
being a bounded, non-singular hypersurface), one defines
\begin{enumerate}
\item a certain semi-algebraic subset $V^{0} \subset V$, with dimension of
$V^{0}$ bounded by $p<k$,
\item a finite subset of points of $\mathcal{N} \subset \R^{p}$.
\end{enumerate}
The set $V^{0}$ and the finite set $\mathcal{N}$ are not arbitrary but must
satisfy certain intricate conditions. A crucial mathematical result is then
proved : for any semi-algebraically connected component $C$ of $V$, $C \cap (
V^{0} \cup V_{\mathcal{N}} )$ is non-empty and semi-algebraically connected,
where $V_{\mathcal{N}} =V \cap \pi_{[ 1,p ]}^{-1} ( \mathcal{N} )$, with
$\pi_{[ 1,p ]} : \R^{k} \rightarrow \R^{p}$ the projection on the first $p$
co-ordinates (see, for example, Proposition 15.7 in {\cite{BPRbook2}} for the
special case when $p=1$, Theorem 14 in {\cite{Mohab-Schost2010}}, \
Proposition 3 in {\cite{BRMS10}}, or Proposition \ref{prop:axiomatic-main} of
the current paper).

The actual algorithm then proceeds by reducing the problem of computing a
roadmap of $V$ to computing roadmaps of $V^{0}$ and of the fibers
$V_{\mathcal{N}}$, each such roadmap containing a well chosen set of points
including the intersection of $V^{0}$ and the fibers $V_{\mathcal{N}}$. The
roadmaps of fibers are then computed using a recursive call to the same
algorithm and the remaining problem is to compute a roadmap of $V^{0}$.

In the {\tmem{classical algorithm}} (see, for example, Chapter 15,
{\cite{BPRbook2}}), $p=1$, and thus $V^{0}$ has dimension at most one, and is
already a roadmap of itself. The complexity of this algorithm for computing
the roadmap of an algebraic set $V \subset \R^{k}$, defined by a polynomial of
degree $d$ in $k$ variables, is $d^{O ( k^{2} )}$ . The exponent $O ( k^{2}
)$ remained a very difficult obstacle to overcome for many years, and the
first progress was reported only very recently.

A fully general deterministic {\tmem{Baby-step Giant-step}} algorithm with
complexity $d^{O ( k^{3/2} )}$ for computing the roadmap of an algebraic set
$V \subset \R^{k}$, defined by a polynomial of degree $d$ in $k$ variables, is
given in {\cite{BRMS10}}. Its recursive scheme is similar to the one
introduced in {\cite{Mohab-Schost2010}} where a probabilistic algorithm of
complexity $d^{O ( k^{3/2} )}$ for computing roadmaps of smooth bounded
hypersurfaces of degree $d$ in $k$ variables is given. In {\cite{BRMS10}},
the parameter $p$ is chosen to be $\approx \sqrt{k}$, the roadmaps of the
fibers are computed recursively using the same algorithm, while that of
$V^{0}$ is computed using the classical algorithm. The main reason for having
such an unbalanced approach, and not using recursion to compute a roadmap of
$V^{0}$ as well, is that the good properties of $V$ under which the
mathematical connectivity result is proved, are not inherited by $V^{0}$. This
difficulty is avoided by making a call to the classical roadmap for $V^{0}$.
The classical roadmap algorithm can be modified so that its complexity is
$d^{O ( p k )}$ for special algebraic sets of dimension at most $p$. Having
an unbalanced approach where the dimension $p$ of $V^{0}$ is much smaller
(roughly $p= \sqrt{k}$) compared to the dimension of the various fibers
(roughly $k - \sqrt{k}$), the complexity of the algorithm in {\cite{BRMS10}}
can be bounded by $d^{O \left( \sqrt{k} k \right)}$.

It is reasonable to hope that a more balanced algorithm in which $p \approx
k/2$, and where the roadmaps of both $V^{0}$ and the $V_{\mathcal{N}}$ are
computed recursively using the same algorithm, by a
{\tmem{divide-and-conquer}} method, can compute a roadmap with a complexity
$d^{\tilde{O} ( k )}$ where we denote by $\tilde{O} ( k )$ any function of $k$
of the form $k \log^{O ( 1 )} ( k )$.

We prove the following theorem which is the main result of this paper
(definitions of real univariate representations are given in Subsection
\ref{prelim}).

\begin{theorem}
\label{thm:main} Let $\R$ be a real closed field and $\D \subset \R$ an
ordered domain. The following holds.
\begin{itemize}
\item There exists an algorithm that takes as input:
\begin{enumerate}
\item a polynomial $P \in \D [ X_{1} , \ldots ,X_{k} ]$, with $\deg ( P
) \leq d$;
\item a finite set, $A$, of real univariate representations
whose associated set of points, $\mathcal{A} = \{ p_{1} , \ldots ,p_{m}
\}$, is contained in $V= \ZZ ( P, \R^{k} )$, and such that
the degree of the real univariate representation representing $p_{i}$ is
bounded by $D_{i}$ for $1 \leq i \leq m$;
\end{enumerate}
and computes a roadmap of $V$ containing $\mathcal{A}$. The complexity of
the algorithm is bounded by 
\[\left( 1+ \sum_{i=1}^{m} D_{i}^{O ( \log^{2}
( k ) )} \right) ( k^{\log ( k )} d )^{O ( k\log^{2} ( k ) )}.
\] 
The
size of the output is bounded by $( \mathrm{card} ( \mathcal{A} ) + 1 ) (
k^{\log ( k )} d )^{O ( k\log ( k ) )}$, while the degrees of the
polynomials appearing in the descriptions of the curve segments and points
in the output are bounded by 
\[
( \max_{1 \leq i \leq m}D_{i} )^{O ( \log
( k ) )} ( k^{\log ( k )} d )^{O ( k\log ( k ) )}.
\]
\item There exists an algorithm that takes as input a polynomial $P \in \D [ X_{1} ,
\ldots ,X_{k} ]$, with $\deg ( P ) \leq d$, and computes
the number of semi-algebraically connected components of $V= \ZZ (
P, \R^{k} )$, with complexity bounded by 
\[( k^{\log ( k )} d )^{O
( k\log^{2} ( k ) )}.
\]

\item There exists an algorithm that takes as input:
\begin{enumerate}
\item a polynomial $P \in \D [ X_{1} , \ldots ,X_{k} ]$, with $\deg ( P
) \leq d$;

\item two real univariate representations whose associated points are
contained in $V= \ZZ ( P, \R^{k} )$, and whose degrees are
bounded by $D_{1}$ and $D_{2}$ respectively;
\end{enumerate}
and decides whether the two points belong to the same semi-algebraically
connected component of $V$, and if so computes a description of a
semi-algebraic path connecting them with image contained in $V$. The
complexity of the algorithm is bounded by 
\[
( D_{1}^{O ( \log^{2} ( k ) )}
+D_{2}^{O ( \log^{2} ( k ) )} +1 ) ( k^{\log ( k )} d )^{O ( k\log^{2}
( k ) )}.
\] 
The size of the output as well as the degrees of the
polynomials appearing in the descriptions of the curve segments and points
in the output are bounded by 
\[\max ( 1 ,D_{1} ,D_{2} )^{O ( \log
( k ) )} ( k^{\log ( k )} d )^{O ( k\log ( k ) )}.
\]
\end{itemize}
\end{theorem}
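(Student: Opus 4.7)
The plan is to follow the critical-point paradigm outlined in the introduction, but with a \emph{balanced} divide-and-conquer recursion in which $p \approx k/2$, so that roadmaps of both $V^{0}$ and the fibers $V_{\mathcal{N}}$ are computed by recursive calls to the same algorithm. Starting from the connectivity statement of Proposition~\ref{prop:axiomatic-main} --- for a suitable $V^{0} \subset V$ of dimension at most $p$ and a finite set $\mathcal{N} \subset \R^{p}$, each semi-algebraically connected component $C$ of $V$ meets $V^{0} \cup V_{\mathcal{N}}$ in a non-empty semi-algebraically connected set --- the task reduces to computing roadmaps of $V^{0}$ and of each fiber $V_{\mathcal{N}, q} = V \cap \pi_{[1,p]}^{-1}(q)$, and then gluing these along their common points $V^{0} \cap V_{\mathcal{N}}$.

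To realize the recursion, I would use critical-point techniques together with an infinitesimal deformation yielding a bounded non-singular approximation of $V$, and explicitly construct: (a)~a polynomial/algebraic description of $V^{0}$ of dimension $p = \lceil k/2 \rceil$ that places $V^{0}$ in the same axiomatic class as the input; (b)~the finite set $\mathcal{N} \subset \R^{p}$ of critical values, of cardinality $d^{O(k)}$ with points given by real univariate representations of degree $d^{O(k)}$; and (c)~the finite set $V^{0} \cap V_{\mathcal{N}}$ together with a routing of the points of $\mathcal{A}$ to the appropriate subproblem, after enlarging $\mathcal{N}$ to contain $\pi_{[1,p]}(\mathcal{A})$ so that every $p_i$ lies on some fiber $V_{\mathcal{N}, q}$. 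A recursive call on $V^{0}$ with input points $V^{0} \cap V_{\mathcal{N}}$, together with recursive calls on each fiber $V_{\mathcal{N}, q}$ with its portion of these points and of $\mathcal{A}$, yield roadmaps whose union is a roadmap of $V$ containing $\mathcal{A}$, by induction on $k$. Parts (2) and (3) of the theorem then reduce immediately to part (1): run the roadmap algorithm (with $\mathcal{A} = \emptyset$ for part (2), and $\mathcal{A}$ the two input points for part (3)) and perform standard graph-connectivity computations on the resulting one-dimensional semi-algebraic object.

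The main obstacle is ensuring that $V^{0}$ inherits the properties required by Proposition~\ref{prop:axiomatic-main}, so that a uniform recursion is legitimate on both $V^{0}$ and the fibers. This is precisely the difficulty that in \cite{BRMS10} forced the unbalanced choice $p \approx \sqrt{k}$ and a direct call to the classical algorithm on $V^{0}$. I would address it by formulating the whole recursion within an \emph{axiomatic} framework: define a class of admissible inputs --- described by polynomial data together with auxiliary information such as polar varieties and infinitesimal deformation parameters --- which is closed under both the $V \mapsto V^{0}$ and $V \mapsto V_{\mathcal{N}}$ constructions, and prove the connectivity statement once and for all for every admissible input.

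For the complexity analysis, writing $T(k, d, D)$ for the cost with input polynomial of degree $d$ and maximum input-representation degree $D$, the recursion has the shape
\[
T(k, d, D) \;\leq\; T\!\left(\lceil k/2 \rceil,\, d^{O(1)},\, D'\right) + \card(\mathcal{N}) \cdot T\!\left(\lfloor k/2 \rfloor,\, d,\, D'\right) + ( k^{\log k} d )^{O(k \log k)},
\]
with $D' = (D\,d)^{O(k)}$ and $\card(\mathcal{N}) = d^{O(k)}$. The recursion depth is $O(\log k)$; combined with the per-level bound this unrolls to $(k^{\log k} d)^{O(k \log^{2} k)}$, and a parallel tracking of degrees yields the claimed bounds on the output size and on the degrees of the polynomials appearing in the output.
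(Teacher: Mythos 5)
Your proposal reproduces the strategy already sketched in the paper's introduction, but the step where you say you would ``define a class of admissible inputs \dots closed under both the $V \mapsto V^{0}$ and $V \mapsto V_{\mathcal{N}}$ constructions'' is precisely where the real difficulty lies, and the paper states explicitly that no such class of varieties is known to be closed under iterating these two operations. The actual resolution is different in kind: a fresh perturbation (four new infinitesimals) is introduced at \emph{every} level of the recursion and general position is re-established each time via Proposition~\ref{prop:parametrized-dimension}; as a consequence the final roadmap is only obtained as a union of \emph{limits} of the leaf sets over $O(\log k)$ infinitesimals, and a separate argument (Remark~\ref{rem:complexityofring}, Remark~\ref{rem:aux}) is needed to show these limits and the attendant degree growth in the triangular systems stay within the complexity bound.

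A second, concrete obstruction your sketch does not address: $V^{0}$, the union over $y$ of the $G$-critical loci of the fibers, is defined by the Lagrangian system $\mathrm{CritEq}_{\ell}$ and is therefore only available as a \emph{projection} along the multiplier variables --- not as an algebraic or basic semi-algebraic set on which one can recurse. The paper resolves this (Section~\ref{sec:minors}) by covering $V^{0}$ with charts on which a fixed minor of the Jacobian is nonzero, then closing each chart up to $\mathrm{jac}(\alpha)^{2} \geq \gamma$ for a new infinitesimal $\gamma$. This forces inequalities into the recursive inputs, so the recursion must accept basic semi-algebraic sets $\mathrm{Bas}(\mathcal{P},\mathcal{Q})$ rather than varieties, and the entire connectivity apparatus --- $G$-critical points of basic sets, $(B,G)$-pseudo-critical values, the data $\mathcal{D}^{0},\mathcal{M}^{0}$ of Definition~\ref{def:property-special} --- exists to make Proposition~\ref{prop:axiomatic-main} true in that generality. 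Finally, because $\tilde{V}^{0}$ is recovered only as $\lim_{\gamma}$ of a union of charts, the union of the limits of the charts' roadmaps need not be connected where the charts' limits overlap; the closest-pair point sets $\mathrm{MinDi}(\cdot,\cdot)$ of Notation~\ref{not:closest} (via Propositions~\ref{prop:closestb} and~\ref{prop:alpha-beta}) are inserted into the recursive inputs precisely to repair this, and Theorem~\ref{thm:correctness} depends on them. Your proposal has no counterpart to any of these steps, so as written the recursive call on $V^{0}$ cannot even be formulated, and the correctness of the glued roadmap is not established.
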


In fact we prove the following more technical result.

We need the following definition.
\begin{definition}
\label{def:strong-dimension}
A semi-algebraic set $S \subset \R^k$
is {\em strongly of dimension $\le \ell$}
if for every $y\in \R^\ell$,
$S_y=\{x\in S\mid \pi_{[1,\ell]}(x)=y\}$
is finite (possibly empty),
where $\pi_{[1,\ell]}$ denotes the projection to the first $\ell$ coordinates.
(Note that the notion of being strongly of dimension $\leq \ell$ is not invariant under
arbitrary change of coordinates. However, if a semi-algebraic set $S\subset \R^k$ is strongly
of dimension $\leq \ell$, then any semi-algebraic subset of $S$ is strongly of dimension $\leq \ell$.)
\end{definition}

\begin{theorem}
\label{thm:mainbis} Let $\R$ be a real closed field and $\D \subset \R$ an
ordered domain. Then the following holds.
 There exists an algorithm that takes as input:
\begin{enumerate}
\item a polynomial $P \in \D [ X_{1} , \ldots ,X_{k} ]$, with $\deg ( P
) \leq d$ such that $V= \ZZ ( P, \R^{k} )$ is bounded 
and strongly of dimension $\le k'$,
\item a finite set, $A$, of real univariate representations
whose associated set of points, $\mathcal{A} = \{ p_{1} , \ldots ,p_{m}
\}$, is contained in $V$, and such that
the degree of the real univariate representation representing $p_{i}$ is
bounded by $D_{i}$ for $1 \leq i \leq m$;
\end{enumerate}
and computes a roadmap of $V$ containing $\mathcal{A}$. The complexity of
the algorithm is bounded by 
\[
( 1+ \sum_{i=1}^{m} D_{i}^{O ( \log^{2}
( k') )}) ( k^{\log (k')} d )^{O ( k\log^{2} ( k' ) )}.
\] 
The
size of the output is bounded by $( \mathrm{card} ( \mathcal{A} ) + 1 ) (
k^{\log ( k' )} d )^{O ( k\log ( k' ) )}$, while the degrees of the
polynomials appearing in the descriptions of the curve segments and points
in the output are bounded by 
\[
( \max_{1 \leq i \leq m}D_{i} )^{O ( \log
( k') )} ( k^{\log ( k' )} d )^{O ( k\log ( k') )}.
\]
\hide{
\item There exists an algorithm that takes as input a polynomial $P \in \D [ X_{1} ,
\ldots ,X_{k} ]$, with $\deg ( P ) \leq d$,
such that $V= \ZZ ( P, \R^{k} )$ is bounded 
and strongly of dimension $\le k'$, and computes
the number of semi-algebraically connected components of $V= \ZZ \left(
P, \R^{k} \right)$, with complexity bounded by 
\[
( k^{\log ( k' )} d )^{O
( k\log^{2} ( k' ) )}.
\]

\item There exists an algorithm that takes as input:
\begin{enumerate}
\item a polynomial $P \in \D [ X_{1} , \ldots ,X_{k} ]$, with $\deg ( P
) \leq d$;

\item two real univariate representations whose associated points are
contained in $V= \ZZ ( P, \R^{k} )$, and whose degrees are
bounded by $D_{1}$ and $D_{2}$ respectively;
\end{enumerate}
and decides whether the two points belong to the same semi-algebraically
connected component of $V$, and if so computes a description of a
semi-algebraic path connecting them with image contained in $V$. The
complexity of the algorithm is bounded by 
\[
( D_{1}^{O ( \log^{2} ( k' ) )}
+D_{2}^{O ( \log^{2} ( k' ) )} +1 ) ( k^{\log ( k' )} d )^{O ( k\log^{2}
( k' ) )}.
\] 
The size of the output as well as the degrees of the
polynomials appearing in the descriptions of the curve segments and points
in the output are bounded by 
\[\max \left( 1 ,D_{1} ,D_{2} \right)^{O ( \log
( k') )} ( k^{\log ( k')} d)^{O ( k\log ( k') )}.
\]
\end{itemize}
}
\end{theorem}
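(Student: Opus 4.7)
The plan is to implement the balanced divide-and-conquer scheme sketched in the introduction by recursion on the strong-dimension parameter $k'$. First I would bring $V$ into a convenient geometric form: since $V$ is bounded and strongly of dimension at most $k'$, I would replace $P$ by an infinitesimal deformation (working in an extension $\D[\zeta_1, \zeta_2,\ldots]$ of Puiseux series in infinitesimals) so that the resulting variety becomes a smooth bounded real algebraic hypersurface in sufficiently generic position with respect to the projection $\pi_{[1,p]}$ on the first $p$ coordinates, where $p \approx k'/2$ is the splitting parameter. In the original $V$, projecting out these infinitesimals then recovers the desired information, while along the way one has access to the critical-point machinery on a smooth complete intersection.

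Next I would instantiate the axiomatic connectivity result (Proposition \ref{prop:axiomatic-main}) with this deformed variety: construct a polar-type subvariety $V^{0} \subset V$, which is strongly of dimension at most $p$ and contains the critical locus of $\pi_{[1,p]}|_V$, together with a finite set $\mathcal{N} \subset \R^p$ containing the projections $\pi_{[1,p]}(V^0)$ of the critical values and also the projections $\pi_{[1,p]}(\mathcal{A})$ of the input points. Then I would invoke the algorithm recursively twice: once on $V^0$, whose strong dimension is at most $p \approx k'/2$, augmented by the input points $(\mathcal{A} \cap V^0) \cup (V^0 \cap V_{\mathcal{N}})$; and, for each $y \in \mathcal{N}$, once on the fiber $V_y$, which has strong dimension at most $k' - p \approx k'/2$, with input points $(V^0 \cap V_y) \cup (\mathcal{A} \cap V_y)$. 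Gluing the resulting roadmaps along the shared intersection points yields a roadmap of $V$ containing $\mathcal{A}$, the correctness being exactly the statement of Proposition \ref{prop:axiomatic-main}. Input points of $\mathcal{A}$ whose projections do not already lie in $\mathcal{N}$ are handled by adding short curve segments connecting them into the assembled roadmap.

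The complexity analysis reduces to solving a two-sided recurrence. The ambient dimension remains $k$ throughout the recursion, but the strong-dimension parameter halves at each level, so the recursion tree has depth $O(\log k')$. At each level, standard critical-point bounds give $\card(\mathcal{N}) = d^{O(k)}$, and the polynomials describing $V^0$ and $\mathcal{N}$ have degrees bounded by $d^{O(1)}$ in the original variables. The output size per level contributes a factor of $(k d)^{O(k)}$, and multiplying across the $O(\log k')$ levels gives the claimed bounds $(k^{\log k'} d)^{O(k\log k')}$ on the output size and $(k^{\log k'} d)^{O(k \log^2 k')}$ on the running time. The dependence on $D_i$ is tracked separately: each input point is carried through at most $O(\log k')$ nested calls before it is absorbed into a fiber of dimension zero, and its representation degree is raised to a bounded power at each level, yielding the cumulative contribution $D_i^{O(\log^2 k')}$, which enters additively since different input points live in essentially independent branches of the recursion.

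The main obstacle, and the principal technical difference from the unbalanced approach of \cite{BRMS10}, is ensuring that $V^{0}$ inherits enough of the hypotheses of Theorem \ref{thm:mainbis} to allow the algorithm to recurse on it. Specifically, I need $V^0$ to be describable as the real zero set of a polynomial of controlled degree (not growing multiplicatively across levels), to be bounded, and to be strongly of dimension at most $p$ in the coordinate sense required by Definition \ref{def:strong-dimension}. This forces a careful choice of polar variety---using, after the infinitesimal deformation, the variety cut out by the original equation together with the vanishing of an appropriate $(k-p+1) \times (k-p+1)$ minor of the Jacobian with respect to the last $k-p+1$ variables---and a substitution/specialization argument that produces a single polynomial description of $V^0$ of degree $d^{O(1)}$. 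Verifying that these geometric preconditions propagate cleanly across all $O(\log k')$ recursion levels, and that the resulting degree blow-up contributes only to the logarithmic exponents rather than compounding multiplicatively, is the step where the whole complexity bound stands or falls.
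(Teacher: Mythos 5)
Your overall architecture --- balanced recursion on the strong dimension, an infinitesimal deformation to reach general position, the axiomatic connectivity result, and a two-sided recurrence of depth $O(\log k')$ --- matches the paper. But the step you yourself flag as decisive is where the proposal breaks down. You assert that after deformation the polar set $V^0$ admits ``a single polynomial description of degree $d^{O(1)}$'' obtained by adjoining one $(k-p+1)\times(k-p+1)$ Jacobian minor, so that the recursive call can again be made on a variety. This is not achievable, and the paper does not achieve it. The set of $G$-critical points of the fibers is the projection, along Lagrangian variables, of the zero set of $\mathrm{CritEq}_{\ell}$; eliminating the multipliers yields not a variety but a union of charts $\mathrm{Cons}(J,J')$, each of the form $\bigwedge (\text{complementary minors}=0) \wedge (\mathrm{jac}(J,J') \neq 0)$ (Proposition \ref{prop:Cramer}). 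Dropping the nonvanishing condition and keeping only equations adds extraneous components (in particular the higher-corank and singular loci) on which the general-position and strong-dimension hypotheses needed for the recursive call fail. The paper's resolution is to introduce a fourth infinitesimal $\gamma$ and replace each open chart by the closed basic set $S^0(\alpha)$ carrying the extra inequality $\mathrm{jac}(\alpha)^2 \geq \gamma$ (Notation \ref{not:covering}, Proposition \ref{prop:coverS0}); consequently the recursion must accept basic semi-algebraic sets $\mathrm{Bas}(\mathcal{P},\mathcal{Q})$ rather than varieties, and the Morse-theoretic connectivity results (Lemmas \ref{lemmafora} and \ref{lemmaforb}, Proposition \ref{prop:axiomatic-main}) must be established in that generality. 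Your proposal, which recurses only on varieties, has no mechanism for any of this.

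A second, related gap: once $V^0$ is replaced by finitely many closed pieces whose union only recovers $V^0$ after applying $\lim_{\gamma}$, the union of the roadmaps of the pieces need not be a roadmap of $V^0$ --- two pieces can have limits meeting in a point contained in neither roadmap. The paper forces the gluing by seeding each recursive call with the local minimizers of pairwise distances between pieces ($\mathrm{MinDist}$; Notation \ref{def:Azero}, Propositions \ref{prop:closestb} and \ref{prop:alpha-beta}). Your ``short curve segments connecting them into the assembled roadmap'' does not address this, since the missing connections arise between recursively computed pieces, not between input points and the roadmap. Two smaller discrepancies: the paper sweeps with a fixed generic polynomial $G$ rather than the coordinate projection, precisely so that Proposition \ref{prop:parametrized-dimension} holds uniformly in the parameters; and the additive $\sum_i D_i^{O(\log^2 k')}$ dependence is obtained by building the tree separately for each input point and invoking Proposition \ref{prop:additive}, not merely by observing that the points occupy different branches of a single recursion.
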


The bounds on the complexity of the roadmap given in Theorem \ref{thm:main}
give an upper bound on the length of a semi-algebraic curve required to
connect two points in the same connected component of a real algebraic variety
in $\mathbb{R}^{k}$. In {\cite{Kurdyka}}, the authors proved that the
geodesic diameter of any connected component $C$ of a real algebraic variety
in $\mathbb{R}^{k}$ defined by a polynomial of degree $d$ and contained
inside the unit ball in $\mathbb{R}^{k}$, is bounded by $( O ( d )
)^{k-1}$. This result guarantees the existence of a semi-algebraic path
connecting any two points in $C$ of length bounded by $( O ( d ) )^{k-1}$.
Unfortunately, the complexity of this path (namely, the number and degrees of
the polynomials needed to define it) is not uniformly bounded as a function of
$k$ and $d$. We obtain a path of length 
bounded by $( k ^{\log (k )} d )^{O ( k\log( k ) )}$,
but
moreover with uniformly bounded complexity. We have the following theorem.

\begin{theorem}
\label{thm:geodesic} Let $V \subset \mathbb{R}^{k}$ be a real algebraic
variety defined by a polynomial of degree at most $d$, and let $C$ be a
connected component of $V$ contained in the unit ball centered at the
origin. Then, any two points $x,y \in C$, can be connected inside $C$ by a
semi-algebraic path of length at most $( k ^{\log (k )} d )^{O ( k\log
( k ) )}$ 
consisting of at most $( k ^{\log (k )} d )^{O ( k\log ( k )
)}$ 
curve segments of degrees bounded by $( k ^{\log ( k )} d )^{O ( k 
\log ( k) )}$.
\end{theorem}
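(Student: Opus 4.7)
The plan is to combine the roadmap produced by Theorem \ref{thm:main} with a Cauchy--Crofton length estimate on each curve segment, bridging the gap between the arbitrary points $x,y\in C$ (which need not be algebraic) and the roadmap by an induction on $k$. The base case $k=1$ is trivial since $V$ is then a finite set of at most $d$ points, so each connected component is a singleton and $x=y$.

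For the inductive step, I would apply Theorem \ref{thm:main} with $\mathcal A=\emptyset$ to obtain a roadmap $M$ of $V$ whose total size and whose curve-segment degrees are both bounded by a quantity $N_k$ of the form $(k^{\log k}d)^{O(k\log k)}$. By property $\mathrm{RM}_2$, the connected component $C_x$ of the fiber $V\cap\pi_1^{-1}(\pi_1(x))$ containing $x$ meets $M$ at some point $x'$. Since $C_x\subset V$ is connected and $x\in C_x\cap C$, the connected component $C$ of $V$ must contain all of $C_x$, so $C_x\subset C$ lies inside the unit ball. Moreover $C_x$ is itself a connected component of a real algebraic set in $\mathbb R^{k-1}$ (the zero set of $P(\pi_1(x),X_2,\ldots,X_k)$, still of degree $\le d$), so the inductive hypothesis provides a path from $x$ to $x'$ inside $C_x$ obeying the announced bounds with $k$ replaced by $k-1$; analogously I would obtain $y'\in C_y\cap M$ and a path from $y$ to $y'$.

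By property $\mathrm{RM}_1$, $M\cap C$ is semi-algebraically connected and contains both $x'$ and $y'$, so I would join them by a curve inside $M\cap C$ traversing at most $N_k$ of the curve segments of $M$, each of degree at most $N_k$ and lying inside the unit ball. The Cauchy--Crofton formula, together with the fact that a generic line meets a degree $\delta$ algebraic curve in at most $\delta$ points and that the measure of affine lines meeting the unit ball is polynomial in $k$, bounds the length of any such segment intersected with the unit ball by $c_k\delta$, with $c_k$ polynomial in $k$. Concatenating the three pieces yields a semi-algebraic path from $x$ to $y$ inside $C$ with at most $N_k+2N_{k-1}$ curve segments, each of degree at most $N_k$, and total length at most $c_kN_k^2+2N_{k-1}$. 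The main thing to monitor is that squaring $N_k$ and absorbing the fiber contribution $N_{k-1}$ keep these quantities within the announced form $(k^{\log k}d)^{O(k\log k)}$, which works because squaring only doubles the implicit constant in the exponent and $N_{k-1}\le N_k$; this is really the only new ingredient beyond invoking Theorem \ref{thm:main}.
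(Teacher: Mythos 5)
Your argument is correct, but it takes a different route from the paper's. The paper disposes of the non-algebraicity of $x,y$ in one stroke: Theorem~\ref{thm:main} is stated for an arbitrary ordered domain $\D\subset\R$, so one may take $\D=\R=\mathbb{R}$, in which case any point of $\mathbb{R}^k$ is the associated point of a degree-one real univariate representation ($f(U)=U$, $F=(1,x_1,\dots,x_k)$); the third part of Theorem~\ref{thm:main} with $D_1=D_2=O(1)$ then directly yields a connecting path with the stated number and degrees of curve segments, and Cauchy--Crofton finishes as in your last step. You instead invoke only the roadmap existence statement (with $\mathcal A=\emptyset$) and recover the endpoints via $\mathrm{RM}_2$ applied to the fibers $V_{\pi_1(x)}$, $V_{\pi_1(y)}$, descending by induction on $k$. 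This is sound: $C_x\subset C$ as you argue, the fiber is the zero set of $P(\pi_1(x),X_2,\dots,X_k)$ of degree $\le d$ in the unit ball of $\mathbb{R}^{k-1}$, and the telescoped recursion $M_k\le N_k+2M_{k-1}$ stays within $(k^{\log k}d)^{O(k\log k)}$ exactly for the reason you flag. What your route buys is independence from the point-connecting variant of the algorithm (only $\mathrm{RM}_1$, $\mathrm{RM}_2$ and Crofton are used), at the cost of the extra induction; the paper's route is shorter but leans on the full strength of the algorithmic statement over $\D=\mathbb{R}$. One small correction: for a curve in $\mathbb{R}^k$ with $k\ge 3$ the Cauchy--Crofton formula integrates the number of intersections with affine \emph{hyperplanes}, not lines (a generic line misses a curve entirely); a degree-$D$ curve segment meets a hyperplane in $O(D^2)$ points after eliminating the parameter, which is the estimate the paper uses, and it gives the same polynomial bound you need.
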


Note that the algebraic case dealt with in this paper is usually the main
building block in designing roadmap algorithms for more general semi-algebraic
sets (see for example Chapter 16 in {\cite{BPRbook2}}). We believe that with
extra effort, the improvement in the algebraic case reported here could lead
to a corresponding improvement in the general semi-algebraic setting.

We prove Theorem \ref{thm:main} by giving a divide-and-conquer algorithm for
computing a roadmap based on two recursive calls to subvarieties whose
dimensions are at most half the dimension on the given variety $V$ (see
Algorithms \ref{alg:bounded} and \ref{alg:main} in Section \ref{sec:tree}
below).

Such a divide-and-conquer roadmap algorithm would be quite simple if it was
the case that the sub-varieties of $V$ obtained by iterating the following two
operations in any order:
\begin{enumerate}
\item taking the sub-variety consisting of the set of critical points of
$G$, for some polynomial $G \in \D [ X_{1} , \ldots ,X_{k} ]$, restricted to
the fibers, $V_{y} =V \cap \pi^{-1} ( \{ y \} )$, where $\pi$ is a
projection map to a subset of the coordinates (see Definition
\ref{def:critpqg} below for a precise definition of critical points of $G$
restricted to the fibers of $V$);

\item fixing a subset of coordinates (i.e.,  taking fibers of $V$);
\end{enumerate}
had good properties, e.g. the number of critical points of $G$ remains finite
as the parameters vary.

Suppose for simplicity that $k-1$ is a power of $2$. Then, the following
simple algorithm for constructing a roadmap would work, Namely, in the very
first step consider the projection map, $\pi$, to the first $p/2$ coordinates,
where $p=\dim ( V ) =k-1$. For every $y \in \R^{p/2}$, let
$V_{y} =V \cap \pi^{-1} ( \{ y \} )$ be the corresponding fiber and let
$V_{y}^{0} \subset V_{y}$ be the set of critical points of $G$ restricted to
$V_{y}$ and $V^{0} = \cup_{y \in \R^{p/2 }} V_{y}^{0}$. Let $\mathcal{M}
\subset V$ be the set of $G$-critical points of $V$, and $\mathcal{M}^{0}$ the
(assumed finite) $G$-critical points of $V^{0}$. Let $\mathcal{N} =\pi (
\mathcal{M} \cup \mathcal{M}^{0} )$. It can be proved that a roadmap of $V$
can be obtained by taking the union of
\begin{itemize}
\item a roadmap of $V^{0}$ containing $V^{0}_{\mathcal{N}}$,

\item and roadmaps of $V_{y}$, containing the points of $V^{0}$ above
$\mathcal{y}$, for $y \in \mathcal{N}$.
\end{itemize}
Both $V^{0}$ and the $V_{y}$ , $y \in \mathcal{N}$, are of dimension $p/2$. If
$p/2=1$, then the roadmaps of $V^{0}$ and the $V_{y}$, $y \in \mathcal{N}$
coincide with themselves. Otherwise, these roadmaps can then be computed by
recursive calls to the same algorithm.

The description given above, that we are using as a guide, is flawed in a
fundamental way. We know of no way to ensure that all the intermediate
varieties that occur in the course of the algorithm have good properties even
if the original variety $V$ has them.

In order to get around this difficulty we use perturbation techniques, in the
spirit of several other prior work on computing roadmaps. The main difficulty
is to ensure that good properties are preserved for the variety $V^{0}$ as we
go down in the recursion.

In the divide-and-conquer scheme pursued in this paper, it is imperative, for
complexity reasons, that $V^{0}$ and the fibers $V_{y}$ have the same
dimension (namely,  $\tfrac{1}{2} \dim ( V )$). So we cannot resort to the
classical roadmap algorithm for $V^{0}$ any more and we need to ensure good
properties for $V^{0}$ (which is no more an hypersurface even if $V$ is) as
well.

While the general principle -- that of making perturbations to reach an ideal
situation -- is similar to that used in {\cite{BRMS10}} for the Baby-step
Giant-step algorithm for computing roadmaps, there are many new
ideas involved which we list below.

We start the construction with an algebraic hypersurface $V$, defined as the
zero set of one single polynomial $P$.

\begin{enumerate}
\item
We make a deformation $\tilde{P}$ of $P$ using an
infinitesimal, and consider the algebraic set $\tilde{V}$ defined by
$\tilde{P}$ with coefficients in a new field 
$\tilde{\R}$ consisting of algebraic Puiseux series (with coefficients in
$\R$) in this infinitesimal. \

\item
Instead of considering critical points of the projection map on
to a fixed coordinate, we consider critical points of a well chosen fixed
polynomial $G $. This is done to ensure more genericity. Geometrically, we
sweep using the level surfaces of the polynomial $G$.

\item For every $y \in \tilde{\R}^{p/2 }$, let
$\tilde{V}^{0}_{y} \subset \tilde{V}_{y}$ be the set of critical points of $G$
restricted to $\tilde{V}_{y}$ and $\tilde{V}^{0} = \bigcup_{y \in
\tilde{\R}^{p/2 }} \tilde{V}_{y}^{0}$. The closed semi-algebraic set
$\tilde{V}^{0}$ is naturally described as the projection of some variety
involving extra variables. This causes a problem, since we need an explicit
description of $\tilde{V}^{0}$ in order to be able to make a recursive call.
We are able to express $\tilde{V}^{0}$ as the union of several pieces
(charts), each described as a basic constructible set of the form
\[ \bigwedge_{P \in \mathcal{P}} ( P=0 ) \wedge \left( Q \neq 0 \right) .
\]

\item
\label{item:preceding} 
The preceding decomposition of $\tilde{V}^{0}$ into open charts
is not very easy to use, so we modify the description using instead closed
sets (by shrinking slightly the constructible sets). We are able to cover (an
approximation of) $\tilde{V}^{0}$ by basic semi-algebraic sets of the form
\[ \bigwedge_{P \in \mathcal{P}} ( P=0 ) \wedge ( Q \geq 0 ) . \]

\item
This necessitates that in our recursive calls we accept as
inputs not just varieties, but basic semi-algebraic sets of a certain special
form having only a few inequalities in their definitions.

\item
The Morse-theoretical connectivity results needed to prove the
correctness of the new algorithms have to be extended to take into account the
two new features mentioned above. The first new feature is that instead of
considering projection map to a fixed coordinate, we are using the polynomial
$G$ as the ``Morse function''. Secondly, instead of varieties we need to deal
with more general semi-algebraic sets. We define a new variant of the notion
of ``pseudo-critical values'' introduced in {\cite{BPRbook2}} which is
applicable to the semi-algebraic case and which takes into account the
polynomial $G$, and prove the required Morse theoretical lemmas in this new
setting.

\item
 The covering mentioned in \eqref{item:preceding} above means that we are replacing
each semi-algebraic set, by several basic semi-algebraic sets, the union of
whose limits co-incides with the given set. In order that the union of the
limits of the roadmaps computed for each of the new sets gives a roadmap of
the original one, we need to make sure that the roadmaps of the new sets
contain certain carefully chosen points. Very roughly speaking these points
will correspond to a finite number of pairs of closest points realizing the
locally minimal distance between any two semi-algebraically connected
components of the new sets.

\item
 The construction involves a perturbation using four
infinitesimals at each level of the recursion. Since, there will be at most $O
( \log ( k' ) )$ levels, at the end we will be doing computations in a ring
with $O ( \log ( k' ) )$ infinitesimals. At the end of the algorithm we will
need to compute descriptions of the limits of the semi-algebraic curves
computed in the previous steps of the algorithm. We show that these limits can
be computed within the claimed complexity bound. For this the fact that we
have only $O ( \log ( k' ) )$ infinitesimals, and not more, is crucial.

\end{enumerate}

The rest of the paper is organized as follows. In Section
\ref{sec:critical}, we state some basic results of Morse theory for higher
co-dimensional non-singular varieties, including definitions of critical
points on basic semi-algebraic sets and their properties.

In Section \ref{sec:connectivity}, we prove the connectivity results that we
will require. We introduce a set of axioms (to be satisfied by a basic
semi-algebraic set $S$ and certain subsets of $S$) and prove an abstract
connectivity result (Proposition \ref{prop:axiomatic-main}) which forms the
basis of the roadmap algorithm in this paper. The main differences between
Proposition \ref{prop:axiomatic-main} and a similar result in {\cite[Proposition 3]{BRMS10}}
are that Proposition \ref{prop:axiomatic-main} applies to
basic semi-algebraic sets (not just to algebraic hypersurfaces), and that
there is an auxiliary polynomial $G$ which plays the role of the
$X_{1}$-co-ordinate in {\cite{BRMS10}}.

In Section \ref{sec:deformation}, we discuss certain specific infinitesimal
deformations that we will use in order to ensure that the properties defined
in Section \ref{sec:axiomatics} hold. In Section
 \ref{sec:grp}, we
explain a deformation technique to reach general position and prove that the
set of $G$-critical points is finite for a certain well chosen polynomial $G$.
The techniques used in this
section are adapted from {\cite{JPT2012}}. In Section
\ref{sec:G-special-values}, we define a new notion of pseudo-critical values
for semi-algebraic sets with respect to a given polynomial $G$ and state their
connectivity properties, generalizing to this new context results from
{\cite{BPRbook2}}. In Section \ref{sec:deformation}, we discuss how the
deformations are used to ensure the connectivity properties defined in Section
\ref{sec:axiomatics}.

Section \ref{sec:minors} is devoted to a description of the set of
$G$-critical points using minors of certain Jacobian matrices and the
properties of the set of $G$-critical points.

Section \ref{sec:tree} is devoted to the description of the Divide and Conquer
Roadmap Algorithm. We first define the tree that is computed, explain how it
gives a roadmap, and finally describe the Divide and Conquer Algorithm first
for the bounded case (Algorithm \ref{alg:bounded_refined}), and then in general
(Algorithm \ref{alg:main}).

In the Annex (Section \ref{sec:aux-proofs}), we include certain technical
proofs of propositions on critical and pseudo-critical values stated in
Section \ref{subsec:criticalsa} and Section \ref{sec:G-special-values} and
used in the paper.

\section{Critical points of algebraic and basic semi-algebraic
sets}\label{sec:critical}

In this section we define critical points of a polynomial first on an
algebraic set and then on a basic semi-algebraic set and discuss their
properties.

\subsection{Critical points of algebraic sets}\label{subsec:critical}

\begin{definition}
\label{def:G-critical-point}Let $G \in \R [ X_{1} , \ldots ,X_{k} ]$ and
$\mathcal{P} =\{ P_{1} , \ldots ,P_{m} \} \subset \R [ X_{1} , \ldots
,X_{k} ]$ be a finite family of polynomials.

We say that $x \in \ZZ ( \mathcal{P}, \R^{k} )$ is a
{\tmem{$G$-critical point of $\ZZ ( \mathcal{P}, \R^{k} )$}}, if
there exists $\lambda = ( \lambda_{0} , \cdots , \lambda_{m} ) \in
\R^{m+1}$ satisfying the system of equations $\mathrm{CritEq} ( \mathcal{P} ,G
)$
\begin{eqnarray}
\label{eqn:prop-critical}
 P_{j} & = & 0, j=1, \ldots ,m, \nonumber\\
\sum^{m}_{j=1} \lambda_{j} \frac{\partial P_{j}}{\partial X_{i}} -
\lambda_{0} \frac{\partial G}{\partial X_{i}} & = & 0,
i=1, \ldots ,k,\label{eqn:criteqnpg}\\
\sum_{j=0}^{m} \lambda_{j}^{2} -1 & = & 0. \nonumber
\end{eqnarray}
The set $\mathrm{Crit} ( \mathcal{P} ,G ) \subset \R^{k}$ is the set of
$G$-critical points of $\ZZ ( \mathcal{P}, \R^{k} )$, i.e., the
projection on $\R^{k}$ of $\mathrm{Zer} \left( \mathrm{CritEq} ( \mathcal{P} ,G
) , \R^{k+m+1} \right)$. Note that geometrically, in the case
the polynomials $\mathcal{P}$ define a non-singular complete intersection,
$\mathrm{Crit} ( \mathcal{P} ,G )$ is the set of points $x \in \ZZ (
\mathcal{P}, \R^{k} )$, such that the tangent space at $x$ of $\ZZ
( \mathcal{P}, \R^{k} )$ is orthogonal to $\mathrm{grad} ( G ) ( x
)$. In case $\ZZ ( \mathcal{P}, \R^{k} )$ in singular, then the
set of $G$-critical points includes the set of singular points of $\ZZ
( \mathcal{P}, \R^{k} )$, which is clear from
\eqref{eqn:criteqnpg}.
\end{definition}

\subsection{Critical points of basic semi-algebraic
sets}\label{subsec:criticalsa}

\begin{notation}
\label{not:S-PQ}Given two finite families of polynomials
$\mathcal{P},\mathcal{Q}\subset \R [ X_{1} , \ldots ,X_{k} ]$, we denote
by $\mathrm{Bas} ( \mathcal{P} , \mathcal{Q} )$ the basic semi-algebraic
set defined by
\[ \mathrm{Bas} ( \mathcal{P} , \mathcal{Q} )=\left\{ x\in \R^{k} \mid
 \bigwedge_{P \in \mathcal{P}}P ( x ) =0 \wedge \bigwedge_{Q
 \in \mathcal{Q}} Q ( x ) \geq 0 \right\} . \]
\end{notation}

\begin{definition}
\label{def:critpqg} Let $G \in \R [ X_{1} , \ldots ,X_{k} ]$. We define
$\mathrm{Crit} ( \mathcal{P} , \mathcal{Q} ,G )$, the set of
\emph{$G$-critical points of $\mathrm{Bas} ( \mathcal{P} ,
\mathcal{Q} )$}, by
\[ \mathrm{Crit} ( \mathcal{P} , \mathcal{Q} ,G ) = \mathrm{Bas} ( \mathcal{P}
 , \mathcal{Q} ) \bigcap \left( \bigcup_{\mathcal{Q' \subset Q
 }} \mathrm{Crit} ( \mathcal{P} \cup \mathcal{Q}' ,G ) \right) . \]
\end{definition}

\begin{definition}
\label{def:general-position}
We say that the pair
 $ \mathcal{P},\mathcal{Q}$ is {\tmem{in general position with
respect to $G \in \R [ X_{1} , \ldots ,X_{k} ]$}} if $\ZZ
( \mathcal{P}, \R^{k} )$ is bounded, and for any subset
$\mathcal{Q}' \subset \mathcal{Q,}$ $\mathrm{Crit} ( \mathcal{P \cup Q'} ,G
) \subset \R^{k}$ is empty or finite.
\end{definition}

\begin{remark}
\label{rem:general-position}
Note that in this case $\ZZ ( \mathcal{P} , \R^{k})$ has only a finite number of singular points; moreover if $
\mathrm{card}(\mathcal{P})=k$, $\ZZ ( \mathcal{P} , \R^{k})$ is finite (possibly empty).
\end{remark}

The properties of $G$-critical points used later in the paper are now given in
the following two Morse-theoretic lemmas. The proofs, which
are slight variants of the classical proofs, are included in the Annex (Section
\ref{auxproof}).

\begin{notation}
Let $T \subset \R^{k}$, $G$ a function $\R^{k} \longrightarrow \R$, and
suppose that $a \in \R$. We denote
\begin{eqnarray*}
T_{G=a} & = & \{ x \in T \mid G ( x ) =a \} ,\\
T_{G \leq a} & = & \{ x \in T \mid G ( x ) \leq a \} ,\\
T_{G<a} & = & \{ x \in T \mid G ( x ) <a \} .
\end{eqnarray*}
\end{notation}

Let $\mathcal{P},\mathcal{Q}\subset \R [ X_{1} , \ldots ,X_{k} ]$,
$S = \mathrm{Bas} ( \mathcal{P} , \mathcal{Q} )$, $S$ bounded, and
$\mathcal{M} = \mathrm{Crit} ( \mathcal{P} , \mathcal{Q} ,G )$.

\begin{lemma}
 \label{lemmafora} Suppose that $b \nin \mathcal{D} =G
( \mathcal{M} )$. Let $C$ be a semi-algebraically connected component of
$S_{G \leq b}$. If $a<b$ and $(a,b] \cap \mathcal{D}$ is empty, then
$C_{G \le a}$ is semi-algebraically connected.
\end{lemma}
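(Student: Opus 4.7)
My plan is to reduce the lemma to the classical Morse--theoretic fact that, in the absence of $G$-critical values in a half-open interval $(a,b]$, the sublevel set $S_{G \leq b}$ admits a semi-algebraic strong deformation retraction onto $S_{G \leq a}$. Given such a retraction $\rho \colon S_{G\leq b}\times[0,1]\to S_{G\leq b}$ with $\rho(\cdot,0)=\mathrm{id}$, $\rho(S_{G\leq b},1)\subseteq S_{G\leq a}$, and $\rho(x,t)=x$ for $x\in S_{G\leq a}$, each semi-algebraically connected component $C$ of $S_{G\leq b}$ is preserved by $\rho$ (the trajectory of any $x\in C$ lies in the component of $x$). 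The restriction $\rho|_{C\times[0,1]}$ is then a continuous semi-algebraic map from the semi-algebraically connected set $C$ onto $C_{G\leq a}$, whence $C_{G\leq a}$ is semi-algebraically connected (the empty set being vacuously connected, should it occur).

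To build $\rho$, I would stratify $S$ by the vanishing pattern of $\mathcal{Q}$: for each $\mathcal{Q}'\subseteq\mathcal{Q}$, put $S_{\mathcal{Q}'}=\{x\in\R^k\mid P(x)=0\ \forall P\in\mathcal{P},\ Q(x)=0\ \forall Q\in\mathcal{Q}',\ Q(x)>0\ \forall Q\in\mathcal{Q}\setminus\mathcal{Q}'\}$. By Definition \ref{def:critpqg}, the assumption $(a,b]\cap\mathcal{D}=\emptyset$ implies that no point of any stratum $S_{\mathcal{Q}'}$ with $G$-value in $(a,b]$ satisfies the critical equations $\mathrm{CritEq}(\mathcal{P}\cup\mathcal{Q}',G)$. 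Hence on each such stratum the tangential gradient of $G$ is nonzero throughout the slab $a<G\leq b$. I would then assemble a semi-algebraic vector field $v$ on $S_{a<G\leq b}$, tangent to every stratum and pointing into $S$ at every boundary face, by combining the negatives of these tangential gradients via a semi-algebraic partition of unity subordinate to the stratification. The (semi-algebraic) integral flow of $v$, suitably reparametrized so that each trajectory reaches $\{G=a\}$ at time $t=1$ and is extended by the identity on $S_{G\leq a}$, furnishes $\rho$.

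The main obstacle is making the stratified gluing simultaneously satisfy three conditions: (i)~the vector field and its flow are semi-algebraic, which is needed to work over an arbitrary real closed field (by Tarski transfer or by direct semi-algebraic integration, invoking Hardt's semi-algebraic triviality of $G|_{S_{a<G\leq b}}\colon S_{a<G\leq b}\to(a,b]$ as a clean alternative); (ii)~trajectories never escape $S$ through a boundary stratum, which is precisely what the inclusion of all $\mathrm{Crit}(\mathcal{P}\cup\mathcal{Q}',G)$ in $\mathcal{M}$ guarantees, since the critical equations would have to be violated at the escape point; and (iii)~the flow reaches the level $G=a$ in uniformly bounded time, which follows from $S$ being bounded together with a uniform lower bound on $|v\cdot\mathrm{grad}(G)|$ over the slab $a<G\leq b$ obtained from compactness-by-definability. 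Once these points are verified, the retraction $\rho$ exists, and the argument of the first paragraph concludes the proof.
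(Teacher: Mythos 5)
Your reduction in the first paragraph (a strong deformation retraction of $S_{G\le b}$ onto $S_{G\le a}$ would preserve components and exhibit $C_{G\le a}$ as the continuous semi-algebraic image of the connected set $C$) is fine, but the retraction you propose to build does not exist under the stated hypotheses, and the step where your construction breaks is precisely the delicate point of the lemma. The hypothesis only excludes $G$-critical values from the half-open interval $(a,b]$; it does \emph{not} exclude $a\in\mathcal{D}$, and in the intended application (Step 1 of the proof of Proposition \ref{prop:axiomatic-main}) $a$ \emph{is} a critical value. Your item (iii) asserts a uniform positive lower bound on the tangential gradient of $G$ over the slab $a<G\le b$; since the closure of that slab meets $G^{-1}(a)$, which may contain critical points of every stratum, that infimum is $0$ in general, the normalized vector field blows up as trajectories descend toward level $a$, and continuity of $\rho(\cdot,1)$ at critical points of $G^{-1}(a)$ is exactly what fails (this is why the classical statement requires no critical values in the \emph{closed} interval $[a,b]$). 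A secondary gap is item (i): over an arbitrary real closed field there is no integration of vector fields, and Hardt triviality, while it does produce a finite partition of the target over which $G$ is semi-algebraically trivial, gives you no control on where the exceptional values lie, so you cannot conclude triviality over $(a,b]$ from the absence of critical values without an additional argument. Note also that general position is only assumed from Lemma \ref{lemmaforb} onward, so for this lemma $\mathcal{M}$ and $\mathcal{D}$ need not even be finite.

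The paper's proof avoids both problems by deforming a given path rather than the whole space, and by stopping infinitesimally above $a$. Given $x,y\in C_{G\le a}$ and a path $\gamma$ in $C$, it passes to $\R\la\eps\ra$, partitions $[0,1]$ into subintervals on which $\Ext(\gamma,\R\la\eps\ra)$ lies either in $G<a+\eps$ or in $a+\eps\le G\le b$, and replaces each piece of the second kind by a path at level $a+\eps$ using the fiber-preserving product structure of Proposition \ref{prop:morsegeneral} (proved via stratified Morse theory over $\mathbb{R}$ and transferred), which applies because the closed slab $[a+\eps,b]$ contains no critical values. It then applies $\lim_{\eps}$ to the resulting closed, bounded, semi-algebraically connected path, using that the image under $\lim_{\eps}$ of such a set is semi-algebraically connected, to land in $C_{G\le a}$. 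If you want to salvage your approach, you would need to retract only onto $S_{G\le a+\eps}$ (or onto $S_{G\le a'}$ for $a'$ slightly above every critical point at level $a$) and then perform this same limiting step; as written, the argument has a genuine gap at level $a$.
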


Now assume that $\mathcal{P},\mathcal{Q}$ are in general position with respect
to $G $ (cf. Definition \ref{def:general-position}).

\begin{lemma}
\label{lemmaforb}Let $C$ be a semi-algebraically connected component of
$S_{G \le b}$, such that $C_{G=b}$ is not empty.
\begin{enumerate}
\item \label{item:lemmaforb1}
If $\dim (C) =0$, $C$ is a point contained in $\mathcal{M}$.

\item \label{item:lemmaforb2} If $\dim ( C ) \neq 0$, then $C_{G<b}$ is non-empty. Let $B_{1} ,
\ldots ,B_{r}$ be the semi-algebraically connected components of
$C_{G<b}$. Then,
\begin{enumerate}
\item \label{item:lemmaforb2a} for each $i,1 \leq i \leq r$, $\overline{B_{i}} \cap
\mathcal{M} \neq \emptyset$;

\item \label{item:lemmaforb2b} if there exist $i,j,1 \leq i<j \leq r$ such that $\overline{B_{i}}
\cap \overline{B_{j}} \neq \emptyset$, then $\overline{B_{i}} \cap
\overline{B_{j}} \subset \mathcal{M}$;

\item \label{item:lemmaforb2c} $\cup_{i=1}^{r} \overline{B_{i}} =C$, and hence $\cup_{i=1}^{r}
\overline{B_{i}}$ is semi-algebraically connected.
\end{enumerate}
\end{enumerate}
\end{lemma}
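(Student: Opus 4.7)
The plan is to reduce every claim to a single ``Lagrange lemma'': any local extremum $x$ of $G$ on $S=\mathrm{Bas}(\mathcal{P},\mathcal{Q})$ belongs to $\mathcal{M}$. To prove this I would set $\mathcal{Q}'(x)=\{Q\in \mathcal{Q}\mid Q(x)=0\}$ and write the Karush--Kuhn--Tucker conditions at $x$ for the constrained problem ``minimize $G$ on $S$''. After normalizing the multiplier vector to unit norm, these conditions are exactly the system $\mathrm{CritEq}(\mathcal{P}\cup \mathcal{Q}'(x),G)$ of Definition \ref{def:G-critical-point}, whether or not the active gradients at $x$ are linearly independent (the singular case corresponds to $\lambda_0=0$). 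Hence $x\in \mathrm{Crit}(\mathcal{P}\cup \mathcal{Q}'(x),G)\cap S\subseteq \mathcal{M}$ by Definition \ref{def:critpqg}. I would also use repeatedly that $S$ is closed and bounded (so its closed subsets are compact) and that $C$, being a connected component of the closed set $S_{G\le b}$, is clopen in $S_{G\le b}$; in particular $C_{G<b}$ is open in $S$, and so are its connected components $B_i$.

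For Part \ref{item:lemmaforb1}, $C=\{x\}$ is isolated in $S_{G\le b}$ with $G(x)=b$, so a small neighborhood $U$ of $x$ in $\R^k$ satisfies $U\cap S\cap \{G\le b\}=\{x\}$, making $x$ a strict local minimum of $G$ on $S$ and thus (by the Lagrange lemma) an element of $\mathcal{M}$. For the non-emptiness of $C_{G<b}$ in Part \ref{item:lemmaforb2}, suppose $C\subseteq\{G=b\}$; the clopen property forces every $x\in C$ to be a local minimum of $G$ on $S$ (every nearby point of $S$ with $G\le b$ is pushed into $C$ and hence has $G=b$), so $C\subseteq \mathcal{M}$, contradicting $\dim(C)>0$ since $\mathcal{M}$ is finite under general position. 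For \ref{item:lemmaforb2a}, let $y_i$ be a point where $G$ attains its minimum on the compact set $\overline{B_i}$. Since $B_i\ne\emptyset$ lies in $\{G<b\}$, $G(y_i)<b$, so $y_i\in C\cap\{G<b\}=C_{G<b}$; as $B_i$ is closed in $C_{G<b}$ and $y_i$ is a limit of $B_i$, $y_i\in B_i$. Since $B_i$ is open in $S$, a small neighborhood of $y_i$ in $S$ lies in $B_i$, so $y_i$ is a local minimum of $G$ on $S$ and the Lagrange lemma gives $y_i\in \mathcal{M}$.

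For \ref{item:lemmaforb2b}, take $y\in \overline{B_i}\cap \overline{B_j}$ with $i\ne j$. If $G(y)<b$, the same ``closed in $C_{G<b}$'' argument as above puts $y$ in $B_i\cap B_j=\emptyset$, a contradiction; hence $G(y)=b$. If $y\notin\mathcal{M}$, i.e.\ $y$ is a non-critical point of $G$ on $S$, a local semi-algebraic Morse normal form around $y$ with respect to $\mathcal{P}$ and the active inequalities $\mathcal{Q}'(y)$ gives coordinates in which $S$ is a manifold with corners and $G-b$ is one of the coordinates, so $S_{G<b}$ is locally connected at $y$; this forces $B_i$ and $B_j$ to lie in a common connected component of $C_{G<b}$, a contradiction. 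For \ref{item:lemmaforb2c}, $\bigcup_i\overline{B_i}\subseteq C$ is immediate since $C$ is closed, and the reverse inclusion reduces to showing $C_{G=b}\subseteq \overline{C_{G<b}}$; otherwise some $x\in C_{G=b}$ has a neighborhood $U$ in $\R^k$ missing $C_{G<b}$, and combined with the clopen property this gives $U\cap C\subseteq C_{G=b}$ with every such point a local minimum of $G$ on $S$, so $U\cap C\subseteq \mathcal{M}$; shrinking $U$ to avoid the other (finitely many) points of $\mathcal{M}$ makes $\{x\}$ clopen in $C$, forcing $C=\{x\}$ and contradicting $\dim(C)>0$. Semi-algebraic connectedness of $\bigcup_i\overline{B_i}=C$ follows from that of $C$. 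The main obstacle is the normal form used in \ref{item:lemmaforb2b}: at a non-critical $y$ one must straighten $\{P=0\}_{P\in\mathcal{P}}$, the faces $\{Q=0\}_{Q\in\mathcal{Q}'(y)}$, and $G-b$ simultaneously, which amounts to the Jacobian of these functions having maximal rank at $y$; the general-position hypothesis on $(\mathcal{P},\mathcal{Q})$ with respect to $G$ is designed to ensure exactly this.
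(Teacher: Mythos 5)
Your proof is correct, but it follows a genuinely different route from the paper's. The paper derives all of Parts (\ref{item:lemmaforb1}), (\ref{item:lemmaforb2a}), (\ref{item:lemmaforb2b}) and the key step of (\ref{item:lemmaforb2c}) from Proposition \ref{prop:morsegeneral} (the stratified-Morse fiber-preserving homeomorphism $S_{a<G<b}\cong S_{G=c}\times(a,b)$ away from $G(\mathcal{M})$), and proves the non-emptiness of $C_{G<b}$ by an infinitesimal perturbation argument in $\R\la\eps\ra$ using the transversality of $T_x\ZZ(\mathcal{P}_x,\R^k)$ to the level set $G=b$ at a non-critical $x$. You instead reduce everything to a single first-order optimality statement (your ``Lagrange lemma'': local extrema of $G$ on $S$ lie in $\mathcal{M}$, via Fritz John conditions, which indeed match $\mathrm{CritEq}$ since that system imposes no sign constraints on $\lambda$ and only requires linear dependence of $\nabla G$ and the active gradients), combined with elementary point-set topology (clopen components, existence of minimizers on closed bounded sets) and, for Part (\ref{item:lemmaforb2b}), a local straightening at a non-critical point of the critical level. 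Your approach is more elementary and self-contained; the paper's is shorter given that Proposition \ref{prop:morsegeneral} is needed elsewhere anyway. One small correction of attribution in your treatment of (\ref{item:lemmaforb2b}): the maximal rank of the Jacobian of $G$ together with $\mathcal{P}\cup\mathcal{Q}'(y)$ at $y$ is \emph{not} what the general-position hypothesis supplies --- it follows directly from $y\nin\mathcal{M}$, because a nontrivial solution of $\mathrm{CritEq}(\mathcal{P}\cup\mathcal{Q}'(y),G)$ at $y$ exists precisely when those gradients are linearly dependent. General position is used only to make $\mathcal{M}$ finite, which you need in Part (\ref{item:lemmaforb1}), in the non-emptiness of $C_{G<b}$, and in Part (\ref{item:lemmaforb2c}). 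With that reading, the semi-algebraic implicit function theorem straightens $G-b$ and the active constraints simultaneously, the local model of $S_{G<b}$ is convex and non-empty, and your argument closes.
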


\section{Axiomatics for connectivity
}\label{sec:connectivity}\label{sec:axiomatics}

In this subsection we identify a set of properties, to be satisfied by a basic
semi-algebraic set $\mathrm{Bas} ( \mathcal{P} , \mathcal{Q} )$, a polynomial
$G$, and certain finite subsets of points contained in $\mathrm{Bas} (
\mathcal{P} , \mathcal{Q} )$, and prove a key connectivity result (Proposition
\ref{prop:axiomatic-main} below) for such a situation, which plays a key role
in our recursive algorithm later. In Section \ref{sec:deformation} we will
explain how to use a perturbation technique to reach the ideal situation
described here.

\begin{notation}
Let $\pi_{[ 1, \ell ]}$ be the projection map from $\R^{k}$ to $\R^{\ell}$
forgetting the last $k- \ell$ coordinates. For every $T \subset \R^{k}$
and $A \subset \R^{\ell}$, we denote $T_{A} = T \cap \pi_{[ 1, \ell
]}^{-1} ( A )$. 
For $ w \in \R^{\ell}$, we denote 
$T_{w} = T \cap \pi_{[ 1, \ell ]}^{-1} ( \{ w \} )$.
\end{notation}

\begin{definition}
\label{def:property-special} 
Let $1\leq \ell <k$, $G\in \R [ X_{1} ,
\ldots ,X_{k} ]$, and let $\mathcal{P},\mathcal{Q} \subset \R [ X_{1} ,
\ldots ,X_{k} ]$ be in general position with respect to $G$. Let $S = 
\mathrm{Bas} ( \mathcal{P} , \mathcal{Q} )$, and suppose that $S$ is bounded.

 We say that a tuple $ ( S, \mathcal{M,} \ell ,S^{0} ,\mathcal{D}^{0}
,\mathcal{M}^{0} )$ is {\tmem{special}} if it satisfies the following Properties 
 \ref{item:special0}, \ref{item:special1}, \ref{item:special2}, and
\ref{item:special3}. 
\begin{enumerate}
 \item \label{item:special0} $\mathcal{M} = \mathrm{Crit} ( \mathcal{P} , \mathcal{Q} ,G )$ is the
finite set of critical points of $G$ on $S$.
\item
\label{item:special1} $S^{0} \subset S$ is a semi-algebraic set 
strongly of dimension $\leq \ell$
such that for
every $w\in \R^{\ell}$, 
$S^{0}_{w}$
meets every semi-algebraically connected component of $S_{w}$,
and for each semi-algebraically connected component $C$ of $S_{w}$,
$S^{0}_{w}$ contains a minimizer of $G$ over $C$.

\item
\label{item:special2} $\mathcal{D}^{0} \subset \R$ is a finite set of values
satisfying for every interval $[ a,b ] \subset \R$ and $c\in [ a,b ]$,
with $\{ c \} \supset \mathcal{D}^{0} \cap [ a,b ]$, if $D$ is a
semi-algebraically connected component of 
$S^{0}_{a \leq G \leq b}$, then $D_{G=c}$ is a semi-algebraically connected component
of $S^{0}_{G=c}$.

\item
\label{item:special3} $\mathcal{M}^{0} \subset S^{0}$ is a finite set of points
satisfying the following properties:
\begin{enumerate}
\item
\label{item:special3a}
$G ( \mathcal{M}^{0} )$=$\mathcal{D}^{0}$,

\item 
\label{item:special3b}
$\mathcal{M}^{0}$ meets every semi-algebraically connected
component of $S^{0}_{G=a}$ for all $a\in \mathcal{D}^{0}$.
\end{enumerate}
\end{enumerate}

\end{definition}

\begin{definition}
\label{def:connected-component}For a semi-algebraic subset $S \subset T$, we
say that $S$ has {\tmem{good connectivity property}} with respect to $T$, if
the intersection of $S$ with every semi-algebraically connected component of
$T$ is non-empty and semi-algebraically connected.
\end{definition}

With the definition introduced above we have the following key result which
generalizes Proposition 3 in {\cite{BRMS10}} (see also Theorem 14 in
{\cite{Mohab-Schost2010}}).

\begin{proposition}
\label{prop:axiomatic-main}Let $ ( S, \mathcal{M, \ell ,} S^{0}
,\mathcal{D}^{0} ,\mathcal{M}^{0} )$ be a special tuple. Then, for every
finite 
$\mathcal{N }\supset \pi_{[ 1, \ell ]} ( \mathcal{M} \cup \mathcal{M}^{0} )$,
the
semi-algebraic set $S^{0} \cup S_{\mathcal{N}}$ has good connectivity
property with respect to $S$.
\end{proposition}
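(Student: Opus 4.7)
The plan is to induct on $b\in\R$ and prove that for every semi-algebraically connected component $D$ of $S_{G\le b}$, the set $R\cap D$ with $R := S^0\cup S_{\mathcal{N}}$ is non-empty and semi-algebraically connected; specialising to $b>\max_S G$ (finite by boundedness of $S$) then yields the proposition. Equivalently, I show that the restriction map from connected components of $R\cap S_{G\le b}$ to connected components of $S_{G\le b}$ is a bijection. Non-emptiness (surjectivity) is immediate: for any $x\in D$, setting $w=\pi_{[1,\ell]}(x)$, Property~\ref{item:special1} produces a point of $S^0_w$ in the connected component of $S_w$ through $x$, which lies in $D$.

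For injectivity (i.e.\ $R\cap D$ connected), let $\mathcal{D}^\ast := G(\mathcal{M})\cup \mathcal{D}^0$ and induct on $\card(\mathcal{D}^\ast\cap(-\infty,b])$. The base case is vacuous: Lemma~\ref{lemmaforb} forces $\min_S G\in G(\mathcal{M})\subset \mathcal{D}^\ast$, so $S_{G\le b}$ is empty below $\min\mathcal{D}^\ast$. For the inductive step, fix $a<b$ with $(a,b)\cap \mathcal{D}^\ast=\emptyset$. When $b\notin \mathcal{D}^\ast$, Lemma~\ref{lemmafora} gives $D\cap S_{G\le a}$ as a single component of $S_{G\le a}$, and induction yields $R\cap D\cap S_{G\le a}$ connected. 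Any additional $x\in R\cap D$ has $G(x)\in(a,b]$, and if $x\in S^0$ then Property~\ref{item:special2} with $c=a$ (valid since $[a,b]\cap \mathcal{D}^0=\emptyset$) shows the $S^0$-slab-component through $x$ has a non-empty slice at $G=a$, giving an $S^0$-path down to $R\cap D\cap S_{G\le a}$; if $x\in S_{\mathcal{N}}$, its fiber-component meets $S^0$ by Property~\ref{item:special1}, reducing to the previous sub-case.

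When $b\in G(\mathcal{M})$, Lemma~\ref{lemmaforb} decomposes $D=\bigcup_i\overline{B_i}$, where the $B_i$ are the components of $D_{G<b}$ and $\overline{B_i}\cap \overline{B_j}\subset \mathcal{M}$ for $i\ne j$ (zero-dimensional and empty-top-slice subcases are immediate). For each $b'\in (a,b)\setminus\mathcal{D}^\ast$, Lemma~\ref{lemmafora} combined with the bijection between components of $S_{G\le b'}$ and $S_{G\le a}$ yields inside each $B_i$ a single component $E_i(b')$ of $S_{G\le b'}$, so $R\cap B_i=\bigcup_{b'<b}R\cap E_i(b')$ is an ascending union of connected sets (connected by the previous case applied at $b'$), hence connected. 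Points of $R\cap D_{G=b}$ are handled by descending $S^0$-slabs (using Property~\ref{item:special2}, since $b\notin\mathcal{D}^0$ here) or by fiber descents through $S_{\mathcal{N}}$ followed by $S^0$-slabs (using Property~\ref{item:special1}), each ending in some $R\cap B_i$. Finally, since $D$ is connected the intersection graph of $\{\overline{B_i}\}$ is connected, and all pairwise intersection points lie in $\mathcal{M}\subset S_{\mathcal{N}}\subset R$, supplying glue points that tie the pieces of $R\cap D$ together.

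The most delicate case is $b\in \mathcal{D}^0\setminus G(\mathcal{M})$. Lemma~\ref{lemmafora} still gives $D\cap S_{G\le a}$ as a single component, but an $S^0$-slab-component $D^{S^0}\subset S^0_{a\le G\le b}$ through a new point $x$ need not reach $G=a$: since $\mathcal{D}^0\cap[a,b]=\{b\}$, Property~\ref{item:special2} is only admissible with $c=b$, forcing $D^{S^0}$ to reach $G=b$ but not below. Property~\ref{item:special3} rescues this: $D^{S^0}_{G=b}$ is a component of $S^0_{G=b}$ containing a point $p\in\mathcal{M}^0$, so $\pi_{[1,\ell]}(p)\in\mathcal{N}$ and the fiber $S_{\pi_{[1,\ell]}(p)}$ lies entirely in $S_{\mathcal{N}}\subset R$; Property~\ref{item:special1} then produces an $S^0$-point $q$ in that fiber's component through $p$ that minimises $G$, supplying the descent. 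The main obstacle I expect is controlling this iterative descent --- verifying that finitely many alternations between $S^0$-slab climbs to $\mathcal{M}^0$-points and fiber descents inside $S_{\mathcal{N}}$ eventually land in $R\cap D\cap S_{G\le a}$, and that the resulting path stays inside the prescribed component $D$. This careful coordination of Properties~\ref{item:special1}--\ref{item:special3} with the generic-position condition of Definition~\ref{def:general-position} (which, via Remark~\ref{rem:general-position}, excludes degeneracies such as $G$ being constant on a positive-dimensional fiber component that would otherwise stall the descent) is the technical heart of the argument.
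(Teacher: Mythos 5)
Your overall architecture coincides with the paper's: induct over the finite set $\mathcal{D}\cup\mathcal{D}^0$ of special values, use Lemma \ref{lemmafora} on intervals free of such values, use Lemma \ref{lemmaforb} at a value $b\in\mathcal{D}\cup\mathcal{D}^0$ to decompose a component $D$ of $S_{G\leq b}$ into the closures $\overline{B_i}$ of the components of $D_{G<b}$, and glue along $\mathcal{M}\subset S_{\mathcal{N}}$; Parts (\ref{item:special1})--(\ref{item:special3}) of Definition \ref{def:property-special} are invoked in exactly the roles the paper assigns them.

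The genuine gap is the descent step you yourself flag as ``the main obstacle'': connecting a point $x$ with $G(x)=b$ --- in particular $x\in\mathcal{M}$ or $x\in\mathcal{M}^0$ --- to a point of $R\cap B_i$ strictly below level $b$, by a path staying inside $R\cap\overline{B_i}$. Your mechanism is to pass to the fiber $S_{\pi_{[1,\ell]}(x)}\subset S_{\mathcal{N}}$ and take the $G$-minimizer $q\in S^0$ over $\Cc(x,S_{\pi_{[1,\ell]}(x)})$ supplied by Part (\ref{item:special1}). This fails on two counts. First, $\Cc(x,S_{\pi_{[1,\ell]}(x)})$ is a component of the full fiber of $S$, not of $\overline{B_i}$ nor of $D$; the minimizer $q$, and the path to it, may exit $\overline{B_i}$ (landing in a different $\overline{B_j}$, or in a different component of $S_{G\leq b}$ altogether if the path climbs above level $b$). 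Second, the minimizer may satisfy $G(q)=b$, stalling the descent; your appeal to general position to exclude ``$G$ constant on a positive-dimensional fiber component'' does not repair this (general position controls the finiteness of $\mathrm{Crit}(\mathcal{P}\cup\mathcal{Q}',G)$, not the behaviour of $G$ on individual fibers, and in any case the first failure mode remains). The paper's resolution is an infinitesimal argument: apply the curve selection lemma to $B$ and $x\in\overline{B}$ to produce $x_{\eps}\in\Ext(B,\R\la\eps\ra)$ with $\lim_{\eps}x_{\eps}=x$ and $G(x_{\eps})<G(x)$, observe that $\Cc(x_{\eps},\Ext(B,\R\la\eps\ra)_{w_{\eps}})$ is a full fiber component of $\Ext(S,\R\la\eps\ra)$, apply Part (\ref{item:special1}) there to obtain a minimizer $x'_{\eps}\in\Ext(S^0,\R\la\eps\ra)$ with $G(x'_{\eps})<b$, and then pass to the limit and specialize $\eps$. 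Without this (or an equivalent device) the alternating ``slab climb to $\mathcal{M}^0$, fiber descent'' iteration you describe for $b\in\mathcal{D}^0\setminus G(\mathcal{M})$ cannot be shown to terminate inside the prescribed $\overline{B_i}$, and the points of $\overline{B_i}\cap\overline{B_j}\subset\mathcal{M}$ cannot be shown to actually glue $R\cap\overline{B_i}$ to $R\cap\overline{B_j}$.
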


In the proof of Proposition \ref{prop:axiomatic-main} we will use the
following notation.

\begin{notation}
\label{not:ccofx}If $S \subset \R^{k}$ is semi-algebraic set and $x \in S$,
then we denote by $\Cc(x,S)$ the semi-algebraically connected
component of $S$ containing $x$.
\end{notation}

 \begin{notation}
\label{not:puiseux}Given a real closed field $\R$ and a variable $\eps$, we
denote by $\R \la \eps \ra$ the real closed field of
algebraic Puiseux series (see {\cite{BPRbook2}}). In the ordered field $\R
\la \eps \ra$, $\eps$ is positive and infinitesimal, i.e.,
smaller than any positive element of $\R$. We denote by $\lim_{\eps}$ the
mapping which sends a bounded Puiseux series to its constant term.
\end{notation}

\begin{notation}
If $\R'$ is a real closed extension of a real closed field $\R$, and $S
\subset \R^{k}$ is a semi-algebraic set defined by a first-order formula
with coefficients in $\R$, then we will denote by $\Ext \left( S, \R'
\right) \subset \R'^{k}$ the semi-algebraic subset of $\R'^{k}$ defined by
the same formula. It is well-known that $\Ext \left( S, \R' \right)$ does
not depend on the choice of the formula defining $S$ (see {\cite{BPRbook2}}
for example).
\end{notation}

\begin{proof}[Proof of Proposition \ref{prop:axiomatic-main}] Let $S^{1}
=S_{\pi_{[ 1, \ell ]} ( \mathcal{M} \cup \mathcal{M}_{0} )}$. We are going
to prove that $S^{0} \cup S^{1}$ has good connectivity property with
respect to $S$, which implies the proposition.

For $a$ in $\R$, we say that property $\mathrm{GCP} (a)$ holds if
$ ( S^{0} \cup S^{1} )_{G \leq a}$ has good connectivity property with
respect to $S$.

We prove that for all $a$ in $\R$, $\mathrm{GCP} (a)$ holds. Since
$S$ is assumed to be bounded, the proposition follows immediately from
this claim, since it is clear that the proposition follows from $\mathrm{GCP}
( a )$ for any $a \ge \max_{x \in S}G (x)$.

The proof uses two intermediate results:

\begin{description}
\item[Step 1 \label{item:axiomatic-main1}]
Let $\mathcal{D} = G(\mathcal{M})$.
 For every $a \in \mathcal{D} \cup
\mathcal{D}^{0}$, and for every $b \in \R$ with 
$(a,b] \cap (\mathcal{D}
\cup \mathcal{D}^{0} ) = \emptyset$, $ \mathrm{GCP} (a)$ implies $\mathrm{GCP}
( b )$.

 \item[Step 2 \label{item:axiomatic-main2}]
 For every $b \in \mathcal{D} \cup \mathcal{D}^{0}$, if
$ \mathrm{GCP} (a)$ holds for all $a<b$, then $
\mathrm{GCP} (b)$ holds.
\end{description}

The combination of \ref{item:axiomatic-main1} and \ref{item:axiomatic-main2} implies by an easy induction that the property 
$\mathrm{GCP} (a)$ holds for all $a$ in $\R$, 
since for $a< \min_{x \in S} ( G ( x) )$, the property $ \mathrm{GCP} (a)$ holds vacuously. So the
proposition follows from \ref{item:axiomatic-main1} and \ref{item:axiomatic-main2}.

We now prove the two steps.

\begin{itemize}
\item[\ref{item:axiomatic-main1}]
We suppose that $a \in \mathcal{D} \cup
\mathcal{D}^{0}$ and $ \mathrm{GCP} (a)$ holds, take $b \in \R$, $a<b$
with $(a,b] \cap (\mathcal{D} \cup \mathcal{D}^{0}) = \emptyset$, and prove that
$\mathrm{GCP} (b)$ holds. Let $C$ be a semi-algebraically
connected component of $S_{G \leq b}$. We have to prove that $C \cap ( S^{0}
\cup S^{1} )$ is semi-algebraically connected.

Since $(a,b] \cap (\mathcal{D} \cup \mathcal{D}^{0}) = \emptyset$, it follows that
$\mathcal{M}_{a< G \leq b} = \emptyset$, and it follows from  Lemma
\ref{lemmafora} that $C_{G \le a}$ is a semi-algebraically connected
component of $S_{G \le a}$. Now, using property $\mathrm{GCP}
(a)$, we see that $C_{G \le a} \cap ( S^{0} \cup S^{1} )$ is non-empty and
semi-algebraically connected.

Let $x \in C \cap ( S^{0} \cup S^{1} )$. We prove that $x$ can be
semi-algebraically connected to a point in $C_{G \le a} \cap S^{0}$ by a
semi-algebraic path in $C \cap ( S^{0} \cup S^{1} )$, which is enough to
prove that $C \cap ( S^{0} \cup S^{1} )$ is semi-algebraically connected.

There are three cases to consider.

Case 1: $x \in S^{1}$. In this case, consider $\Cc ( x,S_{\pi_{[ 1,
\ell ]} ( x )} )= \Cc ( x,S^{1}_{\pi_{[ 1, \ell ]} ( x )} )$. Then,
by Definition \ref{def:property-special}, Part (\ref{item:special1}), there exists $x' \in
\Cc ( x,S_{\pi_{[ 1,q ]} ( x )} ) \cap S^{0}$ such that $x'$ is a
minimizer of $G$ over 
$\Cc ( x,S_{\pi_{[ 1, \ell ]} ( x )} )$ i.e., 
\[
G( x' ) = \min_{x'' \in \Cc( x,S_{\pi_{[ 1, \ell ]} ( x )} )} G ( x'').
\] 
In particular, $x' \in \Cc( x,S^{0}_{G
\leq b} ) \subset C$. Connecting $x$ to $x'$ by a semi-algebraic path inside
$\Cc ( x,S^{1}_{\pi_{[ 1, \ell ]} ( x )} )$ we reduce either to Case 2
or Case 3 below.

Case 2: $x \in S^{0} $, $G ( x ) \leq a$. In this
case there is nothing to prove.

Case 3: $x \in S^{0}$, $G ( x )>a$. By Definition
\ref{def:property-special}, Part (\ref{item:special2}) applied to $\Cc
(x,S^{0}_{a \leq G \leq b} )$ we have that $a \in G ( \Cc
(x,S^{0}_{a \leq G \leq b} ))$ and $\Cc(x,S^{0}_{a \leq G
\leq b} )_{G=a}$ is non-empty. Hence,  there exists a semi-algebraic path
connecting $x$ to a point in $\Cc (x;S^{0}_{a \leq G \leq b}
)_{G=a}$ inside $\Cc (x,S^{0}_{a \leq G \leq b} )$. Since
$\Cc (x,S^{0}_{a \leq G \leq b} ) \subset S^{0}$ and
$\Cc (x,S^{0}_{a \leq G \leq b} ) \subset C$, it follows
that $\Cc (x,S^{0}_{a \leq G \leq b} ) \subset C \cap S^{0}$
and we are done.

This finishes the proof of \ref{item:axiomatic-main1}.
\item[\ref{item:axiomatic-main2}]
We suppose that $b \in \mathcal{D} \cup \mathcal{D}^{0}$, and $ \mathrm{GCP} (a)$ holds for all $a<b$, and prove
that $ \mathrm{GCP} (b)$ holds.

Let $C$ be a semi-algebraically connected component of $S_{G \le b}$.

If $\dim (C) =0$, $C$ is a point belonging to $\mathcal{M} \subset (
S^{0} \cup S^{1} )$ by Lemma \ref{lemmaforb}. So $C \cap ( S^{0} \cup S^{1}
)$ is semi-algebraically connected.

Hence, we can assume that $\dim (C) >0$. If $C_{G=b} = \emptyset$ there is
nothing to prove. Suppose that $C_{G=b}$ is non-empty, so that $C_{G<b}$ is
non-empty by Lemma \ref{lemmaforb}.

Our aim is to prove that $C \cap ( S^{0} \cup S^{1} )$ is semi-algebraically
connected. We do this in two steps. We prove the following statements:
\begin{description}
\item[(a) \label{item:a}]
if $B$ is a semi-algebraically connected component of $C_{G<b}$,
then $\overline{B} \cap ( S^{0} \cup S^{1} )$ is non-empty and
semi-algebraically connected, and
\item
[(b) \label{item:b}]
using 
\ref{item:a}
$C \cap ( S^{0} \cup S^{1} )$ is semi-algebraically
connected.
\end{description}
{\noindent}{\bf Proof of \ref{item:a}.} We prove that if $B$ is a
semi-algebraically connected component of $V_{G<b}$, then $\overline{B} \cap
( S^{0} \cup S^{1} )$ is non-empty and semi-algebraically connected.

Since $\overline{B}$ contains a point of $\mathcal{M}$ it follows that
$\overline{B} \cap ( S^{0} \cup S^{1} )$ is not empty.

Note that if $\overline{B} \cap ( S^{0} \cup S^{1} ) =B \cap ( S^{0} \cup
S^{1} )$, then there exists $a$ with
\[ \max (\{G(x) \mid x \in B \cap ( S^{0} \cup S^{1} ) \}) <a<b, \]
such that $B \cap ( S^{0} \cup S^{1} ) = (B \cap ( S^{0} \cup S^{1} ) )_{G
\le a}$,  and using Lemma \ref{lemmafora}, $B_{G \le a}$ is
semi-algebraically connected. So $B \cap ( S^{0} \cup S^{1} )$ is
semi-algebraically connected since $\mathrm{GCP} (a)$ holds.

We now suppose that $( \overline{B} \setminus B) \cap ( S^{0} \cup S^{1} )$
is non-empty. Taking $x \in ( \overline{B} \setminus B) \cap ( S^{0} \cup
S^{1} )$, we are going to show that $x$ can be connected to a point $z$ in
$B \cap S^{0}$ by a semi-algebraic path $\gamma$ inside $\overline{B} \cap (
S^{0} \cup S^{1} )$. Notice that $G (x) =b$.

We first prove that we can assume without loss of generality that $x \in
S_{0}$. Otherwise, since $x \in S^{0} \cup S^{1}$, we must have that $x \in
S_{w}$ with $w= \pi_{[1, \ell ]} (x)$, and $S_{w} \subset S^{ 1}$. Let $A=
\Cc (x,S_{w} \cap \overline{B} )$. We now prove that $A \cap
S^{0}_{w} \neq \emptyset$. Using the curve section lemma, choose a
semi-algebraic path $\gamma : [0, \eps ] \rightarrow \Ext ( \overline{B} ,
\R \left \la \eps \ra \right )$ such that $\gamma (0) =x$, $\lim_{\eps}
\gamma ( \eps ) =x$ and $\gamma ((0, \eps ]) \subset \Ext (B, \R \left \la
\eps \ra \right )$. Let $w_{\eps} = \pi_{[1, \ell ]} ( \gamma ( \eps ))$
and
\[ A_{\eps} = \Cc ( \gamma ( \eps ), \Ext (B, \R \left \la
 \eps \ra \right )_{w_{\eps}} ) . \]
Note that $x \in \lim_{\eps}A_{\eps} \subset A$.

By the Tarski-Seidenberg transfer principle {\cite{BPRbook2}}, $\Ext (B, \R
\left \la \eps \ra \right )$ is a semi-algebraically connected component
of $ \Ext (S_{G<a} , \R \left \la \eps \ra \right )$ which
implies that $A_{\eps}$ is a semi-algebraically connected component of $\Ext
(S, \R \left \la \eps \ra \right )_{w_{\eps}}$. By Definition
\ref{def:property-special}, Part (\ref{item:special1}), and the Tarski-Seidenberg transfer
principle, 
\[
\Ext (S^{0} , \R \left \la \eps \ra \right )_{w_{\eps}} \cap
A_{\eps} \neq \emptyset.
\]
 Then, since $\Ext (S^{0} , \R \left \la \eps \ra
\right )_{w_{\eps}} \cap A_{\eps}$ is bounded over $\R$, 
\[\lim_{\eps} (
\Ext (S^{0} , \R \left \la \eps \ra \right )_{w_{\eps}} \cap A_{\eps} )
\]
is a non-empty subset of $S^{0}_{w} \cap A$.

Now connect $x$ to a point in $x' \in S^{0}_{w}$ by a semi-algebraic path
whose image is contained in $A \subset \overline{B}_{w} \subset (
\overline{B} \setminus B) \cap ( S^{0} \cup S^{1} )$ such that $x'$ is a
minimizer of $G$ on $A$. If $G ( x' ) <b$, take $z=x'$. Otherwise, replacing
$x$ by $x'$ if necessary we can assume that $x \in S^{0}$ as announced.

There are four cases -- namely, 
\begin{enumerate}
\item
\label{item:axiomatic-main-case1}
  $x \in \mathcal{M} \cup \mathcal{M}^{0}$;

\item 
\label{item:axiomatic-main-case2}
$x \nin \mathcal{M} \cup \mathcal{M}^{0}$ and
$\Cc (x,S^{0}_{G=b} )
\not\subset
\overline{B}$;

\item 
\label{item:axiomatic-main-case3}
$x \nin \mathcal{M} \cup \mathcal{M}^{0}$, $\Cc
(x,S^{0}_{G=b} ) \subset \overline{B}$ and $b \in \mathcal{D}^{0}$;

\item 
\label{item:axiomatic-main-case4}
$x \nin \mathcal{M} \cup \mathcal{M}^{0}$, $\Cc
(x,S^{0}_{G=b} ) \subset \overline{B}$ and $b \nin \mathcal{D}^{0}$;
\end{enumerate}
that we consider now.
\begin{enumerate}
\item  $x \in \mathcal{M} \cup \mathcal{M}^{0}$:\\
Define $w= \pi_{[1, \ell ]} (x)$, and note that 
$S_{w} \subset ( S^{0}\cup S^{1} )$. 
Since $x \in \overline{B}$, and $B$ is bounded, $w \in
\pi_{[1, \ell ]} ( \overline{B} ) = \overline{\pi_{[1, \ell ]} (B)}$. Now
let $\eps >0$ be an infinitesimal. By applying the curve selection lemma
to the set $B$ and $x \in \overline{B}$, we obtain that there exists
$x_{\eps} \in \Ext \left( B, \R \langle \eps \rangle \right)$ with
$\lim_{\eps} x_{\eps} =x$, $G (x_{\eps} ) <G (x)$ and $x \in \lim_{\eps}
\Ext (S, \R \langle \eps \rangle )_{w_{\eps}}$, where $w_{\eps} = \pi_{[1,
\ell ]} \left( x_{\eps} \right)$. By Definition
\ref{def:property-special}, Part (\ref{item:special1}), and the Tarski-Seidenberg transfer
principle, we have that $\Ext (S^{0} , \R \langle \eps \rangle
)_{w_{\eps}}$ is non-empty, and contains a minimizer of $G$ over
$\Cc \left( x_{\eps} , \Ext \left( S, \R \la \eps \ra
\right)_{w_{\eps}} \right)$. Let
\[ x'_{\eps} \in \Ext (S^{0} , \R \langle \eps \rangle )_{w_{\eps}} \cap
 \Cc(x_{\eps} , \Ext (B, \R \left \la \eps \ra \right
 )_{w_{\eps}} ) \]
be such a minimizer and let $x' = \lim_{\eps}x'_{\eps}$. Notice that $G
\left( x_{\eps} \right)< G ( x )$. Since $\lim_{\eps}x_{\eps} =x$ and
$\lim_{\eps} \Cc (x_{\eps} , \Ext (B, \R \left \la \eps \ra \right
)_{w_{\eps}} )$ is semi-algebraically connected,
\[ \lim_{\eps} \Cc (x_{\eps} , \Ext (B, \R \left \la \eps
 \ra \right )_{w_{\eps}} ) \subset \Cc (x,
 \overline{B}_{w} ) . \]
Now choose a semi-algebraic path $\gamma_{1}$ connecting $x$ to $x'$
inside $\Cc (x, \overline{B}_{w} )$ (and hence inside
$S^{0} \cup S^{1}$ since $\Cc (x, \overline{B}_{w} )
\subset S_{w} \subset S^{0} \cup S^{1}$), and a semi-algebraic path
$\gamma_{2} ( \eps )$ joining $x'$ to $x'_{\eps}$ inside $\Ext (S^{0} , \R
\langle \eps \rangle )$. The concatenation of $\gamma_{1} , \gamma_{2} (
\eps )$ gives a semi-algebraic path $\gamma$ having the required property,
after replacing $\eps$ in $\gamma_{2} ( \eps )$ by a small enough positive
element of $t \in \R$. Now take $z =\gamma_{2} ( t )$.

\item $x \nin \mathcal{M} \cup \mathcal{M}^{0}$ and
$\Cc (x,S^{0}_{G=b} )
\not\subset
\overline{B}$:\\
There exists $x' \in \Cc (x,S^{0}_{G=b} )$, $x' \nin
\overline{B}$ and a semi-algebraic path $\gamma : [0,1] \rightarrow
\Cc (x,S^{0}_{G=b} )$, with $\gamma (0) =x, \gamma (1) =x'$. Since
$x' \nin \overline{B}$, it follows from Lemma \ref{lemmaforb} (\ref{item:lemmaforb2}) that for
$t_{1} = \max \{0 \leq t<1 \hspace{0.75em} \mid \hspace{0.75em} \gamma
(t) \in \overline{B} \}$, $\gamma (t_{1} ) \in \mathcal{M}$. We can now
connect $x$ to a point in $z\in B \cap S^{0}$ by a semi-algebraic path
inside $\overline{B} \cap ( S^{0} \cup S^{1} )$ using what has been
already proved in Case (\ref{item:axiomatic-main-case1}) above.

\item $x \nin \mathcal{M} \cup \mathcal{M}^{0}$, $\Cc
(x,S^{0}_{G=b} ) \subset \overline{B}$ and $b \in \mathcal{D}^{0}$:\\
Since $b \in \mathcal{D}^{0}$, by Definition \ref{def:property-special},
Part 
(\ref{item:special3b})
there exists $x' \in \Cc (x,S^{0}_{G=b} ) \cap
\mathcal{M}^{0}$. Thus, there exists a semi-algebraic path connecting
$x$ to $x' \in \mathcal{M}_{0}$ with image contained in $\overline{B} \cap
( S^{0} \cup S^{1} )$. We can now connect $x'$ to a point in $z \in B \cap
S^{0}$ by a semi-algebraic path inside $\overline{B} \cap ( S^{0} \cup
S^{1} )$ using what has been already proved in Case 
(\ref{item:axiomatic-main-case1}) above.

\item $x \nin \mathcal{M} \cup \mathcal{M}^{0}$, $\Cc
(x,S^{0}_{G=b} ) \subset \overline{B}$ and $b \nin \mathcal{D}^{0}$:\\
Since $b \nin \mathcal{D}_{0}$, for all $a<b$ such that $[a,b] \cap
\mathcal{D}^{0} = \emptyset$, $\Cc (x,S^{0}_{a \leq G \leq
b} )_{G=b} = \Cc (x,S^{0}_{G=b} )$ and
$\Cc (x,S^{0}_{a \leq G \leq b} )_{G=a} \neq \emptyset$ by
Definition \ref{def:property-special}, Part (\ref{item:special2}). Let $x' \in
\Cc (x,S^{0}_{a \leq G \leq b} )_{G=a}$. We can choose a
semi-algebraic path $\gamma : [0,1] \rightarrow \Cc
(x,S^{0}_{a \leq G \leq b} )$ with $\gamma (0) =x, \gamma (1) =x'$. Let
$t_{1} = \max \{0 \leq t<1 \hspace{0.75em} \mid \hspace{0.75em} \gamma
(t) \in S^{0}_{G=b} \}$. Then, either $\gamma (t_{1} ) \in \mathcal{M}$
and we can connect $\gamma (t_{1} )$ to a point in $B \cap ( S^{0} \cup
S^{1} )$ by a semi-algebraic path inside $\overline{B} \cap ( S^{0} \cup
S^{1} )$ using 
Case (\ref{item:axiomatic-main-case1});
otherwise, by Lemma \ref{lemmaforb} (\ref{item:lemmaforb2b}), for all
small enough $r>0$, $B_{k} ( \gamma (t_{1} ),r) \cap C_{G<b}$ is non-empty
and contained in $B$. Then, there exists $t_{2} \in (t_{1} ,1]$ such that
$z= \gamma (t_{2} ) \in B \cap S^{0}$, and the semi-algebraic path $\gamma
|_{[0,t_{2} ]}$ gives us the required path in this case.
\end{enumerate}
Taking $x$ and $x'$ in $\overline{B} \cap ( S^{0} \cup S^{1} )$, they can be
connected to points $z$ and $z'$ in $B \cap S_{0}$ by semi-algebraic paths
$\gamma$ and $\gamma'$ inside $\overline{B} \cap ( S^{0} \cup S^{1} )$ such
that, without loss of generality, $G (z) =G (z' ) =a$. Using $\mathrm{GCP} (a)$, we conclude that $\mathrm{GCP} (b)$ holds.

{\noindent}{\bf Proof of \ref{item:b}}. We have to prove that $C \cap ( S^{0}
\cup S^{1} )$ is semi-algebraically connected.

Let $x$ and $x'$ be in $C \cap ( S^{0} \cup S^{1} )$. We prove that it is
possible to connect them by a semi-algebraic path inside $C \cap ( S^{0}
\cup S^{1} )$.

Since we suppose that $\dim (C) >0$, $C_{G<b}$ is non-empty by Lemma
\ref{lemmaforb} (\ref{item:lemmaforb2c}). Using Lemma \ref{lemmaforb} (\ref{item:lemmaforb2c}), let $B_{i}$ (resp.
$B_{j}$) be a semi-algebraically connected component of $C_{<b}$ such that
$x \in \overline{B_{i}}$ (resp. $x' \in \overline{B_{j}}$).

If $i=j$, $x$ and $x'$ both lie in $\overline{B}_{i} \cap ( S^{0} \cup S^{1}
)$ which is semi-algebraically connected by (a). Hence, they can be
connected by a semi-algebraically connected path in $\overline{B}_{i} \cap (
S^{0} \cup S^{1} ) \subset C \cap ( S^{0} \cup S^{1} )$.

So let us suppose that $i \neq j$. Note that:
\begin{itemize}
\item by Lemma \ref{lemmaforb} (\ref{item:lemmaforb2a}), $\overline{B_{i}} \cap
\mathcal{M}$ and $\overline{B_{j}} \cap \mathcal{M}$ are not empty,

\item by (a) $\overline{B_{i}} \cap ( S^{0} \cup S^{1} )$ and
$\overline{B_{j}} \cap ( S^{0} \cup S^{1} )$ are semi-algebraically
connected,

\item by definition of $S^{0} \cup S^{1}$, $\mathcal{M} \subset S^{0}
\cup S^{1}$.
\end{itemize}
Then, one can connect $x$ (resp. $x'$) to a point in $\overline{B}_{i} \cap
\mathcal{M}$ (resp. $\overline{B}_{j} \cap \mathcal{M}$), so that one can
suppose without loss of generality that $x \in \overline{B}_{i} \cap
\mathcal{M}$ and $x' \in \overline{B}_{j} \cap \mathcal{M}$.

Let $\gamma : [0,1] \to C$ be a semi-algebraic path that connects $x$ to
$x'$, and let $I= \gamma^{-1}(C \cap \mathcal{M} )$ and $H= [0,1]
\setminus I$.

Since $\mathcal{M}$ is finite, we can assume without loss of generality that
$I$ is a finite set of points, and $H$ is a union of a finite number of open
intervals.

Since $\gamma (I) \subset \mathcal{M} \subset S^{0} \cup S^{1}$, it
suffices to prove that if $t$ and $t'$ are the end points of an interval in
$H$, then $\gamma (t)$ and $\gamma (t' )$ are connected by a semi-algebraic
path inside $C \cap ( S^{0} \cup S^{1} )$.

Notice that $\gamma ((t,t' )) \cap \mathcal{M} = \emptyset$, so that $\gamma
(t)$ and $\gamma (t' )$ belong to the same $\overline{B}_{\ell}$ by Lemma
\ref{lemmaforb} (\ref{item:lemmaforb2b}) .
Hence,
$\gamma(t), \gamma (t' )$ both belong to  $\overline{B}_{\ell} \cap (
S^{0} \cup S^{1} )$, and we know that $\overline{B}_{\ell} \cap ( S^{0} \cup S^{1} )$
is semi-algebraically connected by \ref{item:a}. Consequently, $\gamma (t)$ and
$\gamma (t' )$ can be connected by a semi-algebraic path in
$\overline{B}_{\ell} \cap ( S^{0} \cup S^{1} ) \subset C \cap ( S^{0} \cup
S^{1} )$.
\end{itemize}
\end{proof}

\section{Good rank property}\label{sec:grp}

In this section, we introduce matrices having the ``good rank property'' and
derive two geometric consequences of this property which will be important for
us later.

\begin{notation}
\label{not:good-rank-property} Let $m \geq 0,B =( b_{i, j} )_{0 \leq i
\leq m,1\leq j \leq k} \in \R^{( m+1 ) \times k}$, such that every $j
\times j$ sub-matrix of $B$ with $1 \leq j \leq m+1 $, has rank $j$.
We say that the matrix $B$ has {\tmem{good rank property}}.
\end{notation}

\subsection{A deformation of several equations to general
position}\label{def:severaldef}

Our first application of matrices having good rank property is to use such a
matrix to define a deformation of a finite set of polynomials with the
property of being in general position which is what we describe now (see
Proposition \ref{prop:parametrized-dimension} below). We discuss first how to
deform a given system of equation, following an idea introduced in
{\cite{JPT2012}}, so that the number of critical points of a certain well
chosen polynomial $G$ is guaranteed to be finite.

\begin{notation}
\label{not:deformation}Let $Q \in \R [ X_{1} , \ldots ,X_{k} ]$, $b= ( b_{0}
,b_{1} , \ldots ,b_{k} ) \in \R^{k+1}$, and $d\geq 0$. Let
$\zeta$ be a new variable. We denote
\begin{eqnarray}
\Def ( Q, \zeta ,b,d ) & = & ( 1- \zeta ) Q^{2}- \zeta ( b_{0} +b_{1}
X_{1}^{d} + \cdots +b_{k} X_{k}^{d} ) .\label{eqn:not:Def}
\end{eqnarray}
In the special case when $b= ( 1, \ldots ,1 )$, and $d=2\deg ( Q )$+2, we
denote
\begin{eqnarray}
\Def ( Q, \zeta ) & = & \Def ( Q, \zeta ,b,d ) .\label{eqn:not:def2}
\end{eqnarray}
\end{notation}

\begin{notation}
\label{not:goodrank} Let $m \geq 0,B =( b_{i, j} )_{0 \leq i \leq m,0 
\leq j \leq k} \in \R^{( m+1 ) \times ( k+1 )}$, a matrix having good rank
property and $b_{0} = ( 1,2, \ldots ,k )$. For $i=0, \ldots ,m$, let
$b_{i} =( b_{i, 0} , \ldots ,b_{i ,k} )$ denote the $i$-th row of $B$.

Let $\mathcal{P} = \{ P_{1} , \ldots ,P_{m} \} \subset \R [ X_{1} , \ldots
,X_{k} ]$, and $\bar{\zeta} = ( \zeta_{1} , \ldots , \zeta_{m} )$
new variables.

For any $d \geq 0$, we denote by $\Def ( \mathcal{P} , \bar{\zeta}
,B,d )$ the polynomials
\begin{equation}
\Def ( P_{1} , \zeta_{1} ,b_{1} ,d ) , \ldots , \Def ( P_{m} , \zeta_{m}
,b_{m} ,d ) , \label{deformfullrank}
\end{equation}

and denote
\begin{eqnarray}
G_{d} & = & b_{0,0} + \sum_{j=1}^{k} b_{0,j}X_{j}^{d} . 
\label{eqn:not:G}
\end{eqnarray}
\end{notation}

The following proposition and its proof are similar to results in
{\cite{JPT2012}}. We include it here for the sake of completeness.

\begin{proposition}
\label{prop:parametrized-dimension}Suppose that $B =( b_{i, j} )_{0 \leq i
\leq m,0\leq j \leq k} \in \R^{( m+1 ) \times k}$ has good rank property.
Let $0 \leq \ell \leq k$, and $d > 2\max_{1 \leq i \leq m} \deg ( P_{i}
)$. Then, for each $w \in \R^{\ell}$, and $\bar{\zeta} = ( \zeta_{1} ,
\ldots , \zeta_{m} ) \in \left( \R \setminus \{ 0 \} \right)^{m}$, $\Def (
\mathcal{P} , \bar{\zeta} ,B,d ) ( w, \cdot )$ is in general position with
respect to $G_{d} ( w, \cdot )$.
\end{proposition}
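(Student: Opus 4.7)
The proposition asks us to verify both clauses of Definition \ref{def:general-position} for the deformed system at every fixed $w \in \R^\ell$ and every $\bar\zeta \in (\R \setminus \{0\})^m$: namely, that the real zero set of $\Def(\mathcal{P}, \bar\zeta, B, d)(w, \cdot)$ is bounded in $\R^{k-\ell}$, and that its set of $G_d(w, \cdot)$-critical points is finite (the only subset $\mathcal{Q}' \subset \emptyset$ being $\emptyset$ itself).

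For boundedness I would argue at infinity. Writing $\tilde{P}_i = \Def(P_{i}, \zeta_{i}, b_{i}, d)(w, \cdot)$ as a polynomial in $X_{\ell+1}, \ldots, X_{k}$, the hypothesis $d > 2\max_i \deg(P_i)$ forces the homogeneous leading part of $\tilde P_i$ to be exactly $-\zeta_i \sum_{j=\ell+1}^{k} b_{i,j} X_j^d$, and the good rank property guarantees all $b_{i,j} \ne 0$. Exploiting the freedom in $d$ (we may take $d$ even) and combining the equations $\tilde P_i = 0$ whose leading forms involve a good-rank submatrix of $B$, one concludes that the simultaneous zero set has no points at infinity, hence is bounded in $\R^{k-\ell}$.

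The central content is the finiteness of $\mathrm{Crit}(\tilde{\mathcal{P}}, \tilde{G})$, where $\tilde G = G_d(w, \cdot)$. A critical point yields $(x, \lambda) \in \R^{k-\ell} \times \R^{m+1}$ solving a square system comprising the $m$ equations $\tilde P_i = 0$, the $k - \ell$ Lagrangian gradient equations, and the normalization $\sum_{i=0}^{m} \lambda_i^2 = 1$. Computing
\[
\partial_j \tilde P_i = 2(1-\zeta_i) P_i\, \partial_j P_i - \zeta_i\, d\, b_{i,j} X_j^{d-1}, \qquad \partial_j \tilde G = d\, b_{0,j} X_j^{d-1},
\]
the $j$-th gradient equation takes the form
\[
2 \sum_{i=1}^{m} \lambda_i (1-\zeta_i) P_i\, \partial_j P_i \;=\; d\, X_j^{d-1}\Bigl(\lambda_0 b_{0,j} + \sum_{i=1}^{m} \zeta_i \lambda_i b_{i,j}\Bigr).
\]
Setting $\mu_0 := \lambda_0$ and $\mu_i := \zeta_i \lambda_i$ for $i \ge 1$ (harmless since $\zeta_i \ne 0$), the right-hand side becomes $d\, X_j^{d-1}(\mu^\top B)_j$, so that $B$ enters only through its good rank. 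I would then stratify by which of the $X_j$ and which of the $P_i(w, x)$ vanish at a critical point: in the generic stratum (none vanishing) one eliminates $\mu$ using the Lagrangian relations and the normalization, obtaining a zero-dimensional polynomial system in $x$; in any degenerate stratum the surviving equations involve a strict submatrix of $B$ which still enjoys good rank, and the argument recurses.

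The main obstacle is carrying out this case analysis cleanly, checking that in every stratum the relevant submatrix of $B$ actually has good rank in the form needed. The guiding principle, inherited from \cite{JPT2012}, is that the good rank hypothesis is a uniform algebraic encoding of genericity: any hypothetical positive-dimensional component in the critical locus would force a nontrivial algebraic relation among selected rows of $B$ (or among the monomials $X_j^{d-1}$) that is ruled out by some submatrix rank condition, contradicting the assumption.
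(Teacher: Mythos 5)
Your setup is right---the Lagrangian system, the computation of $\partial_j \tilde P_i$, the substitution $\mu_i=\zeta_i\lambda_i$ that isolates the role of $B$---but the actual finiteness argument is missing at exactly the point where it is needed. In your ``generic stratum'' you assert that eliminating $\mu$ yields a zero-dimensional system in $x$; nothing in the proposal justifies this. The system is square, but square systems routinely have positive-dimensional zero sets, and zero-dimensionality is precisely the claim to be proved. The paper's mechanism, which your proposal never reaches, is projective: bi-homogenize the entire system $\mathrm{CritEq}$ in $(X,\lambda)$ to get a subvariety $W\subset \PP^{k-\ell}_{\C}\times\PP^{m}_{\C}$, and observe that because $d>2\max_i\deg(P_i)$ the only terms surviving on the hyperplane $X_0=0$ are the pure deformation terms $\zeta_i\sum_j b_{i,j}X_j^{d}$ and $\bigl(\sum_i\zeta_i\lambda_i b_{i,j}-\lambda_0 b_{0,j}\bigr)X_j^{d-1}$. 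The good rank property of $B$ then shows this system at infinity is inconsistent (two cases, $m\ge k-\ell$ and $m<k-\ell$). Since the projection of $W$ to the $X$-factor is a closed projective subvariety missing a hyperplane, it is finite, and $\mathrm{Crit}$ sits inside its real affine part. Without some version of this ``no solutions at infinity $\Rightarrow$ finite'' step, the rank conditions on $B$ have nothing to act on.

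Your proposed stratification by which $X_j$ and which $P_i$ vanish also does not close. Adding the equations $P_i(w,x)=0$ or $X_j=0$ in a degenerate stratum does not produce a smaller instance of the same deformed structure (the $P_i$ are arbitrary polynomials, not deformations with controlled leading forms), so ``the argument recurses'' has no well-defined recursive call. Separately, your boundedness argument over-reaches: $d$ is fixed by hypothesis (you may not ``take $d$ even''), and for $m<k-\ell$ a single relation $\sum_j b_{i,j}X_j^{d}=0$ does not force all $X_j=0$ over $\C$, nor over $\R$ without positivity of the $b_{i,j}$. (The paper's own proof addresses only finiteness of the critical locus; boundedness in the application comes from the specific positive matrices $\mathcal{H}_{N,k}$ used there.) You correctly identified the guiding principle from the deformation technique, but stating that a positive-dimensional component ``would force a relation ruled out by some submatrix rank condition'' is a restatement of the goal, not a proof of it.
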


\begin{proof} Fix $w \in \R^{\ell}$, and $\bar{\zeta} \in \left( \R \setminus \{ 0 \}
\right)^{m}$. We prove that $\mathrm{Crit} \left( \Def ( \mathcal{P},
\bar{\zeta} ,B,d ) ( w, \cdot ) ,G \right)$ is finite (possibly empty).

Consider the following system of bi-homogeneous equations defining a
sub-variety $W \subset \PP^{k - \ell}_{\C} \times \PP^{m}_{\C}$:
\begin{eqnarray}
\left( \Def ( P_{i} , \zeta_{i} ,b_{i} ,d ) ( w, \cdot ) \right)^{h} &
= & 0, i=1, \ldots ,m, \nonumber\\
\sum_{i=1}^{m} \lambda_{i} \frac{\partial \left( \Def ( P_{i} ,
\zeta_{i} ,b_{i} ,d ) ( w, \cdot ) \right)^{h}}{\partial X_{j}} & = &
\lambda_{0}\frac{\partial G_{d} ( w, \cdot )^{h}}{\partial X_{j}} ,j=
\ell +1, \cdots ,k.\label{eqn:G-pseudocriticalhomogenized}
\end{eqnarray}
Let $\pi : \PP^{k - \ell}_{\C} \times \PP^{m}_{\C} \longrightarrow
\PP^{m}_{\C}$ be the projection map to the second factor. 

It follows from
the definition of $\mathrm{Crit} \left( \Def ( \mathcal{P}, \bar{\zeta} ,B,d )
( w, \cdot ) ,G_{d} \right)$ that
this set 
is 
contained in the real
affine part of $\pi ( W )$, and thus in order to prove that 
\[
\mathrm{Crit}
\left( \Def ( \mathcal{P}, \bar{\zeta} ,B,d ) ( y, \cdot ) ,G_{d} \right)
\]
is finite (possibly empty), it suffices to show that the complex projective
variety $\pi ( W )$ is a finite number of points (possibly empty). So, we
prove that the projective variety $\pi ( W ) \subset \mathbb{P}_{\C}^{m}$
has an empty intersection with the hyperplane at infinity defined by $X_{0}
=0$.

Substituting, $X_{0} =0$ in the system
(\ref{eqn:G-pseudocriticalhomogenized}), we get,
\begin{eqnarray}
\zeta_{i} ( b_{i, \ell +1} X_{\ell +1}^{d} + \cdots +b_{i,k} X_{k}^{d}
) & = & 0, i=1, \ldots ,m, 
\label{eqn:G-pseudocritical-parametrized-firstset}\\
d \left( \sum_{i=1}^{m} \zeta_{i} \lambda_{i} b_{i,j} - \lambda_{0 }
b_{0,j} \right)X_{j}^{d-1} & = & 0, j= \ell +1, \cdots ,k. 
\label{eqn:G-pseudocritical-parametrized-secondse}
\end{eqnarray}
There are two cases to consider.

\begin{descriptioncompact}
\item[Case 1] $m \geq k- \ell$: In this case, since the matrix of
coefficients in the first set of equations
\[ \left(\begin{array}{cccc}
 b_{1, \ell +1} & \cdot & \cdot & b_{1,k}\\
 \cdot & \cdot & \cdot & \cdot\\
 b_{m, \ell +1} & \cdot & \cdot & b_{m,k}
 \end{array}\right) \]
has rank $k- \ell$ which follows from the given property of the matrix
$B$, we get that $X_{\ell +1} = \cdots =X_{k} =0$, which is impossible.

\item[Case 2] $m<k- \ell$: Consider the second set of equations
 involving the
Lagrangian variables $\lambda_{0} , \ldots , \lambda_{m}$. Since, the
matrix $B$ has the property that every $( m+1 ) \times ( m+1 )$
sub-matrix has rank $( m+1 )$, we have for every choice $J \subset [ \ell
+1,k ] , \mathrm{card} ( J ) =m+1$, the system of
equations
\[ \sum_{i=1}^{p} \zeta_{i} \lambda_{i} b_{i,j} - \lambda_{0 } b_{0,j} =0
,j \in J \]
has an empty solution in $\PP^{m}_{\C}$, and hence at least $k-m- \ell$
amongst the variables $X_{\ell +1} , \ldots ,X_{k}$ must be equal to $0$.
Suppose that $X_{m+ \ell +1} = \cdots =X_{k } =0$. Now, from the property
that the all $m \times m$ sub-matrices of $B$ have full rank we obtain
that the only solution to system
(\ref{eqn:G-pseudocritical-parametrized-firstset}) with $X_{m+ \ell +1} =
\cdots =X_{k} =0$, is the one with $X_{\ell +1} = \cdots =X_{k} =0$, which
is impossible. 
\end{descriptioncompact}

This proves that in both cases the projective variety $\pi ( W ) \subset
\mathbb{P}_{\C}^{m}$ has an empty intersection with the hyperplane at
infinity defined by $X_{0} =0$, and hence $\pi ( W )$ is a finite number of
points which finishes the proof.
\end{proof}

\subsection{$( B,G )$-pseudo-critical values\label{sec:G-special-values}}

We now describe a second application of matrices with the good rank property.
Given a finite family of polynomials $\mathcal{P}= \{ P_{1} , \ldots ,P_{s} \}
\subset \R [ X_{1} , \ldots ,X_{k} ]$, a matrix $B =( b_{i, j} )_{1 \leq i
\leq s,0\leq j \leq k} \in \R^{s \times ( k+1 )}$ having good rank
property (see Notation \ref{not:goodrank}), and a polynomial $G \in \R [ X_{1}
, \ldots ,X_{k} ]$, we define a finite set $\mathcal{D} ( \mathcal{P},B,G )
\subset \R$ which we call the {\tmem{$( B,G )$-pseudo-critical values of the
family $\mathcal{P}$}}.

These $( B,G )$-pseudo-critical values are used to ensure good connectivity
properties in the case of basic closed semi-algebraic sets.

\begin{definition}
\label{def:Bpseudocritical} Let $\mathcal{P}= \{ P_{1} , \ldots
,P_{s} \} \subset \R [ X_{1} , \ldots ,X_{k} ]$, $B =( b_{i, j} )_{1 \leq
i \leq s,0\leq j \leq k} \in \R^{s \times ( k+1 )}$ a matrix having good
rank property (see Notation \ref{not:goodrank}), and let $G \in \R [ X_{1} ,
\ldots ,X_{k} ]$. We denote for $1 \leq i \leq s$,
\begin{eqnarray*}
H_{i} & = & b_{i,0} + \sum_{j=1}^{k} b_{i,j} X_{j}^{d} ,
\end{eqnarray*}
where $d$ is the least even number greater than $\max_{P \in \mathcal{P}}
\deg ( P )$. For $I \subset [ 1,s ] $, and $\sigma \in \{ -1,1
\}^{I}$, we denote
\begin{eqnarray*}
\tilde{\mathcal{P}}_{I, \sigma ,B} & = & \{ P_{i} +\gamma \sigma ( i )
H_{i} \mid i \in I \} .
\end{eqnarray*}
We say that $c \in \R$, is a $( B,G )$-pseudo-critical value of
$\mathcal{P}$, if there exists $I \subset [ 1,s ] $ with
$\mathrm{card} ( I ) \leq k$, $\sigma \in \{ -1,1 \}^{I}$, $( x, \lambda )
\in \R \langle \gamma \rangle^{k} \times \R \langle \gamma \rangle^{(
\mathrm{card} ( I ) +1 )}$ bounded over {\R}, such that
\begin{eqnarray*}
c & = & \lim_{\gamma}G ( x ) ,
\end{eqnarray*}
and $( x, \lambda ) \in \ZZ \left( \mathrm{CritEq} ( \tilde{\mathcal{P}}_{I,
\sigma ,B} ,G ) , \R \langle \gamma \rangle^{k+ \mathrm{card} ( I ) +1}
\right)$ (see Definition \ref{def:G-critical-point}, 
\eqref{eqn:criteqnpg}). We denote set of all $( B,G )$-pseudo-critical
values of $\mathcal{P}$ by $\mathcal{D} ( \mathcal{P},B,G )$.
\end{definition}

The property of $( B,G )$-pseudo-critical values used in the paper is the
following result. Its proof is postponed to the Annex (Section
\ref{auxproof}).

\begin{proposition}
\label{prop:properties-of-special} Let $\mathcal{P},\mathcal{Q}
\subset \R [ X_{1} , \ldots ,X_{k} ]$, $B =( b_{i, j} )_{1 \leq i \leq
\mathrm{card} ( \mathcal{P} ) + \mathrm{card} ( \mathcal{Q} ) ,0\leq j \leq k}
\in \R^{( \mathrm{card} ( \mathcal{P} ) + \mathrm{card} ( \mathcal{Q} ) ) \times
( k+1 )}$, a matrix having good rank property, and $G\in \R [ X_{1} ,
\ldots ,X_{k} ]$, where $d$ is the least even number greater than $\max_{P
\in \mathcal{P}} \deg ( P )$. Suppose that $S= \mathrm{Bas} (
\mathcal{P},\mathcal{Q} )$ is bounded. Then,
\begin{enumerate}
\item \label{item:properties-of-special1} 
the set $\mathcal{D}=\mathcal{D} ( \mathcal{P} \cup \mathcal{Q},B,G
)$ is finite;
\item \label{item:properties-of-special2} for any interval $[ a,b ] \subset \R$ and $c\in [ a,b ]$, with
$\{ c \} \supset \mathcal{D} \cap [ a,b ]$, if $D$ is a
semi-algebraically connected component of $S_{a\leq G \leq b}$, then
$D_{G=c}$ is a semi-algebraically connected component of $S_{G=c}$.
\end{enumerate}
\end{proposition}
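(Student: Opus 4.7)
The plan is to handle the two parts with different techniques: part~(1) by a projective compactification argument in the spirit of Proposition~\ref{prop:parametrized-dimension}, and part~(2) by a Morse-theoretic argument on infinitesimal perturbations of $S$, with transfer back to $S$ via $\lim_\gamma$.

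For part~(1), observe that $\mathcal{D}$ is a finite union over pairs $(I,\sigma)$ with $I\subset [1,s]$ (where $s=\card(\mathcal{P})+\card(\mathcal{Q})$), $\card(I)\le k$, and $\sigma\in\{-1,+1\}^I$, of the sets $\{\lim_\gamma G(x) \mid (x,\lambda)\text{ is an }\R\la\gamma\ra\text{-solution of } \mathrm{CritEq}(\tilde{\mathcal{P}}_{I,\sigma,B},G)\text{ bounded over }\R\}$. Since there are only finitely many such pairs, it suffices to show finiteness for each. I would proceed by bi-homogenizing the critical system in $\PP^k_{\C\la\gamma\ra}\times\PP^{\card(I)}_{\C\la\gamma\ra}$, projecting to the second factor, and using the good rank property of the submatrix of $B$ indexed by $I$ to show emptiness at the hyperplane $X_0=0$. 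The top-degree-$d$ part of $P_i+\gamma\sigma(i)H_i$ equals $\gamma\sigma(i)\sum_j b_{i,j}X_j^d$ (since $d>\deg P_i$), and this, together with the asymptotic form of the Lagrangian equations, yields an inconsistent system at infinity by the same case analysis (on whether $\card(I)$ exceeds $k-\ell$ or not) as in the proof of Proposition~\ref{prop:parametrized-dimension}.

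For part~(2), to each sign vector $\sigma\in\{-1,+1\}^s$ I would associate a perturbed basic semi-algebraic set $\tilde{S}_\sigma\subset \R\la\gamma\ra^k$ obtained by shifting each defining relation of $S$ by $\gamma\sigma(i)H_i$ in the appropriate direction, arranged so that $\lim_\gamma \bigcup_\sigma \tilde{S}_\sigma = S$. By part~(1) combined with Definition~\ref{def:general-position}, the defining families of each $\tilde{S}_\sigma$ are in general position with respect to $G$, hence Lemmas~\ref{lemmafora} and~\ref{lemmaforb} apply on each. From these, between consecutive critical values of $G$ on $\tilde{S}_\sigma$, the semi-algebraically connected components of the slab $(\tilde{S}_\sigma)_{a\le G\le b}$ are in bijection with those of the slice $(\tilde{S}_\sigma)_{G=c}$. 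Taking $\lim_\gamma$ (legitimate because $S$ is bounded) and invoking the Tarski--Seidenberg transfer principle transfers the bijection to $S$: since every critical value of $G$ on any $\tilde{S}_\sigma$ has limit in $\mathcal{D}$ by definition, whenever $[a,b]$ contains only $c$ from $\mathcal{D}$, a component $D$ of $S_{a\le G\le b}$ intersects $S_{G=c}$ in a single connected component $D_{G=c}$.

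The main obstacle is the transfer step between the perturbed and the limit settings. The limit of a connected component of $\tilde{S}_\sigma$ need not be a component of $S$: components may split under $\lim_\gamma$, and components coming from different $\sigma$ may merge. Controlling this requires the full force of the definition of $\mathcal{D}$ --- varying $\sigma$ over $\{-1,+1\}^s$ approaches every face of the boundary of $S$ from every side, and varying $I$ over all subsets of cardinality $\le k$ captures critical points on strata of every codimension. The good rank property of $B$ then rules out accidental critical-point degenerations as $\gamma\to 0$, so every change in $\pi_0$ of level sets visible in the limit corresponds to a value of $\mathcal{D}$.
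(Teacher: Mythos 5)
Part (1) of your proposal is correct and is in substance the paper's argument: the paper invokes Proposition \ref{prop:parametrized-dimension} together with the semi-algebraic Sard theorem, whereas you redo the at-infinity computation directly for the additive perturbation $P_i+\gamma\sigma(i)H_i$; the mechanism --- the good rank property forces the bi-homogenized critical system to be empty on $X_0=0$ for each of the finitely many pairs $(I,\sigma)$ --- is the same.

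Part (2) contains a genuine gap, and it is the one you name yourself at the end. Working with a separate perturbed set $\tilde{S}_\sigma$ for each of the $2^s$ sign vectors gives you, for each $\sigma$ individually, a correspondence between components of the slab and of the slice of $\tilde{S}_\sigma$; but the component $D$ of $S_{a\leq G\leq b}$ is in general assembled from limits of components of several different $\tilde{S}_\sigma$'s, and to prove $D_{G=c}$ connected you must join a point coming from $\tilde{S}_\sigma$ to a point coming from $\tilde{S}_{\sigma'}$ inside the limit. Your closing sentence (``the good rank property rules out accidental degenerations, so every change in $\pi_0$ visible in the limit corresponds to a value of $\mathcal{D}$'') asserts exactly what is to be proved and supplies no mechanism for this gluing. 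The paper sidesteps the problem by using a \emph{single} thickened set $\tilde{S}=\mathrm{Bas}(\emptyset,\tilde{\mathcal{F}})$ with $\tilde{\mathcal{F}}=\{\pm F_i+\gamma H_i \mid i\leq m\}\cup\{F_i+\gamma H_i \mid i>m\}$, which \emph{contains} $\Ext(S,\R\la\gamma\ra)$. Consequently $D$ sits inside a unique bounded-over-$\R$ component $\tilde{D}$ of $\tilde{S}_{a\leq G\leq b}$ with $\lim_\gamma \tilde{D}=D$ and $\lim_\gamma(\tilde{D}_{G=c})=D_{G=c}$, and connectivity passes to the limit by Proposition 12.43 of \cite{BPRbook2} with nothing left to glue; the sign vectors $\sigma$ reappear only in enumerating the faces of $\tilde{S}$, i.e., the critical systems whose $G$-values generate $\mathcal{D}$.

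A second defect lies in the step ``between consecutive critical values of $G$ on $\tilde{S}_\sigma$ the slab and the slice have the same components.'' The critical values of $G$ on the perturbed sets are elements of $\R\la\gamma\ra$ whose limits lie in $\mathcal{D}$; those limiting to $c$ are in general \emph{not equal} to $c$ but only infinitesimally close to it, possibly on both sides, so $c$ is not between consecutive critical values of the perturbed set and the slice at exactly $G=c$ may be disconnected there. The paper repairs this with an auxiliary infinitesimal $\delta$: all critical values in $[a,b]$ are trapped in $[c-\delta,c+\delta]$, one proves connectivity of $\Ext(\tilde{D},\R\la\gamma,\delta\ra)_{c-\delta\leq G\leq c+\delta}$ by rerouting paths across this critical band using Proposition \ref{prop:morsegeneral} and Lemma \ref{lemmafora}, and only then takes the limit as $\delta$ goes to $0$. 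Your argument needs this extra layer as well.
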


\section{Deformation to the special case }\label{sec:deformation}

Our aim in this section is to associate to a basic semi-algebraic set $S=
\mathrm{Bas} ( \mathcal{P} , \mathcal{Q} )$ a deformation
$\tilde{S}$ of $S$, and a special tuple $ ( \tilde{S} , \tilde{\mathcal{M}} ,
\ell , \tilde{S}^{0} ,\mathcal{D}^{0} ,\mathcal{M}^{0} )$ (cf. Definition
\ref{def:property-special}).

\begin{notation}
\label{convention}
We fix for the remainder of this section:
\begin{enumerate}
\item $p \in \mathbb{N} ,1 \leq p \leq k;$

\item two finite sets of polynomials $\mathcal{P}\subset \R [ X_{1} ,
\ldots ,X_{k} ]$,
$ \mathcal{Q}= \{ Q_{1} , \ldots ,Q_{q} \} \subset \R [ 
X_{1} , \ldots ,X_{k} ]$;

\item $d= \max_{P \in \mathcal{P} \cup \mathcal{Q}} \deg ( P )$;

\item $G=G_{2d+2}$=$1+ \sum^{k}_{i=1} i X_{i}^{2d+2}$.
\end{enumerate}
\end{notation}

\subsection{Deformation of $\mathrm{Bas} ( \mathcal{P} , \mathcal{Q} )$ to
$\mathrm{Bas} ( \tilde{\mathcal{P}} , \tilde{\mathcal{Q}})$}

\begin{notation}
\label{not:cauchy}Let $\mathcal{H}_{N,k} =( h_{i j} )_{0 \leq i \leq N,0
\leq j \leq k}$, be an $( N +1 ) \times ( k+1 )$ matrix with integer entries
defined by $h_{i,j} =j^{i+1}$ and for each $i,0\leq i\leq N,0
\leq j \leq k$.

Notice that the matrix $\mathcal{H}_{N,k}$ has good rank property (see
Notation \ref{not:good-rank-property}), since every submatrix of
$\mathcal{H}_{N,k}$ is a generalized Vandermonde matrix (see for example
{\cite{Polya-Szego}}, page 43).
\end{notation}

\begin{notation}
\label{not:order}Given a finite list of variables $\zeta = ( \zeta_{1} ,
\ldots , \zeta_{t} )$, we denote by $\R \langle \zeta \rangle = \R \langle
\zeta_{1} , \ldots , \zeta_{t} \rangle$ the field $\R \la \zeta_{1} \ra
\cdots \la \zeta_{t} \ra$ and for any $\xi \in \R \langle \zeta_{1} , \ldots
, \zeta_{t} \rangle$ bounded over $\R \langle \zeta_{1} , \ldots , \zeta_{i}
\rangle$, $i<t$, we denote by $\lim_{\zeta_{i+1}} ( \xi )$ the element $(
\lim_{\zeta_{i+1}} \circ \cdots \circ \lim_{\zeta_{t}} ) ( \xi )$ of $\R
\langle \zeta_{1} , \ldots , \zeta_{i} \rangle$. 
For an element $f= \sum_{\alpha} c_{\alpha} \zeta^{\alpha} \in \D [ \zeta_{1} , \ldots ,
\zeta_{t} ]$,
 we will denote by $o_{\zeta} ( f ) = \alpha_{0} \in
\mathbb{N}^{t}$, such that $\zeta^{\alpha_{0}}$ is the largest element of
$\mathrm{supp} ( f )=\{ \zeta^{\alpha} \mid c_{\alpha} \neq 0 \}$ in
the unique ordering of the real closed field $\R \langle \zeta \rangle$. For
$\alpha , \beta \in \mathbb{N}^{t}$, we denote $\alpha \geq \beta$, if
$\zeta^{\alpha} \geq \zeta^{\beta}$.
\end{notation}

We now define families $\tilde{\mathcal{P}}$ and $\tilde{\mathcal{Q}}$ such
that $\mathrm{Bas} ( \tilde{\mathcal{P}} , \tilde{\mathcal{Q}} )$ is a
deformation of $\mathrm{Bas} ( \mathcal{P} , \mathcal{Q} )$. 

\begin{notation}
\label{not:notationtilde} Let
\[ H_{i} = h_{i,0} + \sum^{k}_{j=1} h_{i j}X_{j}^{2d+2} , \]
where $d= \max_{P \in \mathcal{P} \cup \mathcal{Q}} \deg ( P )$,  and the 
$h_{i j}$'s are the entries in the matrix $\mathcal{H}_{k-p+q,k}$.
For $1\leq j \leq q $, let
\[ \tilde{Q}_{j} =( 1- \zeta )Q_{j} +\zeta H_{k-p+j}, \]
and define
\[ \tilde{\mathcal{Q}} = \{ \tilde{Q}_{1} , \ldots , \tilde{Q}_{q} \} . \]

Let
\begin{eqnarray*}
P_{1}^{\star} & = & ( 1- \eps ) \sum_{P \in \mathcal{P}} P^{2} + \eps
( X_{p+1}^{2d+2} + \ldots +X_{k}^{2d+2} +X_{p+1}^{2} + \ldots +X_{k}^{2} )
 ,\\
P_{i}^{\star} & = & \dfrac{\partial P_{1}^{\star}}{\partial X_{p+i}},2 \leq i
\leq k-p,\\
\mathcal{P}^{\star} & = & \{ P_{1}^{\star} , \ldots ,P^{\star}_{k-p} \} .
\end{eqnarray*}
For $1 \leq i \leq k-p$, let
\[ \tilde{P}_{i}=\left( 1- \delta \right) P^{\star}_{i} -\delta
 H_{i} , \]
finally, define
\[ \tilde{\mathcal{P}} = \{ \tilde{P}_{1} , \ldots , \tilde{P}_{k-p} \} , \]
\end{notation}

\begin{proposition}
\label{prop:limtilde}
Suppose that $\mathrm{Bas} ( \mathcal{P} , \mathcal{Q} )$
is bounded, and that for each $y \in \R^{p}$, $\ZZ ( \mathcal{P},
\R^{k} )_{y}$ is a finite number of points (possibly empty). Let
$\mathrm{Bas} (\tilde{\mathcal{P}} , \tilde{\mathcal{Q}} )
\subset \R \la \zeta , \eps , \delta \ra^{k}$. Then,
\[ \mathrm{Bas} ( \mathcal{P} , \mathcal{Q} ) = \lim_{\zeta} ( \mathrm{Bas} (
 \tilde{\mathcal{P}} , \tilde{\mathcal{Q}} ) ) . \]
\end{proposition}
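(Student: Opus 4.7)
The plan is to establish the equality as subsets of $\R^k$ by proving two inclusions, interpreting $\lim_\zeta$ according to Notation~\ref{not:order} as the composition $\lim_\zeta \circ \lim_\eps \circ \lim_\delta$ defined on Puiseux points bounded over $\R$.

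For the inclusion $\lim_\zeta \mathrm{Bas}(\tilde{\mathcal{P}}, \tilde{\mathcal{Q}}) \subset \mathrm{Bas}(\mathcal{P}, \mathcal{Q})$, I would take $\tilde{x} \in \mathrm{Bas}(\tilde{\mathcal{P}}, \tilde{\mathcal{Q}})$ bounded over $\R$, set $x = \lim_\zeta \tilde{x} \in \R^k$, and peel off the three limits one at a time, using that $\lim$ commutes with polynomial evaluation on bounded inputs and preserves the relation $\geq 0$. Applied to $\tilde{Q}_j(\tilde{x}) = (1-\delta)Q_j(\tilde{x}) + \delta H_{k-p+j}(\tilde{x}) \ge 0$, the limit $\lim_\delta$ gives $Q_j(\lim_\delta \tilde{x}) \ge 0$, and the subsequent $\lim_\eps, \lim_\zeta$ propagate this to $Q_j(x) \ge 0$. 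Applied to the equations $\tilde{P}_i(\tilde{x}) = (1-\eps) P_i^\star(\tilde{x}) - \eps H_i(\tilde{x}) = 0$, the $\delta$-limit is trivial (since neither $P_i^\star$ nor $H_i$ involves $\delta$), and $\lim_\eps$ produces $P_i^\star(\tilde{x}'') = 0$, where $\tilde{x}'' = \lim_\eps \lim_\delta \tilde{x} \in \R\la \zeta \ra^k$. For $i=1$, the polynomial $P_1^\star$ is, over the ordered field $\R\la \zeta \ra$ (in which both $1-\zeta$ and $\zeta$ are positive), a sum of squares with positive coefficients, so $P_1^\star(\tilde{x}'') = 0$ forces each squared summand to vanish; in particular $P(\tilde{x}'') = 0$ for every $P \in \mathcal{P}$, and applying $\lim_\zeta$ gives $P(x) = 0$. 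Combined with the inequalities, this shows $x \in \mathrm{Bas}(\mathcal{P}, \mathcal{Q})$.

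For the reverse inclusion, I would produce, for each $x \in \mathrm{Bas}(\mathcal{P}, \mathcal{Q})$, a lift $\tilde{x} \in \mathrm{Bas}(\tilde{\mathcal{P}}, \tilde{\mathcal{Q}})$ bounded over $\R$ with $\lim_\zeta \tilde{x} = x$. The construction proceeds in two stages and relies on Proposition~\ref{prop:parametrized-dimension} together with the good rank property of the Vandermonde-type matrix $\mathcal{H}_{k-p+q,k}$ (Notation~\ref{not:cauchy}), which guarantees that $\tilde{\mathcal{P}}$ is a complete intersection in general position with respect to $G$. Using the finiteness of $\ZZ(\mathcal{P}, \R^k)_y$ for each $y \in \R^p$ — so that $x$ is an isolated point of its fiber — I would apply a Puiseux-series implicit function argument in the $(\zeta,\eps)$-directions: the perturbed system $\tilde{P}_i = 0$, viewed as a deformation of $\mathcal{P}^\star = 0$, which in turn encodes $\mathcal{P}=0$ via the sum-of-squares structure, admits a solution whose $\lim_\eps$ lies in $\ZZ(\mathcal{P}^\star) \subset \R\la \zeta \ra^k$ and specializes to $x$ under $\lim_\zeta$. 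The inequalities are then accommodated by the $\delta$-perturbation: since $H_{k-p+j}$ is a sum of even powers with positive integer coefficients and hence non-negative on $\R^k$, adding $\delta H_{k-p+j}$ to $(1-\delta)Q_j$ preserves $\ge 0$ whenever $Q_j(x) \ge 0$, so the lift can be chosen inside $\mathrm{Bas}(\tilde{\mathcal{P}}, \tilde{\mathcal{Q}})$.

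The main obstacle will be the reverse inclusion: one must produce the lift explicitly while simultaneously respecting the relative orders $\delta \ll \eps \ll \zeta$ of the three infinitesimals. The good rank property is what prevents degeneracies and ensures that each real point of $\mathrm{Bas}(\mathcal{P}, \mathcal{Q})$ is hit by the $\lim_\zeta$-image of the deformed set; the hypothesis that $\mathrm{Bas}(\mathcal{P}, \mathcal{Q})$ is bounded guarantees that the constructed lifts are bounded over $\R$ so that $\lim_\zeta$ is well-defined; and the Tarski--Seidenberg transfer principle validates the implicit-function step in the Puiseux series setting, reducing it to an ordinary deformation statement over $\R$.
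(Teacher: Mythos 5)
Your overall architecture matches the paper's: the inclusion $\lim_{\zeta}(\mathrm{Bas}(\tilde{\mathcal{P}},\tilde{\mathcal{Q}}))\subset\mathrm{Bas}(\mathcal{P},\mathcal{Q})$ is a routine limit computation (the paper calls it clear), and all the substance lies in producing, for each $x=(y,z)\in\mathrm{Bas}(\mathcal{P},\mathcal{Q})$, a point of the deformed set whose $\lim_{\zeta}$ is $x$. Your account of that construction, however, has a genuine gap. You propose to obtain a zero of $\mathcal{P}^{\star}(y,\cdot)$ specializing to $z$ by a ``Puiseux-series implicit function argument'' underwritten by the good rank property and Proposition \ref{prop:parametrized-dimension}. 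Neither tool does this job. No implicit function theorem is available at $x$: the fiber $\ZZ(\mathcal{P},\R^{k})_{y}$ is only assumed finite, the variety may be singular at $z$, and the gradient of $\sum_{P\in\mathcal{P}}P^{2}$ vanishes identically on its zero set, so there is no nondegeneracy at $\zeta=0$ to propagate --- handling exactly this singular situation is the whole point of the deformation. Moreover, Proposition \ref{prop:parametrized-dimension} and the good rank property of $\mathcal{H}$ only yield \emph{finiteness} of critical loci of the deformed system; they say nothing about every point of $\mathrm{Bas}(\mathcal{P},\mathcal{Q})$ being reached by $\lim_{\zeta}$.

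The missing idea is an extremal-point argument. Because $z$ is isolated in its fiber, the hypersurface $\ZZ(P_{1}^{\star}(y,\cdot),\R\la\zeta\ra^{k-p})$ has a bounded semi-algebraically connected component $C_{z}$ with $\lim_{\zeta}(C_{z})=z$; being closed and bounded, $C_{z}$ has a nonempty set of $X_{p+1}$-extremal points, and these are zeros of the square system $\mathcal{P}^{\star}(y,\cdot)$ (the defining equation together with its partials in the remaining fiber variables). That system has only \emph{simple} zeros (Proposition 12.44 of \cite{BPRbook2}), and simplicity --- not the good rank property --- is what the $\eps$-step needs: a simple zero $z'$ persists under the perturbation $\tilde{P}_{i}=(1-\eps)P_{i}^{\star}-\eps H_{i}$, yielding $z''$ with $\lim_{\eps}(z'')=z'$ and hence $\lim_{\zeta}(y,z'')=x$. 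Your treatment of the inequalities is at the same level of detail as the paper's own ``it is clear,'' so I only note that the argument ``$\delta H_{k-p+j}\geq 0$ preserves $\tilde{Q}_{j}\geq 0$'' is delicate precisely when $Q_{j}(x)=0$, since $Q_{j}$ evaluated at the lift could a priori be a negative infinitesimal of order much larger than $\delta$.
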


\begin{proof}
It is clear that $\lim_{\zeta} ( \mathrm{Bas} (\tilde{\mathcal{P}}
, \tilde{\mathcal{Q}} ) ) \subset \mathrm{Bas} ( \mathcal{P} , \mathcal{Q}
)$. We now prove that $\mathrm{Bas} ( \mathcal{P} , \mathcal{Q} ) \subset
\lim_{\zeta} ( \mathrm{Bas} ( \tilde{\mathcal{P}} ,
\tilde{\mathcal{Q}}) )$. Let $x=( y,z ) \in \mathrm{Bas} ( \mathcal{P}
, \mathcal{Q} )$, where $y \in \R^{p}$ and $z \in \R^{k-p}$. For each (of
the finitely many) $z \in \R^{k-p}$ such that $x= ( y,z ) \in \mathrm{Bas} (
\mathcal{P} , \mathcal{Q} )$, there exists a bounded semi-algebraically
connected component $C_{z}$ of the non-singular hypersurface 
$\ZZ (P_{1}^{\star} ( y, \cdot ) , \R \langle \zeta,\eps \rangle^{k-p} )$ 
such
that 
$\lim_{\eps} ( C_{z} )= z$.
Now, the system $\mathcal{P}^{\star} ( y, \cdot ) $ has only simple
zeros in 
$\R \langle \zeta,\eps \rangle^{k-p}$ 
(see {\cite{BPRbook2}},
Proposition 12.44) and contains the non-empty set of $X_{p+1}$-extremal
points of $C_{z}$. 
Let 
$z' \in \R \langle \zeta, \eps \rangle^{k-p}$ 
be an
$X_{p+1}$-extremal point of $C_{z}$. Then, since $z'$ is a simple zero of
the system $\mathcal{P}^{\star} ( y, \cdot )$, 
there must exist $z'' \in \ZZ
( \tilde{\mathcal{P}} , \R \la \zeta , \eps , \delta
\ra^{k-p} )$ such that $\lim_{\eps} (z'' )= z'$.
Moreover, it is clear that $x'' = ( y,z'' ) \in \mathrm{Bas} (
\tilde{\mathcal{P} , \tilde{\mathcal{Q}}} )$, and that
$\lim_{\zeta} ( x'' ) =x$, which finishes the proof.
\end{proof}

\subsection{General position and definition of $\tilde{\mathcal{M}}$}

Suppose now that
$\ZZ( \mathcal{P},\R^k)$ is strongly of dimension $\leq p$, and let
$S= \mathrm{Bas} ( \mathcal{P} , \mathcal{Q} ) \subset \R^{k}$.
Let
\[ \tilde{S} = \mathrm{Bas} ( \tilde{\mathcal{P}} ,
 \tilde{\mathcal{Q}} ) \subset \R \la \zeta , \eps , \delta
 \ra^{k} \]
following Notation \ref{not:notationtilde}.

\begin{proposition}
\label{prop:general-position} For every $\ell \leq p$ and for every $w \in
\R \la \zeta , \eps , \delta \ra^{\ell}$,
$\tilde{\mathcal{P}} ( w,- ) , \tilde{\mathcal{Q}} ( w,- )$ is in general
position with respect to $G ( w,- )$.
\end{proposition}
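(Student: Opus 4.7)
The plan is to adapt the proof of Proposition \ref{prop:parametrized-dimension} to the current setting. The polynomials $\tilde{\mathcal{P}}, \tilde{\mathcal{Q}}$ are not literally of the form $\Def(\cdot,\cdot,B,\cdot)$ used there, but the deformation terms $\eps H_i$ and $\delta H_{k-p+j}$ involve rows of the matrix $\mathcal{H}_{k-p+q,k}$, which by Notation \ref{not:cauchy} has the good rank property. The whole argument of Proposition \ref{prop:parametrized-dimension} was driven by precisely this property of the coefficient matrix, so it will transfer once we identify the correct leading forms.

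First I would verify boundedness of $\ZZ(\tilde{\mathcal{P}}(w,\cdot),\R\la\zeta,\eps,\delta\ra^{k-\ell})$. The polynomial $\tilde{P}_1(w,\cdot)$ has degree $2d+2$, and its homogeneous leading form in $X_{\ell+1},\ldots,X_k$ is the sum of the degree-$(2d+2)$ part of $(1-\eps)P_1^\star(w,\cdot)$ (which is a multiple of $\zeta\sum_{j=p+1}^k X_j^{2d+2}$) and of $-\eps\sum_{j=\ell+1}^k h_{1,j}X_j^{2d+2}$. Since $\eps$ is infinitesimal over $\R\la\zeta\ra$, the second term dominates, so the leading form is, up to a nonzero factor, $\sum_{j=\ell+1}^k h_{1,j}X_j^{2d+2}$ with all $h_{1,j}=j^2>0$. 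Its only real zero is the origin, so $\ZZ(\tilde{P}_1(w,\cdot))$ is bounded, hence so is $\ZZ(\tilde{\mathcal{P}}(w,\cdot))$.

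Next, for any $I\subset[1,q]$, I would establish finiteness (or emptiness) of $\mathrm{Crit}(\tilde{\mathcal{P}}(w,\cdot)\cup\{\tilde{Q}_j(w,\cdot)\mid j\in I\},G(w,\cdot))$ by the same homogenization strategy as in Proposition \ref{prop:parametrized-dimension}. Setting $m=k-p+\mathrm{card}(I)$, I would write down the critical equations in $X_{\ell+1},\ldots,X_k$ with Lagrange multipliers $(\lambda_0,\lambda_1,\ldots,\lambda_{k-p},(\mu_j)_{j\in I})\in\PP^m_\C$, homogenize via a variable $X_0$, and project to the second factor to obtain a variety $\pi(W)\subset\PP^m_\C$. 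It suffices to prove $\pi(W)\cap\{X_0=0\}=\emptyset$. At $X_0=0$, the equations $\tilde{P}_i=0$ and $\tilde{Q}_j=0$ reduce to their degree-$(2d+2)$ leading forms, which as above are proportional to $\sum_{j'=\ell+1}^k h_{i,j'}X_{j'}^{2d+2}=0$ for each row $i$ in the submatrix of $\mathcal{H}_{k-p+q,k}$ indexed by $\tilde{\mathcal{P}}\cup\tilde{\mathcal{Q}}'$. Similarly, the Lagrange equations at infinity become a linear system in the Lagrange variables whose coefficients are rows of the same matrix.

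Now, as in Proposition \ref{prop:parametrized-dimension}, split into two cases according to whether $m\geq k-\ell$ or $m<k-\ell$. If $m\geq k-\ell$, the good rank property of the relevant submatrix of $\mathcal{H}_{k-p+q,k}$ forces $X_{\ell+1}=\cdots=X_k=0$, contradicting $(X_{\ell+1}:\cdots:X_k)\in\PP^{k-\ell}_\C$. Otherwise $m<k-\ell$, and for each choice of $m+1$ coordinates among $X_{\ell+1},\ldots,X_k$, the good rank property applied to the Lagrange system forces at least $k-\ell-m$ of the variables $X_{\ell+1},\ldots,X_k$ to vanish; a second application of the good rank property to the original equations then forces the remaining coordinates to vanish as well, again a contradiction. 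The main obstacle is bookkeeping: carefully isolating the dominant infinitesimal (here $\eps$ dominates $\zeta$ in $\tilde{P}_1$, and $\delta$ is the sole deformation in $\tilde{Q}_j$) in each row, and checking that the rows of $\mathcal{H}_{k-p+q,k}$ used in each case still form a good-rank submatrix—this reduces to the Vandermonde-style observation of Notation \ref{not:cauchy}.
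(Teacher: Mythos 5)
Your overall strategy---rerunning the homogenization argument of Proposition \ref{prop:parametrized-dimension} and appealing to the good rank property of $\mathcal{H}_{k-p+q,k}$---is exactly what the paper intends (its own proof is a one-line citation of that proposition, ``noting that $\eps,\delta\neq 0$''). But your execution contains a concrete error: you have the order of the infinitesimals backwards. In $\R\la\zeta,\eps,\delta\ra=\R\la\zeta\ra\la\eps\ra\la\delta\ra$ one has $1\gg\zeta\gg\eps\gg\delta>0$; saying that $\eps$ is infinitesimal over $\R\la\zeta\ra$ means $\eps$ is \emph{smaller} than every positive element of $\R\la\zeta\ra$, so in the degree-$(2d+2)$ part of $\tilde{P}_1=(1-\eps)P_1^{\star}-\eps H_1$ it is the term $(1-\eps)\zeta\sum_{j>p}X_j^{2d+2}$ coming from $P_1^{\star}$ that dominates, not $-\eps\sum_j h_{1j}X_j^{2d+2}$. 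The actual leading form of $\tilde{P}_1(w,\cdot)$ in $X_{\ell+1},\dots,X_k$ is
\[ (1-\eps)\zeta\sum_{j=p+1}^{k}X_j^{2d+2}\;-\;\eps\sum_{j=\ell+1}^{k}h_{1j}X_j^{2d+2}, \]
whose coefficients are negative for $\ell<j\leq p$ and positive for $j>p$. It is therefore not proportional to a row of $\mathcal{H}_{k-p+q,k}$, and for $\ell<p$ it is indefinite. Consequently (a) your boundedness argument for $\ZZ(\tilde{P}_1(w,\cdot))$ via positivity of the leading form collapses, and (b) the coefficient matrices of the linearized systems at infinity in your Case 1/Case 2 dichotomy are not submatrices of $\mathcal{H}_{k-p+q,k}$ whenever the row of $\tilde{P}_1$ is used, so the good rank property cannot be invoked verbatim there. (Note that only $\tilde{P}_1$ is affected: $\deg P_i^{\star}=2d+1<2d+2$ for $i\geq 2$ and $\deg Q_j\leq d<2d+2$, so those leading forms really are $-\eps$, resp.\ $\delta$, times rows of $\mathcal{H}$, as you assert.)

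Both defects are repairable, but the repair is a genuine step missing from your write-up. For boundedness, one can use $\tilde{P}_2$ (whose leading form is definite) when $k-p\geq 2$, or argue directly from $(1-\eps)P_1^{\star}=\eps H_1$ together with $P_1^{\star}\geq 0$ and $H_1>0$. For the rank claims, expand by multilinearity in the modified first row: writing $e$ for the indicator vector of the columns $p+1,\dots,k$,
\[ \det\bigl((1-\eps)\zeta e-\eps h_1,\,h_{i_2},\dots\bigr)=(1-\eps)\zeta\,\det(e,h_{i_2},\dots)-\eps\,\det(h_1,h_{i_2},\dots); \]
the second determinant is nonzero by the good rank property, and since $\zeta\gg\eps$ the sum is nonzero whether or not the first determinant vanishes. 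Without some such argument the full-rank assertions that drive both cases of your analysis at infinity are unjustified.
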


\begin{proof}
Follows from Definition \ref{def:general-position} and Proposition
\ref{prop:parametrized-dimension} noting that 
$\zeta,\delta \neq 0$
in $\R
\la \zeta , \eps , \delta \ra$.
\end{proof}

\begin{corollary}
\label{cor:M-finite}
The set $\tilde{\mathcal{M}} = \mathrm{Cr} ( \tilde{\mathcal{P}} ,
\tilde{\mathcal{Q}} ,G )$ is finite.
\end{corollary}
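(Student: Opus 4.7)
The plan is to reduce Corollary \ref{cor:M-finite} directly to Proposition \ref{prop:general-position} combined with the unpacking of Definition \ref{def:critpqg}. By Definition \ref{def:critpqg},
\[
\tilde{\mathcal{M}} = \mathrm{Crit}(\tilde{\mathcal{P}},\tilde{\mathcal{Q}},G) = \mathrm{Bas}(\tilde{\mathcal{P}},\tilde{\mathcal{Q}}) \cap \Bigl(\bigcup_{\mathcal{Q}' \subset \tilde{\mathcal{Q}}} \mathrm{Crit}(\tilde{\mathcal{P}} \cup \mathcal{Q}', G)\Bigr),
\]
so it is enough to show that each of the finitely many sets $\mathrm{Crit}(\tilde{\mathcal{P}} \cup \mathcal{Q}', G)$, with $\mathcal{Q}' \subset \tilde{\mathcal{Q}}$, is finite.

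First, I would invoke Proposition \ref{prop:general-position} in the degenerate case $\ell = 0$ (so the parameter $w$ is vacuous). This directly gives that the pair $(\tilde{\mathcal{P}}, \tilde{\mathcal{Q}})$ is in general position with respect to $G$ over $\R\langle\zeta,\eps,\delta\rangle$, in the sense of Definition \ref{def:general-position}. Unwinding that definition, for every subset $\mathcal{Q}' \subset \tilde{\mathcal{Q}}$ the set $\mathrm{Crit}(\tilde{\mathcal{P}} \cup \mathcal{Q}', G)$ is either empty or finite.

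Since $\tilde{\mathcal{Q}}$ is a finite family, the collection of subsets $\mathcal{Q}' \subset \tilde{\mathcal{Q}}$ is finite, so $\bigcup_{\mathcal{Q}' \subset \tilde{\mathcal{Q}}} \mathrm{Crit}(\tilde{\mathcal{P}} \cup \mathcal{Q}', G)$ is a finite union of finite sets, hence finite. Intersecting with $\mathrm{Bas}(\tilde{\mathcal{P}},\tilde{\mathcal{Q}})$ preserves finiteness, so $\tilde{\mathcal{M}}$ is finite.

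There is essentially no obstacle here: the only thing to verify is that the hypotheses of Proposition \ref{prop:general-position} are met by the deformation $(\tilde{\mathcal{P}},\tilde{\mathcal{Q}})$ defined in Notation \ref{not:notationtilde}, which is immediate since $\mathcal{H}_{k-p+q,k}$ has the good rank property (Notation \ref{not:cauchy}), the degree bound $2d+2 > 2\max_{P \in \mathcal{P} \cup \mathcal{Q}}\deg(P)$ is satisfied by construction, and the infinitesimals $\eps, \delta$ are nonzero in $\R\langle\zeta,\eps,\delta\rangle$. Thus the corollary follows as a direct specialization of the proposition.
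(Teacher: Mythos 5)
Your proof is correct and follows exactly the route the paper intends: the corollary is stated without proof as an immediate consequence of Proposition \ref{prop:general-position}, and your argument simply makes that explicit by taking $\ell=0$ (permitted, since Proposition \ref{prop:parametrized-dimension} allows $0\leq\ell\leq k$), unwinding Definitions \ref{def:critpqg} and \ref{def:general-position}, and observing that a finite union of finite sets is finite. Nothing is missing.
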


\begin{corollary}
\label{cor:strong-dimension}
$\ZZ(\tilde{\mathcal{P}},\R^k)$, 
and
$\mathrm{Bas} ( \tilde{\mathcal{P}} ,
 \tilde{\mathcal{Q}} )$ are strongly of dimension $\le p$.
\end{corollary}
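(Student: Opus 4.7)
The plan is to handle the two assertions separately. The claim that $\ZZ(\mathcal{P},\R^k)$ is strongly of dimension $\leq p$ is immediate, since this is exactly the standing hypothesis introduced just before the definitions of $\tilde{\mathcal P}$ and $\tilde{\mathcal Q}$ (``Suppose now that $\ZZ(\mathcal P,\R^k)$ is strongly of dimension $\le p$''). So the only real content is the corresponding statement for $\mathrm{Bas}(\tilde{\mathcal P}, \tilde{\mathcal Q}) \subset \R\langle\zeta,\eps,\delta\rangle^{k}$.

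For the second part, I would apply Proposition~\ref{prop:general-position} with $\ell = p$. This gives that for every $w \in \R\langle\zeta,\eps,\delta\rangle^{p}$, the pair $\tilde{\mathcal{P}}(w,-), \tilde{\mathcal{Q}}(w,-)$ is in general position with respect to $G(w,-)$, now viewed as polynomials in the $k-p$ free variables $X_{p+1}, \ldots, X_{k}$. The crucial bookkeeping is the cardinality count: by Notation~\ref{not:notationtilde}, $\tilde{\mathcal{P}}$ consists of exactly $k-p$ polynomials $\tilde P_{1}, \ldots, \tilde P_{k-p}$, so after fixing $w$ we have $k-p$ equations in $k-p$ unknowns.

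I would then invoke Remark~\ref{rem:general-position}, which says that for a system in general position in which the cardinality of the equation family equals the ambient dimension, the zero set is finite (possibly empty). Applied to $\tilde{\mathcal{P}}(w,-), \tilde{\mathcal{Q}}(w,-)$ in the $k-p$ remaining variables, this yields that $\ZZ(\tilde{\mathcal{P}}(w,-), \R\langle\zeta,\eps,\delta\rangle^{k-p})$ is finite. Since
\[
\mathrm{Bas}(\tilde{\mathcal{P}}, \tilde{\mathcal{Q}})_{w} \subset \ZZ(\tilde{\mathcal{P}}, \R\langle\zeta,\eps,\delta\rangle^{k})_{w} \;=\; \{w\} \times \ZZ(\tilde{\mathcal{P}}(w,-), \R\langle\zeta,\eps,\delta\rangle^{k-p}),
\]
the fiber $\mathrm{Bas}(\tilde{\mathcal{P}}, \tilde{\mathcal{Q}})_{w}$ is finite for every $w$, which is exactly the definition of being strongly of dimension $\leq p$.

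This argument is essentially a direct combination of the two cited results, so I do not foresee any serious obstacle; the only thing that requires a bit of care is confirming that the hypotheses of Proposition~\ref{prop:general-position} really are met with $\ell = p$ (they are, since the proposition is stated for every $\ell \leq p$) and that the matching ``cardinality equals dimension'' condition for Remark~\ref{rem:general-position} is satisfied after the restriction to the fiber, which follows from the fact that $\tilde{\mathcal{P}}$ was defined with precisely $k-p$ elements.
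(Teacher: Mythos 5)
Your proposal is correct and follows essentially the same route as the paper's own proof: apply Proposition \ref{prop:general-position} with $\ell = p$, use $\mathrm{card}(\tilde{\mathcal{P}}) = k-p$ together with Remark \ref{rem:general-position} to get finiteness of every fiber $\ZZ(\tilde{\mathcal{P}}, \R\la\zeta,\eps,\delta\ra^{k})_{w}$, and conclude for $\mathrm{Bas}(\tilde{\mathcal{P}},\tilde{\mathcal{Q}})$ by inclusion. Your explicit check that the ``cardinality equals fiber dimension'' condition holds is exactly the bookkeeping the paper performs.
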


\begin{proof}
Applying Proposition \ref{prop:general-position} with $\ell = p$, and noting that 
$\mathrm{card}(\mathcal{P}) = k-p$, we get that
 for every $w \in\R \la \zeta , \eps , \delta \ra^{p}$,
$\ZZ(\tilde{\mathcal{P}}, \R\la \zeta , \eps , \delta \ra^{k})_{w}$ 
is finite (possibly empty) by Remark \ref{rem:general-position}.
 It then follows from 
 Definition \ref{def:strong-dimension} that,
 $\ZZ(\tilde{\mathcal{P}}, \R\la \zeta , \eps , \delta \ra^{k})$ is strongly of dimension
 $\leq p$. The same then holds for
 $\mathrm{Bas} ( \tilde{\mathcal{P}} ,
 \tilde{\mathcal{Q}} )$, since 
 $\mathrm{Bas} ( \tilde{\mathcal{P}} ,
 \tilde{\mathcal{Q}} )\subset 
 \ZZ(\tilde{\mathcal{P}}, \R\la \zeta , \eps , \delta \ra^{k})$.
 \end{proof}

\subsection{Definition of $\tilde{\mathcal{A}}$}
Since we have replaced $\mathrm{Bas} ( \mathcal{P} , \mathcal{Q} )$ by
$\mathrm{Bas} (\tilde{\mathcal{P}} , \tilde{\mathcal{Q}} )$, we need
to associate to any given finite set of points $\mathcal{A} \subset
\mathrm{Bas} (\mathcal{P} , \mathcal{Q})$, a corresponding finite
set of points $\tilde{\mathcal{A}} \subset \mathrm{Bas} (
\tilde{\mathcal{P}} , \tilde{\mathcal{Q}} )$ whose limits contain
$\mathcal{A}$, and which moreover ensures certain connectivity properties (see
Proposition \ref{prop:propertyofAtilde}).

In our constructions we will often require to choose a finite subset of a
given semi-algebraic set $S$ which meets every semi-algebraically connected
component of $S$. Since the relevant connectivity properties of the constructions 
will not depend on how these
points are chosen it is convenient to have the following notation. Later in
the descriptions of our algorithms we will specify precisely how these points
are chosen.

\begin{notation}
\label{not:sample} For any closed and bounded semi-algebraic subset $S
\subset \R^{k}$, we denote by $\mathrm{Samp} ( S )$ some finite subset of $S$
which meets every semi-algebraically connected component of $S$. 
\end{notation}

\begin{notation}
\label{not:closest} We associate to two closed and bounded semi-algebraic
sets $S_{1} ,S_{2} \subset \R^{k}$ a finite set of points $\mathrm{MinDist}
( S_{1} ,S_{2} ) \subset S_{1}$ defined as follows. Let $M$ be the set of
local minimizers of the polynomial function $F ( X,Y ) = \sum_{i=1}^{k} (
X_{i} -Y_{i} )^{2}$ on the set $S_{1} \times S_{2}$ and let $\pi_{1} ,
\pi_{2} : \R^{k} \times \R^{k} \longrightarrow \R^{k}$ be the projections on
the first and second components respectively. Let
\begin{eqnarray*}
\mathrm{MinDi} ( S_{1} ,S_{2} ) & = & \pi_{1} ( \mathrm{Samp} ( M ) ) \cup
\pi_{2} ( \mathrm{Samp} ( M ) )
\end{eqnarray*}
using Notation \ref{not:sample}.
\end{notation}

\begin{proposition}
\label{prop:closesta}Let $T \subset \R \langle \zeta \rangle^{k}$ be a
closed semi-algebraic set bounded over $\R$, and $x \in \lim_{\zeta} ( T )$.
Then, $\mathrm{MinDi} ( T, \{ x \} ) \neq \emptyset$, and $x \in \lim_{\zeta} 
( \mathrm{MinDi} ( T, \{ x \} ) )$.
\end{proposition}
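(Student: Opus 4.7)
My plan is to reduce everything to the observation that the set $M$ of local minimizers of $F(X,Y) = \sum_{i=1}^{k}(X_i - Y_i)^2$ on $T \times \{x\}$ is non-empty. Once this is known, both assertions of the proposition follow at once: since $S_2 = \{x\}$ is a singleton, $\pi_2(\mathrm{Samp}(M)) = \{x\}$ as soon as $\mathrm{Samp}(M) \neq \emptyset$, so $x \in \mathrm{MinDi}(T, \{x\})$, which is therefore non-empty; and because $x \in \R^k$ satisfies $\lim_{\zeta} x = x$, one has $x \in \lim_{\zeta}(\mathrm{MinDi}(T, \{x\}))$ immediately.

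The remaining step, that $M$ is non-empty, I would establish as follows. Since $x \in \lim_{\zeta}(T)$ the set $T$ is non-empty; by hypothesis it is closed and bounded over $\R$, hence closed and bounded as a subset of $\R\langle\zeta\rangle^k$. The semi-algebraic extreme value theorem, which is valid over any real closed field by Tarski--Seidenberg transfer, then guarantees that the continuous semi-algebraic function $t \mapsto F(t, x)$ attains its infimum on $T$ at some $t^{\star} \in T$. The pair $(t^{\star}, x)$ is a global, and hence local, minimizer of $F$ on $T \times \{x\}$, so $(t^{\star}, x) \in M$, which finishes the argument.

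I do not expect a serious obstacle here; the main content is noticing that the $\pi_2$ summand in the definition of $\mathrm{MinDi}$ already supplies the point $x$ itself, so the distance-theoretic content of the definition does essentially no work for this particular proposition. For completeness one could also argue via the $\pi_1$ summand: by first showing that $\mu := F(t^{\star}, x)$ is infinitesimal (any witness $\tilde{t} \in T$ with $\lim_{\zeta} \tilde{t} = x$ gives $F(\tilde{t}, x)$ infinitesimal, and $\mu$ is bounded above by this), and then exploiting the fact that the continuous semi-algebraic function $F$ is constant on each semi-algebraically connected component of the set of local minimizers (a standard local-argument along a semi-algebraic path), to conclude that any sample point from the component of $(t^{\star}, x)$ also has distance-squared $\mu$ to $x$, and hence $\zeta$-limit equal to $x$. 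This more geometric route would be useful if the definition of $\mathrm{MinDi}$ ever drops the $\pi_2$ term, but for the statement as given the $\pi_2$ observation suffices.
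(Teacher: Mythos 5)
Your primary argument rests entirely on the $\pi_{2}$ summand in the displayed formula of Notation \ref{not:closest}, which would indeed place $x$ itself inside $\mathrm{MinDi} ( T, \{ x \} )$ and make the proposition vacuous. But that reads too much into what is an inconsistency in the paper's own notation: the surrounding prose asserts $\mathrm{MinDist} ( S_{1} ,S_{2} ) \subset S_{1}$, Algorithm \ref{alg:closest-point} (which is supposed to compute exactly this set) projects only to the first $k$ coordinates, and --- decisively --- the downstream use in Proposition \ref{prop:propertyofAtilde} requires $\tilde{\mathcal{A}} \supset \mathrm{MinDi} ( \tilde{S} ,\mathcal{A} )$ to be a subset of $\tilde{S}$ while satisfying $\lim_{\zeta} ( \tilde{\mathcal{A}} ) \supset \mathcal{A}$. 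The substance of the proposition is the existence of points \emph{of $T$} in $\mathrm{MinDi} ( T, \{ x \} )$ whose $\lim_{\zeta}$ is $x$; the point $x$ contributed by $\pi_{2}$ generally does not lie in $T$ and is useless for that purpose. So the $\pi_{2}$ shortcut should be discarded, and the burden of proof falls entirely on what you present as a ``for completeness'' remark.

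That secondary argument is essentially correct and is a genuine alternative to the paper's proof. You take a global minimizer $t^{\star}$ of the distance to $x$ over all of $T$ (extreme value theorem for closed, bounded semi-algebraic sets over a real closed field), note that the minimum value $\mu$ is infinitesimal because some point of $T$ has limit $x$, and then invoke the constancy of $F$ on each semi-algebraically connected component of the local-minimizer set $M$ to conclude that the sample point of $\mathrm{Samp} ( M )$ lying in the component of $( t^{\star} ,x )$ is also at infinitesimal distance from $x$, hence has limit $x$ and contributes to $\pi_{1} ( \mathrm{Samp} ( M ) )$. The constancy lemma does require its own short proof (pull $F$ back along a semi-algebraic path inside a component of $M$; a continuous semi-algebraic function on a segment all of whose points are local minima is constant, by a clopen argument applied to the level set of its maximum), but it is correct. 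The paper argues differently: it writes $\lim_{\zeta} ( T )$ as the union of the $\lim_{\zeta} ( C_{i} )$ over the semi-algebraically connected components $C_{i}$ of $T$, picks a component with $x \in \lim_{\zeta} ( C_{i} )$, and uses the global minimizers of the distance on that (clopen) component, which form entire components of $M$ and are therefore met by $\mathrm{Samp} ( M )$. Your version minimizes once over all of $T$ at the price of the constancy lemma; the paper's localizes to a component first. Either route is sound.
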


\begin{proof}
Let $C$ be the semi-algebraically connected component of $\lim_{\zeta} ( T
)$ containing $x$. Then, there exists semi-algebraically connected
components $C_{1} , \ldots ,C_{m}$ of $T$, such that $C =\bigcup_{i=1}^{m}
\lim_{\zeta} ( C_{i} )$. Hence, there exists $i,1 \leq i \leq m$, such that
$x \in \lim_{\zeta} ( C_{i} )$. Since, $C_{i}$ is bounded over $\R$, the
subset $M_{i,x } \subset C_{i}$ of points which achieve the minimum distance
from $x$ to $C_{i}$ is non-empty. Every semi-algebraically connected
component of $M_{i,x }$ is a semi-algebraically connected component of the
set $M_{x} \subset T$ of points which achieve the minimum distance from $x$
to $T$. Hence, $M_{i,x }$ contains one point, $\tilde{x}$, which is included
in $\mathrm{MinDi} ( T, \{ x \} )$. It is now clear that $\mathrm{MinDi} ( T, \{
x \} ) \neq \emptyset$, and that $x= \lim_{\zeta} ( \tilde{x} ) \in
\lim_{\zeta} ( \mathrm{MinDi} ( T, \{ x \} ) \neq \emptyset )$.
\end{proof}

\begin{proposition}
\label{prop:closestb}Let $T_{1} ,T_{2} \subset \R \langle \zeta \rangle^{k}$
be closed semi-algebraic sets bounded over $\R$. Then, for every
$\tilde{C} , \tilde{D}$ semi-algebraically connected components of
$T_{1}$ and $T_{2}$ respectively, such that $\lim_{\zeta} ( \tilde{C} )
\cap \lim_{\zeta} ( \tilde{D} )$ is non-empty, $\lim_{\zeta} ( \tilde{C}
\cap \mathrm{MinDi} ( T_{1} ,T_{2} ) ) \cap \lim_{\zeta} ( \tilde{D}
\cap \mathrm{MinDi} ( T_{1} ,T_{2} ) )$ is non-empty, and meets every
semi-algebraically connected component of $\lim_{\zeta} ( \tilde{C} )
\cap \lim_{\zeta} ( \tilde{D} )$.
\end{proposition}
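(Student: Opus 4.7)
The plan is to prove the stronger statement that for every semi-algebraically connected component $K$ of $\lim_{\zeta}(\tilde{C}) \cap \lim_{\zeta}(\tilde{D})$, one can exhibit a sampled point $(\tilde{u},\tilde{v}) \in \mathrm{Samp}(M) \cap (\tilde{C} \times \tilde{D})$ such that $\lim_{\zeta} \tilde{u} = \lim_{\zeta} \tilde{v} \in K$. The common limit then automatically lies in $\lim_{\zeta}(\tilde{C} \cap \mathrm{MinDi}(T_{1},T_{2}))$, in $\lim_{\zeta}(\tilde{D} \cap \mathrm{MinDi}(T_{1},T_{2}))$, and in $K$, simultaneously yielding non-emptiness and the \emph{meets every component} assertion.

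To construct such a sample, I would fix $x \in K$, lift it to $\tilde{x}_{1} \in \tilde{C}$ and $\tilde{x}_{2} \in \tilde{D}$ with $\lim_{\zeta} \tilde{x}_{i} = x$ (so $F(\tilde{x}_{1},\tilde{x}_{2})$ is infinitesimal), pick an infinitesimal $\epsilon_{0} \in \R \la \zeta \ra$ with $\epsilon_{0} > F(\tilde{x}_{1},\tilde{x}_{2})$, and consider the semi-algebraically connected component $N$ of the sublevel set $\{(\tilde{y}_{1},\tilde{y}_{2}) \in \tilde{C} \times \tilde{D} \mid F(\tilde{y}_{1},\tilde{y}_{2}) \leq \epsilon_{0}\}$ containing $(\tilde{x}_{1},\tilde{x}_{2})$. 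Since $N$ is closed and bounded over $\R$, $F$ attains a minimum on $N$ at some $(\tilde{a},\tilde{b})$ with $F(\tilde{a},\tilde{b}) < \epsilon_{0}$ strictly; this strict inequality forces $(\tilde{a},\tilde{b})$ to admit an open neighborhood in $\tilde{C} \times \tilde{D}$ contained in $N$, so $(\tilde{a},\tilde{b})$ is in fact a local minimizer of $F$ on $\tilde{C} \times \tilde{D}$, and, since $\tilde{C},\tilde{D}$ are semi-algebraically clopen in $T_{1},T_{2}$, a local minimizer on $T_{1} \times T_{2}$. Thus $(\tilde{a},\tilde{b}) \in M$. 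Let $M'$ be the semi-algebraically connected component of $M$ through $(\tilde{a},\tilde{b})$; again by the clopen property, $M' \subset \tilde{C} \times \tilde{D}$, and I would take $(\tilde{u},\tilde{v}) \in \mathrm{Samp}(M) \cap M'$ as the desired sample.

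It remains to verify that $y := \lim_{\zeta} \tilde{u} = \lim_{\zeta} \tilde{v}$ exists and lies in $K$. Equality of the two limits reduces to $F(\tilde{u},\tilde{v})$ being infinitesimal, which I would derive from the fact that $F$ is constant on each semi-algebraically connected component of $M$: along any semi-algebraic path $\gamma \subset M'$, the local-minimizer property at every $\gamma(t)$ forces $F \circ \gamma$ to be locally, hence globally, constant on $[0,1]$. For the membership in $K$, I would invoke twice the standard fact (cf.~\cite{BPRbook2}) that $\lim_{\zeta}$ preserves semi-algebraic connectedness for closed sets bounded over $\R$: both $\lim_{\zeta} N$ and $\lim_{\zeta} M'$ are then semi-algebraically connected subsets of $\R^{2k}$, and because $F$ is infinitesimal on all of $N$ and on all of $M'$, both images lie on the diagonal of $\R^{k} \times \R^{k}$ and identify with semi-algebraically connected subsets of $\lim_{\zeta}(\tilde{C}) \cap \lim_{\zeta}(\tilde{D})$. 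The image of $N$ contains $x \in K$, so it is contained in $K$, and in particular $\lim_{\zeta} \tilde{a} \in K$; the image of $M'$ contains $\lim_{\zeta} \tilde{a}$ and $y$, so it is also contained in $K$, giving $y \in K$.

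The principal obstacle is that the abstract sample extracted from $\mathrm{Samp}(M) \cap M'$ can land anywhere in $M'$, so the infinitesimal-$F$ property of the carefully constructed minimizer $(\tilde{a},\tilde{b})$ does not transfer to $(\tilde{u},\tilde{v})$ by itself, and a priori nothing forces $\lim_{\zeta} \tilde{u}$ and $\lim_{\zeta} \tilde{v}$ to agree or to land in the prescribed component $K$. The constancy of $F$ on each semi-algebraically connected component of $M$, together with the preservation of semi-algebraic connectedness by $\lim_{\zeta}$ on sets bounded over $\R$, are the two facts that bridge this gap.
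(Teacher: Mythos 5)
Your proof is correct, and its skeleton matches the paper's: both arguments reduce the statement to producing a local minimizer of $F(X,Y)=\sum_{i}(X_{i}-Y_{i})^{2}$ on $\tilde{C}\times\tilde{D}$ whose $F$-value is infinitesimal and whose limit lies over the prescribed component $K$. The constructions differ in detail. The paper works with the strict sublevel set $U=\{F<F(\tilde{u}_{0},\tilde{v}_{0})\}$, isolates the union $U'$ of those components of $U$ whose limits land in $(K\times K)\cap\Delta$, and extracts the minimizer from $\overline{U'}$ (taking $(\tilde{u}_{0},\tilde{v}_{0})$ itself if $U'=\emptyset$); you instead minimize over the component $N$ of the closed sublevel set $\{F\le\epsilon_{0}\}$ containing the lifted point, which gives closedness (hence attainment of the minimum) for free and avoids the case split, at the cost of invoking local connectedness of semi-algebraic sets to see that the strict minimizer is interior to $N$. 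More importantly, you make explicit a step the paper compresses into ``we are done'': since $\mathrm{MinDi}(T_{1},T_{2})$ only records one sample point per semi-algebraically connected component of the set $M$ of local minimizers, one must still argue that the point of $\mathrm{Samp}(M)$ taken from the component $M'$ through the constructed minimizer also has infinitesimal $F$-value and limit in $K$. Your two bridging facts --- $F$ is constant on each component of $M$ (a continuous semi-algebraic function on $[0,1]$ with a local minimum at every point is constant), and $\lim_{\zeta}$ sends semi-algebraically connected sets bounded over $\R$ to semi-algebraically connected sets --- close this gap cleanly; the only formal caveat is that $M$ need not be closed, so the connectedness of $\lim_{\zeta}(M')$ should be obtained by passing to $\overline{M'}$, which is harmless since $\lim_{\zeta}(M')=\lim_{\zeta}(\overline{M'})$.
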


\begin{proof}
Let $M$ denote the semi-algebraic subset of $\R \langle \zeta \rangle^{k}
\times \R \langle \zeta \rangle^{k}$ consisting of the local minimizers of
the polynomial function $F ( X,Y ) = \sum_{i=1}^{k} ( X_{i} -Y_{i} )^{2}$ on
$T_{1} \times T_{2}$. Also, note that the function $F$ is proportional to
the square of the distance to the diagonal $\Delta \subset \R \langle \zeta
\rangle^{k} \times \R \langle \zeta \rangle^{k}$.

Let $B$ be a semi-algebraically connected component of $\lim_{\zeta} (
\tilde{C} ) \cap \lim_{\zeta} ( \tilde{D} )$. Notice that $( B \times B )
\cap \Delta$ is a semi-algebraically connected component of $( \lim_{\zeta}
( \tilde{C} ) \times \lim_{\zeta} ( \tilde{D} ) ) \cap \Delta$. Let $(
\tilde{u}_{0} , \tilde{v}_{0} ) \in \tilde{C} \times \tilde{D}$ such
that $\lim_{\zeta} ( \tilde{u}_{0} ) = \lim_{\zeta} ( \tilde{v}_{0}
) \in B$. Notice that $\lim_{\zeta} ( F ( \tilde{u}_{0} , \tilde{v}_{0} ) )
=F ( \lim_{\zeta} ( \tilde{u}_{0} ) , \lim_{\zeta} ( \tilde{v}_{0} ) ) =0$,
and hence $F ( \tilde{u}_{0} , \tilde{v}_{0} )$ is infinitesimally small.
Let
\begin{eqnarray*}
U & = & \{ ( \tilde{u} , \tilde{v} ) \in \tilde{C} \times \tilde{D} \mid
F ( \tilde{u} , \tilde{v} ) <F ( \tilde{u}_{0} , \tilde{v}_{0} )
\} .
\end{eqnarray*}
Since the image under $\lim_{\zeta}$ of a bounded, semi-algebraically
connected set is semi-algebraically connected (see Proposition 12.43 in
{\cite{BPRbook2}}), for any semi-algebraically connected component $V$
of $U$, $\lim_{\zeta} ( V )$ is either contained in $( B \times B ) \cap
\Delta$ or disjoint from $( B \times B ) \cap \Delta$. Denote by $U'$ the
union of semi-algebraically connected components $V$ of $U$ such that
$\lim_{\zeta} ( V ) \subset ( B \times B ) \cap \Delta$, and denote by
$\overline{U'} \subset \tilde{C} \times \tilde{D}$ the closure of $U'$. If
$U'$ is empty then $( \tilde{u}_{0} , \tilde{v}_{0} )$ is a local
minimizer of $F$ on $\tilde{C} \times \tilde{D}$ and we are done. Otherwise,
the minimum of $F$ on $\overline{U'}$ is strictly smaller than $F (
\tilde{u}_{0} , \tilde{v}_{0} )$, and it must be realized at a point of $U'$,
since $F ( \tilde{u} , \tilde{v} ) =F ( \tilde{u}_{0} , \tilde{v}_{0} )$ for
all $( \tilde{u} , \tilde{v} ) \in \overline{U'} \setminus U'$, and we are
done.
\end{proof}

We now let $\mathcal{A} \subset S$ be a fixed finite set of points contained
in $S$.

Let (using Notation \ref{not:closest})
\begin{eqnarray*}
\tilde{\mathcal{A}} & = & \mathrm{MinDi} ( \tilde{S} ,\mathcal{A} ) \cup
\mathrm{MinDi} ( \tilde{S} , \tilde{S} ) .
\end{eqnarray*}
\begin{proposition}
\label{prop:propertyofAtilde}The finite set
$\tilde{\mathcal{A}} \subset \tilde{S}$ has the
following properties.
\begin{enumerate}
\item \label{item:propertyofAtilde1}
$\lim_{\zeta} (\tilde{\mathcal{A}}) \supset
\mathcal{A}$;

\item \label{item:propertyofAtilde2}
for every pair of semi-algebraically connected components
$\tilde{C} , \tilde{D}$ of $\tilde{S}$ such that $\lim_{\zeta} (
\tilde{C} ) \cap \lim_{\zeta} ( \tilde{D} )$ is non-empty,
$\lim_{\zeta} ( \tilde{C} \cap \tilde{\mathcal{A}} ) \cap
\lim_{\zeta} ( \tilde{D} \cap \tilde{\mathcal{A}} )$ is non-empty,
and meets every semi-algebraically connected component of $\lim_{\zeta} (
\tilde{C} ) \cap \lim_{\zeta} ( \tilde{D} )$;

\item\label{item:propertyofAtilde3}
 $\tilde{\mathcal{A}}$ meets every semi-algebraically connected
component of $\tilde{S}$.
\end{enumerate}
\end{proposition}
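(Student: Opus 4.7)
The plan is to establish the three parts in order, each by a direct appeal to one of the two preceding propositions on $\mathrm{MinDi}$, the only real content being a short bookkeeping argument for part \ref{item:propertyofAtilde1}.

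For part \ref{item:propertyofAtilde1}, fix $x \in \mathcal{A}$. By Proposition \ref{prop:limtilde}, $\mathcal{A} \subset S = \lim_{\zeta}(\tilde{S})$, so in particular $x \in \lim_{\zeta}(\tilde{S})$, and Proposition \ref{prop:closesta} applied to $T = \tilde{S}$ produces a point $\tilde{x} \in \mathrm{MinDi}(\tilde{S}, \{x\})$ with $\lim_{\zeta}(\tilde{x}) = x$. I must then transfer this conclusion to $\mathrm{MinDi}(\tilde{S}, \mathcal{A})$. Since $\mathcal{A}$ is finite, the semi-algebraically connected components of $\tilde{S} \times \mathcal{A}$ split as products $\tilde{C} \times \{y\}$, where $\tilde{C}$ ranges over the connected components of $\tilde{S}$ and $y$ over $\mathcal{A}$, so the local-minimizer set $M$ of $F(X,Y)=\sum_i(X_i-Y_i)^2$ on $\tilde{S}\times\mathcal{A}$ is the disjoint union of the sets $M_y \times \{y\}$, where $M_y$ is the set of local minimizers of $F(\cdot, y)$ on $\tilde{S}$. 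The sampling in the definition of $\mathrm{MinDi}$ therefore picks at least one representative in each connected component of each $M_y \times \{y\}$, and the first-coordinate projection $\pi_1$ yields, for $y = x$, a point $\tilde{x}' \in \mathrm{MinDi}(\tilde{S}, \mathcal{A})$ lying in the same connected component of $M_x$ as $\tilde{x}$. As in the proof of Proposition \ref{prop:closesta}, every point of this connected component has limit $x$ (because the minimum of $F(\cdot, x)$ on the relevant connected component of $\tilde{S}$ is infinitesimal), so $x = \lim_{\zeta}(\tilde{x}') \in \lim_{\zeta}(\tilde{\mathcal{A}})$.

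Part \ref{item:propertyofAtilde2} is an immediate application of Proposition \ref{prop:closestb} with $T_1 = T_2 = \tilde{S}$, which gives the stated conclusion verbatim with $\mathrm{MinDi}(\tilde{S}, \tilde{S})$ in place of $\tilde{\mathcal{A}}$. Since $\mathrm{MinDi}(\tilde{S}, \tilde{S}) \subset \tilde{\mathcal{A}}$, the intersections $\lim_{\zeta}(\tilde{C} \cap \mathrm{MinDi}(\tilde{S}, \tilde{S})) \cap \lim_{\zeta}(\tilde{D} \cap \mathrm{MinDi}(\tilde{S}, \tilde{S}))$ only enlarge upon replacing $\mathrm{MinDi}(\tilde{S}, \tilde{S})$ by $\tilde{\mathcal{A}}$, so both non-emptiness and the property of meeting every connected component of $\lim_{\zeta}(\tilde{C}) \cap \lim_{\zeta}(\tilde{D})$ are preserved.

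Part \ref{item:propertyofAtilde3} is the special case $\tilde{D} = \tilde{C}$ of part \ref{item:propertyofAtilde2}: since $\tilde{C}$ is a non-empty bounded closed semi-algebraic set, $\lim_{\zeta}(\tilde{C})$ is non-empty, hence $\lim_{\zeta}(\tilde{C}) \cap \lim_{\zeta}(\tilde{C}) \neq \emptyset$, and part \ref{item:propertyofAtilde2} yields $\lim_{\zeta}(\tilde{C} \cap \tilde{\mathcal{A}}) \neq \emptyset$, which forces $\tilde{C} \cap \tilde{\mathcal{A}} \neq \emptyset$. The main obstacle in this plan is the product-decomposition bookkeeping in part \ref{item:propertyofAtilde1} --- tracking how the $\mathrm{Samp}$ operation on $\tilde{S} \times \mathcal{A}$ descends to sampling on each fiber $\tilde{S} \times \{x\}$ --- but this is conceptually straightforward once the product structure of the connected components has been observed.
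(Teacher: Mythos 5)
Your proposal is correct and follows essentially the same route as the paper: part \ref{item:propertyofAtilde1} via Proposition \ref{prop:closesta} together with the observation that $\mathrm{MinDi}(\tilde S,\mathcal{A})$ decomposes fiberwise over the finite set $\mathcal{A}$ (the paper states this decomposition without the component-by-component bookkeeping you supply, which is a harmless elaboration), part \ref{item:propertyofAtilde2} directly from Proposition \ref{prop:closestb} with $T_1=T_2=\tilde S$, and part \ref{item:propertyofAtilde3} as the special case $\tilde C=\tilde D$.
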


\begin{proof}
Part (\ref{item:propertyofAtilde1}) 
follows from Proposition \ref{prop:closesta} after observing
that $\mathrm{MinDist} ( \tilde{S} ,\tilde{\mathcal{A}} ) = \bigcup_{x \in
\mathcal{A}} \mathrm{MinDi} ( \tilde{S} , \{ x \} )$ (see Notation
\ref{not:closest}), and the fact that $\tilde{\mathcal{A}}$ contains
$\mathrm{MinDist} ( \tilde{S} ,\mathcal{A} )$.

 Part (\ref{item:propertyofAtilde2}) follows directly from Proposition \ref{prop:closestb} with
$T_{1} = T_{2} = \tilde{S}$ and the fact that $\tilde{A}$ contains
$\mathrm{MinDi} ( \tilde{S} , \tilde{S} )$.

Part (\ref{item:propertyofAtilde3}) is a special case of Part (\ref{item:propertyofAtilde2}), with $T_{1} =T_{2} = \tilde{S}$, and $\tilde{C} = \tilde{D}$.
\end{proof}

\begin{corollary}
\label{cor:propertyofAtilde}Let $x,x' \in \tilde{S}$ such that $\lim_{\zeta}
( x' ) \in \Cc ( \lim_{\zeta} ( x ) ,S )$. Then, there exist elements
$\tilde{x}_{0} =x, \ldots , \tilde{x}_{2n+1} =x'$ of $\tilde{S}$ such that
\begin{enumerate}
\item for all $i=1, \ldots ,n,$ $\lim_{\zeta_{t}} ( \tilde{x}_{2i-1} ) =
\lim_{\zeta} ( \tilde{x}_{2i} )$,

\item for all $i=0, \ldots ,n$, $\tilde{x}_{2i+1} \in \Cc (
\tilde{x}_{2i} , \tilde{S} )$,

\item for all $i=1, \ldots ,2n$, $\tilde{x}_{i} \in
\tilde{\mathcal{A}}$.
\end{enumerate}
\end{corollary}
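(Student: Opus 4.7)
The plan is to walk from $x$ to $x'$ by hopping between the semi-algebraically connected components of $\tilde{S}$ whose $\lim_\zeta$-images cover the semi-algebraically connected component of $S$ common to $\lim_\zeta(x)$ and $\lim_\zeta(x')$. Set $C = \Cc(\lim_\zeta(x), S)$, so by hypothesis $\lim_\zeta(x') \in C$ as well. Let $\tilde{C}^{\star} = \Cc(x, \tilde{S})$ and $\tilde{C}^{\star\star} = \Cc(x', \tilde{S})$. Since the image under $\lim_\zeta$ of a bounded semi-algebraically connected set is semi-algebraically connected (Proposition 12.43 of \cite{BPRbook2}) and the image of a closed bounded set is closed, both $\lim_\zeta(\tilde{C}^{\star})$ and $\lim_\zeta(\tilde{C}^{\star\star})$ are closed connected subsets of $S$ meeting $C$, hence contained in $C$.

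Next I form the finite graph $\Gamma$ whose vertices are those semi-algebraically connected components $\tilde{C}$ of $\tilde{S}$ with $\lim_\zeta(\tilde{C}) \subseteq C$, and whose edges join $\tilde{C}$ and $\tilde{D}$ precisely when $\lim_\zeta(\tilde{C}) \cap \lim_\zeta(\tilde{D}) \neq \emptyset$. By Proposition \ref{prop:limtilde}, $\lim_\zeta(\tilde{S}) = S$, so
\[
 C \;=\; \bigcup_{\tilde{C} \in V(\Gamma)} \lim_\zeta(\tilde{C}),
\]
expressing the semi-algebraically connected set $C$ as a finite union of closed connected pieces. A standard nerve-type argument (if the vertex set split as $A \sqcup B$ with no edges between, the corresponding unions of limits would give a separation of $C$) then forces $\Gamma$ to be connected, so I can fix a path $\tilde{C}_0 = \tilde{C}^{\star}, \tilde{C}_1, \ldots, \tilde{C}_n = \tilde{C}^{\star\star}$ in $\Gamma$.

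Finally I build the chain: put $\tilde{x}_0 = x$ and $\tilde{x}_{2n+1} = x'$, and for each $i = 1, \ldots, n$ the adjacency $\lim_\zeta(\tilde{C}_{i-1}) \cap \lim_\zeta(\tilde{C}_i) \neq \emptyset$ lets me invoke Proposition \ref{prop:propertyofAtilde}(\ref{item:propertyofAtilde2}) to choose $\tilde{x}_{2i-1} \in \tilde{C}_{i-1} \cap \tilde{\mathcal{A}}$ and $\tilde{x}_{2i} \in \tilde{C}_i \cap \tilde{\mathcal{A}}$ with $\lim_\zeta(\tilde{x}_{2i-1}) = \lim_\zeta(\tilde{x}_{2i})$. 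Conditions (1) and (3) of the corollary follow directly from this construction, and condition (2) holds because $\tilde{x}_{2i}$ and $\tilde{x}_{2i+1}$ both lie in $\tilde{C}_i$ for every $i = 0, \ldots, n$ (including the endpoint cases $\tilde{x}_0 = x \in \tilde{C}_0$ and $\tilde{x}_{2n+1} = x' \in \tilde{C}_n$); the degenerate case $n = 0$ is handled trivially by the sequence $(x, x')$.

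The only substantive step is the connectedness of $\Gamma$, which is a purely topological nerve argument using that $\lim_\zeta$ carries closed bounded semi-algebraically connected sets to closed connected sets; all the rest is bookkeeping built directly on top of Proposition \ref{prop:propertyofAtilde}.
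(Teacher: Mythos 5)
Your proof is correct and follows the same route as the paper, whose proof of this corollary is simply the one-line observation that it follows from Part (\ref{item:propertyofAtilde2}) of Proposition \ref{prop:propertyofAtilde}; your graph on the components $\tilde{C}$ of $\tilde{S}$ with $\lim_{\zeta}(\tilde{C})\subset \Cc(\lim_{\zeta}(x),S)$, together with the separation argument showing that graph is connected, is exactly the standard elaboration of that one line. No gaps.
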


\begin{proof} Follows clearly from Part (\ref{item:propertyofAtilde2}) of Proposition \ref{prop:propertyofAtilde}.
\end{proof}

\subsection{Definition of $\tilde{S}^{0}$}

We want to consider $G$-critical points parametrized by $\R^{\ell}$.

\begin{notation}
\label{not:lagrange-equations} Let $G \in \R [ X_{1} , \ldots ,X_{k} ]$ and
$\mathcal{P} =\{ P_{1} , \ldots ,P_{m} \} \subset \R [ X_{1} , \ldots
,X_{k} ]$ be a finite family of polynomials.

Let $0 \leq \ell \leq k$ and consider the system of equations
$\mathrm{CritEq}_{\ell} ( \mathcal{P} ,G )$
\begin{eqnarray*}
P_{j} & = & 0,j=1, \ldots ,m,\\
\sum^{m}_{j=1} \lambda_{j} \frac{\partial P_{j}}{\partial X_{i}} -
\lambda_{0} \frac{\partial G}{\partial X_{i}} & = & 0,i= \ell +1, \ldots
,k,\\
\sum_{j=0}^{m} \lambda_{j}^{2} -1 & = & 0.
\end{eqnarray*}
The set $\mathrm{Crit}_{\ell} ( \mathcal{P} ,G ) \subset \R^{k}$ is the
projection to  $\R^{k}$ of 
\[
\ZZ \left( \mathrm{CritEq}_{\ell} (
\mathcal{P} ,G ) , \R^{k} \times \R^{\max ( m,k ) +1} \right).
\]

Note that for every $w \in \R^{\ell}$,
\[ \mathrm{Crit}_{\ell} ( \mathcal{P} ,G )_{w} = \mathrm{Crit} ( \mathcal{P} (
 w, \cdot ) ,G ( w, \cdot ) ) . \]
\end{notation}

We now fix $\ell ,1 \leq \ell <p$.

\begin{notation}
\label{notationStilde0} Let $\tilde{\mathcal{Q}}' \subset
\tilde{\mathcal{Q}}$. Define
\[ \tilde{S}^{0} ( \tilde{\mathcal{Q}}' ) = \mathrm{Cr}_{\ell} (
 \tilde{\mathcal{P}} \cup \tilde{\mathcal{Q}}' ,G ) \cap \tilde{S} ,
\]
\[ \tilde{S}^{0} = \bigcup_{\tilde{\mathcal{Q}}' \subset
 \tilde{\mathcal{Q}}} \tilde{S}^{0} ( \tilde{\mathcal{Q}}' ) . \]
\end{notation}

\begin{proposition}
\label{prop:properties-of-S0}For each $w\in \R \la \zeta , \eps ,
\delta \ra^{\ell}$:
\begin{enumerate}
\item\label{item:properties-of-S01}
$\tilde{S}^{0}_{w}$ is a finite set;

\item \label{item:properties-of-S02} 
$\tilde{S}^{0}_{w}$ meets every semi-algebraically connected
component of $\tilde{S}_{w}$, and contains for every semi-algebraically
connected component $C$ of $\tilde{S}_{w}$ a minimizer of $G$ over $C$.
\end{enumerate}
\end{proposition}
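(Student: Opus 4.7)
The plan is to derive both parts as consequences of Proposition \ref{prop:general-position} combined with a classical Lagrange multiplier argument applied fiberwise.

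For Part (\ref{item:properties-of-S01}), I would unfold the definitions and use the last observation of Notation \ref{not:lagrange-equations} to write
\[
\tilde{S}^0_w \;=\; \bigcup_{\tilde{\mathcal{Q}}' \subset \tilde{\mathcal{Q}}} \bigl(\mathrm{Crit}(\tilde{\mathcal{P}}(w,\cdot) \cup \tilde{\mathcal{Q}}'(w,\cdot),\, G(w,\cdot)) \cap \tilde{S}_w\bigr).
\]
Since $\ell < p$, Proposition \ref{prop:general-position} says that the pair $(\tilde{\mathcal{P}}(w,\cdot), \tilde{\mathcal{Q}}(w,\cdot))$ is in general position with respect to $G(w,\cdot)$, so Definition \ref{def:general-position} forces every set in the union above to be empty or finite. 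Since the union is indexed by the $2^q$ subsets of $\tilde{\mathcal{Q}}$, $\tilde{S}^0_w$ is finite.

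For Part (\ref{item:properties-of-S02}), fix a semi-algebraically connected component $C$ of $\tilde{S}_w$. The basic semi-algebraic set $\tilde{S}$ is closed, and it is bounded over $\R$ thanks to the dominating high-degree terms $X_j^{2d+2}$ entering the $\tilde{P}_i$ via $\eps H_i$ and the $\tilde{Q}_j$ via $\delta H_{k-p+j}$ in Notation \ref{not:notationtilde}. Consequently $\tilde{S}_w$ and $C$ are closed and bounded, so the semi-algebraic extreme value theorem (obtained by Tarski--Seidenberg transfer from the real case) yields a point $x^{\ast} \in C$ minimizing $G(w,\cdot)$ over $C$. Let $\tilde{\mathcal{Q}}' = \{\tilde{Q} \in \tilde{\mathcal{Q}} \mid \tilde{Q}(w,x^{\ast}) = 0\}$ collect the inequality constraints active at $x^{\ast}$. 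Near $x^{\ast}$ the fiber $\tilde{S}_w$ is cut out by the vanishing of $\tilde{\mathcal{P}} \cup \tilde{\mathcal{Q}}'$, the inequalities outside $\tilde{\mathcal{Q}}'$ being strict and thus locally inactive, so $x^{\ast}$ is a local minimizer of $G(w,\cdot)$ on this equality-constrained set. The Lagrange multiplier criterion then produces a non-trivial tuple $(\lambda_0, \{\lambda_P\})$ satisfying
\[
\lambda_0 \,\partial G(w,\cdot)/\partial X_i \;=\; \sum_{P \in \tilde{\mathcal{P}} \cup \tilde{\mathcal{Q}}'} \lambda_P \,\partial P(w,\cdot)/\partial X_i \quad (i = \ell+1,\dots,k)
\]
at $x^{\ast}$, and after renormalizing so that $\sum \lambda_j^2 = 1$ this is precisely a solution of $\mathrm{CritEq}_\ell(\tilde{\mathcal{P}} \cup \tilde{\mathcal{Q}}', G)$ lying over $w$. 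Therefore $x^{\ast} \in \mathrm{Crit}_\ell(\tilde{\mathcal{P}} \cup \tilde{\mathcal{Q}}', G)_w \cap \tilde{S}_w = \tilde{S}^0(\tilde{\mathcal{Q}}')_w \subset \tilde{S}^0_w$, simultaneously showing that $\tilde{S}^0_w$ meets $C$ and contains a minimizer of $G(w,\cdot)$ over $C$.

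The step I expect to require the most care is the Lagrange multiplier argument in the degenerate situation where the gradients of the active constraints are already linearly dependent at $x^{\ast}$, so that $x^{\ast}$ is a singular point of the constraint variety; this case is absorbed automatically by the critical point equations, which permit $\lambda_0 = 0$ together with a non-trivial dependence relation among the constraint gradients. A more mechanical subsidiary point is the verification that $\tilde{S}$ is bounded over $\R$, which is structural and should be read off the explicit form of the deformation in Notation \ref{not:notationtilde}.
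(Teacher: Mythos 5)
Your proof is correct and takes essentially the same route as the paper's: Part (\ref{item:properties-of-S01}) is read off from Proposition \ref{prop:general-position}, and Part (\ref{item:properties-of-S02}) identifies the minimizer of $G$ over a component $C$ (which exists because $\tilde{S}_{w}$ is closed and bounded) as a local minimizer of $G$ over the zero set of the active constraints $\tilde{\mathcal{P}} \cup \tilde{\mathcal{Q}}'$, hence a point of $\mathrm{Crit}_{\ell} ( \tilde{\mathcal{P}} \cup \tilde{\mathcal{Q}}' ,G ) \cap \tilde{S}$. The only difference is that you make explicit the Lagrange-multiplier step (including the degenerate case $\lambda_{0} =0$) that the paper leaves implicit.
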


\begin{proof}
 Part (\ref{item:properties-of-S01}) is immediate from Proposition
\ref{prop:general-position}.

Part (\ref{item:properties-of-S02}) follows from the fact that for each semi-algebraically
connected component $C$ of $\tilde{S}_{w}$, there exists some
$\tilde{\mathcal{Q}}'$ such that the minimizer of $G$ over $C$ is a local
minimizer $x \in \left( \ZZ ( \tilde{\mathcal{P}} \cup
\tilde{\mathcal{Q}}' , \R \la \zeta , \eps , \delta
\ra ) \right)_{w}$ of $G$ over $\left( \ZZ (
\tilde{\mathcal{P}} \cup \tilde{\mathcal{Q}}' , \R \la \zeta ,
\eps , \delta \ra ) \right)_{w}$, and then $x$ clearly
belongs to $\mathrm{Cr}_{\ell} (\tilde{\mathcal{P}} \cup \tilde{\mathcal{Q}}')
,G ) \cap \tilde{S}$. Since, $\tilde{S}_{w}$ is closed and bounded, every
semi-algebraically connected component $C$ of $\tilde{S}_{w}$ must contain a
minimizer of $G$ over $C$, and hence $\tilde{S}^{0}_{w}$ meets every
semi-algebraically connected component of $\tilde{S}_{ w}$.
\end{proof}

\subsection{Definition of $\mathcal{D}^{0}$, $\mathcal{M}^{0}$}

\begin{notation}
\label{not:D0}Let
\begin{eqnarray*}
\tilde{S} & = & \mathrm{Bas} ( \tilde{\mathcal{P}} ,
\tilde{\mathcal{Q}} ) ,\\
F & =& \prod_{\tilde{\mathcal{Q}}' \subset \tilde{\mathcal{Q}}} F (
\tilde{\mathcal{Q}}' ) ,
\end{eqnarray*}
where
\begin{eqnarray*}
F ( \tilde{\mathcal{Q}}' ) & = & \sum_{P \in \mathrm{CrEq}_{\ell} (
\tilde{\mathcal{P}} \cup \tilde{\mathcal{Q}}' ,G )} P^{2} .
\end{eqnarray*}
We denote (see Definition \ref{def:Bpseudocritical})
\begin{eqnarray}
\label{eqn:defD0}
\mathcal{D}^{0} & = & \mathcal{D} ( \{ F \} \cup \tilde{\mathcal{Q}}
,\mathcal{H}_{\mathrm{card} ( \mathcal{Q} ) +1,2k-p+ \mathrm{card} (
\mathcal{Q} ) +1} ,G )
\end{eqnarray}
considering the polynomials in $\{ F \} \cup \tilde{\mathcal{Q}}$ as
elements of 
\[\R \left[ \zeta , \eps , \delta \right] [ X_{1} , \ldots ,X_{k}
, \lambda_{0} , \ldots , \lambda_{k-p+ \mathrm{card} ( \mathcal{Q} )} ].
\]
\end{notation}

Let $\mathcal{M}^{ 0} = \mathrm{Samp} \left( \bigcup_{c \in \mathcal{D}^{0}}
\tilde{S}^0_{G=c} 
\right)$ be a finite set of points meeting every
semi-algebraically connected component of $\bigcup_{c \in \mathcal{D}^{0}}
\tilde{S}^0_{G=c}$.

\begin{lemma}
\label{lem:properties-of-M0-and-D0} The sets $\mathcal{D}^{0}$ and
$\mathcal{M}^{0}$ have the following properties:
\begin{enumerate}
\item \label{item:properties-of-M0-and-D01}
 for every interval $[ a,b ] \subset \R \la \zeta , \eps ,
\delta \ra$ and $c\in [ a,b ]$, with $\{ c \} \supset
\mathcal{D}^{0} \cap [ a,b ]$, if $D$ is a semi-algebraically connected
component of $( \tilde{S}^{0} )_{a\leq G \leq b}$, then $D_{G=c}$ is a
semi-algebraically connected component of $( \tilde{S}^{0} )_{G=c}$;

\item \label{item:properties-of-M0-and-D02} $\mathcal{M}^{0}$ meets every semi-algebraically connected component
of $( \tilde{S}^{0} )_{G=a} $ for all $a\in \mathcal{D}^{0}$.
\end{enumerate}
\end{lemma}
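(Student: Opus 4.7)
The lemma has two parts, which I handle separately.

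Part (\ref{item:properties-of-M0-and-D02}) is essentially immediate from the construction. By Notation \ref{not:sample}, $\mathcal{M}^0$ meets every semi-algebraically connected component of the union $\bigcup_{c\in\mathcal{D}^0}\tilde{S}^0_{G=c}$; since the level sets $\tilde{S}^0_{G=c}$ for distinct values $c\in\mathcal{D}^0$ are pairwise disjoint (separated by the value of $G$), the connected components of the union are exactly the connected components of each individual slice. Hence $\mathcal{M}^0$ meets every semi-algebraically connected component of $\tilde{S}^0_{G=a}$ for each $a\in\mathcal{D}^0$.

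For Part (\ref{item:properties-of-M0-and-D01}), my plan is to apply Proposition \ref{prop:properties-of-special}(\ref{item:properties-of-special2}) to the basic semi-algebraic set $S' = \mathrm{Bas}(\{F\},\tilde{\mathcal{Q}})$, viewed in the extended space $\R\langle\zeta,\eps,\delta\rangle^{2k-p+q+1}$ with coordinates $(X_1,\ldots,X_k,\lambda_0,\ldots,\lambda_{k-p+q})$ and $G$ regarded as a polynomial in these variables (depending only on the $X_i$). The key hypotheses to verify are: the matrix $\mathcal{H}_{q+1,2k-p+q+1}$ has good rank property (Notation \ref{not:cauchy}); the set $S'$ is bounded, since the vanishing of $F$ forces the Lagrange normalization $\sum_j\lambda_j^2=1$ built into the critical equations (any unused $\lambda$-coordinates can be confined to a ball without changing the projection); the set of $(\mathcal{H},G)$-pseudo-critical values of $\{F\}\cup\tilde{\mathcal{Q}}$ is precisely $\mathcal{D}^0$ by definition (\ref{eqn:defD0}); and the projection $\pi$ forgetting the $\lambda$'s satisfies $\pi(S')=\tilde{S}^0$, while $G$ factors through $\pi$. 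Proposition \ref{prop:properties-of-special}(\ref{item:properties-of-special2}) then yields the slab-to-level matching of connected components at the level of $S'$.

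The main obstacle will be to transfer this property from $S'$ to its projection $\tilde{S}^0$. Given a connected component $D$ of $\tilde{S}^0_{a\leq G\leq b}$, I would lift it to the finite family $D'_1,\ldots,D'_m$ of connected components of $S'_{a\leq G\leq b}$ whose projections together form $D$. By the property already established for $S'$, each slice $(D'_i)_{G=c}$ is a connected component of $S'_{G=c}$, so $D_{G=c}=\bigcup_i\pi((D'_i)_{G=c})$ is a union of connected pieces of $\tilde{S}^0_{G=c}$. The non-trivial step is showing that this union is in fact a single connected component of $\tilde{S}^0_{G=c}$, equivalently, that whenever two of the projections $\pi(D'_i),\pi(D'_j)$ overlap somewhere in the slab $a\leq G\leq b$, their slices at $G=c$ also overlap. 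I would handle this by invoking Hardt's semi-algebraic triviality theorem applied to $G\colon S'_{a<G<b}\to(a,b)$: the resulting trivialization of $S'$ over $(a,b)$, pushed forward by $\pi$ and extended to closure at $c$, shows that the incidence pattern among the $\pi(D'_i)$ is constant across $(a,b)$ and persists at $c$, yielding the required connectivity of $D_{G=c}$.
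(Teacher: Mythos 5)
Your handling of Part (\ref{item:properties-of-M0-and-D02}) matches the paper, which dismisses it as clear from the definition of $\mathcal{M}^{0}$; your observation that the slices $\tilde{S}^{0}_{G=c}$, $c\in\mathcal{D}^{0}$, are finitely many, pairwise disjoint and closed, so that components of the union are exactly components of the individual slices, is the right justification. For Part (\ref{item:properties-of-M0-and-D01}), the first half of your argument is also exactly the paper's: apply Proposition \ref{prop:properties-of-special}(\ref{item:properties-of-special2}) to $\mathrm{Bas}(\{F\},\tilde{\mathcal{Q}})$ in the space carrying the Lagrangian coordinates, using the good rank property of $\mathcal{H}_{\mathrm{card}(\mathcal{Q})+1,2k-p+\mathrm{card}(\mathcal{Q})+1}$ and the fact that $\mathcal{D}^{0}$ is by definition the corresponding set of pseudo-critical values.

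The gap is in the transfer from $S'=\mathrm{Bas}(\{F\},\tilde{\mathcal{Q}})$ to its projection $\tilde{S}^{0}$. Your reduction to the claim that slab-overlap of $\pi(D'_i)$ and $\pi(D'_j)$ forces overlap of their slices at $G=c$ is a correct reduction, but Hardt triviality cannot deliver that claim. First, Hardt's theorem only trivializes $G$ over the open intervals of \emph{some} finite partition of $(a,b)$, and the break points of that partition are not controlled by $\mathcal{D}^{0}$; the incidence pattern among the $\pi(D'_i)$ could change at such an uncontrolled break point, which is precisely the failure mode the pseudo-critical values are designed to exclude --- and they exclude it only for $S'$ itself, not for the images $\pi(D'_i)$. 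Second, even over an interval where $G$ restricted to $S'$ is trivial, the trivializing homeomorphism has no reason to commute with the projection $\pi$ forgetting the $\lambda$'s, so constancy of the topology of the fibers of $G$ on $S'$ does not imply constancy of how the sets $\pi(D'_i)$ intersect one another.

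What actually makes the transfer work, and what the paper uses, is the special structure of the fibers of $\pi$ on $\mathrm{Bas}(\{F\},\tilde{\mathcal{Q}})$: over a point $x$ the fiber is a union of intersections of linear subspaces with the unit sphere in the $\lambda$-coordinates, and $G$ is independent of $\lambda$. From this the paper concludes that the semi-algebraically connected components of $(\tilde{S}^{0})_{a\leq G\leq b}$ and $(\tilde{S}^{0})_{G=c}$ are in correspondence with those of $(\mathrm{Bas}(\{F\},\tilde{\mathcal{Q}}))_{a\leq G\leq b}$ and $(\mathrm{Bas}(\{F\},\tilde{\mathcal{Q}}))_{G=c}$ respectively, after which Part (\ref{item:properties-of-M0-and-D01}) follows at once from the statement upstairs. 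If you prefer to keep your lifting framework, this fiber structure (together with the involution $\lambda\mapsto-\lambda$ of the critical equations, which commutes with both $\pi$ and $G$ and accounts for the disconnected fibers) is the ingredient you need in place of Hardt to justify the overlap claim.
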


\begin{proof}
Part (\ref{item:properties-of-M0-and-D01}): notice that $\mathcal{D}^{0}$ is the finite set of $( B,G
)$-pseudo-critical values of the family $\{ F \} \cup
\tilde{\mathcal{Q}}$, for the matrix $B=\mathcal{H}_{\mathrm{card} (
\mathcal{Q} ) +1,2k-p+ \mathrm{card} ( \mathcal{Q} ) +1}$ which has the good rank
property. Hence, using Part (\ref{item:properties-of-special2}) of Proposition
\ref{prop:properties-of-special} we have that for every interval $[ a,b ]
\subset \R \la \zeta , \eps , \delta \ra$ and $c\in [
a,b ]$, with $\{ c \} \supset \mathcal{D}^{0} \cap [ a,b ]$, if $D$ is
a semi-algebraically connected component of $( \mathrm{Bas} ( \{ F \} ,
\tilde{\mathcal{Q}} ) )_{a\leq G \leq b}$, then $D_{G=c}$ is a
semi-algebraically connected component of $( \mathrm{Bas} ( \{ F \} ,
\tilde{\mathcal{Q}} ) )_{G=c}$.

 To finish the proof of Part (\ref{item:properties-of-M0-and-D01}) observe that $\tilde{S}^{0}$ is the
image of $\mathrm{Bas} ( \{ F \} , \tilde{\mathcal{Q}} ) \subset \R \la
\zeta , \eps , \delta \ra^{k} \times \R \la \zeta , \eps , \delta \ra^{k-p+
\mathrm{card} ( \mathcal{Q} ) +1}$ under projection to $\R \la \zeta , \eps ,
\delta \ra^{k}$, and the fibers of this projection are intersections of
linear subspaces with the unit sphere in $\R \la \zeta , \eps ,
\delta \ra^{k-p+ \mathrm{card} ( \mathcal{Q} ) +1}$, and the
polynomial $G$ is independent of the $\lambda$'s. Hence, the
semi-algebraically connected components of $( \tilde{S}^{0} )_{a\leq G
\leq b}$ and $( \tilde{S}^{0} )_{G=c}$, are in correspondence with those of
$( \mathrm{Bas} ( \{ F \} , \tilde{\mathcal{Q}} ) )_{a\leq G \leq b}$
and $( \mathrm{Bas} ( \{ F \} , \tilde{\mathcal{Q}} ) )_{G=c}$
respectively.

Part (\ref{item:properties-of-M0-and-D02}) is clear from the definition of $\mathcal{M}^{0}$. 
\end{proof}

\begin{remark}
\label{rem:BasFQtilde} Note that the elements of $\mathcal{D}^{0}$ are the
$( B,G )$-pseudo-critical values of the family $\{ F \} \cup
\tilde{\mathcal{Q}}$, for the matrix $B=\mathcal{H}_{\mathrm{card} (
\mathcal{Q} ) +1,2k-p+ \mathrm{card} ( \mathcal{Q} ) +1}$, and thus satisfy
the properties of Proposition \ref{prop:properties-of-special} with respect
to the level sets of the polynomial $G$ restricted to $\mathrm{Bas} ( \{ F \}
, \tilde{\mathcal{Q}} ) \subset \R \la \zeta , \eps , \delta
\ra^{k} \times \R \la \zeta , \eps , \delta
\ra^{k-p+ \mathrm{card} ( \mathcal{Q} ) +1}$. Part (\ref{item:properties-of-M0-and-D01}) of Lemma
\ref{lem:properties-of-M0-and-D0} implies that the same properties also
hold for $\tilde{S}^{0}$ with respect to the values $\mathcal{D}^{0}$
(recall that $\tilde{S}^{0}$ is defined in Notation \ref{notationStilde0} as
the projection of $\mathrm{Bas} ( \{ F \} , \tilde{Q} )$ to $\R \la
\zeta , \eps , \delta \ra^{k}$).
\end{remark}

\subsection{Definition of $\mathcal{N} , \tilde{S}^{1}$, and
$\mathcal{B}$\label{sec:goodconnectivity}}

We will use the two following propositions which use the definitions given above.
\begin{proposition}
\label{prop:special}The tuple $ ( \tilde{S} , \tilde{\mathcal{M}} , \ell ,
\tilde{S}^{0} ,\mathcal{D}^{0} ,\mathcal{M}^{0} )$ is special (cf.
Definition \ref{def:property-special}).
\end{proposition}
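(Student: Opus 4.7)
The plan is to verify the four properties of Definition \ref{def:property-special} one at a time, invoking the results already assembled in this section. Each item corresponds almost directly to a proposition, corollary, or lemma already proved, so the proof is essentially a bookkeeping exercise; the only real content is making sure the quantifiers and the ambient set in each statement match Definition \ref{def:property-special} exactly.

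First, for Property \eqref{item:special0}, the finiteness of $\tilde{\mathcal{M}} = \mathrm{Crit}(\tilde{\mathcal{P}}, \tilde{\mathcal{Q}}, G)$ is Corollary \ref{cor:M-finite}, which itself rests on the general-position conclusion of Proposition \ref{prop:general-position} applied with $\ell = 0$. For Property \eqref{item:special1}, the condition that $\tilde{S}^{0}$ is strongly of dimension $\leq \ell$ follows from Part \eqref{item:properties-of-S01} of Proposition \ref{prop:properties-of-S0}, since that part says exactly that $\tilde{S}^{0}_{w}$ is finite for every $w \in \R\langle \zeta, \eps, \delta \rangle^{\ell}$, which is Definition \ref{def:strong-dimension}. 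The remaining requirements in Property \eqref{item:special1}, namely that $\tilde{S}^{0}_{w}$ meets every semi-algebraically connected component of $\tilde{S}_{w}$ and contains a minimizer of $G$ on each such component, are verbatim Part \eqref{item:properties-of-S02} of the same proposition.

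For Property \eqref{item:special2}, the required behaviour of $\mathcal{D}^{0}$ with respect to semi-algebraically connected components of the slices $\tilde{S}^{0}_{a \leq G \leq b}$ is Part \eqref{item:properties-of-M0-and-D01} of Lemma \ref{lem:properties-of-M0-and-D0}; this is the step whose proof required the translation between $\mathrm{Bas}(\{F\}, \tilde{\mathcal{Q}})$ and its projection $\tilde{S}^{0}$, combined with Proposition \ref{prop:properties-of-special} applied to the matrix $\mathcal{H}_{\mathrm{card}(\mathcal{Q})+1,\, 2k-p+\mathrm{card}(\mathcal{Q})+1}$, which has good rank property.

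Finally, for Property \eqref{item:special3}, the set $\mathcal{M}^{0}$ is by construction the sampled set of points taken from the fibres $\tilde{S}_{G=c}$ over $c \in \mathcal{D}^{0}$. The inclusion $G(\mathcal{M}^{0}) \subseteq \mathcal{D}^{0}$ is immediate from the definition, and the reverse inclusion $\mathcal{D}^{0} \subseteq G(\mathcal{M}^{0})$ is obtained by noting that, by the very definition of $\mathcal{D}^{0}$ as the set of $(B,G)$-pseudo-critical values of $\{F\} \cup \tilde{\mathcal{Q}}$, each $c \in \mathcal{D}^{0}$ is realized as $\lim_{\gamma} G(x)$ for some $x$ in the corresponding fibre, so this fibre is non-empty and $\mathrm{Samp}$ produces at least one point per semi-algebraically connected component. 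This also simultaneously yields Property \eqref{item:special3b}, which is precisely Part \eqref{item:properties-of-M0-and-D02} of Lemma \ref{lem:properties-of-M0-and-D0}. The main point requiring care throughout is that $\mathcal{M}^{0} \subset \tilde{S}^{0}$, which holds because each selected sample point in a fibre $\tilde{S}_{G=c}$ lies in $\tilde{S}^{0}$ by the minimizer/component statement of Proposition \ref{prop:properties-of-S0}(\ref{item:properties-of-S02}); once this containment is checked, the verification of all four properties is complete.
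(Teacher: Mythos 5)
Your verification is correct and takes essentially the same route as the paper, whose proof is the one-liner ``Follows from Lemma \ref{lem:properties-of-M0-and-D0} and Definition \ref{def:property-special}''; you have simply made explicit the supporting results (Corollary \ref{cor:M-finite} for Property \eqref{item:special0}, Proposition \ref{prop:properties-of-S0} for Property \eqref{item:special1}) that the paper leaves implicit. One small slip: the containment $\mathcal{M}^{0} \subset \tilde{S}^{0}$ does not follow from the minimizer statement of Proposition \ref{prop:properties-of-S0}\eqref{item:properties-of-S02}, which concerns the fibres $\tilde{S}_{w}$ over $\pi_{[1,\ell]}$ rather than the level sets of $G$; it is instead immediate once $\mathcal{M}^{0}$ is read (as Lemma \ref{lem:properties-of-M0-and-D0}\eqref{item:properties-of-M0-and-D02} indicates it should be) as a sample of $\bigcup_{c \in \mathcal{D}^{0}} (\tilde{S}^{0})_{G=c}$.
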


\begin{proof}
Follows from Lemma \ref{lem:properties-of-M0-and-D0} and Definition
\ref{def:property-special}.
\end{proof}

We denote $\mathcal{N} = \pi_{[ 1, \ell ]} ( \tilde{\mathcal{M}} \cup
\mathcal{M}^{0} \cup \tilde{\mathcal{A}} )$, $\tilde{S}^{
1} = \tilde{S}_{\mathcal{N}}$ and $\mathcal{B} = (
\tilde{S}^{0} )_{\mathcal{N}}$. Note that $\tilde{S}^{0} \cap \tilde{S}^{
1} = \mathcal{B} .$

\begin{proposition}
\label{prop:good-connectivity}
The semi-algebraic set $\tilde{S}^{0} \cup \tilde{S}^{1}$
has good connectivity property with respect to $\tilde{S}$.
\end{proposition}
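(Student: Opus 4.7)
The plan is to observe that this proposition is essentially an immediate consequence of the axiomatic connectivity result Proposition \ref{prop:axiomatic-main}, once the hypotheses have been verified. The work has already been done in the surrounding construction; what remains is to check that everything lines up.

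First, I would invoke Proposition \ref{prop:special}, which asserts that the tuple $(\tilde{S}, \tilde{\mathcal{M}}, \ell, \tilde{S}^{0}, \mathcal{D}^{0}, \mathcal{M}^{0})$ is special in the sense of Definition \ref{def:property-special}. This supplies the first hypothesis of Proposition \ref{prop:axiomatic-main}. In particular, Proposition \ref{prop:general-position} ensures that $\tilde{\mathcal{P}}, \tilde{\mathcal{Q}}$ is in general position with respect to $G$ over $\R\langle\zeta,\eps,\delta\rangle$, so that $\tilde{\mathcal{M}}$ is finite (Corollary \ref{cor:M-finite}); Proposition \ref{prop:properties-of-S0} ensures that $\tilde{S}^{0}$ is strongly of dimension at most $\ell$ and that fiberwise it meets every connected component of $\tilde{S}_{w}$ while containing a minimizer of $G$ there; and Lemma \ref{lem:properties-of-M0-and-D0} ensures the properties required of $\mathcal{D}^{0}$ and $\mathcal{M}^{0}$.

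Next, I would verify the hypothesis on $\mathcal{N}$ required in Proposition \ref{prop:axiomatic-main}. By definition,
\[
\mathcal{N} \;=\; \pi_{[1,\ell]}\bigl(\tilde{\mathcal{M}} \cup \mathcal{M}^{0} \cup \tilde{\mathcal{A}}\bigr) \;\supset\; \pi_{[1,\ell]}\bigl(\tilde{\mathcal{M}} \cup \mathcal{M}^{0}\bigr),
\]
which is precisely the required containment. Note also that $\mathcal{N}$ is finite, since $\tilde{\mathcal{M}}$, $\mathcal{M}^{0}$, and $\tilde{\mathcal{A}}$ are all finite sets (the first by Corollary \ref{cor:M-finite}, the second by construction, the third by construction from Notation \ref{not:closest}).

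Finally, Proposition \ref{prop:axiomatic-main} applied to the special tuple with this choice of $\mathcal{N}$ yields that $\tilde{S}^{0} \cup \tilde{S}_{\mathcal{N}}$ has good connectivity property with respect to $\tilde{S}$. Since by definition $\tilde{S}^{1} = \tilde{S}_{\mathcal{N}}$, this is exactly the conclusion. I do not anticipate any real obstacle here: the entire section has been devoted to rigging $\tilde{S}$, $\tilde{S}^{0}$, $\mathcal{D}^{0}$, $\mathcal{M}^{0}$, and $\tilde{\mathcal{A}}$ so that the hypotheses of the axiomatic result are met; Proposition \ref{prop:good-connectivity} is essentially the point where one reaps the payoff. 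The only minor item worth being careful about is that the extra points from $\tilde{\mathcal{A}}$ appear in $\mathcal{N}$ beyond what Proposition \ref{prop:axiomatic-main} strictly demands, but since that proposition allows any finite $\mathcal{N}$ containing $\pi_{[1,\ell]}(\tilde{\mathcal{M}} \cup \mathcal{M}^{0})$, these additional points cause no problem.
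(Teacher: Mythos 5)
Your proposal is correct and follows exactly the paper's own route: the paper's proof is simply ``Follows from Proposition \ref{prop:special} and Proposition \ref{prop:axiomatic-main},'' and your verification that $\mathcal{N}=\pi_{[1,\ell]}(\tilde{\mathcal{M}}\cup\mathcal{M}^{0}\cup\tilde{\mathcal{A}})$ is a finite set containing $\pi_{[1,\ell]}(\tilde{\mathcal{M}}\cup\mathcal{M}^{0})$ is precisely the implicit check the paper leaves to the reader.
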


\begin{proof} Follows from Proposition \ref{prop:special} and Proposition
\ref{prop:axiomatic-main}.
\end{proof}

\section{Critical points and minors}\label{sec:minors}

In the previous section, $\tilde{S}^{0}$ is described as the image of a
projection applied to the basic semi-algebraic set $\mathrm{Bas} ( \{ F \} ,
\tilde{\mathcal{Q}} )$ (see Remark \ref{rem:BasFQtilde}). This means that
we cannot hope to compute a roadmap of $\tilde{S}^{0}$ by a divide-and-conquer
algorithm directly since the input to such an algorithm should be a basic
semi-algebraic set. In this section, we give an alternative description of
$\tilde{S}^{0}$ (see Proposition \ref{prop:coverS0} below) as a (limit of)
union of basic semi-algebraic sets which allows us to get past this problem.

\subsection{Description of critical points}\label{sec:morsetheory}

In the case when $\mathcal{P} = \{ P_{1} , \ldots ,P_{m} \}$, $m<k$, is in
general position with respect to {\tmem{$G \in \R [ X_{1} , \ldots ,X_{k}
]$}}, we can describe $\mathrm{Crit} ( \mathcal{P} ,G ) \subset \R^{k}$ as
follows.

Define the Jacobian matrix
\begin{eqnarray*}
\mathcal{\mathrm{Jac}} & = & \left(\begin{array}{ccccc}
\frac{\partial G}{\partial X_{1}} & \frac{\partial P_{1}}{\partial X_{1}}
&\cdots & \frac{\partial P_{m}}{\partial X_{1}} & \\
\vdots & \vdots && \vdots & \\
\frac{\partial G}{\partial X_{k}} & \frac{\partial P_{1}}{\partial X_{k}}
& \cdots & \frac{\partial P_{m}}{\partial X_{k}} & 
\end{array}\right)
\end{eqnarray*}
whose rows are indexed by $[ 1,k ]$ and columns by $[ 0,m ]$.

For $J \subset [ 1,k ]$ and $J' \subset [ 0,m ]$, let $\mathrm{Jac} (
J,J' )$ the matrix obtained from $\mathrm{Jac}$ by extracting the rows numbered by elements of
$J$, and the columns numbered by elements of  $J'$.

We use the following convenient notation in what follows.
For any finite set $X$, and any integer $r\geq0$, we will denote by  $\binom{X}{r}$ the set of 
all subsets of $X$ of cardinality $r$.

For each $0 \leq r \leq m$, and each $J \in \binom{[ 1,k ]}{r}, J' \in
\binom{[ 0,m ]}{r}$, let
\[ \mathrm{jac} ( J,J' ) = \det ( \mathrm{Jac} ( J,J' ) ) . \]
For every $i \in [ 1,k ] \setminus J$, and $i' \in [ 0,m ] \setminus J'$, let
\begin{eqnarray*}
\mathrm{Eq} ( J,J' ) & = & \mathcal{P} \cup \bigcup_{i \in [ 1,k ]
\setminus J,i' \in [ 0,m ] \setminus J'} \mathrm{jac} ( J \cup \{ i \} ,J'
\cup \{ i' \} ) ,
\end{eqnarray*}
and
\begin{eqnarray*}
\mathrm{Cons} ( J,J' ) & = & \left\{ x \in \ZZ \left( \mathrm{Eq} (
J,J' ) , \R^{k} \right)\mid \mathrm{jac} (
J,J' ) ( x ) \neq 0 \right\} .
\end{eqnarray*}
\begin{proposition}
\label{prop:Cramer} If $\mathcal{P}= \{ P_{1} , \ldots ,P_{m} \}$ is in
general position with respect to {\tmem{$G \in \R [ X_{1} , \ldots ,X_{k}
]$}}, the finite variety $\mathrm{Crit} ( \mathcal{P},G )$ is the union of
the various 
\[
\mathrm{Cons} ( J,J' ), 0 \leq r \leq m,  J \in \binom{[1,k]}{r} ,J'
\in \binom{[ 0,m ]}{r}.
\]
\end{proposition}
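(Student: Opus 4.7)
The plan is to translate the Lagrange-multiplier definition of $\mathrm{Crit}(\mathcal{P},G)$ (Definition \ref{def:G-critical-point}) into a rank condition on the Jacobian matrix $\mathrm{Jac}$, and then apply the standard linear algebra characterization of matrix rank via minors. The finiteness is not a new statement: with $\mathcal{Q}'=\emptyset$ in Definition \ref{def:general-position}, the hypothesis that $\mathcal{P}$ is in general position with respect to $G$ directly gives that $\mathrm{Crit}(\mathcal{P},G)$ is empty or finite, so only the description as a union of the sets $\mathrm{Cons}(J,J')$ has to be established.

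First, I observe that a point $x$ with $\mathcal{P}(x)=0$ lies in $\mathrm{Crit}(\mathcal{P},G)$ if and only if the $k\times(m+1)$ matrix $\mathrm{Jac}(x)$ has rank at most $m$. Indeed, the second and third equations in \eqref{eqn:criteqnpg} say exactly that the columns of $\mathrm{Jac}(x)$ admit a non-trivial linear relation with coefficient vector $(-\lambda_0,\lambda_1,\ldots,\lambda_m)$, and conversely any linear dependence among the $m+1$ columns can be scaled by a non-zero factor so that $\sum_{j=0}^m \lambda_j^2 = 1$. Thus $x\in\mathrm{Crit}(\mathcal{P},G)$ iff $\mathcal{P}(x)=0$ and $\mathrm{rank}(\mathrm{Jac}(x))\leq m$.

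For the inclusion $\mathrm{Crit}(\mathcal{P},G)\subseteq\bigcup_{r,J,J'}\mathrm{Cons}(J,J')$, take $x\in\mathrm{Crit}(\mathcal{P},G)$ and set $r:=\mathrm{rank}(\mathrm{Jac}(x))\leq m$. By the elementary characterization of matrix rank, there exist $J\in\binom{[1,k]}{r}$ and $J'\in\binom{[0,m]}{r}$ with $\mathrm{jac}(J,J')(x)\neq 0$, while every $(r+1)\times(r+1)$ minor of $\mathrm{Jac}(x)$ vanishes; in particular this holds for the minors $\mathrm{jac}(J\cup\{i\},J'\cup\{i'\})$ indexing $\mathrm{Eq}(J,J')$. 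So $x\in\mathrm{Cons}(J,J')$.

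For the reverse inclusion, suppose $x\in\mathrm{Cons}(J,J')$ with $|J|=|J'|=r\leq m$. I must show $\mathrm{rank}(\mathrm{Jac}(x))\leq m$; it will be enough to show the rank is exactly $r$. The nonvanishing of $\mathrm{jac}(J,J')(x)$ gives rank at least $r$. For the upper bound, the main step is the linear-algebra lemma: if an $r\times r$ submatrix at rows $J$ and columns $J'$ is invertible and every $(r+1)\times(r+1)$ extension $\mathrm{jac}(J\cup\{i\},J'\cup\{i'\})$ vanishes, then every row $i\notin J$ is a linear combination of the rows in $J$. I would prove this by using invertibility of the $(J,J')$-block to uniquely determine coefficients $(c_j)_{j\in J}$ from the $J'$-entries of row $i$, and then extending to an arbitrary column $i'\notin J'$ by expanding the vanishing $(r+1)\times(r+1)$ determinant on rows $J\cup\{i\}$, columns $J'\cup\{i'\}$, and using uniqueness of coefficients. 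This forces all $k$ rows into an $r$-dimensional span, giving $\mathrm{rank}(\mathrm{Jac}(x))=r\leq m$. Combined with $\mathcal{P}(x)=0$ (built into $\mathrm{Eq}(J,J')$), this places $x$ in $\mathrm{Crit}(\mathcal{P},G)$.

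The only non-routine step is the lemma on extensions of an invertible minor; everything else is a bookkeeping translation between the Lagrange-multiplier form of the critical-point equations and the minor description.
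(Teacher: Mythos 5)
Your proposal is correct and follows essentially the same route as the paper: both directions reduce to the observation that $x\in\mathrm{Crit}(\mathcal{P},G)$ iff $\mathcal{P}(x)=0$ and $\mathrm{Jac}(x)$ has rank at most $m$, with the forward inclusion obtained by picking a maximal nonsingular minor and the reverse by the bordered-minor computation. The only cosmetic difference is that you package the reverse inclusion as the standard bordered-minors rank lemma (on rows) and then deduce the existence of $\lambda$ from column dependence, whereas the paper constructs $\lambda$ explicitly via Cramer's rule; the underlying determinant expansion is identical.
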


\begin{proof}
 We first prove that $\mathrm{Crit} ( \mathcal{P},G )$ is contained in
the union of the various $\mathrm{Cons} ( J,J' ) ,0 \leq r \leq m, J \in
\binom{[ 1,k ]}{r} ,J' \in \binom{[ 0,m ]}{r}$. It follows from Definition
\ref{def:G-critical-point} that each $x \in \mathrm{Crit} ( \mathcal{P},G )$
is contained in the projection to $\R^{k}$ of the set of solutions to the
system of equations, $\mathrm{CritEq} ( \mathcal{P} ,G )$ (cf.
\eqref{eqn:criteqnpg}).

Substituting, $X=x$ in the above system, we obtain the following system of
homogeneous linear equations in $\lambda = ( \lambda_{0} , \ldots ,
\lambda_{m} )$.
\begin{eqnarray}
\lambda_{0} \frac{\partial G}{\partial X_{i}} ( x ) + \sum^{m}_{j=1}
\lambda_{j} \frac{\partial P_{j}}{\partial X_{i}} ( x ) & = & 0,i=1,
\ldots ,k.\label{eqn:intermediate5}
\end{eqnarray}
Let the rank of the matrix of coefficients of the above system be $r_{x}$.
Then, $r_{x} \leq m$, since there must exist a $\lambda = ( \lambda_{0} ,
\ldots , \lambda_{m} )$ satisfying (\ref{eqn:intermediate5}) and $\lambda
\neq ( 0, \ldots ,0 )$ since it has to satisfy also the equation
\begin{eqnarray*}
\sum_{j=0}^{m} \lambda_{j}^{2} -1 & = & 0.
\end{eqnarray*}
Then there exists $J \subset \binom{[ 1,k ]}{r_{x}} ,J' \subset \binom{[ 0,m
]}{r_{x}}$ such that the $r_{x} \times r_{x}$ sub-matrix of the matrix of
coefficients with rows indexed by $J$ and columns indexed by $J'$ has full
rank and hence $\mathrm{jac} ( J,J' ) ( x ) \neq 0$. Then, clearly for every
$i \in [ 1,k ] \setminus J$, and $i' \in [ 0,m ] \setminus J'$,
\[ \mathrm{jac} ( J \cup \{ i \} ,J' \cup \{ i' \} ) ( x ) =0. \]
Hence, $x \in \mathrm{Cons} ( J,J' )$ using the definition of the set
$\mathrm{Cons} ( J,J' )$. This completes the proof that $\mathrm{Crit} (
\mathcal{P},G )$ is contained in union of the various $\mathrm{Cons} ( J,J' )
,0 \leq r \leq m,J \in \binom{[ 1,k ]}{r} ,J' \in \binom{[
0,m ]}{r}$.

To prove the reverse inclusion fix, $r$, $0 \leq r \leq m$, $J \in \binom{[
1,k ]}{r} ,J' \in \binom{[ 0,m ]}{r}$, and let $x \in \mathrm{Cons} ( J,J' )$.
Then, $\mathrm{jac} ( J,J' ) ( x ) \neq 0$, and for each $i \in [ 1,k ]
\setminus J$, and $i' \in [ 0,m ] \setminus J'$, $\mathrm{jac} ( J \cup \{ j
\} ,J' \cup \{ i' \} ) ( x )= 0$. We now show that there exists
$\lambda = ( \lambda_{0} , \ldots , \lambda_{m} )$ such that $( x, \lambda
)$ satisfy the system of equations (\ref{eqn:prop-critical}). It follows
from Cramer's rule that for each $i \in J$, the equation
\begin{eqnarray*}
\lambda_{0} \frac{\partial G}{\partial X_{i}} ( x ) + \sum^{m}_{j=1}
\lambda_{j} \frac{\partial P_{j}}{\partial X_{i}} ( x ) & = & 0 
 ,
\end{eqnarray*}
is satisfied after making the substitution
\begin{eqnarray}
\lambda_{j} & = & - \sum_{j' \in J' \setminus \{ j \}} \dfrac{\mathrm{jac}
( J,J' \setminus \{ j \} \cup \{ j' \} ) ( x )}{\mathrm{jac} ( J,J' )
( x )} \lambda_{j'} ,\label{eqn:intermediate6}
\end{eqnarray}
for each $j \in J'$.

Moreover, substituting the expressions in (\ref{eqn:intermediate6}) in the
equations indexed by $i \in [ 1,k ] \setminus J$ in (\ref{eqn:prop-critical}), clearing the denominator
$\mathrm{jac} ( J,J' ) ( x )$, we have that the coefficient of $\lambda_{i'}$ for
$i' \in [ 0,m ] \setminus J'$ equals $\mathrm{jac} ( J \cup \{ i \} ,J' \cup
\{ i' \} ) ( x )$,  and hence equal to $0$. Thus, the equations indexed by $i
\in [ 1,k ] \setminus J$ in (\ref{eqn:prop-critical}) are satisfied as well.
Finally since, $r \leq m<m+1$, we can assume, that there exists $\lambda = (
\lambda_{0} , \ldots , \lambda_{m} ) \in \R \la \zeta , \eps , \delta
\ra^{m+1}$ with not all coordinates equal to $0 $, such that $( x,
\lambda )$ satisfy all but the last equation in (\ref{eqn:criteqnpg}), and
it follows that there exists $\lambda$ such that $( x, \lambda )$ satisfy
(\ref{eqn:criteqnpg}), and hence $x \in \mathrm{Crit} ( \mathcal{P},G )$. This
proves the reverse inclusion.
\end{proof}

\subsection{Description of $\tilde{S}^{0}$ using minors}\label{descriptionS0}

\begin{notation}
\label{not:covering} Following Notation \ref{convention} and Notation
\ref{not:notationtilde}:
\begin{enumerate}
\item Let $\ell <p \leq k$, $\tilde{\mathcal{Q}}' \subset \tilde{Q}$ and $
\tilde{\mathcal{P}} \cup \tilde{\mathcal{Q}}' = \{ F_{1} , \ldots ,F_{m}
\}$.

\item Define the matrix
\begin{eqnarray*}
\mathrm{Jac} ( \ell , \tilde{\mathcal{Q}}' ) & = &
\left(\begin{array}{ccccc}
\frac{\partial G}{\partial X_{\ell +1}} & \frac{\partial
F_{1}}{\partial X_{\ell +1}} & \cdots & \frac{\partial F_{m}}{\partial
X_{\ell +1}} & \\
\vdots & \vdots && \vdots & \\
\frac{\partial G}{\partial X_{k}} & \frac{\partial F_{1}}{\partial
X_{k}} & \cdots & \frac{\partial F_{m}}{\partial X_{k}} & 
\end{array}\right)
\end{eqnarray*}
whose rows are indexed by $[ \ell +1,k ]$ and columns by $[ 0,m ]$.

For each $\alpha = ( \tilde{\mathcal{Q}}' ,r,J,J' )$ with
$\tilde{\mathcal{Q}}' \subset \tilde{Q}$, $0 \leq r \leq m$, $J \in
\binom{[ \ell +1,k ]}{r}, J' \in \binom{[ 0,m ]}{r}$ denote by
\[ \mathrm{jac} ( \alpha ) = \det ( \mathrm{Jac} ( \ell , \tilde{\mathcal{Q}}'
 ) ( J,J' ) ) . \]

Moreover, for each $i \in [ \ell +1,k ] \setminus J$, $i' \in [ 0,m ]
\setminus J'$, let
\[ \mathrm{jac} ( \alpha ,i,i' ) = \det ( \mathrm{Jac} ( \ell ,
 \tilde{\mathcal{Q}}' ) ( J \cup \{ i \} ,J' \cup \{ i' \} ) ) . \]

Let
\begin{eqnarray}
\mathcal{P}^{0} ( \alpha ) & = &\tilde{\mathcal{P}} \cup
\tilde{\mathcal{Q}}' \cup \bigcup_{i \in [ \ell +1,k ] \setminus J,i'
\in [ 0,m ] \setminus J'} \{ \mathrm{jac} ( \alpha ,i,i' ) \} , 
\label{eqn:P0}\\
\mathcal{Q}^{0} ( \alpha ) & = & \tilde{\mathcal{Q}} \cup \{ \mathrm{jac}
( \alpha )^{2} - \gamma \}\label{eqn:Q0}
\end{eqnarray}
where $\gamma$ is a new variable.

\item Define
\begin{eqnarray*}
S^{0} ( \alpha ) & = & \mathrm{Bas} ( \mathcal{P}^{0} ( \alpha )
,\mathcal{Q}^{0} ( \alpha ) ) \subset \R \la \zeta , \eps , \delta ,
\gamma \ra^{k} .
\end{eqnarray*}
\end{enumerate}
\end{notation}

\begin{notation}
\label{not:indices} Fixing $\tilde{\mathcal{P}},
\tilde{\mathcal{Q}} , \ell$ with $0 \leq \ell <p \leq k $, we
denote by $\mathcal{I}( \tilde{P} , \tilde{Q} , \ell )$ the set of
quadruples $\alpha = ( \tilde{\mathcal{Q}}' ,r,J,J' )$ with
$\tilde{\mathcal{Q}}' \subset \tilde{\mathcal{Q}}$, $0 \leq r \leq m$, $J \in \binom{[
\ell +1,k ]}{r}, J' \in \binom{[ 0,m ]}{r}$. 
\end{notation}

\begin{proposition}
\label{prop:coverS0}{\tmdummy}

\[ \begin{array}{lll}
 \tilde{S}^{0} & = & \lim_{\gamma} \left( \bigcup_{\alpha \in \mathcal{I}
 ( \tilde{P} , \tilde{Q} , \ell )} S^{0} ( \alpha ) \right) .
 \end{array} \]
\end{proposition}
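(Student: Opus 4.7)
The plan is to establish the equality by proving both inclusions, with the key technical fact being Proposition \ref{prop:Cramer} which gives a description of $G$-critical points via non-vanishing minors; this will be applied fiber-wise (in the parameters $X_1, \ldots, X_\ell$) since $\mathrm{Jac}(\ell, \tilde{\mathcal{Q}}')$ only involves derivatives in $X_{\ell+1}, \ldots, X_k$. General position (Proposition \ref{prop:general-position}) is what licenses the use of Proposition \ref{prop:Cramer} on every fiber and in any field extension.

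For the inclusion $\tilde{S}^{0} \subseteq \lim_{\gamma} \bigcup_{\alpha} S^{0}(\alpha)$, I would take $x \in \tilde{S}^0$, so by Notation \ref{notationStilde0} there exists $\tilde{\mathcal{Q}}' \subseteq \tilde{\mathcal{Q}}$ with $x \in \mathrm{Crit}_{\ell}(\tilde{\mathcal{P}} \cup \tilde{\mathcal{Q}}', G) \cap \tilde{S}$. Fixing $w = \pi_{[1,\ell]}(x)$, Proposition \ref{prop:general-position} says $\tilde{\mathcal{P}}(w,\cdot) \cup \tilde{\mathcal{Q}}'(w,\cdot)$ is in general position with respect to $G(w,\cdot)$, so Proposition \ref{prop:Cramer} applied on the fiber produces integers $r, J, J'$ such that $\mathrm{jac}(\alpha)(x) \neq 0$ and $\mathrm{jac}(\alpha, i, i')(x) = 0$ for all valid $i, i'$, where $\alpha = (\tilde{\mathcal{Q}}', r, J, J')$. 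Combined with $\tilde{\mathcal{P}}(x) \cup \tilde{\mathcal{Q}}'(x) = 0$ and $\tilde{\mathcal{Q}}(x) \geq 0$, and since $\mathrm{jac}(\alpha)(x)^2$ is a strictly positive element of $\R \la \zeta, \eps, \delta \ra$ while $\gamma$ is infinitesimal over this field, we get $x \in S^0(\alpha)$ when viewed in $\R \la \zeta, \eps, \delta, \gamma \ra^k$; taking $\lim_\gamma$ gives $x$ back.

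For the reverse inclusion, I would take $\tilde{x} \in S^0(\alpha)$ for some $\alpha = (\tilde{\mathcal{Q}}', r, J, J')$. The defining constraint $\mathrm{jac}(\alpha)^2 \geq \gamma > 0$ forces $\mathrm{jac}(\alpha)(\tilde{x}) \neq 0$, while the other defining equations in $\mathcal{P}^0(\alpha)$ are precisely those characterising membership in $\mathrm{Cons}(J, J')$ for the family $\tilde{\mathcal{P}} \cup \tilde{\mathcal{Q}}'$ on the fiber over $\pi_{[1,\ell]}(\tilde{x})$. By Proposition \ref{prop:general-position} (applicable since it is stated over any field extension) and Proposition \ref{prop:Cramer} on the fiber, $\tilde{x} \in \mathrm{Crit}_\ell(\tilde{\mathcal{P}} \cup \tilde{\mathcal{Q}}', G)$ in the field $\R \la \zeta, \eps, \delta, \gamma \ra$; together with $\tilde{x} \in \Ext(\tilde{S}, \R \la \zeta, \eps, \delta, \gamma \ra)$ this yields $\tilde{x} \in \Ext(\tilde{S}^0, \R \la \zeta, \eps, \delta, \gamma \ra)$.

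The main obstacle is then to conclude $\lim_\gamma \tilde{x} \in \tilde{S}^0$ rather than merely in its closure. For this I would verify that $\tilde{S}^0$ is closed and bounded over $\R\la\zeta,\eps,\delta\ra$: each piece $\mathrm{Crit}_\ell(\tilde{\mathcal{P}} \cup \tilde{\mathcal{Q}}', G) \cap \tilde{S}$ is the image under projection to the $X$-coordinates of the closed semi-algebraic set $\ZZ(\mathrm{CritEq}_\ell(\tilde{\mathcal{P}} \cup \tilde{\mathcal{Q}}', G), \R\la\zeta,\eps,\delta\ra^{k+m+1}) \cap (\tilde{S} \times \R\la\zeta,\eps,\delta\ra^{m+1})$, which is bounded (the $\lambda$-variables lie on the unit sphere by the last equation of $\mathrm{CritEq}_\ell$, and $\tilde{S}$ is bounded), hence its projection is closed. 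Since $\tilde{S}^0$ is a finite union of such closed sets, it is closed, so $\lim_\gamma \tilde{x}$ lies in $\tilde{S}^0$ as required, completing the proof.
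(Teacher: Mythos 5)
Your proposal is correct and follows essentially the same route as the paper: both directions reduce to Proposition \ref{prop:Cramer} applied fiber-wise (licensed by Proposition \ref{prop:general-position}), which identifies $\tilde{S}^{0}$ with $\bigcup_{\alpha}\{x\in \mathrm{Bas}(\mathcal{P}^{0}(\alpha),\tilde{\mathcal{Q}})\mid \mathrm{jac}(\alpha)(x)\neq 0\}$, after which the $\gamma$-limit is handled via the closedness and boundedness of $\tilde{S}^{0}$ (the paper packages this as $\lim_{\gamma}S^{0}(\alpha)$ being the closure of the $\mathrm{jac}(\alpha)\neq 0$ locus, citing Proposition 11.56 of the reference, while you argue pointwise). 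Your explicit justification that $\tilde{S}^{0}$ is closed — as a finite union of projections of closed bounded sets — is a detail the paper asserts without proof, and is a welcome addition.
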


\begin{proof}
 We first prove that
\begin{eqnarray}
\tilde{S}^{0} & = & \bigcup_{\alpha \in \mathcal{I} ( \tilde{P} , \tilde{Q}
, \ell )} \{ x \in \mathrm{Bas} ( \mathcal{P}^{0} ( \alpha ) ,
\tilde{\mathcal{Q}} ) \mid \mathrm{jac} ( \alpha ) ( x ) \neq 0\} . 
\label{eqn:intermediate}
\end{eqnarray}

Using Notation \ref{notationStilde0}, notice that for each
$\tilde{\mathcal{Q}}' \subset \tilde{\mathcal{Q}}$, $\tilde{S}^{0} (
\tilde{\mathcal{Q}}' )$ is the set of $G$-critical points of $\ZZ (
\tilde{\mathcal{P}} \cup \tilde{\mathcal{Q}}' ( w, \cdot ) , \R
\la \zeta , \eps , \delta \ra^{k} )$ contained in
$\tilde{S}$, as $w$ varies over $\R \la \zeta , \eps , \delta
\ra^{\ell}$, and $\tilde{S}^{0}=\bigcup_{\tilde{\mathcal{Q}}'
\subset \tilde{Q}} \tilde{S}^{0} ( \tilde{\mathcal{Q}}' )$. It follows from
Proposition \ref{prop:Cramer} that for each $\tilde{\mathcal{Q}}' \subset
\tilde{Q}$,
\begin{eqnarray}
\tilde{S}^{0} ( \tilde{\mathcal{Q}}' ) & = & _{\mathcal{}} \{ x \in
\mathrm{Bas} ( \mathcal{P}^{0} ( \alpha ) , \tilde{\mathcal{Q}} ) \mid
\mathrm{jac} ( \alpha ) ( x ) \neq 0\} ,\label{eqn:intermediate3}
\end{eqnarray}
and this proves (\ref{eqn:intermediate}).

Noticing that all the sets $\{ x \in \mathrm{Bas} ( \mathcal{P}^{0} (
\alpha ) , \tilde{\mathcal{Q}} ) \mid \mathrm{jac} ( \alpha ) ( x ) \neq 0\}
\subset \R \la \zeta , \eps , \delta \ra^{k}$ are bounded, it follows from
the definition of $S^{0} ( \alpha )$ that
\begin{eqnarray}
\lim_{\gamma}S^{0} ( \alpha ) & = & \overline{_{\mathcal{}} \{ x \in
\mathrm{Bas} ( \mathcal{P}^{0} ( \alpha ) , \tilde{\mathcal{Q}} ) \mid
\mathrm{jac} ( \alpha ) ( x ) \neq 0\}} ,\label{eqn:intermediate4}
\end{eqnarray}
using {\cite{BPRbook2}} Proposition 11.56.

Also, since $\tilde{S}^{0}$ is closed it follows from
(\ref{eqn:intermediate3}) that
\begin{eqnarray*}
\tilde{S}^{0} & = & \bigcup_{\alpha \in \mathcal{I} ( \tilde{P} , \tilde{Q}
, \ell )} \overline{\{ x \in \mathrm{Bas} ( \mathcal{P}^{0} ( \alpha ) ,
\tilde{\mathcal{Q}} ) \mid \mathrm{jac} ( \alpha ) ( x ) \neq 0 \}} .
\end{eqnarray*}
The proposition now follows from (\ref{eqn:intermediate4}).
\end{proof}

\begin{remark}
Note that if the description of $S$ does not involve any inequality, this is
the first time that an inequality appears in the construction. 
\end{remark}

\subsubsection{Definition of $\mathcal{A} ( \alpha
)$}\label{sec:definitionB}

Since we have covered $\tilde{S}^{0}$ by the (limit of the) union of the
$S^{0} ( \alpha )$, we need to choose a finite set of points ensuring
connectivity properties.

\begin{notation}
\label{def:Azero}

For each $\alpha \in \mathcal{I} ( \tilde{\mathcal{P}} ,
\tilde{\mathcal{Q}} , \ell )$, we denote (using Notation
\ref{not:closest})
\begin{eqnarray*}
\mathcal{A} ( \alpha ) & = & \mathrm{MinDi} ( S^{0} ( \alpha ) ,
\mathcal{B} ) \cup \left( \bigcup_{\beta \in \mathcal{I} ( \tilde{P} ,
\tilde{Q} , \ell )} \mathrm{MinDi} ( S^{0} ( \alpha ) ,S^{0} ( \beta ) )
\right) .
\end{eqnarray*}
\end{notation}

We have the following property of the finite sets $\mathcal{A} ( \alpha )
 , \alpha \in \mathcal{I} ( \tilde{\mathcal{P}} ,
\tilde{\mathcal{Q}} , \ell )$.

\begin{proposition}
\label{prop:alpha-beta}
For every $\alpha , \beta$ in $\mathcal{I} ( \tilde{P} , \tilde{Q} , \ell )$
the following are true.
\begin{enumerate}
\item\label{item:alpha-beta1} $\bigcup_{\alpha \in \mathcal{I} ( \tilde{\mathcal{P}} ,
\tilde{\mathcal{Q}} , \ell )} \lim_{\gamma} ( \mathcal{A ( \alpha
)} ) \supset \mathcal{B}$.

\item\label{item:alpha-beta2} For $C$ and $D$ semi-algebraically connected components (not
necessarily distinct) of $S^{0} ( \alpha )$ and $S^{0} ( \beta )$ such
that $\lim_{\gamma} ( C ) \cap \lim_{\gamma} ( D )$ is
non-empty, $\lim_{\gamma} ( C \cap \mathcal{A} ( \alpha ) ) \cap
\lim_{\gamma} ( D \cap \mathcal{A} ( \beta ) )$ is non-empty,  and meets
every semi-algebraically connected component of $\lim_{\gamma} ( C )
\cap \lim_{\gamma} ( D )$.

\item \label{item:alpha-beta3} $\mathcal{A ( \alpha )}$ meets every semi-algebraically connected
component of $S^{0} ( \alpha )$.
\end{enumerate}
\end{proposition}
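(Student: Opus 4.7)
The plan is to derive all three parts directly from Propositions \ref{prop:closesta} and \ref{prop:closestb}, with the infinitesimal $\gamma$ playing the role that $\zeta$ plays there. This parallels the proof of Proposition \ref{prop:propertyofAtilde}, with the additional bookkeeping of handling several indices $\alpha$.

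For Part (\ref{item:alpha-beta1}), fix $x \in \mathcal{B}$. Since $\mathcal{B} \subset \tilde{S}^{0}$, Proposition \ref{prop:coverS0} gives $x \in \lim_{\gamma}\bigl( \bigcup_{\alpha} S^{0}(\alpha) \bigr)$, and finiteness of $\mathcal{I}(\tilde{\mathcal{P}}, \tilde{\mathcal{Q}}, \ell)$ supplies some $\alpha$ with $x \in \lim_{\gamma}(S^{0}(\alpha))$. Applying Proposition \ref{prop:closesta} (with $\gamma$ in the role of $\zeta$) to $T = S^{0}(\alpha)$ produces a point $\tilde{x} \in \mathrm{MinDi}(S^{0}(\alpha), \{x\})$ with $\lim_{\gamma} \tilde{x} = x$. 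Because $x$ is isolated in the finite set $\mathcal{B}$, every connected component of the local minimizers of the distance from $x$ to $S^{0}(\alpha)$ is a connected component of the full local minimizer set on $S^{0}(\alpha) \times \mathcal{B}$, so (choosing $\mathrm{Samp}$ compatibly) the same $\tilde{x}$ may be taken to lie in $\mathrm{MinDi}(S^{0}(\alpha), \mathcal{B}) \subset \mathcal{A}(\alpha)$, giving $x \in \lim_{\gamma}(\mathcal{A}(\alpha))$.

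For Part (\ref{item:alpha-beta2}), apply Proposition \ref{prop:closestb} directly with $T_{1} = S^{0}(\alpha)$, $T_{2} = S^{0}(\beta)$, $\tilde{C} = C$, $\tilde{D} = D$. This yields that
\[
\lim_{\gamma}\bigl( C \cap \mathrm{MinDi}(S^{0}(\alpha), S^{0}(\beta)) \bigr) \cap \lim_{\gamma}\bigl( D \cap \mathrm{MinDi}(S^{0}(\alpha), S^{0}(\beta)) \bigr)
\]
is non-empty and meets every semi-algebraically connected component of $\lim_{\gamma}(C) \cap \lim_{\gamma}(D)$. The only subtlety is symmetry: one must observe that $\mathrm{MinDi}(S^{0}(\alpha), S^{0}(\beta))$ lies in both $\mathcal{A}(\alpha)$ and $\mathcal{A}(\beta)$. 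This holds because the set of local minimizers of $F(X,Y) = \sum (X_{i}-Y_{i})^{2}$ on $S^{0}(\alpha) \times S^{0}(\beta)$ maps bijectively under coordinate swap to that on $S^{0}(\beta) \times S^{0}(\alpha)$, so with a consistent choice of $\mathrm{Samp}$ the two $\mathrm{MinDi}$ sets coincide and are contained in the unions defining both $\mathcal{A}(\alpha)$ and $\mathcal{A}(\beta)$. The displayed set is then contained in $\lim_{\gamma}(C \cap \mathcal{A}(\alpha)) \cap \lim_{\gamma}(D \cap \mathcal{A}(\beta))$, as required.

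Part (\ref{item:alpha-beta3}) is the special case of Part (\ref{item:alpha-beta2}) with $\beta = \alpha$ and $C = D$: for non-empty $C$ the intersection $\lim_{\gamma}(C) \cap \lim_{\gamma}(C) = \lim_{\gamma}(C)$ is automatically non-empty, so Part (\ref{item:alpha-beta2}) produces a point in $C \cap \mathcal{A}(\alpha)$. The main obstacle throughout is simply keeping track of which $\mathrm{MinDi}$ set sits inside which $\mathcal{A}(\cdot)$; once the symmetry convention is fixed, the argument reduces to a routine invocation of the two underlying $\mathrm{MinDi}$ propositions.
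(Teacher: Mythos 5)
Your proposal is correct and follows essentially the same route as the paper, which simply cites Proposition \ref{prop:closesta} for Part (\ref{item:alpha-beta1}), Proposition \ref{prop:closestb} for Part (\ref{item:alpha-beta2}), and notes that Part (\ref{item:alpha-beta3}) is a special case of Part (\ref{item:alpha-beta2}). The extra bookkeeping you supply (decomposing $\mathrm{MinDi}(S^{0}(\alpha),\mathcal{B})$ over the points of the finite set $\mathcal{B}$, and the symmetry of $\mathrm{MinDi}(S^{0}(\alpha),S^{0}(\beta))$ under swapping $\alpha$ and $\beta$) is exactly the detail the paper leaves implicit.
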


\begin{proof}

Part (\ref{item:alpha-beta1}) follows from Proposition \ref{prop:closesta}.

Part (\ref{item:alpha-beta2}) follows from Proposition \ref{prop:closestb}.

Part (\ref{item:alpha-beta3}) is a special case of Part (\ref{item:alpha-beta2}).
\end{proof}

\section{Divide and conquer algorithm}\label{sec:tree}

\subsection{Description of the tree $\mathrm{Tree} ( V,\mathcal{A} )$, and its
associated roadmap}

We first describe the tree which is going to be constructed in the algorithm,
using the definitions in the two former sections. We then prove that the
(limits of the) union of the leaves of the tree give a roadmap.

Since new infinitesimals will be added at each level of the tree, we need the
following notation.

\begin{notation}
\label{not:rtdt}We consider an ordered domain $\D$ contained in a real
closed field $\R$. We denote by $\D_{t}$ the polynomial ring $\D [ \eta ]$
and we denote by $\R_{t}$ the real closed field $\R \langle \eta \rangle$
where $\eta = ( \eta_{1} , \ldots , \eta_{t} )$ and $\eta_{i} = \left(
\zeta_{i} , \eps_{i} , \delta_{i} , \gamma_{i} \right)$. By convention
$\R_{0} = \R$ and $\D_{0} = \D$.
\end{notation}

\subsubsection{Description of the tree $\mathrm{Tree} ( V,\mathcal{A}
)$}
\label{subsec:tree}
We start with a bounded real algebraic variety $V = \ZZ ( P, \R^{k}
)$, 
strongly 
of dimension $\leq k'$ (assumed to be a power of $2$ for
simplicity), and suppose that $\mathcal{A} \subset V$ is a finite set of
points meeting every semi-algebraically connected component of $V$. The
algorithm constructs a rooted tree, which we denote by $\mathrm{Tree} (
V,\mathcal{A} )$.

More precisely, the root, $\mathfrak{r}$, of $\mathrm{Tree} ( V,\mathcal{A} )$
has level $0$, contains the empty string $s ( \mathfrak{r} )$, the real
algebraic variety $\mathrm{Bas} ( \mathfrak{r} )= V$, and the finite set of
points $\mathcal{A} ( \mathfrak{r} )= \mathcal{A}$. A node $\mathfrak{n}$ of
the tree $\mathrm{Tree} ( V,\mathcal{A} )$ at level $t \neq 0$ contains a
string $s ( \mathfrak{n} ) \in \{ 0,1 \}^{t}$, a basic semi-algebraic set
$\mathrm{Bas} ( \mathfrak{n} ) = \{ w ( \mathfrak{n} ) \} \times \mathrm{Bas} (
\mathcal{P} ( \mathfrak{n} ) ,\mathcal{Q} ( \mathfrak{n} ) ) \subset
\R_{t}^{k}$, 
such that $\mathrm{Bas} ( \mathfrak{n} )$ is strongly of dimension $\leq k' /2^{t}$, 
$w ( \mathfrak{n} ) \in \R_{t}^{\mathrm{Fix} ( \mathfrak{n} )}$,
defining
\[ \mathrm{Fix} ( \mathfrak{n} ) = \sum_{i=1}^{t} s ( \mathfrak{n} )_{i} k'
 /2^{i} , \]
and a finite number of points $\mathcal{A} ( \mathfrak{n} ) \subset \mathrm{Bas}
( \mathfrak{n} )$ meeting every semi-algebraically connected components of
$\mathrm{Bas} ( \mathfrak{n} )$. 
A node $\mathfrak{n}$ of the tree $\mathrm{Tree}
( V,\mathcal{A} )$ of level $t \neq 0$ is either a left child, if the last
bit of $s ( \mathfrak{n} )$ is $0$, or a right child if the last bit of $s (
\mathfrak{n} )$ is $1$.

If the level of the node $\mathfrak{n}$ is $< \log ( k' )$, we construct the
left children and right children of $\mathfrak{n}$ as follows. 
We replace
$\mathrm{Bas} ( \mathfrak{n} )$ by a semi-algebraic set $\widetilde{\mathrm{Bas}}
( \mathfrak{n} ) = \{ w ( \mathfrak{n} ) \} \times \mathrm{Bas} (
\tilde{\mathcal{P}} ( \mathfrak{n} ) , \tilde{\mathcal{Q}} ( \mathfrak{n} )
) \subset \R_{t}^{k}$ (see Notation \ref{not:notationtilde}) such that
\begin{enumerate}
\item
$\lim_{\zeta_{t}} ( \widetilde{\mathrm{Bas}} ( \mathfrak{n} ) ) =\mathrm{Bas}
( \mathfrak{n} )$ 
(using Proposition \ref{prop:limtilde}), and
\item
$\widetilde{\mathrm{Bas}} ( \mathfrak{n} )$ is strongly of dimension 
$\leq k' /2^{t}$ (using Corollary \ref{cor:strong-dimension}).
\end{enumerate}
We define semi-algebraic subsets $\widetilde{\mathrm{Bas}} (
\mathfrak{n} )^{0} , \widetilde{\mathrm{Bas}} ( \mathfrak{n} )^{1}$ of
$\widetilde{\mathrm{Bas}} ( \mathfrak{n} )$, 
with $\widetilde{\mathrm{Bas}} (
\mathfrak{n} )^{ 1}$ strongly of dimension $\leq k'/2^{t+1}$,
by the method
described in Section \ref{sec:deformation}, with 
$p= k'/2^{t}$, $\ell =p/2= k'/2^{t+1}$.

We define $\mathcal{N ( \mathfrak{n} )}$, $\tilde{\mathcal{A}} (
\mathfrak{n} )$, and $\mathcal{\mathcal{B}} ( \mathfrak{n} )$ as in Section
\ref{sec:deformation}. For every $w \in \mathcal{N ( \mathfrak{n} )}$ we have
a right child $\mathfrak{m}$ of the node $\mathfrak{n}$, with
\begin{eqnarray}
s ( \mathfrak{m} ) & = & s ( \mathfrak{n} ) 1, \nonumber\\
w ( \mathfrak{m} ) & = & ( w ( \mathfrak{n} ) ,w ) , \nonumber\\
\mathrm{Bas} ( \mathfrak{m} ) & =& 
\Ext \left( \widetilde{\mathrm{Bas}} ( \mathfrak{n} )_{w ( \mathfrak{m} )} , \R_{t+1} \right) , \nonumber\\
\mathcal{A} ( \mathfrak{m} ) & = & ( \tilde{\mathcal{A}} (
\mathfrak{n} ) \cup \mathcal{B} ( \mathfrak{n} ) )_{w (
\mathfrak{m} )} .\label{eqn:Aofm}
\end{eqnarray}
Recall that $\widetilde{\mathrm{Bas}} ( \mathfrak{n} )^{0} \subset \R_{t} \la
\zeta_{t+1} , \eps_{t+1} , \delta_{t+1} \ra^{k}$ is defined as an image of a
certain semi-algebraic set under a projection along Lagrangian variables. We
are able to identify (by the method of Section \ref{descriptionS0}, using
Notation \ref{not:indices}) a finite family $( \mathrm{Bas} ( \mathfrak{n} )^{0}
( \alpha ) )_{\alpha \in \mathcal{I} ( \mathfrak{n} )}$ (with $\mathcal{I} (
\mathfrak{n} ) =\mathcal{I} ( \tilde{\mathcal{P}} ( \mathfrak{n} ) ,
\tilde{\mathcal{Q}} ( \mathfrak{n} ) ,  k'/2^{t+1} )$ of basic
semi-algebraic subsets of $\Ext \left( \widetilde{\mathrm{Bas}} (
\mathfrak{n} )^{0} , \R_{t+1} \right)$, with each
\[ \begin{array}{lll}
 \mathrm{Bas} ( \mathfrak{n} )^{0} ( \alpha ) & := & \{ w ( \mathfrak{n} )
 \} \times \mathrm{Bas} ( \mathcal{P}^{0} ( \alpha ) ,\mathcal{Q}^{0} (
 \alpha ) )
 \end{array} \subset \R_{t+1}^{k} , \]
 such that
\[ \bigcup_{\alpha \in \mathcal{I} ( \mathfrak{n} )} \lim_{\gamma_{t+1}} (
 \mathrm{Bas} ( \mathfrak{n} )^{0} ( \alpha ) ) =\widetilde{\mathrm{Bas}} (
 \mathfrak{n} )^{0} \]
using Proposition \ref{prop:coverS0}.

For each $\alpha \in \mathcal{I}( \mathfrak{n} )$ we include a left child
node $\mathfrak{n} ( \alpha )$, with
\begin{eqnarray*}
s ( \mathfrak{\mathfrak{n} ( \alpha )} ) & = & s ( \mathfrak{n} ) 0,\\
w ( \mathfrak{\mathfrak{n} ( \alpha )} ) & = & w ( \mathfrak{n} ) ,\\
\mathrm{Bas} ( \mathfrak{n} ( \alpha ) ) & = & \mathrm{Bas} ( \mathfrak{n} )^{0}
( \alpha ) ,\\
\mathcal{A} ( \mathfrak{n} ( \alpha ) ) & = & \mathcal{A} ( \alpha ) \mbox{ (using the definition in Section \ref{sec:definitionB})}.
\end{eqnarray*}

If the level of the node $\mathfrak{n}$ is $\log ( k' )$, then $\mathfrak{n}$
is a leaf of $\mathrm{Tree} ( V,\mathcal{A} )$.  Note that the basic
semi-algebraic set $\mathrm{Bas} ( \mathfrak{n} )$ contained in a leaf
$\mathfrak{n}$ is 
strongly 
of dimension $\leq 1$.

\begin{definition}
\label{def:leftandrightleaves}We denote by $\mathrm{Leav} ( V,\mathcal{A} )
$ the set of leaf nodes of $\mathrm{Tree} ( V,\mathcal{A} )$. When
$\mathfrak{n}$ is a node in $\mathrm{Tree} ( V,\mathcal{A} )$, we denote by
\begin{enumerate}
\item $\mathrm{Leav} ( \mathfrak{n} )$ the set of leaves of the subtree of
$\mathrm{Tree} ( V,\mathcal{A} )$ rooted at $\mathfrak{n}$,

\item $\mathrm{Leav}^{1} ( \mathfrak{n} )$ the set  of leaves $\mathfrak{l}$
of the subtree of $\mathrm{Tree} ( V,\mathcal{A} )$ rooted at $\mathfrak{n}$
with $s ( \mathfrak{l} )$ consisting of $s ( \mathfrak{n} )$ followed only
by 1,

\item $\mathrm{Leav}^{0} ( \mathfrak{n} )$ the set  of leaves $\mathfrak{l}$
of the subtree of $\mathrm{Tree} ( V,\mathcal{A} )$ rooted at $\mathfrak{n}$
with $s ( \mathfrak{l} )$ consisting of $s ( \mathfrak{n} )$ followed only
by 0.
\end{enumerate}
\end{definition}

Note that for every node $\mathfrak{n}$ of level $t$ of $\mathrm{Tree} (
V,\mathcal{A} )$ we have
\begin{equation}
\mathcal{A} ( \mathfrak{n} ) \subset \bigcup_{\mathfrak{l} \in
\mathrm{Leav}^{1} ( \mathfrak{n} )} \lim_{\zeta_{t+1}} ( \mathcal{A} (
\mathfrak{l} ) ) , \label{eqn:rightmostA}
\end{equation}
\begin{equation}
\mathcal{B} ( \mathfrak{n} ) \subset \bigcup_{\mathfrak{l} \in
\mathrm{Leav}^{1} ( \mathfrak{n} )} \lim_{\gamma_{t+1}} ( \mathcal{A} (
\mathfrak{l} ) ) . \label{eqn:rightmostB}
\end{equation}
A useful fact is the following.

\begin{proposition}
\label{prop:additive}Suppose that $\mathcal{A}_{1} , \mathcal{A}_{2}$ are
finite subsets of $V$, and $\mathcal{A} =\mathcal{A}_{1} \cup
\mathcal{A}_{2}$.

Then,
\begin{eqnarray*}
\bigcup_{\mathfrak{l} \in \mathrm{Leav} ( V,\mathcal{A}_{1} ) \cup
\mathrm{Leav} ( V, \mathcal{A}_{2})}\mathrm{Bas}
( \mathfrak{l} ) & = & \bigcup_{\mathfrak{l} \in\mathrm{Leav} ( V,
\mathcal{A} )} \mathrm{Bas} ( \mathfrak{l} ) .
\end{eqnarray*}
\end{proposition}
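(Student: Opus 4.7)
The plan is induction on the tree level $t$ with a strengthened claim. At every node $\mathfrak{n}$ of $\mathrm{Tree}(V,\mathcal{A})$, identified by its path from the root, let $I(\mathfrak{n}) \subseteq \{1,2\}$ denote the set of indices $i$ such that the same path also occurs as a node in $\mathrm{Tree}(V,\mathcal{A}_i)$. The inductive claim is that $I(\mathfrak{n}) \neq \emptyset$, that $\mathrm{Bas}(\mathfrak{n})$ and $\widetilde{\mathrm{Bas}}(\mathfrak{n})^0$ depend only on the path (not on $\mathcal{A}$), and that the finite sets $\mathcal{A}(\mathfrak{n}), \tilde{\mathcal{A}}(\mathfrak{n}), \mathcal{N}(\mathfrak{n}), \mathcal{B}(\mathfrak{n})$ are additive in $\mathcal{A}$, e.g.
\[
\mathcal{A}(\mathfrak{n})(V,\mathcal{A}) = \bigcup_{i \in I(\mathfrak{n})} \mathcal{A}(\mathfrak{n})(V,\mathcal{A}_i),
\]
with analogous identities for $\tilde{\mathcal{A}},\mathcal{N},\mathcal{B}$. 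The base case $t=0$ is immediate since $\mathcal{A}(\mathfrak{r}) = \mathcal{A}_1 \cup \mathcal{A}_2$ and $I(\mathfrak{r}) = \{1,2\}$.

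The engine of the induction is the fact that $\mathrm{MinDist}$ distributes over unions in its second argument, $\mathrm{MinDist}(S, A \cup B) = \mathrm{MinDist}(S, A) \cup \mathrm{MinDist}(S, B)$, which follows from the decomposition of local minimizers of $F(X,Y) = \sum (X_i - Y_i)^2$ on $S \times (A \cup B)$. From $\tilde{\mathcal{A}}(\mathfrak{n}) = \mathrm{MinDist}(\tilde{S},\mathcal{A}(\mathfrak{n})) \cup \mathrm{MinDist}(\tilde{S},\tilde{S})$ additivity transfers to $\tilde{\mathcal{A}}$, and then to $\mathcal{N}(\mathfrak{n}) = \pi_{[1,\ell]}(\tilde{\mathcal{M}} \cup \mathcal{M}^0 \cup \tilde{\mathcal{A}})$ and $\mathcal{B}(\mathfrak{n}) = (\widetilde{\mathrm{Bas}}(\mathfrak{n})^0)_{\mathcal{N}(\mathfrak{n})}$, since $\tilde{\mathcal{M}}, \mathcal{M}^0, \widetilde{\mathrm{Bas}}^0$ are determined by $\mathrm{Bas}(\mathfrak{n})$. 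For the inductive step, a left child $\mathfrak{n}(\alpha)$ satisfies $I(\mathfrak{n}(\alpha)) = I(\mathfrak{n})$ since the set of allowable $\alpha$'s does not depend on $\mathcal{A}$, and $\mathcal{A}(\mathfrak{n}(\alpha)) = \mathrm{MinDist}(S^0(\alpha),\mathcal{B}(\mathfrak{n})) \cup \bigcup_{\beta}\mathrm{MinDist}(S^0(\alpha),S^0(\beta))$ is additive by additivity of $\mathcal{B}(\mathfrak{n})$. For a right child $\mathfrak{m}$ at $w \in \mathcal{N}(\mathfrak{n})(V,\mathcal{A})$, additivity of $\mathcal{N}(\mathfrak{n})$ furnishes some $i \in I(\mathfrak{n})$ with $w \in \mathcal{N}(\mathfrak{n})(V,\mathcal{A}_i)$, so $I(\mathfrak{m}) := \{i \in I(\mathfrak{n}) : w \in \mathcal{N}(\mathfrak{n})(V,\mathcal{A}_i)\}$ is non-empty and is precisely the set of individual trees containing $\mathfrak{m}$. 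For $i \notin I(\mathfrak{m})$, the slice at $w$ of the contribution of $\mathcal{A}_i$ to $\tilde{\mathcal{A}}(\mathfrak{n}) \cup \mathcal{B}(\mathfrak{n})$ is empty, because $\pi_{[1,\ell]}(\tilde{\mathcal{A}}_i)$ and $\pi_{[1,\ell]}(\mathcal{B}_i)$ both lie inside $\mathcal{N}(\mathfrak{n})(V,\mathcal{A}_i)$; hence $\mathcal{A}(\mathfrak{m}) = (\tilde{\mathcal{A}}(\mathfrak{n}) \cup \mathcal{B}(\mathfrak{n}))_w = \bigcup_{i \in I(\mathfrak{m})}\mathcal{A}(\mathfrak{m})(V,\mathcal{A}_i)$, closing the induction.

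Applied at the leaf level $t = \log k'$, together with the monotonicity $\mathcal{N}(V,\mathcal{A}) \supseteq \mathcal{N}(V,\mathcal{A}_i)$ at each ancestor (itself a consequence of additivity), the inductive claim shows that the set of leaf paths in $\mathrm{Tree}(V,\mathcal{A})$ coincides with the union of leaf paths in $\mathrm{Tree}(V,\mathcal{A}_1)$ and $\mathrm{Tree}(V,\mathcal{A}_2)$. Since $\mathrm{Bas}(\mathfrak{l})$ depends only on the path, the corresponding unions of basic sets are equal, which is the assertion of the proposition. The main technical obstacle resolved by the induction is that the node set of $\mathrm{Tree}(V,\mathcal{A})$ is not a priori the union of node sets of $\mathrm{Tree}(V,\mathcal{A}_i)$, because $\mathcal{N}$ at each internal node depends non-trivially on $\mathcal{A}$; the additivity claim prevents any path in $\mathrm{Tree}(V,\mathcal{A})$ from genuinely mixing contributions of $\mathcal{A}_1$ and $\mathcal{A}_2$, since as soon as a right-edge rules out one of the indices, the $\mathcal{A}$-data at all subsequent nodes collapse to those of the surviving tree.
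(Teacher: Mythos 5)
Your proof is correct and follows essentially the same route as the paper's: an induction on the level of the tree showing that the geometric data at each node depends only on the path while the finite point sets are additive in $\mathcal{A}$, with the additivity of $\mathrm{MinDi}$ over unions in its second argument and the fact that $\tilde{\mathcal{M}}, \mathcal{M}^{0}, \widetilde{\mathrm{Bas}}(\mathfrak{n})^{0}$ and the index set of left children are independent of $\mathcal{A}$ doing the work. Your index set $I(\mathfrak{n})$ simply packages into a single strengthened invariant what the paper states as two separate inductive claims (every node of $\mathrm{Tree}(V,\mathcal{A}_{i})$ embeds into $\mathrm{Tree}(V,\mathcal{A})$, and every node of $\mathrm{Tree}(V,\mathcal{A})$ decomposes into one of three cases), and your case analysis for right children matches the paper's trichotomy on whether $w(\mathfrak{m})$ projects from $\mathcal{A}(\mathfrak{n}_{1})\cap\mathcal{A}(\mathfrak{n}_{2})$ or from one of the differences.
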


\begin{proof}
We prove by induction on the level $t$ the two following statements.
\begin{enumerate}
\item For each node $\mathfrak{m}$ of $\mathrm{Tree} ( V,\mathcal{A}_{1} )$
(respectively $\mathrm{Tree} ( V, \mathcal{\mathcal{A}_{2}} )$) with
$\mathrm{level} ( \mathfrak{m} ) =t$, there exists a node $\mathfrak{m}'$ of
$\mathrm{Tree} ( V, \mathcal{A} )$ with level $t$ such that
\begin{eqnarray}
( w ( \mathfrak{m}' ) ,s ( \mathfrak{m}' ) , \tilde{\mathcal{P}} (
\mathfrak{m}' ) , \tilde{\mathcal{Q}} ( \mathfrak{m}' ) ) & = & ( w (
\mathfrak{m} ) ,s ( \mathfrak{m} ) , \tilde{\mathcal{P}} ( \mathfrak{m}
) , \tilde{\mathcal{Q}} ( \mathfrak{m} ) ) ,\label{eqn:monotone1}\\
\mathcal{A} ( \mathfrak{m}' ) & \supset & \mathcal{A} ( \mathfrak{m} ) .
\nonumber
\end{eqnarray}
\item For each node $\mathfrak{m}$ of $\mathrm{Tree} ( V, \mathcal{A} )$ \
with $\mathrm{level} ( \mathfrak{m} ) =t$, either there exists a node
$\mathfrak{m}_{1}$ of $\mathrm{Tree} ( V,\mathcal{A}_{1} )$ with level $t$
such that
\begin{eqnarray}
( w ( \mathfrak{m}_{1} ) ,s ( \mathfrak{m}_{1} ) , \tilde{\mathcal{P}} (
\mathfrak{m}_{1} ) , \tilde{\mathcal{Q}} ( \mathfrak{m}_{1} ) ) & = & (
w ( \mathfrak{m} ) ,s ( \mathfrak{m} ) , \tilde{\mathcal{P}} (
\mathfrak{m} ) , \tilde{\mathcal{Q}} ( \mathfrak{m} ) ) , 
\label{eqn:monotone2}\\
\mathcal{A} ( \mathfrak{m}_{1} ) & = & \mathcal{A} ( \mathfrak{m} ) ,
\nonumber
\end{eqnarray}
or there exists a node $\mathfrak{m}_{2}$ of $\mathrm{Tree} ( V,
\mathcal{A}_{2} )$ with level $t$ such that
\begin{eqnarray}
( w ( \mathfrak{m}_{2} ) ,s ( \mathfrak{m}_{2} ) , \tilde{\mathcal{P}} (
\mathfrak{m}_{2} ) , \tilde{\mathcal{Q}} ( \mathfrak{m}_{2} ) ) & = & (
w ( \mathfrak{m} ) ,s ( \mathfrak{m} ) , \tilde{\mathcal{P}} (
\mathfrak{m} ) , \tilde{\mathcal{Q}} ( \mathfrak{m} ) ) , 
\label{eqn:monotone3}\\
\mathcal{A} ( \mathfrak{m}_{2} ) & = & \mathcal{A} ( \mathfrak{m} ) ,
\nonumber
\end{eqnarray}
or there exists a node $\mathfrak{m}_{1}$ of $\mathrm{Tree} (
V,\mathcal{A}_{1} )$ and a node $\mathfrak{m}_{2}$ of $\mathrm{Tree} ( V,
\mathcal{A}_{2} )$, both with level $t$ such that
\begin{eqnarray}
( w ( \mathfrak{m}_{1} ) ,s ( \mathfrak{m}_{1} ) , \tilde{\mathcal{P}} (
\mathfrak{m}_{1} ) , \tilde{\mathcal{Q}} ( \mathfrak{m}_{1} ) ) & = & (
w ( \mathfrak{m}_{2} ) ,s ( \mathfrak{m}_{2} ) , \tilde{\mathcal{P}} (
\mathfrak{m}_{2} ) , \tilde{\mathcal{Q}} ( \mathfrak{m}_{2} ) ) 
\label{eqn:monotone4}\\
& = & ( w ( \mathfrak{m} ) ,s ( \mathfrak{m} ) , \tilde{\mathcal{P}} (
\mathfrak{m} ) , \tilde{\mathcal{Q}} ( \mathfrak{m} ) ) , \nonumber\\
\mathcal{A} ( \mathfrak{m}_{1} ) \cup\mathcal{A} ( \mathfrak{m}_{2}
) & = & \mathcal{A} ( \mathfrak{m} ) . \nonumber
\end{eqnarray}
\end{enumerate}
The base case is when $t=0$, and in this case the claim is obviously true.
We now prove the inductive step from $t-1$ to $t$.
\begin{enumerate}
\item Suppose that the node $\mathfrak{m}$ is a child of $\mathfrak{n}$.
Since, $\mathrm{level} ( \mathfrak{n} ) =t-1$, by induction hypothesis there
exists a node $\mathfrak{n}'$ in $\mathrm{Tree} ( V, \mathcal{A} )$ with
level $t-1$ such that
\begin{eqnarray*}
( w ( \mathfrak{n}' ) ,s ( \mathfrak{n}' ) , \tilde{\mathcal{P}} (
\mathfrak{n}' ) , \tilde{\mathcal{Q}} ( \mathfrak{n}' ) ) & = & ( w (
\mathfrak{n} ) ,s ( \mathfrak{n} ) , \tilde{\mathcal{P}} ( \mathfrak{n}
) , \tilde{\mathcal{Q}} ( \mathfrak{n} ) ) ,\\
\mathcal{A} ( \mathfrak{n}' ) & \supset & \mathcal{A} ( \mathfrak{n} ) .
\end{eqnarray*}
The existence of a child $\mathfrak{m}'$ of the node $\mathfrak{n}'$
satisfying (\ref{eqn:monotone1}) is now clear from the definition of
$\mathrm{Tree} ( V, \mathcal{A} )$ (following the description given in
Section \ref{subsec:tree}). This completes the induction in this case.

\item Suppose again that the node $\mathfrak{m}$ is a child of
$\mathfrak{n}$. Since, $\mathrm{level} ( \mathfrak{n} ) =t-1$, by induction
hypothesis there exists either a node $\mathfrak{n}_{1}$ of $\mathrm{Tree} (
V,\mathcal{A}_{1} )$ with level $t-1$ such that
\begin{eqnarray}
( w ( \mathfrak{n}_{1} ) ,s ( \mathfrak{n}_{1} ) , \tilde{\mathcal{P}} (
\mathfrak{n}_{1} ) , \tilde{\mathcal{Q}} ( \mathfrak{n}_{1} ) ) & = & (
w ( \mathfrak{n} ) ,s ( \mathfrak{n} ) , \tilde{\mathcal{P}} (
\mathfrak{n} ) , \tilde{\mathcal{Q}} ( \mathfrak{n} ) ) , 
\label{eqn:monotone2a}\\
\mathcal{A} ( \mathfrak{n}_{1} ) & = & \mathcal{A} ( \mathfrak{n} ) ,
\nonumber
\end{eqnarray}
or there exists a node $\mathfrak{n}_{2}$ of $\mathrm{Tree} ( V,
\mathcal{A}_{2} )$ with level $t-1$ such that
\begin{eqnarray}
( w ( \mathfrak{n}_{2} ) ,s ( \mathfrak{n}_{2} ) , \tilde{\mathcal{P}} (
\mathfrak{n}_{2} ) , \tilde{\mathcal{Q}} ( \mathfrak{n}_{2} ) ) & = & (
w ( \mathfrak{n} ) ,s ( \mathfrak{n} ) , \tilde{\mathcal{P}} (
\mathfrak{n} ) , \tilde{\mathcal{Q}} ( \mathfrak{n} ) ) , 
\label{eqn:monotone3a}\\
\mathcal{A} ( \mathfrak{n}_{2} ) & = & \mathcal{A} ( \mathfrak{n} ) ,
\nonumber
\end{eqnarray}
or there exists a node $\mathfrak{n}_{1}$ of $\mathrm{Tree} (
V,\mathcal{A}_{1} )$ and a node $\mathfrak{n}_{2}$ of $\mathrm{Tree} ( V,
\mathcal{A}_{2} )$ both with level $t-1$ such that
\begin{eqnarray}
( w ( \mathfrak{n}_{1} ) ,s ( \mathfrak{n}_{1} ) , \tilde{\mathcal{P}} (
\mathfrak{n}_{1} ) , \tilde{\mathcal{Q}} ( \mathfrak{n}_{1} ) ) & = & (
w ( \mathfrak{n}_{2} ) ,s ( \mathfrak{n}_{2} ) , \tilde{\mathcal{P}} (
\mathfrak{n}_{2} ) , \tilde{\mathcal{Q}} ( \mathfrak{n}_{2} ) )
\nonumber\\
& = & ( w ( \mathfrak{n} ) ,s ( \mathfrak{n} ) , \tilde{\mathcal{P}} (
\mathfrak{n} ) , \tilde{\mathcal{Q}} ( \mathfrak{n} ) ) , 
\label{eqn:monotone4a}\\
\mathcal{A} ( \mathfrak{n}_{1} ) \cup\mathcal{A} ( \mathfrak{n}_{2}
) & = & \mathcal{A} ( \mathfrak{n} ) . \nonumber
\end{eqnarray}
It follows from the description of $\mathrm{Tree} ( V,\mathcal{A}_{1} )$
given in Section \ref{subsec:tree} that\eqref{eqn:monotone2} implies
that there exists a child $\mathfrak{m}_{1}$ of the node
$\mathfrak{n}_{1}$ satisfying  \eqref{eqn:monotone2a}. Similarly, it
is clear that \eqref{eqn:monotone3}  implies that there exists a child
$\mathfrak{m}_{2}$ of the node $\mathfrak{n}_{2}$ satisfying 
\eqref{eqn:monotone3a}. Now suppose that \eqref{eqn:monotone4} hold.
If $\mathfrak{m}$ is a left child of $\mathfrak{n}$, then clearly there
exists a child $\mathfrak{m}_{1}$ of the node $\mathfrak{n}_{1}$, and a
child $\mathfrak{m}_{2}$ of the node $\mathfrak{n}_{2}$ satisfying 
\eqref{eqn:monotone4a}. If $\mathfrak{m}$ is a right child of
$\mathfrak{n}$, then there are several cases.
\begin{enumerate}
\item Case $w ( \mathfrak{m} ) \in \pi_{[ 1, \mathrm{Fix} ( \mathfrak{m} )
]} ( \mathcal{A} ( \mathfrak{n}_{1} ) \cap \mathcal{A} (
\mathfrak{n}_{2} ) )$: In this case there exists a child
$\mathfrak{m}_{1}$ of the node $\mathfrak{n}_{1}$, and a child
$\mathfrak{m}_{2}$ of the node $\mathfrak{n}_{2}$ satisfying
\eqref{eqn:monotone4}.

\item Case $w ( \mathfrak{m} ) \in \pi_{[ 1, \mathrm{Fix} ( \mathfrak{m} )
]} ( \mathcal{A} ( \mathfrak{n}_{1} ) \setminus \mathcal{A} (
\mathfrak{n}_{1} ) \cap \mathcal{A} ( \mathfrak{n}_{2} ) )$: In this
case there exists a child $\mathfrak{m}_{1}$ of the node
$\mathfrak{n}_{1}$ satisfying \eqref{eqn:monotone2}.

\item Case $w ( \mathfrak{m} ) \in \pi_{[ 1, \mathrm{Fix} ( \mathfrak{m} )
]} ( \mathcal{A} ( \mathfrak{n}_{2} ) \setminus \mathcal{A} (
\mathfrak{n}_{1} ) \cap \mathcal{A} ( \mathfrak{n}_{2} ) )$: In this
case there exists a child $\mathfrak{m}_{2}$ of the node
$\mathfrak{n}_{2}$ satisfying \eqref{eqn:monotone3}.
\end{enumerate}
This completes the induction in this case.
\end{enumerate}
The proposition follows by applying the result proved above to the leaf
nodes of the trees $\mathrm{Tree} ( V,\mathcal{A}_{1} )$, $\mathrm{Tree} ( V,
\mathcal{A}_{2} )$ and $\mathrm{Tree} ( V, \mathcal{A} )$.
\end{proof}

\subsubsection{Roadmap associated to $\mathrm{Tree} ( V,\mathcal{A} )$}

We now prove that the union of the (limits of the) sets contained in the
leaves of $\mathrm{Tree} ( V,\mathcal{A} )$ form a roadmap. Most of this section
is devoted to the proof of the following theorem, which is the key result
needed to prove the correctness of our algorithms.

\begin{theorem}
\label{thm:correctness}The semi-algebraic set
\[ \mathrm{DCRM} ( V, \mathcal{A} ) \assign \bigcup_{\mathfrak{l} \in
 \mathrm{Leav} ( V,\mathcal{A} )}\lim_{\zeta_{1}} (
 \mathrm{Bas} ( \mathfrak{l} ) ) \]
contains $\mathcal{A}$, and is a roadmap of $V$.
\end{theorem}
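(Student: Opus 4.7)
The plan is to prove by downward induction on the level $t$ of nodes $\mathfrak{n}$ in $\mathrm{Tree}(V,\mathcal{A})$ that the semi-algebraic set
\[
R(\mathfrak{n}) := \bigcup_{\mathfrak{l} \in \mathrm{Leav}(\mathfrak{n})} \lim_{\zeta_{t+1}} \mathrm{Bas}(\mathfrak{l})
\]
is a roadmap of $\mathrm{Bas}(\mathfrak{n})$ containing $\mathcal{A}(\mathfrak{n})$. Applied at the root $\mathfrak{r}$, this yields the theorem since $R(\mathfrak{r}) = \mathrm{DCRM}(V,\mathcal{A})$, $\mathrm{Bas}(\mathfrak{r}) = V$, and $\mathcal{A}(\mathfrak{r}) = \mathcal{A}$. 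The base case at a leaf $\mathfrak{l}$ of level $\log k'$ is immediate: $\mathrm{Bas}(\mathfrak{l})$ is strongly of dimension $\leq 1$, so it is its own roadmap (RM${}_1$ trivially, and RM${}_2$ because fibers of $\pi_1$ over a strongly $1$-dimensional set are finite), and $\mathcal{A}(\mathfrak{l}) \subset \mathrm{Bas}(\mathfrak{l})$ holds by construction.

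For the inductive step at an internal node $\mathfrak{n}$ of level $t$, Proposition~\ref{prop:limtilde} gives $\mathrm{Bas}(\mathfrak{n}) = \lim_{\zeta_{t+1}} \widetilde{\mathrm{Bas}}(\mathfrak{n})$. The strategy is first to assemble a roadmap $\tilde{R}(\mathfrak{n})$ of the intermediate set $\widetilde{\mathrm{Bas}}(\mathfrak{n})$ out of the children's roadmaps, and then to descend to $\mathrm{Bas}(\mathfrak{n})$ by taking $\lim_{\zeta_{t+1}}$. For the right children (indexed by $w \in \mathcal{N}(\mathfrak{n})$), the inductive hypothesis gives roadmaps of the fibers $\widetilde{\mathrm{Bas}}(\mathfrak{n})_w$, whose union is a roadmap of $\tilde{S}^1(\mathfrak{n})$. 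For the left children (indexed by the charts $\alpha \in \mathcal{I}(\mathfrak{n})$), the inductive hypothesis gives roadmaps of each $S^0(\alpha)$; Proposition~\ref{prop:coverS0} identifies $\lim_{\gamma_{t+1}}\bigcup_\alpha S^0(\alpha)$ with $\tilde{S}^0(\mathfrak{n})$, while Proposition~\ref{prop:alpha-beta} ensures that the witness points in the $\mathcal{A}(\alpha)$ preserve connectivity across the different charts in the $\gamma_{t+1}$-limit, so $\bigcup_\alpha \lim_{\gamma_{t+1}} R(\mathfrak{n}(\alpha))$ is a roadmap of $\tilde{S}^0(\mathfrak{n})$. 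By Proposition~\ref{prop:good-connectivity}, the union $\tilde{S}^0(\mathfrak{n}) \cup \tilde{S}^1(\mathfrak{n})$ has good connectivity with respect to $\widetilde{\mathrm{Bas}}(\mathfrak{n})$, so gluing the two partial roadmaps yields a roadmap $\tilde{R}(\mathfrak{n})$ of $\widetilde{\mathrm{Bas}}(\mathfrak{n})$ containing $\tilde{\mathcal{A}}(\mathfrak{n}) \cup \mathcal{B}(\mathfrak{n})$.

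The main obstacle is the descent from $\tilde{R}(\mathfrak{n})$ to $R(\mathfrak{n}) = \lim_{\zeta_{t+1}}\tilde{R}(\mathfrak{n})$: connectivity is not preserved automatically under $\lim_{\zeta_{t+1}}$, because two distinct semi-algebraically connected components $\tilde{C}, \tilde{D}$ of $\widetilde{\mathrm{Bas}}(\mathfrak{n})$ may have overlapping limits, and each connected component of the overlap must be witnessed by a point lying in both $\tilde{C}$ and $\tilde{D}$ that survives in $R(\mathfrak{n})$. This is exactly the content of Proposition~\ref{prop:propertyofAtilde}(\ref{item:propertyofAtilde2}) applied to the minimum-distance set $\tilde{\mathcal{A}}(\mathfrak{n})$: the required witnesses lie in $\tilde{\mathcal{A}}(\mathfrak{n}) \cap \tilde{C}$ and $\tilde{\mathcal{A}}(\mathfrak{n}) \cap \tilde{D}$, and via equation~(\ref{eqn:Aofm}) they are absorbed into $\mathcal{A}(\mathfrak{m})$ for the appropriate right children $\mathfrak{m}$, and hence by~(\ref{eqn:rightmostA}) applied recursively they survive into $R(\mathfrak{n})$, yielding RM${}_1$ for $\mathrm{Bas}(\mathfrak{n})$. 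RM${}_2$ is inherited upward from the leaves (where it is trivial by the strong-dimension bound) together with the fact that at each level the right children cover all fibers over $\mathcal{N}(\mathfrak{n})$, which by construction includes the critical first-coordinate projections coming from $\widetilde{\mathcal{M}}$ and $\mathcal{M}^0$. The containment $\mathcal{A}(\mathfrak{n}) \subset R(\mathfrak{n})$ follows from~(\ref{eqn:rightmostA}) and the inductive hypothesis, while Proposition~\ref{prop:additive} provides the bookkeeping tool needed when $\mathcal{A}(\mathfrak{n})$ splits across multiple right-subtrees at subsequent levels, ensuring no components are lost in the union of leaves.
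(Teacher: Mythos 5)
Your overall strategy---downward induction on the level, assembling a roadmap of $\widetilde{\mathrm{Bas}}(\mathfrak{n})$ from the children's roadmaps via Proposition \ref{prop:good-connectivity}, and then descending through $\lim_{\zeta_{t+1}}$ using the minimum-distance witnesses of Proposition \ref{prop:propertyofAtilde} and Proposition \ref{prop:alpha-beta}---is essentially the route the paper takes, except that the paper never packages an intermediate roadmap of $\widetilde{\mathrm{Bas}}(\mathfrak{n})$: it runs two separate inductions, one (Proposition \ref{prop:structure2}, via Lemma \ref{lem:intermediate}) producing chains of witness points in the $\mathcal{A}(\mathfrak{l})$'s that yield the connectivity half of $\RM_1$, and one (Proposition \ref{prop:leaves2}) for meeting every semi-algebraically connected component of every fiber. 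Your treatment of $\RM_1$ and of the containment $\mathcal{A}(\mathfrak{n})\subset R(\mathfrak{n})$ is sound and matches the paper's in substance.

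The genuine gap is your argument for $\RM_2$. You assert that $\RM_2$ is ``inherited upward from the leaves together with the fact that at each level the right children cover all fibers over $\mathcal{N}(\mathfrak{n})$.'' But $\RM_2$ requires meeting every semi-algebraically connected component of $S_x$ for \emph{every} $x\in\R$, whereas $\mathcal{N}(\mathfrak{n})$ is finite; the right children contribute nothing over the generic values of the first free coordinate, above which $\tilde{S}_{\mathcal{N}}$ has no fiber at all. The mechanism that actually delivers $\RM_2$ is the left branch: by Proposition \ref{prop:properties-of-S0} (Property \ref{item:special1} of Definition \ref{def:property-special}), $\tilde{S}^{0}_{w}$ meets every semi-algebraically connected component of $\tilde{S}_{w}$ for \emph{every} $w$, hence for every value of the first free coordinate, and iterating this down the all-zero paths of the tree is exactly the content of Proposition \ref{prop:leaves2}, which shows that already $\bigcup_{\mathfrak{m}\in\mathrm{Leav}^{0}(\mathfrak{n})}\lim_{\zeta_{t+1}}(\mathrm{Bas}(\mathfrak{m}))$ meets every component of every fiber of $\mathrm{Bas}(\mathfrak{n})$. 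That induction must also track the interplay of $\lim_{\zeta_{t+1}}$, $\lim_{\gamma_{t+1}}$ and the chart decomposition of Proposition \ref{prop:coverS0}, which your sketch elides. As written, your stated reason would not produce a valid proof of $\RM_2$; the repair is available in the paper but is a separate induction, not a consequence of the gluing step you describe.
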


Theorem \ref{thm:correctness} will follow from the following more general
proposition.

\begin{proposition}
\label{prop:correctness}Let $\mathfrak{n}$ be a node in $\mathrm{Tree} (
V,\mathcal{A} )$ of level $t$. Then, the semi-algebraic set
\[ \mathrm{DCRM} ( \mathrm{Bas} ( \mathfrak{n} ) , \mathcal{A ( \mathfrak{n} )}
 ) \assign \bigcup_{\mathfrak{l} \in \mathrm{Leav} ( \mathfrak{n} )}
 \lim_{\zeta_{t+1}} ( \mathrm{Bas} ( \mathfrak{l} ) ) \]

contains $\mathcal{A} ( \mathfrak{n} )$, and is a roadmap of $\mathrm{Bas} (
\mathfrak{n} )$.
\end{proposition}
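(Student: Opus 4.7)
The plan is to prove the proposition by downward induction on $\log(k')-t$, proceeding from the leaves up to the root. Since $\mathrm{Leav}(\mathfrak{n}) = \bigcup_{\mathfrak{c}} \mathrm{Leav}(\mathfrak{c})$ where $\mathfrak{c}$ ranges over the children of $\mathfrak{n}$, and since the outer limit $\lim_{\zeta_{t+1}}$ factors through the inner limit $\lim_{\zeta_{t+2}}$, the recursive identity
\[
\mathrm{DCRM}(\mathrm{Bas}(\mathfrak{n}), \mathcal{A}(\mathfrak{n})) \;=\; \lim_{\zeta_{t+1}} \bigcup_{\mathfrak{c}} \mathrm{DCRM}(\mathrm{Bas}(\mathfrak{c}), \mathcal{A}(\mathfrak{c}))
\]
reduces the induction to the following step: given roadmaps $R(\mathfrak{c})$ of $\mathrm{Bas}(\mathfrak{c})$ containing $\mathcal{A}(\mathfrak{c})$ for every child $\mathfrak{c}$, show that $\lim_{\zeta_{t+1}}$ of their union is a roadmap of $\mathrm{Bas}(\mathfrak{n})$ containing $\mathcal{A}(\mathfrak{n})$. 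The base case is a leaf $\mathfrak{n}$ at level $\log(k')$, where $\mathrm{Bas}(\mathfrak{n})$ is strongly of dimension $\leq 1$ and contains $\mathcal{A}(\mathfrak{n})$ by construction, so $\mathrm{DCRM}(\mathrm{Bas}(\mathfrak{n}),\mathcal{A}(\mathfrak{n})) = \mathrm{Bas}(\mathfrak{n})$ is trivially a roadmap of itself.

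For the inductive step at a node $\mathfrak{n}$ of level $t < \log(k')$, the plan is to carry out three stages. First, grouping the left children $\mathfrak{n}(\alpha)$, $\alpha \in \mathcal{I}(\mathfrak{n})$, and applying the induction hypothesis to each, I obtain roadmaps $R(\mathfrak{n}(\alpha))$ of $\mathrm{Bas}(\mathfrak{n})^{0}(\alpha)$ containing $\mathcal{A}(\alpha)$. I would then show that
\[
R^{0} \;:=\; \bigcup_{\alpha \in \mathcal{I}(\mathfrak{n})} \lim_{\gamma_{t+1}} R(\mathfrak{n}(\alpha))
\]
is a roadmap of $\widetilde{\mathrm{Bas}}(\mathfrak{n})^{0}$ that contains $\mathcal{B}(\mathfrak{n})$: coverage of $\widetilde{\mathrm{Bas}}(\mathfrak{n})^{0}$ comes from Proposition~\ref{prop:coverS0}; the containment $\mathcal{B}(\mathfrak{n}) \subset R^{0}$ from Proposition~\ref{prop:alpha-beta}(\ref{item:alpha-beta1}); and the required semi-algebraic connectivity within each connected component of $\widetilde{\mathrm{Bas}}(\mathfrak{n})^{0}$ by chaining the chart-roadmaps $\lim_{\gamma_{t+1}} R(\mathfrak{n}(\alpha))$ through the bridging $\mathrm{MinDist}$-points supplied by Proposition~\ref{prop:alpha-beta}(\ref{item:alpha-beta2})--(\ref{item:alpha-beta3}). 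Symmetrically, the union $R^{1}$ of the right-child roadmaps is a roadmap of $\widetilde{\mathrm{Bas}}(\mathfrak{n})^{1} = \widetilde{\mathrm{Bas}}(\mathfrak{n})_{\mathcal{N}(\mathfrak{n})}$ containing both $\mathcal{B}(\mathfrak{n})$ and $\tilde{\mathcal{A}}(\mathfrak{n}) \cap \widetilde{\mathrm{Bas}}(\mathfrak{n})^{1}$.

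Second, I would glue $R^{0}$ and $R^{1}$ into a roadmap of $\widetilde{\mathrm{Bas}}(\mathfrak{n})$. Propositions~\ref{prop:special} and~\ref{prop:good-connectivity} identify $(\widetilde{\mathrm{Bas}}(\mathfrak{n}), \tilde{\mathcal{M}}, \ell, \widetilde{\mathrm{Bas}}(\mathfrak{n})^{0}, \mathcal{D}^{0}, \mathcal{M}^{0})$ as special and give that $\widetilde{\mathrm{Bas}}(\mathfrak{n})^{0} \cup \widetilde{\mathrm{Bas}}(\mathfrak{n})^{1}$ has good connectivity with respect to $\widetilde{\mathrm{Bas}}(\mathfrak{n})$. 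Since $R^{0} \cap R^{1} \supset \mathcal{B}(\mathfrak{n})$ contains one bridging point per component of the intersection $\widetilde{\mathrm{Bas}}(\mathfrak{n})^{0} \cap \widetilde{\mathrm{Bas}}(\mathfrak{n})^{1}$, the union $R^{0} \cup R^{1}$ is then a roadmap of $\widetilde{\mathrm{Bas}}(\mathfrak{n})$ containing $\tilde{\mathcal{A}}(\mathfrak{n})$, with $\mathrm{RM}_{2}$ secured by the fact that $\mathcal{N}(\mathfrak{n})$ contains the $\pi_{[1,\ell]}$-projections of all critical and $\mathcal{D}^{0}$-special points prescribed in Definition~\ref{def:property-special}. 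Third, I would pass to the limit $\zeta_{t+1} \to 0$: Proposition~\ref{prop:limtilde} yields $\lim_{\zeta_{t+1}} \widetilde{\mathrm{Bas}}(\mathfrak{n}) = \mathrm{Bas}(\mathfrak{n})$, while Proposition~\ref{prop:propertyofAtilde} together with Corollary~\ref{cor:propertyofAtilde} transport both the containment $\mathcal{A}(\mathfrak{n}) \subset \lim_{\zeta_{t+1}} \tilde{\mathcal{A}}(\mathfrak{n})$ and the chaining needed to verify that $\lim_{\zeta_{t+1}}(R^{0} \cup R^{1}) = \mathrm{DCRM}(\mathrm{Bas}(\mathfrak{n}), \mathcal{A}(\mathfrak{n}))$ remains a roadmap of $\mathrm{Bas}(\mathfrak{n})$.

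The hard part will be the first stage: rigorously verifying that $R^{0}$ is semi-algebraically connected on every component of $\widetilde{\mathrm{Bas}}(\mathfrak{n})^{0}$. Each overlap $\lim_{\gamma_{t+1}} S^{0}(\alpha) \cap \lim_{\gamma_{t+1}} S^{0}(\beta)$ can have several semi-algebraically connected components, and each such component must contain a point lying simultaneously in $R(\mathfrak{n}(\alpha))$ and $R(\mathfrak{n}(\beta))$ to chain the two chart-roadmaps together; this is exactly the role of the $\mathrm{MinDist}(S^{0}(\alpha), S^{0}(\beta))$-points placed in $\mathcal{A}(\alpha) \cap \mathcal{A}(\beta)$ via Proposition~\ref{prop:closestb}, but writing down explicit semi-algebraic paths realizing these bridges, and verifying that every component of every overlap is captured, will be the main technical work. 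The same chaining template, with $\tilde{\mathcal{A}}(\mathfrak{n})$ in place of the $\mathcal{A}(\alpha)$'s and $\zeta_{t+1}$ in place of $\gamma_{t+1}$, reappears in the third stage; once the first stage is in hand, the rest of the proof amounts to replaying the same argument in a different guise and routinely checking $\mathrm{RM}_{1}$ and $\mathrm{RM}_{2}$ at each intermediate level.
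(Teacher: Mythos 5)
Your overall strategy --- induct from the leaves upward, assemble the left-children roadmaps into a roadmap of $\widetilde{\mathrm{Bas}}(\mathfrak{n})^{0}$ and the right-children roadmaps into one of $\widetilde{\mathrm{Bas}}(\mathfrak{n})^{1}$, glue them through $\mathcal{B}(\mathfrak{n})$ using the good connectivity property, and pass to the $\zeta_{t+1}$-limit --- is essentially the paper's. The paper packages the chaining you describe as Proposition \ref{prop:structure2} and Lemma \ref{lem:intermediate}; your ``hard part'' (bridging the chart-roadmaps $\lim_{\gamma_{t+1}} R(\mathfrak{n}(\alpha))$ through the $\mathrm{MinDi}$ points) is exactly the content of Case 4 of Lemma \ref{lem:intermediate}, resting on Proposition \ref{prop:alpha-beta} as you say. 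So for the containment of $\mathcal{A}(\mathfrak{n})$ and for $\mathrm{RM}_{1}$ your plan is sound and matches the paper's.

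The genuine gap is your treatment of $\mathrm{RM}_{2}$. You assert that it is ``secured by the fact that $\mathcal{N}(\mathfrak{n})$ contains the $\pi_{[1,\ell]}$-projections of all critical and $\mathcal{D}^{0}$-special points,'' but $\mathrm{RM}_{2}$ is a condition on \emph{every} fiber $\mathrm{Bas}(\mathfrak{n})_{(w(\mathfrak{n}),x)}$, $x\in\R_{t}$, and no finite set $\mathcal{N}$ of distinguished fibers can deliver it; the critical and pseudo-critical points are what make $\mathrm{RM}_{1}$ work, not $\mathrm{RM}_{2}$. The mechanism your sketch never invokes is Property (\ref{item:special1}) of Definition \ref{def:property-special} (equivalently Proposition \ref{prop:properties-of-S0}): $\widetilde{\mathrm{Bas}}(\mathfrak{n})^{0}_{w}$ meets every semi-algebraically connected component of $\widetilde{\mathrm{Bas}}(\mathfrak{n})_{w}$ for \emph{every} $w$, not only for $w\in\mathcal{N}(\mathfrak{n})$. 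This property has to be propagated down the all-left path of the tree --- only the left children keep the same first free coordinate $X_{\mathrm{Fix}(\mathfrak{n})+1}$, the right children fix it --- and combined at each step with $\lim_{\zeta_{t+1}}(\widetilde{\mathrm{Bas}}(\mathfrak{n}))=\mathrm{Bas}(\mathfrak{n})$ to lift components of real fibers to components of deformed fibers. This is exactly what the paper's Proposition \ref{prop:leaves2} does by a separate induction over $\mathrm{Leav}^{0}(\mathfrak{n})$. As written, your induction establishes $\mathrm{RM}_{1}$ but leaves $\mathrm{RM}_{2}$ unproved.
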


Several intermediate results will be used in the proof of Proposition
\ref{prop:correctness} and Theorem \ref{thm:correctness}.

The following relation defined on elements of $\{ 0,1 \}^{t}$ will be used to
define a notion of ``neighbor'' amongst the leaf nodes of $\mathrm{Tree} (
V,\mathcal{A} )$ which in turn will be used to prove the existence of
connecting paths in the roadmap of $V$ defined by $\mathrm{Tree} ( V,\mathcal{A}
)$ having some extra structure.

\begin{definition}
\label{def:neighbor}We define a symmetric and reflexive relation $N_{t}$ on
elements of $\{ 0,1 \}^{ t}$ by induction on $t$ as follows.
\begin{enumerate}
\item If $t=1$, $0N_{1} 1$.

\item For all $s,s' \in \{ 0,1 \}^{t}$, $sN_{t} s'$ implies that
$0sN_{t+1} 0s' $, and $1sN_{t+1} 1s$.

\item Finally, $01^{t-1} N_{t} 1^{t}$ for all $t \geq 1$.
\end{enumerate}
\end{definition}

\begin{remark}
\label{rem:treeofstrings}The relation $N_{t}$ defined in Definition
\ref{def:neighbor} induces the structure of a tree on the set $\{ 0,1
\}^{t}$. This tree in the case $t=4$ is displayed in Figure \ref{fig:tree}. 
The edges in the tree correspond to pairs of elements $s,t \in \{ 0,1
\}^{4}$, with $s N_{4}t$.

\begin{figure}
\vspace{-2.5in}
\includegraphics[scale=0.4]{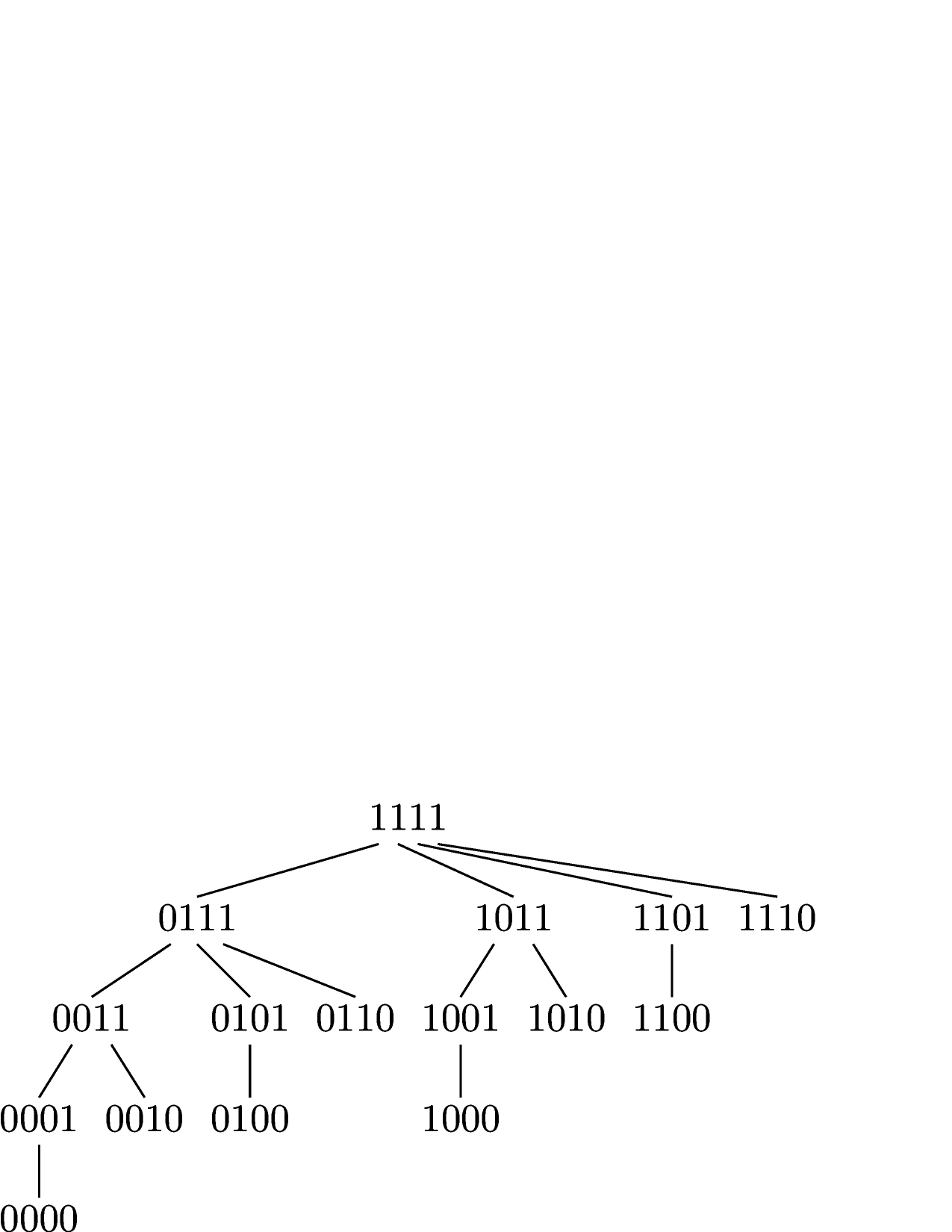}
\caption{Tree of leaves }
\label{fig:tree}
\end{figure}
\end{remark}

The following proposition which uses the relation defined in Definition
\ref{def:neighbor} above will be used to prove the existence of connecting
paths in the roadmap. These connecting paths will have a certain special
structure -- and this structure will be defined using the relation defined in
Definition \ref{def:neighbor}.

\begin{proposition}
\label{prop:structure2}Let $\mathfrak{n}$ be a node of $\mathrm{Tree} (
V,\mathcal{A} )$ with $\mathrm{level} ( \mathfrak{n} ) =t$,
$\mathfrak{l},\mathfrak{l}' \in \mathrm{Leav} ( \mathfrak{n} )$, and $x \in
\mathcal{A} ( \mathfrak{l} ) ,x' \in \mathcal{A} ( \mathfrak{l}' )$, such
that $\lim_{\zeta_{t+1}} ( x ) \in \mathrm{Cc} ( \lim_{\zeta_{t+1}} ( x' ) ,
\mathrm{Bas} ( \mathfrak{n} ) )$. Then, there exist $\mathfrak{\mathfrak{l}=
\mathfrak{\mathfrak{l}}}_{0} , \ldots ,\mathfrak{l}_{N} =\mathfrak{l}' \in
\mathrm{Leav} ( \mathfrak{n} )$, and for each $i,0 \leq i \leq N$,
$x_{2i} ,x_{2i+1} \in \mathcal{\mathcal{A}} ( \mathfrak{l}_{i} )$
 such that
\begin{enumerate}
\item $x_{0} =x$, $x_{2N+1} =x'$;

\item for all $i=1, \ldots ,N,$ $\lim_{\zeta_{t+1}} ( x_{2i-1} ) =
\lim_{\zeta_{t+1}} ( x_{2i} )$;

\item for all $i=0, \ldots ,N$, $x_{2i+1} \in \mathrm{Cc} ( x_{2i} ,
\mathrm{Bas} ( \mathfrak{\mathfrak{\mathfrak{l}}_{i}} ) )$;

\item for all $i=0, \ldots ,N-1,$ $s ( \mathfrak{l}_{i} ) N_{\log ( k' )}
s ( \mathfrak{l}_{i+1} )$.
\end{enumerate}
\end{proposition}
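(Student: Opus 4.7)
The plan is to induct on the depth $d=\log(k')-\mathrm{level}(\mathfrak n)$ of $\mathfrak n$ in $\mathrm{Tree}(V,\mathcal A)$. The base case $d=0$ is immediate: $\mathfrak n$ is a leaf, so $\mathfrak l=\mathfrak l'=\mathfrak n$, the operator $\lim_{\zeta_{t+1}}$ is trivial, and taking $N=0$, $\mathfrak l_0=\mathfrak n$, $x_0=x$, $x_1=x'$ verifies the four conditions (condition~(4) by reflexivity of $N_{\log(k')}$).

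For the inductive step let $\tilde S=\widetilde{\mathrm{Bas}}(\mathfrak n)$, with the associated objects $\tilde S^0$, $\tilde S^1=\tilde S_{\mathcal N(\mathfrak n)}$, $\mathcal B(\mathfrak n)$, $\tilde{\mathcal A}(\mathfrak n)$, and the cover $\tilde S^0=\lim_{\gamma_{t+1}}\bigl(\bigcup_{\alpha\in\mathcal I(\mathfrak n)}S^0(\alpha)\bigr)$ from Proposition~\ref{prop:coverS0}. Each child of $\mathfrak n$ has $\mathrm{Bas}$ equal either to a fiber $\tilde S_w$ with $w\in\mathcal N(\mathfrak n)$ (a right child, contained in $\tilde S^1$) or to some $S^0(\alpha)$ (a left child whose $\lim_{\gamma_{t+1}}$ lies inside $\tilde S^0$), and every $\mathfrak l\in\mathrm{Leav}(\mathfrak n)$ lies under a unique such child $\mathfrak c(\mathfrak l)$. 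I would build the chain in two layers. First, after projecting $x,x'$ down to points $\bar x,\bar x'\in\tilde S$ by the appropriate composition of limit operations, apply Corollary~\ref{cor:propertyofAtilde} to $\tilde S$ with $\tilde{\mathcal A}(\mathfrak n)$: this yields a sequence $\tilde x_0=\bar x,\ldots,\tilde x_{2n+1}=\bar x'$ in $\tilde S$ with consecutive pairs either equal under $\lim_{\zeta_{t+1}}$ or lying in the same connected component of $\tilde S$. For each ``same-component'' pair, Proposition~\ref{prop:good-connectivity} routes the connection through $\tilde S^0\cup\tilde S^1$, and Proposition~\ref{prop:alpha-beta} further breaks it into subpaths each living inside a single $S^0(\alpha)$ or a single fiber $\tilde S_w$, with consecutive subpaths meeting at a point of $\mathcal B(\mathfrak n)\cup\tilde{\mathcal A}(\mathfrak n)$.

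Each such subpath lies in $\mathrm{Bas}$ of a single child of $\mathfrak n$ (after $\lim_{\gamma_{t+1}}$ for left children, directly for right children), so the inductive hypothesis applied to that child produces a chain of leaves in the corresponding subtree satisfying (1)--(4) with $N_{\log(k')}$-adjacent $s$-strings. It remains to verify the $N_{\log(k')}$-adjacency across sub-trees. Within one child's subtree the adjacency is inherited from the inductive hypothesis by rule~(2) of Definition~\ref{def:neighbor} (prepending the common bit $s(\mathfrak c)$). At a transition between a left and a right child of $\mathfrak n$, the crossing point lies in $\mathcal B(\mathfrak n)\cup\tilde{\mathcal A}(\mathfrak n)$; thanks to the inclusions \eqref{eqn:rightmostA}--\eqref{eqn:rightmostB}, this point is (the limit of) a point of the $\mathcal A$-set of an ``all-right descendant'' in each subtree, i.e.\ a leaf whose $s$-suffix after $s(\mathfrak n)$ is $01^{d-1}$ on the left side or $1^d$ on the right side, which is exactly the bridge pair of rule~(3) of Definition~\ref{def:neighbor}.

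The main obstacle is this alignment: one must simultaneously (i) choose each bridging point so that it lies both in an appropriate fiber (on the right) and in an appropriate $S^0(\alpha)$ (on the left), and (ii) route the inductive sub-chains through the ``all-right'' leaves on both sides of every such bridge. Point~(i) is handled by Proposition~\ref{prop:alpha-beta} (via the sets $\mathrm{MinDi}(S^0(\alpha),\mathcal B)$) together with Proposition~\ref{prop:propertyofAtilde} (via $\mathrm{MinDi}(\tilde S,\tilde S)$), while point~(ii) follows from the propagation \eqref{eqn:rightmostA}--\eqref{eqn:rightmostB} of $\tilde{\mathcal A}(\mathfrak n)$ and $\mathcal B(\mathfrak n)$ down the rightmost branches of the tree. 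The bookkeeping of which constructed point arrives in which $\mathcal A(\mathfrak l_i)$ through which $\lim$ operation, and of how the inductive hypothesis is invoked on each child with its own ``level-$t$ projection'' hypothesis, is the main technical burden.
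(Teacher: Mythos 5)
Your proposal is correct and follows essentially the same route as the paper: induction on the level of $\mathfrak{n}$, reduction via Corollary \ref{cor:propertyofAtilde} and Proposition \ref{prop:good-connectivity} to a chain of components of the children's sets, and bridging between children through points of $\mathcal{B}(\mathfrak{n})\cup\tilde{\mathcal{A}}(\mathfrak{n})$ propagated down the all-right branches via \eqref{eqn:rightmostA}--\eqref{eqn:rightmostB} to secure the $N_{\log(k')}$-adjacency. The only packaging difference is that the paper isolates the case analysis of left/right child transitions (the ``alignment'' you flag as the main burden) into a separate statement, Lemma \ref{lem:intermediate}, before invoking the induction hypothesis on each pair $\bar{x}_{2i},\bar{x}_{2i+1}$.
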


The proof of Proposition \ref{prop:structure2} will use the following lemma.

\begin{lemma}
\label{lem:intermediate}Let $\mathfrak{m}_{1} ,\mathfrak{m}_{2}$ be two
distinct children of a node $\mathfrak{n}$ of $\mathrm{Tree} ( V,\mathcal{A}
)$ with $\mathrm{level} ( \mathfrak{n} ) =t$, and for $i=1,2$, let $B_{i}$ be
a semi-algebraically connected component of $\mathrm{Bas} ( \mathfrak{m}_{i}
)$. Suppose that $\lim_{\gamma_{t+1}} ( B_{1} ) \cap \lim_{\gamma_{t+1}} (
B_{2} ) \neq \emptyset$. Then, there exists $\mathfrak{l}_{1} \in
\mathrm{Leav} ( \mathfrak{m}_{1} ) ,\mathfrak{l}_{2} \in \mathrm{Leav} (
\mathfrak{m}_{2} )$, and $x_{1} \in \mathcal{A} ( \mathfrak{l}_{1} ) ,x_{2}
\in \mathcal{A} ( \mathfrak{l}_{2} )$, such that
\begin{eqnarray*}
\lim_{\zeta_{t+1}} ( x_{1} ) & = & \lim_{\zeta_{t+1}} ( x_{2} ) ,\\
\lim_{\zeta_{t+2}} ( x_{i} ) & \in & B_{i} , \text{ for } i=1,2, 
\end{eqnarray*}
and
\[ s ( \mathfrak{l}_{1} )N_{\log ( k' )} s ( \mathfrak{l}_{2} ) . \]
\end{lemma}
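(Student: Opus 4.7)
My plan is a case analysis on the types (left or right) of the two distinct children $\mathfrak{m}_{1}, \mathfrak{m}_{2}$ of $\mathfrak{n}$. In every non-vacuous case I first produce, at level $t+1$, points $y_{i} \in B_{i} \cap \mathcal{A}(\mathfrak{m}_{i})$ satisfying $\lim_{\gamma_{t+1}}(y_{1}) = \lim_{\gamma_{t+1}}(y_{2})$, and then lift them to points in leaves by one application of the containment \eqref{eqn:rightmostA} to $\mathfrak{m}_{i}$: this yields $\mathfrak{l}_{i} \in \mathrm{Leav}^{1}(\mathfrak{m}_{i})$ and $x_{i} \in \mathcal{A}(\mathfrak{l}_{i})$ with $\lim_{\zeta_{t+2}}(x_{i}) = y_{i} \in B_{i}$. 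Because $\lim_{\zeta_{t+1}}$ is the composition of all level $t+1$ (and higher) limits and hence factors through $\lim_{\gamma_{t+1}}$, the equality of $\gamma_{t+1}$-limits transports to $\lim_{\zeta_{t+1}}(x_{1}) = \lim_{\zeta_{t+1}}(x_{2})$.

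The case of two distinct right children is vacuous: the two bases $\widetilde{\mathrm{Bas}}(\mathfrak{n})_{w(\mathfrak{m}_{i})}$ lie in disjoint fibers and neither involves $\gamma_{t+1}$, so $\lim_{\gamma_{t+1}}(B_{1}) \cap \lim_{\gamma_{t+1}}(B_{2}) = B_{1} \cap B_{2} = \emptyset$, contradicting the hypothesis. When $\mathfrak{m}_{i} = \mathfrak{n}(\alpha_{i})$ are distinct left children, the required $y_{1}, y_{2}$ come directly from Proposition \ref{prop:alpha-beta}(\ref{item:alpha-beta2}) applied to $B_{i} \subset S^{0}(\alpha_{i})$. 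The substantive mixed case---say $\mathfrak{m}_{1} = \mathfrak{n}(\alpha)$ left and $\mathfrak{m}_{2}$ right at $w := w(\mathfrak{m}_{2}) \in \mathcal{N}(\mathfrak{n})$---proceeds by picking $z \in \lim_{\gamma_{t+1}}(B_{1}) \cap B_{2}$. Proposition \ref{prop:coverS0} places $z$ in $\widetilde{\mathrm{Bas}}(\mathfrak{n})^{0}$, and since $\pi_{[1,\ell]}(z) = w$ one has $z \in \mathcal{B}(\mathfrak{n})_{w} \subset \mathcal{A}(\mathfrak{m}_{2})$ by \eqref{eqn:Aofm}, so $y_{2} := z$ works. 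For $y_{1}$ I invoke Proposition \ref{prop:closesta} with $T = S^{0}(\alpha)$ and $x = z$ (selecting the connected component to be $B_{1}$, which contains $z$ in its limit), obtaining $y_{1} \in B_{1} \cap \mathrm{MinDi}(S^{0}(\alpha), \{z\}) \subset B_{1} \cap \mathcal{A}(\mathfrak{m}_{1})$ with $\lim_{\gamma_{t+1}}(y_{1}) = z$.

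The step I expect to require the most care is verifying the string relation $s(\mathfrak{l}_{1}) N_{\log(k')} s(\mathfrak{l}_{2})$. In the two-left-children case both leaves can be chosen with the identical string $s(\mathfrak{n}) 0 1^{\log(k')-t-1}$, so the relation holds by reflexivity of $N_{\log(k')}$. In the mixed case, the natural choices give $s(\mathfrak{l}_{1}) = s(\mathfrak{n}) 0 1^{\log(k')-t-1}$ and $s(\mathfrak{l}_{2}) = s(\mathfrak{n}) 1^{\log(k')-t}$; these two strings differ only in the $(t+1)$-st bit, with all subsequent bits equal to $1$. The $N$-relation is then produced by clause (3) of Definition \ref{def:neighbor} at level $\log(k')-t$ (yielding $0 1^{\log(k')-t-1} N_{\log(k')-t} 1^{\log(k')-t}$), followed by iterated application of clause (2) to prepend the common prefix $s(\mathfrak{n})$ on both sides.
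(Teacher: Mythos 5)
Your proposal is correct and follows essentially the same route as the paper: the same four-way case analysis on left/right children, Proposition \ref{prop:alpha-beta} for two left children, the identification of a point of $\mathcal{B}(\mathfrak{n})$ in the mixed case via Proposition \ref{prop:coverS0} and the definitions of $\mathcal{A}(\alpha)$ and \eqref{eqn:Aofm}, and lifting to leaves along all-ones strings to get the $N_{\log(k')}$ relation. The only (harmless) cosmetic difference is that you lift through \eqref{eqn:rightmostA} at the children level where the paper invokes \eqref{eqn:rightmostB} directly, and you spell out the derivation of the neighbor relation from Definition \ref{def:neighbor} in more detail.
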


\begin{proof}
There are four cases to consider.
\begin{enumerate}
\item $\mathfrak{m}_{1}$ is a left child and $\mathfrak{m}_{2}$ a right
child of $\mathfrak{n}$. Let $\mathfrak{m}_{1} =\mathfrak{n} ( \alpha )$
for some $\alpha \in \mathcal{I} ( \mathfrak{n} )$. Since $\mathcal{B} (
\mathfrak{n} ) = \widetilde{\mathrm{Bas}} ( \mathfrak{n} )^{0} \cap
\widetilde{\mathrm{Bas}} ( \mathfrak{n} )^{1}$, and $\lim_{\gamma_{t+1}} (
B_{2} ) \cap \lim_{\gamma_{t+1}} ( B_{1} ) \neq \emptyset$, there
exists a point $x \in \lim_{\gamma_{t+1}} ( B_{2} ) \cap
\lim_{\gamma_{t+1}} ( B_{1} ) \subset \mathcal{B ( \mathfrak{n} )}$.
Moreover $x \in \lim_{\gamma_{t+1}} ( \mathcal{A} ( \alpha ) \cap
B_{1} )$ by definition of $\mathcal{A} ( \alpha )$ (see Notation 50),
since $\mathcal{B} ( \mathfrak{n} ) = \widetilde{\mathrm{Bas}} (
\mathfrak{n} )^{0} \cap \widetilde{\mathrm{Bas}} ( \mathfrak{n} )^{1}$ is
finite. Moreover $x\in \mathcal{B ( \mathfrak{n} ) \cap}
\lim_{\gamma_{t+1}} ( B_{2} ) \subset \mathcal{B ( \mathfrak{n} ) \cap}
\lim_{\gamma_{t+1}} ( \mathrm{Bas} ( \mathfrak{m}_{2} ) ) \subset
\lim_{\gamma_{t+1}} ( \mathcal{A} ( \mathfrak{m}_{2} ) )$ using 
\eqref{eqn:Aofm}.

Using \eqref{eqn:rightmostB}, there exist for $i=1,2 
,\mathfrak{l}_{i} \in \mathrm{Leav} ( \mathfrak{m}_{i} )$, and $x_{i} \in
\mathcal{A} ( \mathfrak{l}_{i} )$, with $\lim_{\gamma_{t+1}} ( x_{i} )
=x$. Then, $\lim_{\zeta_{t+1}} ( x_{i} ) = \lim_{\zeta_{t+1}} ( x )$, and
$\lim_{\zeta_{t+2}} ( x_{i} ) \in B_{i}$ and
\begin{equation}
s ( \mathfrak{l}_{i} ) =s ( \mathfrak{m}_{i} ) 1 \cdots 1 \label{eqn:s}
.
\end{equation}
Since in this case, $s ( \mathfrak{m}_{1} ) =s ( \mathfrak{n} ) 0$, and
$s ( \mathfrak{m}_{2} ) =s ( \mathfrak{n} ) 1$, it follows from Definition
\ref{def:neighbor} and \eqref{eqn:s} that $s ( \mathfrak{l}_{1} )
N_{\log ( k' )} s ( \mathfrak{l}_{2} )$.

\item $\mathfrak{m}_{1}$ is a right child and $\mathfrak{m}_{2}$ a left
child of $\mathfrak{n}$. This case is similar to the one above with the
roles of $\mathfrak{m}_{1}$ and $\mathfrak{m}_{2}$ reversed.

\item Both $\mathfrak{m}_{1} ,\mathfrak{m}_{2}$ are right children of
$\mathfrak{n}$. In this case, $ \mathrm{Bas} ( \mathfrak{m}_{1}
 ) \cap \mathrm{Bas} ( \mathfrak{m}_{2} ) = \emptyset$, and hence
$\lim_{\gamma_{t+1}} ( \mathrm{Bas} ( \mathfrak{m}_{1} ) ) \cap
\lim_{\gamma_{t+1}} ( \mathrm{Bas} ( \mathfrak{m}_{2} ) ) = \emptyset$,
since the descriptions of $ \mathrm{Bas} ( \mathfrak{m}_{1}
 )$ and $\mathrm{Bas} ( \mathfrak{m}_{2} )$  do not depend on
$\gamma_{t+1}$. Thus, there is nothing to prove in this case.

\item Both $\mathfrak{m}_{1} ,\mathfrak{m}_{2}$ are left children of
$\mathfrak{n}$. In this case there exists $\alpha_{1} , \alpha_{2} \in
\mathcal{I} ( \mathfrak{n} )$, such that for $i=1,2$, $\mathfrak{m}_{i}
=\mathfrak{n} ( \alpha_{i} )$. In this case there exists for $i=1,2$,
$x'_{i} \in \mathcal{A} ( \alpha_{i} )$, such that $\lim_{\gamma_{t+1}} (
x'_{1} ) = \lim_{\gamma_{t+1}} ( x'_{2} ) \in \lim_{\gamma_{t+1}} (
B_{1} ) \cap \lim_{\gamma_{t+1}} ( B_{2} )$ (using 
Proposition \ref{prop:alpha-beta}).
Moreover, using \eqref{eqn:rightmostA}, there exists for $i=1,2$, \
$\mathfrak{l}_{i} \in \mathrm{Leav}^{1} ( \mathfrak{m}_{i} )$, and $x_{i}
\in \mathcal{A} ( \mathfrak{l}_{i} )$, such that $x'_{i} =
\lim_{\zeta_{t+2}} ( x_{i} )$. Notice that, for $i=1,2$, $s (
\mathfrak{l}_{i} ) =s ( \mathfrak{m}_{i} ) 1 \cdots 1$. It is now easy to
check that $s ( \mathfrak{l}_{1} ) =s ( \mathfrak{l}_{2} )$, and that the
tuple $( x_{1} ,\mathfrak{l}_{1} ,x_{2} ,\mathfrak{l}_{2} )$ then
satisfies the required properties.
\end{enumerate}
\end{proof}

\begin{proof}[Proof of Proposition \ref{prop:structure2}]
The proof of the proposition  is by induction
on $t= \mathrm{level} ( \mathfrak{n} )$. The base case is when $\mathfrak{n}$
is a leaf node, in which case the statement clearly holds. Otherwise,
suppose that the proposition is true for all nodes having level greater than $t$.

Using Corollary \ref{cor:propertyofAtilde}, we can assume without loss of
generality that
\[ \lim_{\gamma_{t+1}} ( x ) \in \mathrm{Cc} ( \lim_{\gamma_{t+1}} ( x' ) ,
 \widetilde{\mathrm{Bas}} ( \mathfrak{n} ) ) . \]

Since by 
Proposition \ref{prop:good-connectivity},
$\widetilde{\mathrm{Bas}} (
\mathfrak{n} )^{0} \cup \widetilde{\mathrm{Bas}} ( \mathfrak{n} )^{1}$ has
good connectivity property with respect to $\widetilde{\mathrm{Bas}} (
\mathfrak{n} )$, and since 
\[
\bigcup_{\mathfrak{l}\text{ child of  } \mathfrak{n}} \lim_{\gamma_{t+1}} (\mathcal{A} (
\mathfrak{l} ) )\subset \widetilde{\mathrm{Bas}} ( \mathfrak{n} )^{0}
\cup \widetilde{\mathrm{Bas}} ( \mathfrak{n} )^{1},
\]
  it follows that
$\lim_{\gamma_{t+1}} ( x' ) \in \mathrm{Cc} ( \lim_{\gamma_{t+1}} ( x ) ,
\widetilde{\mathrm{Bas}} ( \mathfrak{n} )^{0} \cup \widetilde{\mathrm{Bas}} (
\mathfrak{n} )^{1} )$. So there exists a sequence
$\mathfrak{m}=\mathfrak{m}_{0} , \ldots ,\mathfrak{m}_{n} =\mathfrak{m}'$ of
children of $\mathfrak{n}$, for each $i, 0 \leq i \leq n
$, a semi-algebraically connected component $B_{i }$ of $\mathrm{Bas}
( \mathfrak{m}_{i} )$, with 
\[B_{0} = \mathrm{Cc} ( \lim_{\zeta_{t+2}} ( x ),
\mathrm{Bas} ( \mathfrak{m}_{0} ) ) ,B_{n} = \mathrm{Cc} ( \lim_{\zeta_{t+2}} (
x' ) , \mathrm{Bas} ( \mathfrak{m}_{n} ) ),
\] 
and for each $i, 0 \leq i \leq
n-1$, $\lim_{\gamma_{t+1}} ( B_{i} )\cap \lim_{\gamma_{t+1}}
( B_{i+1} ) \neq \emptyset $.

Applying Lemma \ref{lem:intermediate} we have that for each $i,0\leq i
\leq n $, there exists $\mathfrak{l}_{2i} ,\mathfrak{l}_{2i+1} \in
\mathrm{Leav} ( \mathfrak{m}_{i} )$, and for each $j,1 \leq j \leq 2n$,
$\bar{x}_{j} \in \mathcal{A} ( \mathfrak{l}_{j} )$, such that, for each $i,0
\leq i \leq n-1,$ \
\begin{eqnarray*}
\lim_{\zeta_{t+1}} ( \bar{x}_{2i+1} ) & = & \lim_{\zeta_{t+1}} (
\bar{x}_{2i+2} ) ,
\end{eqnarray*}
\[ \lim_{\zeta_{t+2}} ( \bar{x}_{2i} ) , \lim_{\zeta_{t+2}} ( \bar{x}_{2i+1}
 ) \in B_{i} , \]
and
\begin{equation}
s ( \mathfrak{l}_{i} )N_{\log ( k' )} s ( \mathfrak{l}_{i+1} ) .
\label{eqn:adjacent}
\end{equation}
Let also
\begin{eqnarray*}
\bar{x}_{0} & = & x,\\
\bar{x}_{2n+1} & = & x' ,\\
\mathfrak{l}_{0} & = & \mathfrak{l},\\
\mathfrak{l}_{2n+1} & = & \mathfrak{l}' .
\end{eqnarray*}
We now apply the induction hypothesis to each of the pairs $\bar{x}_{2i} ,
\bar{x}_{2i+1}$,
$0 \leq i \leq n$, and use \eqref{eqn:adjacent} to complete the
induction.
\end{proof}

\begin{proposition}
\label{prop:structure1}Let $\mathfrak{n}$ be a node of the tree $\mathrm{Tree}
( V,\mathcal{A} )$, with $\mathrm{level} ( \mathfrak{n} ) =t $, and
$\mathrm{Leav} ( \mathfrak{n} )$ be the set of leaves of the sub-tree of
$\mathrm{Tree} ( V,\mathcal{A} )$ rooted at $\mathfrak{n}$. For any two leaves
$\mathfrak{l}$ and $\mathfrak{l}'$ in $\mathrm{Leav} ( \mathfrak{n} )$ and any
two points $x \in \lim_{\zeta_{t+1}} ( \mathrm{Bas} ( \mathfrak{l} ) ), x' \in
\lim_{\zeta_{t+1}} ( \mathrm{Bas} ( \mathfrak{l}' ) )$, such that $x' \in
\mathrm{Cc} ( x, \mathrm{Bas} ( \mathfrak{n} ) )$, there exists a semi-algebraic
path $\Gamma$ connecting $x$ to $x'$, such that:
\begin{enumerate}
\item $\Gamma$ is a concatenation of semi-algebraic paths $\Gamma_{0} ,
\ldots , \Gamma_{m}$, where each $\Gamma_{i} \subset \lim_{\zeta_{t+1}} (
\mathrm{Bas} ( \mathfrak{l}_{i} ) )$, for some $\mathfrak{l}_{i} \in
\mathrm{Leav} ( \mathfrak{n} )$;

\item for each $i,0 \leq i \leq m-1$, $s ( \mathfrak{l}_{i} )
N_{\log ( k' )}s ( \mathfrak{l}_{i+1} )$.
\end{enumerate}
\end{proposition}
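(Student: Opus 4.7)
The plan is to lift $x$ and $x'$ into the appropriate leaves, invoke Proposition \ref{prop:structure2} to obtain a combinatorial chain of leaves connecting the lifts, and then push the resulting semi-algebraic paths down via $\lim_{\zeta_{t+1}}$. A basic auxiliary fact I will use throughout is that $\lim_{\zeta_{t+1}}(\mathrm{Bas}(\mathfrak{l}_0)) \subset \mathrm{Bas}(\mathfrak{n})$ for every leaf $\mathfrak{l}_0$ of the subtree rooted at $\mathfrak{n}$; this follows by induction on the depth of the subtree from Proposition \ref{prop:limtilde} (at right children) and Proposition \ref{prop:coverS0} (at left children), together with the observation that every set in play is bounded, so that taking limits of bounded semi-algebraically connected sets yields semi-algebraically connected sets (Proposition 12.43 of \cite{BPRbook2}).

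First, I would pick lifts $\tilde{x} \in \mathrm{Bas}(\mathfrak{l})$ and $\tilde{x}' \in \mathrm{Bas}(\mathfrak{l}')$ with $\lim_{\zeta_{t+1}}(\tilde{x}) = x$ and $\lim_{\zeta_{t+1}}(\tilde{x}') = x'$. Since by construction $\mathcal{A}(\mathfrak{l})$ meets every semi-algebraically connected component of $\mathrm{Bas}(\mathfrak{l})$, and similarly for $\mathfrak{l}'$, I may choose $y \in \mathcal{A}(\mathfrak{l}) \cap \Cc(\tilde{x}, \mathrm{Bas}(\mathfrak{l}))$ and $y' \in \mathcal{A}(\mathfrak{l}') \cap \Cc(\tilde{x}', \mathrm{Bas}(\mathfrak{l}'))$. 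Semi-algebraic paths $\tilde{\gamma}$ from $\tilde{x}$ to $y$ and $\tilde{\gamma}'$ from $y'$ to $\tilde{x}'$ exist inside the respective connected components, and their images under $\lim_{\zeta_{t+1}}$ provide semi-algebraic paths $\Gamma_{\mathrm{pre}} \subset \lim_{\zeta_{t+1}}(\mathrm{Bas}(\mathfrak{l}))$ and $\Gamma_{\mathrm{post}} \subset \lim_{\zeta_{t+1}}(\mathrm{Bas}(\mathfrak{l}'))$. By the auxiliary fact above, these paths lie in $\mathrm{Bas}(\mathfrak{n})$, so $\lim_{\zeta_{t+1}}(y), \lim_{\zeta_{t+1}}(y') \in \Cc(x, \mathrm{Bas}(\mathfrak{n}))$.

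Next, I would apply Proposition \ref{prop:structure2} with the pair $(y, \mathfrak{l})$, $(y', \mathfrak{l}')$ in the roles of $(x, \mathfrak{l})$, $(x', \mathfrak{l}')$. This produces leaves $\mathfrak{l} = \mathfrak{l}'_0, \mathfrak{l}'_1, \ldots, \mathfrak{l}'_N = \mathfrak{l}'$ and points $x_{2i}, x_{2i+1} \in \mathcal{A}(\mathfrak{l}'_i)$ with $x_0 = y$, $x_{2N+1} = y'$, satisfying $x_{2i+1} \in \Cc(x_{2i}, \mathrm{Bas}(\mathfrak{l}'_i))$, $\lim_{\zeta_{t+1}}(x_{2i-1}) = \lim_{\zeta_{t+1}}(x_{2i})$, and $s(\mathfrak{l}'_i) N_{\log(k')} s(\mathfrak{l}'_{i+1})$. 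For each $i$, I connect $x_{2i}$ to $x_{2i+1}$ by a semi-algebraic path inside $\Cc(x_{2i}, \mathrm{Bas}(\mathfrak{l}'_i))$ and take the image under $\lim_{\zeta_{t+1}}$, obtaining a semi-algebraic path $\Gamma'_i \subset \lim_{\zeta_{t+1}}(\mathrm{Bas}(\mathfrak{l}'_i))$. The endpoint matching $\lim_{\zeta_{t+1}}(x_{2i+1}) = \lim_{\zeta_{t+1}}(x_{2i+2})$ allows the consecutive $\Gamma'_i$ to be concatenated.

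The desired $\Gamma$ is then the concatenation $\Gamma_{\mathrm{pre}} \cdot \Gamma'_0 \cdot \ldots \cdot \Gamma'_N \cdot \Gamma_{\mathrm{post}}$, realized as a path whose pieces lie in the $\lim_{\zeta_{t+1}}$-images of leaves in the sequence $\mathfrak{l}, \mathfrak{l}'_0, \mathfrak{l}'_1, \ldots, \mathfrak{l}'_N, \mathfrak{l}'$. The only two adjacencies not directly supplied by Proposition \ref{prop:structure2}, namely $(\mathfrak{l}, \mathfrak{l}'_0) = (\mathfrak{l}, \mathfrak{l})$ and $(\mathfrak{l}'_N, \mathfrak{l}') = (\mathfrak{l}', \mathfrak{l}')$, satisfy $N_{\log(k')}$ by reflexivity (Definition \ref{def:neighbor}). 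The principal technical point, rather than any genuine obstacle, is to confirm that the image under $\lim_{\zeta_{t+1}}$ of a bounded semi-algebraic path is a semi-algebraic path, which is routine given the boundedness of all sets in play throughout the recursion.
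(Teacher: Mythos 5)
Your proposal is correct and follows exactly the route the paper intends: the paper's own proof consists of the single line ``Immediate consequence of Proposition \ref{prop:structure2}'', and what you have written is a careful unpacking of that deduction — lifting $x,x'$ into the leaves, routing through points of $\mathcal{A}(\mathfrak{l})$ and $\mathcal{A}(\mathfrak{l}')$ so that Proposition \ref{prop:structure2} applies, pushing the resulting paths down by $\lim_{\zeta_{t+1}}$, and closing the two end adjacencies by reflexivity of $N_{\log(k')}$. No further comment is needed.
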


\begin{proof}
Immediate consequence of Proposition \ref{prop:structure2}.
\end{proof}

The following two propositions will be used in the proof of Proposition \ref{prop:correctness}.
\begin{proposition}
\label{prop:leaves2} Let $\mathfrak{n}$ be a node of the tree $\mathrm{Tree} (
V,\mathcal{A} )$, with $\mathrm{level} ( \mathfrak{n} ) =t$, and let
$\mathrm{Leav}^{ 0} ( \mathfrak{n} )$ be the set of leaves $\mathfrak{m}$ of
the subtree of $\mathrm{Tree} ( V,\mathcal{A} )$ rooted at $\mathfrak{n}$,
such that $s ( \mathfrak{m} )$ contains no $1$ to the right of $s (
\mathfrak{n} )$. Then, the semi-algebraic set
\begin{eqnarray*}
L & = & \bigcup_{\mathfrak{m} \in \mathrm{Leav}^{0} ( \mathfrak{n} )}
\lim_{\zeta_{t+1}} ( \mathrm{Bas} ( \mathfrak{m} ) )
\end{eqnarray*}
is such that for all $x \in \R_{t}$, $L_{( w ( \mathfrak{n} ) ,x )}$ meets
every semi-algebraically connected component of $\mathrm{Bas} ( \mathfrak{n}
)_{( w ( \mathfrak{n} ) ,x )}$.
\end{proposition}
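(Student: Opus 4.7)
The proof proceeds by descending induction on $t = \mathrm{level}(\mathfrak{n})$, i.e.\ induction on the depth of $\mathfrak{n}$ from the leaves. The base case is $t = \log(k')$: then $\mathfrak{n}$ is itself a leaf, so $\mathrm{Leav}^{0}(\mathfrak{n}) = \{\mathfrak{n}\}$ and $L = \mathrm{Bas}(\mathfrak{n})$, and the conclusion is immediate.

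For the inductive step, suppose the statement holds at level $t+1$, and consider $\mathfrak{n}$ at level $t$. The left children of $\mathfrak{n}$ are the nodes $\mathfrak{n}(\alpha)$ with $\alpha \in \mathcal{I}(\mathfrak{n})$, and by construction every leaf $\mathfrak{l} \in \mathrm{Leav}^{0}(\mathfrak{n})$ is obtained by taking only left-child steps. Hence $\mathrm{Leav}^{0}(\mathfrak{n}) = \bigsqcup_{\alpha} \mathrm{Leav}^{0}(\mathfrak{n}(\alpha))$. Setting
\[
L_{\alpha} := \bigcup_{\mathfrak{m}\in\mathrm{Leav}^{0}(\mathfrak{n}(\alpha))} \lim_{\zeta_{t+2}}(\mathrm{Bas}(\mathfrak{m})),
\]
and using $w(\mathfrak{n}(\alpha)) = w(\mathfrak{n})$ together with $\mathrm{Bas}(\mathfrak{n}(\alpha)) = \mathrm{Bas}(\mathfrak{n})^{0}(\alpha)$, the inductive hypothesis applied to each $\mathfrak{n}(\alpha)$ gives: for every $y$, the fiber $(L_{\alpha})_{(w(\mathfrak{n}),y)}$ meets every semi-algebraically connected component of $\mathrm{Bas}(\mathfrak{n})^{0}(\alpha)_{(w(\mathfrak{n}),y)}$.

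Taking the union over $\alpha$, the fibers of $L' := \bigcup_{\alpha} L_{\alpha}$ meet every semi-algebraically connected component of the corresponding fibers of $\bigcup_{\alpha}\mathrm{Bas}(\mathfrak{n})^{0}(\alpha)$. Invoking Proposition \ref{prop:coverS0} to rewrite $\widetilde{\mathrm{Bas}}(\mathfrak{n})^{0} = \lim_{\gamma_{t+1}}\!\bigl(\bigcup_{\alpha}\mathrm{Bas}(\mathfrak{n})^{0}(\alpha)\bigr)$, then applying Proposition \ref{prop:properties-of-S0}(\ref{item:properties-of-S02}) to pass from $\widetilde{\mathrm{Bas}}(\mathfrak{n})^{0}$ to $\widetilde{\mathrm{Bas}}(\mathfrak{n})$ fiberwise, and finally Proposition \ref{prop:limtilde} to descend from $\widetilde{\mathrm{Bas}}(\mathfrak{n})$ to $\mathrm{Bas}(\mathfrak{n})$ via $\lim_{\zeta_{t+1}}$ (which is shorthand for the composition $\lim_{\zeta_{t+1}}\!\circ\lim_{\eps_{t+1}}\!\circ\lim_{\delta_{t+1}}\!\circ\lim_{\gamma_{t+1}}\!\circ\lim_{\zeta_{t+2}}\!\circ\cdots$, by Notation \ref{not:order}), we conclude that $L = \lim_{\zeta_{t+1}}(L')$ meets, in every fiber over $(w(\mathfrak{n}),x)$, every semi-algebraically connected component of the corresponding fiber of $\mathrm{Bas}(\mathfrak{n})$.

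The main technical obstacle is a fiberwise transfer lemma used at each step: if $X \subset Y$ are bounded semi-algebraic subsets of $\R\langle\eps\rangle^{k}$ such that for every parameter the fibers of $X$ meet every semi-algebraically connected component of the corresponding fiber of $Y$, then the same property holds for $\lim_{\eps} X \subset \lim_{\eps} Y$. This follows from the fact that the image under $\lim_{\eps}$ of a bounded semi-algebraically connected set is semi-algebraically connected (cf.\ \cite[Proposition 12.43]{BPRbook2}), together with the observation that every semi-algebraically connected component of a fiber of $\lim_{\eps} Y$ is a union of $\lim_{\eps}$-images of components of fibers of $Y$, each of which meets $X$ by hypothesis. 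Once this is granted, it is applied iteratively through the infinitesimal tower $\zeta_{t+1},\eps_{t+1},\delta_{t+1},\gamma_{t+1}$, interleaved with Propositions \ref{prop:coverS0}, \ref{prop:properties-of-S0}, and \ref{prop:limtilde}, to close the induction.
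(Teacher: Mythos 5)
Your proof is correct and follows essentially the same route as the paper's: descending induction on the level, restriction to the left children, and the chain Proposition \ref{prop:coverS0} $\rightarrow$ Proposition \ref{prop:properties-of-S0} $\rightarrow$ Proposition \ref{prop:limtilde}, with the connectedness of limits of closed, bounded, semi-algebraically connected sets (Proposition 12.43 of \cite{BPRbook2}) doing the work at each stage of the infinitesimal tower. The only difference is organizational: the paper chases a single semi-algebraically connected component of $\mathrm{Bas}(\mathfrak{n})_{(w(\mathfrak{n}),x)}$ down through the tower and applies the induction hypothesis to one well-chosen left child, whereas you package the same argument as a fiberwise transfer lemma applied iteratively to all left children at once; both are valid.
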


\begin{proof}
 The proof is by induction on $t= \mathrm{level} ( \mathfrak{n} )$. If
$\mathfrak{n}$ is a leaf node with $| s ( \mathfrak{n} ) | =0$, then
$\mathrm{Leav}^{ 0} ( \mathfrak{n} ) = \{ \mathfrak{n} \}$ and there is
nothing to prove. Now assume that the proposition is true for all
$\mathfrak{n}'$, with $\mathrm{level} ( \mathfrak{n}' ) >t $.

Note that the left children of $\mathfrak{n}$ are precisely those children
$\mathfrak{m}$ of $\mathfrak{n}$ with $s ( \mathfrak{m} ) = ( s (
\mathfrak{n} ) ,0 )$, and these are in 1-1 correspondence with $\alpha \in
\mathcal{I} ( \mathfrak{n} )$. Denote by $\mathfrak{n} ( \alpha )$ the left
child of $\mathfrak{n}$ corresponding to $\alpha \in \mathcal{I} (
\mathfrak{n} )$ .

We denote 
(with a slight abuse of notation)
$\Ext \left( \widetilde{\mathrm{Bas}}
( \mathfrak{n} )^{0} , \R_{t+1} \right)$ by $\widetilde{\mathrm{Bas}} (
\mathfrak{n} )^{0}$ and $\Ext \left( \widetilde{\mathrm{Bas}} ( \mathfrak{n}
) , \R_{t+1} \right)$ by $\widetilde{\mathrm{Bas}} ( \mathfrak{n} )$, and make the
following claims.

{\noindent}1. For each $w \in \R_{t+1}^{\ell}$, where $\ell = k'/2^{t+1}$, $\widetilde{\mathrm{Bas}} ( \mathfrak{n} )^{0}_{( w ( \mathfrak{n}
) ,w )}$ meets every semi-algebraically connected component of
$\widetilde{\mathrm{Bas}} ( \mathfrak{n} )_{( w ( \mathfrak{n} ) ,w )}$
(Proposition \ref{prop:properties-of-S0}). It follows immediately (since
$\ell \geq 1$) that for each $x' \in \R_{t+1}$, $\widetilde{\mathrm{Bas}} (
\mathfrak{n} )^{0}_{( w ( \mathfrak{n} ) ,x' )}$ meets every
semi-algebraically connected component of $\widetilde{\mathrm{Bas}} (
\mathfrak{n} )_{( w ( \mathfrak{n} ) ,x' )}$.

{\noindent}2. Also, $\lim_{\zeta_{t+1}} ( \widetilde{\mathrm{Bas}} (
\mathfrak{n} ) ) = \mathrm{Bas} ( \mathfrak{n} ) \subset \R_{t}^{k}$
(Proposition \ref{prop:limtilde}). It follows that for any $x \in
\R_{t}$, and $C$ a semi-algebraically connected component of $\mathrm{Bas} (
\mathfrak{n} )_{( w ( \mathfrak{n} ) ,x )}$, there exists $x' \in \R_{t+1}$
with $\lim_{\zeta_{t+1}} ( x' ) =x$, and a semi-algebraically connected
component $D$ of $\widetilde{\mathrm{Bas}} ( \mathfrak{n} )_{( w (
\mathfrak{n} ) ,x' )}$ such that $\lim_{\zeta_{t+1}} ( D ) \subset C$.

{\noindent}3. Using Claim 1. there exists a semi-algebraically connected
component $D^{0}$ of $\widetilde{\mathrm{Bas}} ( \mathfrak{n} )_{( w (
\mathfrak{n} ) ,x' )}^{0}$ which is contained in $D$.

Now since,
\[ \widetilde{\mathrm{Bas}} ( \mathfrak{n} )^{0} = \bigcup_{\alpha \in
 \mathcal{I} ( \mathfrak{n} )} \lim_{\gamma_{t+1}} \mathrm{Bas} (
 \mathfrak{n} ( \alpha ) ) \]
there exists a left child $\mathfrak{n} ( \alpha )$ of $\mathfrak{n}$ and a
semi-algebraically connected component $D_{\mathfrak{n} ( \alpha )}$ of
$\mathrm{Bas} ( \mathfrak{n} ( \alpha ) )$ such that $\lim_{\gamma_{t+1}} (
D_{\mathfrak{n} ( \alpha )} ) \subset D^{0}$.

Noting that being the left child of $\mathfrak{n}$, $\mathrm{level} (
\mathfrak{n} ( \alpha ) ) > \mathrm{level} ( \mathfrak{n} )$, and noting that
the fact $s ( \mathfrak{n} ( \alpha ) ) =s ( \mathfrak{n} ) 0$ implies that
$\mathrm{Fix} ( \mathfrak{n} ( \alpha ) ) = \mathrm{Fix} ( \mathfrak{n} )$, we
can apply the induction hypothesis to obtain that $L'_{( w ( \mathfrak{n} )
,x' )}$ meets $D_{\mathfrak{n} ( \alpha )}$, where
\[ L' = \bigcup_{\mathfrak{m} \in \mathrm{Leav}^{ 0} ( \mathfrak{n} ( \alpha
 ) )} \lim_{\zeta_{t+2}} ( \mathrm{Bas} ( \mathfrak{m} ) ) . \]
Now $\lim_{\zeta_{t+1}} ( L' ) \subset L$, which implies that
$\lim_{\zeta_{t+1}} ( L'_{( w ( \mathfrak{n} ) ,x' )} ) \subset L_{( w (
\mathfrak{n} ) ,x )}$. Moreover, $L'_{( w ( \mathfrak{n} ) ,x )} \cap
D_{\mathfrak{n} ( \alpha )} \neq \emptyset$, $\lim_{\gamma_{t+1}} (
D_{\mathfrak{n} ( \alpha )} ) \subset D^{0} \subset D$, and
$\lim_{\zeta_{t+1}} ( D ) \subset C$. Together they imply that $L_{( w (
\mathfrak{n} ) ,x )} \cap C \neq \emptyset$.
\end{proof}

\begin{corollary}
\label{cor:meetscc}$\mathrm{DCRM} ( \mathrm{Bas} ( \mathfrak{n} ) , \mathcal{A} (
\mathfrak{n}) )$ meets every semi-algebraically connected component of
$\mathrm{Bas} ( \mathfrak{n} )$.
\end{corollary}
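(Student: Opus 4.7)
The plan is to deduce the corollary almost immediately from Proposition \ref{prop:leaves2}. The key observation is that $\mathrm{Leav}^{0}(\mathfrak{n})\subset \mathrm{Leav}(\mathfrak{n})$, so the semi-algebraic set $L=\bigcup_{\mathfrak{m}\in \mathrm{Leav}^{0}(\mathfrak{n})}\lim_{\zeta_{t+1}}(\mathrm{Bas}(\mathfrak{m}))$ produced there is contained in $\mathrm{DCRM}(\mathrm{Bas}(\mathfrak{n}),\mathcal{A}(\mathfrak{n}))$. It therefore suffices to show that $L$ meets every semi-algebraically connected component of $\mathrm{Bas}(\mathfrak{n})$; the stronger fiberwise version of that statement is precisely what Proposition \ref{prop:leaves2} provides.

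Concretely, I would argue as follows. Let $C$ be a semi-algebraically connected component of $\mathrm{Bas}(\mathfrak{n})\subset \R_{t}^{k}$. Since $\mathrm{Bas}(\mathfrak{n})\subset \{w(\mathfrak{n})\}\times \R_{t}^{k-\mathrm{Fix}(\mathfrak{n})}$, we may pick any point $p\in C$ and let $x\in \R_{t}$ be its $(\mathrm{Fix}(\mathfrak{n})+1)$-st coordinate, so that $p\in \mathrm{Bas}(\mathfrak{n})_{(w(\mathfrak{n}),x)}$. Let $D$ be the semi-algebraically connected component of the fiber $\mathrm{Bas}(\mathfrak{n})_{(w(\mathfrak{n}),x)}$ that contains $p$. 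Since $D$ is a semi-algebraically connected subset of $\mathrm{Bas}(\mathfrak{n})$ meeting $C$, we must have $D\subset C$.

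Now apply Proposition \ref{prop:leaves2} with this $x$: it yields that $L_{(w(\mathfrak{n}),x)}$ meets every semi-algebraically connected component of $\mathrm{Bas}(\mathfrak{n})_{(w(\mathfrak{n}),x)}$; in particular $L_{(w(\mathfrak{n}),x)}\cap D\neq \emptyset$. Any such intersection point lies in $L\cap C$, and hence in $\mathrm{DCRM}(\mathrm{Bas}(\mathfrak{n}),\mathcal{A}(\mathfrak{n}))\cap C$, proving the corollary.

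There is essentially no obstacle in this argument, provided one trusts Proposition \ref{prop:leaves2}; the only care required is in tracking the fixed coordinates $w(\mathfrak{n})$ and the convention that fibers of $\mathrm{Bas}(\mathfrak{n})$ already have these coordinates specified. The real work has been shifted into the inductive proof of Proposition \ref{prop:leaves2}, so the corollary here is just the natural consequence obtained by forgetting the fiberwise refinement and enlarging $\mathrm{Leav}^{0}(\mathfrak{n})$ to all of $\mathrm{Leav}(\mathfrak{n})$.
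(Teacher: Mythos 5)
Your proof is correct and is exactly the argument the paper intends: the corollary is stated without proof as an immediate consequence of Proposition \ref{prop:leaves2}, obtained by restricting to a fiber through any point of a given component, noting that the fiber component lies inside that component, and observing that $\mathrm{Leav}^{0}(\mathfrak{n})\subset \mathrm{Leav}(\mathfrak{n})$ so that $L\subset \mathrm{DCRM}(\mathrm{Bas}(\mathfrak{n}),\mathcal{A}(\mathfrak{n}))$. Nothing further is needed.
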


We are now ready for the proof of Proposition \ref{prop:correctness}, and as
an immediate consequence Theorem \ref{thm:correctness}.

\begin{proof}[Proof of Proposition \ref{prop:correctness}]
The fact that
$\mathcal{A} ( \mathfrak{n} )$ is contained in the set 
\[
\mathrm{DCRM} (
\mathrm{Bas} ( \mathfrak{n} ) ,\mathcal{A} ( \mathfrak{n} ) ) =
\bigcup_{\mathfrak{l} \in \mathrm{Leav} ( \mathfrak{n} )} 
\lim_{\zeta_{t+1}} ( \mathrm{Bas} ( \mathfrak{l} ) )
\]
 follows from \eqref{eqn:rightmostA}. The roadmap property $\mathrm{RM}_{1}$ follows from
Proposition \ref{prop:structure1} and Corollary \ref{cor:meetscc}. The
property $\mathrm{RM}_{2}$ follows from Proposition \ref{prop:leaves2}.
\end{proof}

\begin{proof}[Proof of Theorem \ref{thm:correctness}]
Follows immediately from Proposition
\ref{prop:correctness}, setting $\mathfrak{n}=\mathfrak{r}$, and observing 
that $\mathcal{A}=\mathcal{A} ( \mathfrak{r} )$ by construction, and that
$\mathrm{Fix} ( \mathfrak{r} ) =0$.
\end{proof}

\subsection{Preliminary definitions and algorithms}\label{prelim}

In this subsection we introduce certain notation, definitions and algorithms
that will be used in Algorithm \ref{algo:divide} (Divide) in the next
subsection. Recall that in the description of the tree $\mathrm{Tree} (
V,\mathcal{A} )$, at each node $\mathfrak{n}$ of $\mathrm{Tree} ( V,\mathcal{A}
)$, some coordinates have been fixed and the basic semi-algebraic set
$\mathrm{Bas} ( \mathfrak{n} )$ is contained in the fiber over the point $w (
\mathfrak{n} )$ consisting of the fixed coordinates. We now explain how we
represent algebraically the points that fix the fibers in our construction,  and
also the necessary algorithms to compute these points. We refer the reader to
{\cite{BPRbook2}} for any missing detail.

A root of a univariate polynomials is going to be described by a Thom
encoding.

\begin{notation}
\label{not:derivatives} Let $P$ be a univariate polynomial of degree~$p$ in
$\D [X]$. We denote by $\mathrm{Der} ( P )$ the list $P,P' , \ldots ,P^{(p)}$.
Let $P \in \D [X]$ and $\sigma \in \{0,1, \um 1\}^{\Der (P)}$ a sign
condition on the set $\Der (P)$ of derivatives of $P$. The {\tmem{Thom
encoding}} of a root $x$ of $P$ in $\R$ is equal to $\sigma$ if the sign
condition taken by the set $\Der (P)$ at $x$ coincides with $\sigma$. Note
that two different roots of $P$ have different Thom encodings (see
{\cite{BPRbook2}} Proposition 2.28).
\end{notation}

Because we need to fix successively blocks of coordinates of decreasing size,
triangular Thom encodings appear naturally.

\begin{definition}
\label{def:triangular-thom}
A {\tmem{triangular system of polynomials}} with
variables $T= ( T_{1} , \ldots ,T_{t} )$ is a tuple $\mathcal{T=} ( F_{1} ,
\ldots ,F_{t} )$ where
\begin{eqnarray*}
& F_{i} \in & \D [T_{1} , \ldots ,T_{i} ],1 \leq i \leq t,
\end{eqnarray*}
such that $\ZZ ( \mathcal{T} , \R^{t} )$ is finite. 
A {\tmem{triangular Thom
encoding}} specifying 
\[
\theta = ( \theta_{1} , \ldots , \theta_{t} ) \in
\R^{t}
\] 
is a pair $( \mathcal{T} , \tau )$ where $\mathcal{T}$ is a
triangular system of polynomials,  and $\tau = \tau_{1} , \ldots , \tau_{t}$
is a list of Thom encodings, such that $\tau_{i}$ is the Thom encoding of
the real root $\theta_{i}$ of $F_{i} ( \theta_{1} , \cdots , \theta_{i-1}
,T_{i} )$, for $i=1, \ldots ,t$.
\end{definition}

Moreover, we need to describe points in the corresponding fibers, which is
done using real univariate representations.

\begin{definition}
\label{realuniv}
A \emph{$k$-real univariate representation $u$ over a
triangular Thom encoding $\mathcal{T}, \tau$} specifying $\theta
\in \R^{t}$ is of the form
\[ u= (f(T,U), \sigma ,F(T,U)) ,\]
where $f (T,U) ,F (T,U) = ( f_{0} ( T,U ) , \ldots ,f_{k} ( T,U ) )$ is a
$k+2$-tuple of polynomials in $\D [T,U]$, such that $f ( \theta ,U )$ and
$f_{0} ( \theta ,U )$ are co-prime, and $\sigma$ is the Thom encoding of a
real root $x$ of $f ( \theta ,U )$. The {\tmem{point associated}} to $u$ is
the point
\[ \left( \frac{f_{1} ( \theta ,x)}{f_{0} ( \theta ,x )} , \ldots ,
 \frac{f_{k} ( \theta ,x)}{f_{0} ( \theta ,x)} \right) \in \R^{k} . \]
For $1 \leq p \leq k$, we call the real univariate representation $u_{\leq p}
= (f(T,U), \sigma ,F_{\leq p} (T,U))$ where $F_{\leq p} (T,U) = ( f_{0} (
T,U ) , \ldots ,f_{p} ( T,U ) )$, over the initial real triangular Thom
encoding $\mathcal{T}, \tau$ to be the {\tmem{projection of $u$ to the first
$p$ coordinates}}. Geometrically this corresponds to forgetting the last
$k-p$ coordinates of the associated point.
\end{definition}

We now give a few auxiliary algorithms. The first one computes the limit of a
Thom Encoding and is used in the determination of the $( B,G
)$-pseudo-critical values needed in our construction.

In the following algorithm $\overline{\eps} = \left( \eps_{1} , \ldots ,
\eps_{t} \right)$ is a tuple of infinitesimals.

{\algorithm\label{alg:limitofaThomencoding}[Limit of a Thom Encoding]
\begin{itemize}
\item {\tmstrong{Input}}:  a Thom encoding $\left(
f_{\overline{\eps}} , \sigma_{\overline{\eps}} \right)$ ,
$f_{\overline{\eps}} \in \D \left[ \overline{\eps} ,U \right]$, representing
$x_{\overline{\eps}} \in \R_{t} \la \overline{\eps} \ra$
bounded over $\R$.

\item {\tmstrong{Output}}: a Thom encoding $( f_{,} \sigma )$ , $f \in \D
[ U ]$, representing
\[ x= \lim_{\eps_{1}} ( x_{\bar{\epsilon}} ) \in \R . \]
\item {\tmstrong{Complexity and degree bounds}}: If $D_{1}$ (resp. $D_{2}$)
is a bound on the degree of $f_{\overline{\eps}}$ with respect to $U$ (resp.
$\overline{\eps}$) the number of arithmetic operations in $\D$ is bounded by
$D_{1}^{O (1)} D_{2}^{O (t)}$. Moreover, the degrees in $U$ of the
polynomials appearing in the output are still bounded by $D_{1}$.

\item {\tmstrong{Procedure}}:

\item {\tmstrong{Step 1}}. Replace $f_{\overline{\eps}}$ by
$\overline{\eps}^{-o_{\overline{\eps}} \left( f_{\overline{\eps}} \right)} 
f_{\overline{\eps}}$ (see Notation \ref{not:order}). Denote by $f (T)$ the
polynomial obtained by substituting successively $\eps_{t}$ by $0$, and then
$\eps_{t-1}$ by 0, and so on, and finally $\eps_{1}$ by $0$, in
$f_{\overline{\eps}}$.

\item {\tmstrong{Step 2}}. Compute the set $\Sigma$ of Thom encodings of
roots of $f (T)$ using Algorithm 10.11 (Sign Determination) from
{\cite{BPRbook2}}.

\item {\tmstrong{Step 3}}. Identify the Thom encoding $\sigma$ using
Algorithm 10.13 (Univariate Sign Determination) from {\cite{BPRbook2}}, by
checking whether a ball of infinitesimal radius $\delta$ ($1 \gg \delta \gg
\overline{\eps} >0$) around the point $x$ represented by the real univariate
representation $f, \sigma$ contains $x_{\overline{\eps}}$.
\end{itemize}}

\begin{proof}[Proof of correctness] Follows immediately from the correctness of
Algorithm 10.11 (Sign Determination)  and
Algorithm 10.13 (Univariate Sign Determination) in \cite{BPRbook2}.
\end{proof}

\begin{proof}[Proof of complexity and degree bounds]
Follows from the complexity
of Algorithm 10.11 (Sign Determination)  and
Algorithm 10.13 (Univariate Sign Determination) in  \cite{BPRbook2}. The
fact that the degree in $U$ of the polynomials in the output are bounded by
$D_{1}$ is clear.
\end{proof}

\begin{remark}
\label{rem:complexityofring} Our algorithms use several algorithms from
{\cite{BPRbook2}} such as Algorithm 12.16 (Bounded Algebraic Sampling),
Algorithm 14.9 (Global Optimization), Algorithm 15.2 (Curve Segments), and
Algorithm 11.19 (Restricted Elimination) with one important modification.
Each of these algorithms described in {\cite{BPRbook2}} has an associated
structure which is an ordered domain in which all computations (i.e.,
arithmetic operations and sign evaluations) take place. In the calls to
these algorithms in this paper, this ordered domain will be of the form
$\D_{t} [ \theta ]$, where $\theta \in \R_{t}^{m}$ is specified by a
triangular Thom encoding $( \mathcal{T}, \tau )$ and involves $4t$
infinitesimals (see Notation \ref{not:rtdt}). Each element of $\D_{t} [
\theta ]$ is represented by some polynomial in $\D_{t} [ T ] = \D [ \eta
,T_{1} , \ldots ,T_{m} ]$ and arithmetic operations are performed as
ordinary polynomial arithmetic in the ring $\D_{t} [ T ]$. For the
evaluation of the sign of an element in $\D_{t} [ \theta ]$ represented by a
polynomial $f \in \D_{t} [ T ]$ we also use an algorithm from
{\cite{BPRbook2}}, namely Algorithm 12.10 (Triangular Sign Determination)
with input $f,\mathcal{T}, \tau$.

Suppose that the degree of the output (and of the intermediate
computations) of a particular algorithm in {\cite{BPRbook2}} is bounded by
some function $f ( d,k,s )$ of the degrees $d$, the number of variables $k$,
and the number of polynomials $s$. If $d' ,k' ,s'$ is a bound on the
degrees, number of variables and number of the input polynomials (considered
as polynomials with coefficients in $\D_{t} [ \theta ]$) in a call to that
algorithm in this paper, then the degree bound of the output (and
intermediate computations) is $f ( d' ,k' ,s' )$ in the ring $\D_{t} [
\theta ]$.

But we want to evaluate the complexity in the ring $\D$. Denoting by $N$ a
bound on the degrees in $T, \eta$ of the input polynomials, we have the
following :
\begin{itemize}
\item the degrees in $T, \eta$ of the output (and of the intermediate
computations) are bounded by $O ( Nf ( d' ,k' ,s' ) )$,

\item if the complexity of a particular algorithm in {\cite{BPRbook2}} is
bounded by some function $F ( d,k,s )$, then the number of arithmetic
operations and sign evaluations in $\D_{t} [ \theta ]$ of the call to that
algorithm in this paper is bounded by $F ( d' ,k' ,s' )$ , while the cost
of the call to that algorithm in this paper, i.e.,  the number of arithmetic
operations and sign evaluations in $\D$, is bounded by $N^{O ( m+t )} F (
d' ,k' ,s' )$.
\end{itemize}
These statements do not follow immediately from the complexity results on
the algorithms given in {\cite{BPRbook2}}. It is necessary to inspect the
algorithms in {\cite{BPRbook2}} carefully, noticing that they are all based
on linear algebra subroutines and determinant computations.
\end{remark}

We now describe an algorithm for computing the $( B,G )$-pseudo-critical
values of a family of polynomials (cf. Definition \ref{def:Bpseudocritical}), using
Notation \ref{not:rtdt}.

{\algorithm\label{alg:special}[$( B,G )$-pseudo-critical values over a Triangular Thom
Encoding]
\begin{itemize}
\item {\tmstrong{Input}}:
\begin{enumerate}
\item a triangular Thom encoding $\left(\mathcal{T}
,\tau\right)$ with $\mathcal{T} \subset \D_{t} [ T ]$,
fixing a point $\theta \in \R^{m}$, $m \leq t$;

\item a family of polynomials $\mathcal{P}= \{ P_{1} , \ldots ,P_{s} \}
\subset \D_{t} [T,X_{1} , \ldots ,X_{k} ] $, such that $\ZZ (
\mathcal{P} ( \theta , \cdot ) , \R^{k} )$ is bounded;

\item a matrix $B =( b_{i, j} )_{1 \leq i \leq s,0\leq j \leq k} \in
\mathbb{N}_{>0}^{s \times ( k+1 )}$ having good rank property;

\item a polynomial $G \in \D_{t} [ X_{1} , \ldots ,X_{k} ]$.
\end{enumerate}
\item { \bf Output}: a set of Thom encodings $( f, \sigma )$ over
$\left(\mathcal{T},\tau\right)$ specifying a finite subset of
$\R$ containing the $( B,G )$-pseudo-critical values of $\ZZ (\mathcal{P} (
\theta , \cdot ) , \R_{t}^{k} )$.

\item { \bf Complexity and degree bounds}: $s^{k}D^{O ( t+m )}d^{O
( k )}$ arithmetic operations in $\D$, where $D$ is a bound on the degree of
$\mathcal{T}$ with respect to $T, \eta$, and $d$ is a bound on the degrees
of the polynomials in $\mathcal{P}$ and of $G$. The degrees in $T, \eta$ \
of the polynomials appearing in the Thom encodings over $( \mathcal{T,} \tau
)$ output are bounded by $O ( d )^{k} D$, while the degree in the new
variable $U$ is bounded by $O ( d )^{k}$.

\item {\bf Procedure}:

\item {\tmstrong{Step 1}}. For each $I \subset [ 1,s ] $ with
$\mathrm{card} ( I ) \leq k$, $\sigma \in \{ -1,1 \}^{I}$, compute using
Algorithm 12.16 (Bounded Algebraic Sampling) from {\cite{BPRbook2}} with
ring $\D_{t} [ \theta , \gamma ]$ and input $\mathrm{CritEq} (
\tilde{\mathcal{P}}_{I,B} ,G )$ (see Definition \ref{def:Bpseudocritical}
for the definition of $\tilde{\mathcal{P}}_{I,B}$) a set of real
univariate representations over $( \mathcal{T, \tau} )$ with associated
points meeting every semi-algebraically connected component of $\ZZ \left(
\mathrm{CritEq} ( \tilde{\mathcal{P}}_{I,B} ,G ) \cup \{ G-Z \} , \R_{t}
\langle \gamma \rangle^{k+ \mathrm{card} ( I ) +2} \right)$.

\item {\tmstrong{Step 2}}. For each real univariate representation
\[ ( f,g_{0} ,g_{1} , \ldots ,g_{k} ,g_{\lambda_{0}} , \ldots
 ,g_{\lambda_{\mathrm{card} ( I )}} ,g_{Z} ) , \sigma \]
over $( \mathcal{T, \tau} )$ output in the previous step, where 
\[
f,g_{0}
,g_{1} , \ldots ,g_{k} ,g_{\lambda_{0}} , \ldots ,g_{\lambda_{\mathrm{card} (
I )}} ,g_{Z} \in \D_{t} [ T, \gamma ,U ],
\]
eliminating $U$ from the equations
\[ f ( T,U ),Z g_{0} ( T,U ) -g_{Z} ( T,U ) \]
obtain a Thom encoding $( A ( T, \gamma ,Z ) , \alpha )$ over $(
\mathcal{T,} \tau )$ describing a point $a \in \R_{t} \la \gamma \ra$.

\item {\tmstrong{Step 3}}. Compute a Thom encoding describing
$\lim_{\gamma} ( a )$ using Algorithm \ref{alg:limitofaThomencoding} (Limit
of a Thom Encoding).

\item {\tmstrong{Step 4}}. Output the set of all real univariate representations computed in Step
3.
\end{itemize}
}

\begin{proof}[Proof of correctness]
 The correctness of Algorithm \ref{alg:special}
is a consequence of the correctness of Algorithm 12.16 (Bounded Algebraic
Sampling) and Algorithm 12.14 (Limit of bounded points) from
{\cite{BPRbook2}} given the definition of $( B,G )$-pseudo-critical values
(see Definition \ref{def:Bpseudocritical}).
\end{proof}

\begin{proof}[Proof of complexity and degree bounds]
It follows from the
complexity of Algorithm 12.16 (Bounded Algebraic Sampling) and Algorithm
12.14 (Limits of bounded points) from {\cite{BPRbook2}} and from Remark
\ref{rem:complexityofring}, that the complexity is bounded by 
\[
s^{k}D^{O (m+t )} d^{O ( k )}.
\] 
Moreover, it follows from the complexity analysis of
Algorithm 12.16 (Bounded Algebraic Sampling) from {\cite{BPRbook2}}, and
that of Algorithm \ref{alg:limitofaThomencoding} (Limit of a Thom
Encoding), that the degrees in $T, \eta$ of the polynomials appearing in the
Thom encodings over $( \mathcal{T,} \tau )$ output is bounded by $O ( d
)^{k} D$, and the degree in $U$ is bounded by $O ( d )^{k}$.
\end{proof}

As mentioned earlier, we will need to compute certain well chosen finite sets
of points which correspond to points that minimize locally the distance
between pairs of semi-algebraically connected components of some basic
semi-algebraic set described in the input. For technical reasons, we need
such an algorithm in two different versions. In the first algorithm (Algorithm
\ref{alg:closest-point}) the input is a basic semi-algebraic set and a point,
while in the second algorithm (Algorithm \ref{alg:closest-pair}) the input
is a pair of basic semi-algebraic sets.

We use again Notation \ref{not:rtdt}.

{\algorithm
\label{alg:closest-point}
[Closest Point over a Triangular Thom
Encoding]
\begin{itemize}
\item{\bf Input}: A triangular Thom encoding $\left(\mathcal{T}
, \tau\right)$,
$\mathcal{ T} \subset \D_{t} [ T ]$, fixing a
point $\theta \in \R^{m}$, finite subsets $\mathcal{P} ,\mathcal{Q}
\subset \D_{t} [T,X_{1} , \ldots ,X_{k} ]$ with $\mathrm{Bas} ( \mathcal{P} (
\theta , \cdot ),\mathcal{Q} ( \theta , \cdot ) )$ bounded, and a real
univariate representation $u= ( g, \sigma ,G )$ over $\left(\mathcal{T}, \tau\right)$ with associated point $x$.

\item {\bf Output}: A finite set of real univariate representations
over $( \mathcal{T}, \tau )$ with associated points $\mathrm{MinDi} (
\mathrm{Bas} ( \mathcal{P} ( \theta , \cdot ), \mathcal{Q} (\theta , \cdot )
) , \{ x \} )$.

\item {\bf Complexity and degree bounds}: Let $\mathrm{card} (
\mathcal{Q} )\leq t$, $\deg_{X} ( \mathcal{P} , 
 \mathcal{Q} ) \leq d$, $\deg_{\eta ,T} ( \mathcal{T} ) \leq
D$, $\deg_{\eta ,T ,U} ( u ) \leq D$,  and $\deg_{\eta ,T} (
\mathcal{P} , \mathcal{Q} ) \leq dD$. Then the
number of arithmetic operations in $\D$  is bounded by $d^{O ( k )} D^{O (
m+t )}$. The degrees in $T, \eta$ of the polynomials appearing in the Thom
encodings over $( \mathcal{T}, \tau )$ output are bounded by $O ( d )^{k}
D$, while the degree in the new variable $U$ is bounded by $O ( d )^{k}$. 

\item {\tmstrong{Procedure}}:

\item {\tmstrong{Step 1}}. Let $S \subset \R^{k} \times \R^{k}$ be the
semi-algebraic sets defined by
\begin{eqnarray*}
S & := & \mathrm{Bas} ( \mathcal{P} ( \theta , \cdot ) ,\mathcal{Q} (
\theta , \cdot ) ) \times \{ x \} ,
\end{eqnarray*}
where, with $G= ( g_{0} , \ldots ,g_{k} )$, the point $x$ associated to $u$
is defined by 
\[ \sum_{i=1}^{k} ( g_{0} ( T,U ) Y_{i} - g_{i} ( T,U ) )^{2} . \]

\item {\tmstrong{Step 2}}. Let $F =\sum_{1 \leq i \leq k} ( X_{i} -Y_{i}
)^{2}$. Apply Algorithm 14.9 (Global Optimization) from {\cite{BPRbook2}}
with ring $\D [ \theta , \theta_{g} ]$,  where $( \theta , \theta_{g} )$
is associated to the triangular Thom encoding $( ( \mathcal{T},g ) , ( \tau
, \sigma ) )$, and the pair $( S,F )$ as input, and
project the set of real univariate representations over $( (
\mathcal{T},g ) , ( \tau , \sigma ) )$ that are output, to the first $k$ coordinates.

\item {\tmstrong{Step 3.}} For each univariate representation $w= ( h (
T,U,V ) , \sigma_{h} ,H ( T,U,V ) )$ output in Step 2, use Algorithm 12.16
(Bounded Algebraic Sampling) from {\cite{BPRbook2}} with ring $\D [ \theta
]$ and the polynomials $\{ g,h \}$ to obtain a set of real univariate
representations $v= ( e ( T,T' ) , \sigma_{e} ,E= ( e_{0,} e_{U} ,e_{V} )
)$. Substitute the rational functions $\tfrac{e_{U}}{e_{0}} ,
\frac{e_{V}}{e_{0}}$ for $U,V$ in the real univariate representation $w$, and
output the resulting real univariate representation over $( \mathcal{T},
\tau )$. 
\end{itemize}}

\begin{proof}[Proof of correctness]The correctness of Algorithm
\ref{alg:closest-point} is a consequence of the correctness of Algorithm
14.9 (Global Optimization), and of Algorithm 12.16 (Bounded Algebraic
Sampling) from {\cite{BPRbook2}}. The degree bounds follow from the
complexity analysis of the above algorithms.
\end{proof}

\begin{proof}[Proof of complexity and degree bounds]
It follows from the
complexity analysis of Algorithm 14.9 (Global Optimization) from {\cite{BPRbook2}},
and Remark \ref{rem:complexityofring},  that the complexity of Step 2 is
bounded by $d^{O ( k )} D^{O ( m+t )}$. Moreover, the degrees in $\eta
,T,U,V$ of the polynomials appearing in the real univariate representation
$w$ are bounded by $D d^{O ( k )}$. The cardinality of the set of real
univariate representations output is bounded by $2^{t} d^{O ( k )}$. It
follows from the complexity of Algorithm 12.16 (Bounded Algebraic
Sampling) from {\cite{BPRbook2}} and Remark \ref{rem:complexityofring} that
the degrees in $\eta ,T,T'$ of the $e,E$ are bounded by $D^{O ( 1 )} d^{O (
k )}$, and that the complexity of Step 3 is bounded by $d^{O ( k )} D^{O
( t )}$.
\end{proof}

{\algorithm
\label{alg:closest-pair}
[Closest Pairs over a Triangular Thom
Encoding]
\begin{itemize}
\item {\bf Input}: a triangular Thom encoding $\left(\mathcal{T}
, \tau\right)$,$\mathcal{T} \subset \D_{t} [ T ]$, fixing a point
$\theta \in \R_{t}^{m}$ and finite subsets $\mathcal{P}_{1} ,\mathcal{Q}_{1}
,\mathcal{P}_{2} ,\mathcal{Q}_{2} \subset \D_{t} [T ,X_{1} , \ldots
,X_{k} ]$ such that $\mathrm{Bas} ( \mathcal{P}_{1} ( \theta , \cdot )
\mathcal{,Q}_{1} ( \theta , \cdot ) )$ and $\mathrm{Bas} ( \mathcal{P}_{2} (
\theta , \cdot ) \mathcal{,Q}_{2} ( \theta , \cdot ) )$ are bounded.

\item {\bf Output}: A finite set of real univariate representations
over $( \mathcal{T}, \tau )$ with associated points
\[ \mathrm{MinDi} ( \mathrm{Bas} ( \mathcal{P}_{1} ( \theta , \cdot )
 \mathcal{,Q}_{1} ( \theta , \cdot ) ) , \mathrm{Bas} ( \mathcal{P}_{2} (
 \theta , \cdot ) \mathcal{,Q}_{2} ( \theta , \cdot ) ) ) . \]
 
\item {\bf Complexity and degree bounds}: Suppose that
\begin{enumerate}
\item
$
\mathrm{card} (\mathcal{Q}_{1} ), \mathrm{card} ( \mathcal{Q}_{2} ) \leq t,
$
\item
$\deg_{X} ( \mathcal{P}_{1} ,\mathcal{Q}_{1} ,\mathcal{P}_{2} ,
 \mathcal{Q}_{2} ) \leq d$, 
 \item
 $\deg_{\eta ,T} ( \mathcal{T} ) \leq
D$, 
\item
and $\deg_{\eta ,T} ( \mathcal{P}_{1} ,\mathcal{Q}_{1}
,\mathcal{P}_{2} ,\mathcal{Q}_{2} ) \leq dD$.
\end{enumerate}
 Then the number
of arithmetic operations in {\D} is bounded by $d^{O ( k )} D^{O ( m+t )}$.
The degrees in $T, \eta$ of the polynomials appearing in the Thom
encodings over $( \mathcal{T}, \tau )$ output are bounded by $O ( d )^{k}
D$, while the degree in the new variable $U$ is bounded by $O ( d )^{k}$.

\item {\tmstrong{Procedure}}:

\item {\tmstrong{Step 1}}. Let $S \subset \R^{k} \times \R^{k}$ be the
semi-algebraic sets defined by
\begin{eqnarray*}
S & = & \mathrm{Bas} ( \mathcal{P}_{1} ( \theta , \cdot ) ,\mathcal{Q}_{1} (
\theta , \cdot ) ) \times \mathrm{Bas} ( \mathcal{P}_{2} ( \theta ,
\cdot ) ,\mathcal{Q}_{2} ( \theta , \cdot ) ) .
\end{eqnarray*}
\item {\tmstrong{Step 2}}. Let $F =\sum_{1 \leq i \leq k} ( X_{i} -Y_{i}
)^{2}$. Apply Algorithm 14.9 (Global Optimization) from {\cite{BPRbook2}} to
the pair $( S,F )$ with ring $\D_{t} [ \theta ]$ and project the output set
of real univariate representations over $( \mathcal{T}, \tau )$ to the first
$k$-coordinates as well as to the last $k$-coordinates.
\end{itemize}}

\begin{proof}[Proof of correctness]
 The correctness of Algorithm
\ref{alg:closest-pair} is a consequence of the correctness of Algorithm 14.9
(Global Optimization) from {\cite{BPRbook2}}. 
\end{proof}

\begin{proof}[Proof of complexity and degree bounds] It follows from the
complexity of Algorithm 14.9 (Global Optimization) from {\cite{BPRbook2}}
and Remark \ref{rem:complexityofring} that the complexity of Step 2 is
bounded by $^{} d^{O ( k )} D^{O ( m+t )}$. The degree bounds follow from
the complexity analysis of the Algorithm 14.9 (Global Optimization) from
{\cite{BPRbook2}}.
\end{proof}

\subsection{The Divide algorithm\label{subsec:divide}}

We now describe Algorithm \ref{algo:divide} (Divide) which will be used later
to create the left and right children of a node of the tree, $\mathrm{Tree} (
V,\mathcal{A} )$, described in Section \ref{subsec:tree} above.

{\algorithm\label{algo:divide}[Divide]
\begin{itemize}
\item {\tmstrong{Input}}: A tuple $( s, ( \mathcal{T} , \tau ) , \mathcal{P}
, \mathcal{Q} ,A )$ satisfying the following.
\begin{enumerate}
\item $s \in \{ 0,1 \}^{t}$.

\item $( \mathcal{T}, \tau )$ is a triangular Thom encoding fixing $\theta
\in \R_{t}^{| \mathrm{fix} ( s ) |} $, where $\mathcal{T}$ is a
triangular system with variables $T_{\mathrm{fix} ( s )} = ( T_{i_{1}} ,
\ldots ,T_{i_{| \mathrm{fix} ( s ) |}} ),i_{j} \in \mathrm{fix} ( s
) = \{ i \mid s_{i} =1 \}$.

\item $\mathcal{P}\subset \D_{t} [ T_{\mathrm{fix} ( s )} 
,X_{\mathrm{Fix} ( s ) +1} , \ldots ,X_{k} ]$ is a finite set of
polynomials, where $\mathrm{Fix} ( s ) = \sum_{i=1}^{t} s_{i} k'/2^{i}$, and $\mathcal{Q}\subset \D_{t} [ T_{\mathrm{fix} ( s )}
,X_{\mathrm{Fix} ( s ) +1} , \ldots ,X_{k} ]$ is a set of $t-\mathrm{card}
( \mathrm{fix} ( s ) )$ polynomials, defining a semi-algebraic set
\[
\mathrm{Bas} ( \mathcal{P} ( \theta , \cdot ) ,\mathcal{Q} ( \theta , \cdot
) ) \subset \R_{t}^{k- \mathrm{Fix} ( s )}
\]
(cf. Notation \ref{not:rtdt}).

\item $A$ is a finite set of real univariate representations over
$\mathcal{T}$, with associated points $\mathcal{A} \subset S= \mathrm{Bas} (
\mathcal{P} ( \theta , \cdot ) ,\mathcal{Q} ( \theta , \cdot ) )$, meeting
every semi-algebraically connected component of $S$.

\item 
$\ZZ ( \mathcal{P} ( \theta , \cdot ) ,
\R_{t}^{k- \mathrm{Fix} ( s )} )
$
is strongly of dimension $\leq p= k'/2^{t}$,
if $t \neq 0$. More precisely, for every $z \in \R_{t}^{p}$, 
$\ZZ ( \mathcal{P} ( \theta , \cdot ) , \R_{t}^{k-\mathrm{Fix} ( s )})_{z}$ 
is a finite set (possibly empty).
\end{enumerate}

\item {\tmstrong{Output}}:

A tuple $( \tilde{\mathcal{P}} , \tilde{\mathcal{Q}} , \tilde{A}
,N,B, ( \mathcal{P}^{0} ( \alpha ) ,\mathcal{Q}^{0} ( \alpha ) 
,A ( \alpha ) )_{\alpha \in \mathcal{I} ( \tilde{\mathcal{P}} ,
\tilde{\mathcal{Q}} ,p/2 )} )$ where:
\begin{enumerate}
\item $\tilde{\mathcal{P}} \subset \D_{t} \left[ \zeta_{t+1} ,
\eps_{t+1} \right] [ T_{\mathrm{fix} ( s )} ,X_{\mathrm{Fix} ( s ) +1} ,
\ldots X_{k} ]$, is a finite set of polynomials with $\mathrm{card} (
\tilde{\mathcal{P}} ) =k- \mathrm{Fix} ( s ) -p$.

\item $\tilde{\mathcal{Q}} \subset \D_{t} \left[ \zeta_{t+1} ,
\eps_{t+1} , \delta_{t+1} \right] [ T_{\mathrm{fix} ( s )} ,X_{X_{\mathrm{Fix}
( s ) +1}} , \ldots X_{k} ]$ is a finite set of polynomials with
$\mathrm{card} ( \tilde{\mathcal{Q}} ) = \mathrm{card} (\mathcal{Q} ) =t-
\mathrm{card} ( \mathrm{fix} ( s ) )$.

\item $\tilde{A}$ is a set of real univariate representations over $(
\mathcal{T}, \tau )$ whose set of associated points is
$\tilde{\mathcal{A}} \subset \tilde{S} = \mathrm{Bas} (
\tilde{\mathcal{P}} ( \theta , \cdot ) , \tilde{\mathcal{Q}} ( \theta ,
\cdot ) )$.

\item $N$ is a set of real univariate representations, $u= ( h, \sigma ,H
)$, over $( \mathcal{T}, \tau )$ with associated points $\mathcal{N}
\subset \R_{t+1}^{p/2}$(and new variable $T_{t+1}$).

\item $B= \bigcup_{u \in N} B ( u )$, where for each $u= ( h, \sigma ,H )
\in N$ output in (4), $B ( u )$ is a set of real univariate
representations over $( ( \mathcal{T},h ) , ( \tau , \sigma ) )$
describing $\theta' = ( \theta ,x_{\sigma} ) \in \R_{t}^{\mathrm{Fix} ( s )
+1}$ whose set of associated points is $\mathcal{B} ( u ) \subset
\mathrm{Bas} ( \tilde{\mathcal{P}}_{u} ( \theta' , \cdot ) ,
\tilde{\mathcal{Q}}_{u} ( \theta' , \cdot ) )$. We denote the set
of associated points of $B$ by $\mathcal{B}$.

\item For every $\alpha \in \mathcal{I} ( \tilde{\mathcal{P}} ,
\tilde{\mathcal{Q}} ,p/2 )$ (see Notation
\ref{not:indices}), 
\[
\mathcal{P}^{0} ( \alpha ) ,\mathcal{Q}^{0} (
\alpha ) \subset \D_{t+1} [ T_{\mathrm{fix} ( s )} ,X_{\mathrm{Fix} ( s ) +1}
, \ldots X_{k} ]
\]
 are finite subsets with $\mathrm{card} ( \mathcal{Q}^{0}
( \alpha ) )=\mathrm{card} (\mathcal{Q} ) +1$, and $A ( \alpha )$ is a
set of real univariate representations over $\mathcal{T}$, whose set of
associated points is $\mathcal{A} ( \alpha ) \subset S^{0} ( \alpha )
= \mathrm{Bas} ( \mathcal{P}^{0} ( \alpha ) ( \theta , \cdot )
,\mathcal{Q}^{0} ( \alpha ) ( \theta , \cdot ) ) .$
\end{enumerate}
The tuple $( \tilde{\mathcal{P}} , \tilde{\mathcal{Q}} , \tilde{A} ,N,B
 , ( \mathcal{P}^{0} ( \alpha ) ,\mathcal{Q}^{0} ( \alpha )
 ,A ( \alpha ) )_{\alpha \in \mathcal{I} ( \tilde{\mathcal{P}} ,
\tilde{\mathcal{Q}} ,p/2 )} )$ satisfies the following properties.
Let
\begin{eqnarray*}
\tilde{S}^{0} &=& \lim_{\gamma_{t+1}} ( \cup_{\alpha \in
\mathcal{I} ( \tilde{\mathcal{P}} , \tilde{\mathcal{Q}} ,p/2 )}
S^{0} ( \alpha)), \\ 
\tilde{S}^{1} &=&
\tilde{S}_{\mathcal{N}}.
\end{eqnarray*}

\begin{enumerate}
\item $\lim_{\zeta_{t}} ( \tilde{S} ) =S$.

\item $\tilde{S}^{0} \cup \tilde{S}^{ 1}$ has good
connectivity properties with respect to $\tilde{S}$.

\item 
$\tilde{S}^{0}$ and 
$\tilde{S}_{\mathcal{N}}$ are both strongly of dimension $\leq p/2$.

\item $\tilde{\mathcal{A}} = \mathrm{MinDi} ( \tilde{S} ,\mathcal{A} ) \cup
\mathrm{MinDi} ( \tilde{S} , \tilde{S} )$, 
$\mathcal{N} \supset \pi_{[
\mathrm{Fix} ( s ) +1, \mathrm{Fix} ( s ) +p/2 ]} (\tilde{\mathcal{A}})$.

\item $\mathcal{B} = \tilde{S}^{0} \cap \tilde{S}^{1}$.

\item $\mathcal{A} ( \alpha ) = \mathrm{MinDi} ( S^{0} ( \alpha ) ,
\mathcal{B} ) \cup \left( \bigcup_{\beta \in \mathcal{I}( \tilde{P} ,
\tilde{\mathcal{Q}} , \ell )} \mathrm{MinDi} ( S^{0} ( \alpha ) ,S^{0} (
\beta ) ) \right)$.

\item For every $\alpha \in \mathcal{I}$, $\mathcal{A
( \alpha )} \subset S^{0} ( \alpha )$ meets every semi-algebraically
connected component of $S^{0} ( \alpha )$, 
and for every $\alpha , \beta$
in $\mathcal{I}$, and $C$ (resp. $D$) semi-algebraically connected
component of $S^{0} ( \alpha )$ (resp. $S^{0} ( \beta )$) such that
$\lim_{\gamma_{t+1}} ( C ) \cap \lim_{\gamma_{t+1}} ( D )$ is
non-empty, $\lim_{\gamma_{t+1}} ( C \cap \mathcal{A} ( \alpha ) ) \cap
\lim_{\gamma_{t+1}} ( D \cap \mathcal{A} ( \beta ) )$ is non-empty, and
meets every semi-algebraically connected component of $\lim_{\gamma_{t+1}}
( C ) \cap \lim_{\gamma_{t+1}} ( D )$.
\end{enumerate}
\item {\tmstrong{Complexity and degree bounds}}: In order to simplify the
complexity analysis, we are going to make the following assumptions which
are going to be satisfied for each call to this algorithm in Algorithm
\ref{alg:bounded} (Divide and Conquer Roadmap Algorithm for Bounded
Algebraic Sets). Let the triangular system $\mathcal{T}$ in the input be
$\mathcal{T}= ( F_{1} , \ldots ,F_{| \mathrm{fix} ( s ) |} )$, where for each
$h, \; 1 \leq h \leq \mathrm{card} ( \mathrm{fix} ( s ) )$,
$F_{h} \in \D_{t} [ T_{i_{1}} , \ldots ,T_{i_{h}} ]$. Also denote $\eta =
\left( \zeta_{1} , \eps_{1} , \delta_{1} , \gamma_{1} \cdots , \zeta_{t} ,
\eps_{t} , \delta_{t} , \gamma_{t} \right)$ and $\eta_{t+1} = \left(
\zeta_{t+1} , \eps_{t+1} , \delta_{t+1} , \gamma_{t+1} \right)$ (as in
Notation \ref{not:rtdt}). Let $c > 0$ be a constant. We assume that:
\begin{enumerate}
\item $\deg_{X} ( \mathcal{P}, \mathcal{Q} ) \leq
( 2 k )^{t } d$;
\item $\deg_{T_{\mathrm{fix} ( s )}} ( \mathcal{P},
\mathcal{Q} )$, $\deg_{\eta} ( \mathcal{P},
\mathcal{Q} )$, $\deg_{T_{\mathrm{fix} ( s )}} ( F_{h} )$, $\deg_{\eta} (
F_{h} )$ are all bounded by $D^{t} ( ( 2k )^{t} d )^{c k t}$;

\item The degrees in $\eta ,T_{\mathrm{fix} ( s )}$ of the polynomials
(belonging to $\D_{t} [ T_{\mathrm{fix} ( s )} ,T_{t+1} ]$) appearing in the
univariate representations $A$ are bounded by 
\[
D^{t} ( ( 2k )^{t} d )^{c k
t},
\]
 while the degrees in $T_{t+1}$ are bounded by $D$.
\end{enumerate}
With the above assumption on the input parameters, the output tuple
($\tilde{\mathcal{P}} , \tilde{\mathcal{Q}}$,$\tilde{A} ,N,B$,$(
\mathcal{P}^{0} ( \alpha ) ,\mathcal{Q}^{0} ( \alpha ),A (
\alpha ) )_{\alpha \in \mathcal{I} ( \tilde{\mathcal{P}} ,
\tilde{\mathcal{Q}} ,p/2 )}$) satisfies the following, for $c$ large enough.

\begin{enumerate}
\item\label{item:complexity-divide1} 
\begin{eqnarray*}
\deg_{X} ( \tilde{\mathcal{P}} ,
\tilde{\mathcal{Q}} ) &\leq&2( 2 k )^{t } d,\\
\deg_{T_{\mathrm{fix} ( s
)}} ( \tilde{\mathcal{P}} , \tilde{\mathcal{Q}} ) ,
 &\leq& 2D^{t} ( ( 2k )^{t} d )^{c k t},\\
\deg_{\eta} ( \tilde{\mathcal{P}} ,
\tilde{\mathcal{Q}} ) &\leq& 2D^{t} ( ( 2k )^{t} d )^{c k t},\\
\deg_{\eta_{t+1}} ( \tilde{\mathcal{P}} ,
\tilde{\mathcal{Q}} )&=&1.
\end{eqnarray*}

\item\label{item:complexity-divide2}
\begin{eqnarray*}
\deg_{X} ( \mathcal{P}^{0} ( \alpha ) ,
\mathcal{Q}^{0} ( \alpha ) )& \leq & ( 2 k )^{t } d k, \\
\deg_{T_{\mathrm{fix} ( s )}} ( \mathcal{P}^{0} ( \alpha ) ,
 \mathcal{Q}^{0} ( \alpha ) ) 
&\leq& D^{t+1} ( ( 2k )^{t+1} d )^{c k 
( t+1 )},\\
\deg_{\eta} ( \mathcal{P}^{0}
( \alpha ) , \mathcal{Q}^{0} ( \alpha ) ) 
&\leq& D^{t+1} ( ( 2k )^{t+1} d )^{c k 
( t+1 )}.
\end{eqnarray*}

\item \label{item:complexity-divide3}
 The univariate representations in $\tilde{A} ,N,
B ,A ( \alpha )$ have degrees in the new variable $T_{t+1}$,
as well as in $\eta_{t+1}$, bounded by $D ( ( 2k )^{t+1} d )^{ c k}$, and
have degrees at most $D^{t+1} ( ( 2k )^{t+1} d )^{c k( t+1 )}$ in the
variables $\eta ,T_{\mathrm{fix} ( s )}$.
The cardinalities of the sets $\tilde{A} ,N, B,A (
\alpha )$ are all bounded by $( \mathrm{card} ( \mathcal{A} ) +1 ) ( ( 2k
)^{t+1} d )^{ c k}$.

\end{enumerate}
The 
complexity of the algorithm 
is bounded by
\[ ( \mathrm{card} ( \mathcal{A} ) +1 ) D^{O ( t^{2} )} ( k^{t } d )^{O (
 t^{2} )} . \]
\item {\tmstrong{Procedure}}:

\item {\tmstrong{Step 1}}. Define $\tilde{\mathcal{P}}$ and
$\tilde{\mathcal{Q}}$ as in Notation \ref{not:notationtilde}.

\item {\tmstrong{Step 2}}. Compute $\tilde{M}$ as follows. For each subset
$\tilde{\mathcal{Q}}' \subset \tilde{\mathcal{Q}}$ and $ 
\tilde{\mathcal{P}} \cup \tilde{\mathcal{Q}}' = \{ F_{1} , \ldots ,F_{m} \} 
$, compute, using Algorithm 12.16 (Bounded Algebraic Sampling) from
{\cite{BPRbook2}} in the ring $\D_{t} [ \theta ]$, a finite set of real
univariate representations, $\tilde{M} ( \tilde{\mathcal{Q}}' )$ over $(
\mathcal{T}, \tau )$ whose associated points are the real solutions to the
system
\[ \mathrm{CritEq}_{p/2} ( \tilde{\mathcal{P}} ( \theta , \cdot ) \cup
 \tilde{\mathcal{Q}}' ( \theta , \cdot ) ,G ) \]
and projecting the real univariate representations to the first $k$
coordinates.

Let
\begin{eqnarray*}
\tilde{M} & = & \bigcup_{\tilde{\mathcal{Q}}' \subset \tilde{\mathcal{Q}}}
\tilde{M} ( \tilde{\mathcal{Q}}' ) .
\end{eqnarray*}
Note that the associated set of points, $\tilde{\mathcal{M}}$ of
$\tilde{M}$, is the finite set of critical points of $G$ on $\mathrm{Bas} (
\tilde{\mathcal{P}} ( \theta , \cdot ) , \tilde{\mathcal{Q}} ( \theta ,
\cdot ) )$.

\item {\tmstrong{Step 3}}. Compute a set, $D^{0}$, of Thom encodings over $(
\mathcal{T}, \tau )$ as follows.

Let
\[ F=\prod_{\tilde{\mathcal{Q}}' \subset \tilde{\mathcal{Q}}} F (
 \tilde{\mathcal{Q}}' ) , \]
where
\[ F ( \tilde{\mathcal{Q}}' ) = \sum_{P \in \mathrm{CrEq}_{\ell} (
 \tilde{\mathcal{P}} \cup \tilde{\mathcal{Q}}' ,G )} P^{2} . \]

Compute using Algorithm \ref{alg:special} ($( B,G )$-pseudo-critical values
over a Triangular Thom Encoding), a set, $D^{0}$, of Thom encodings over $(
\mathcal{T}, \tau )$, whose set of associated values, $\mathcal{D}^{0}$,
contain the $( B,G )$-pseudo-critical values of the set $ \{ F ( \theta ,
\cdot ) \} \cup \tilde{\mathcal{Q}} ( \theta , \cdot )$, with
$B=\mathcal{H}_{\mathrm{card} ( \mathcal{Q} ) +1,k- \mathrm{Fix} ( s ) +
\mathrm{card} ( \tilde{\mathcal{P}} ) + \mathrm{card} ( \tilde{\mathcal{Q}} )
+2}$ (see Notation \ref{not:cauchy}).

\item {\tmstrong{Step 4}}. Compute $M^{0}$ as follows. For each $( h,
\tau_{h} ) \in D^{0}$, use 
Algorithm 13.3 (Sampling on an Algebraic Set)
from {\cite{BPRbook2}} in the ring $\D_{t} [ \theta' ]$ (where $\theta'$ is
specified by $\mathcal{T} \cup \{ h ( T_{\mathrm{fix} ( s )} ,U ) \} , ( \tau
, \tau_{h} )$) with input the set of polynomials 
$
\{F\}
\cup
\tilde{\mathcal{Q}} \cup \{ G-U \}$
(where $F$ is as in Step 3), 
to obtain real univariate
representations 
$(e, \sigma_{e} ,E )$, where $e \in \D_{t+1} [T_{\mathrm{fix} ( s )} ,U,V ]$.
Use Algorithm 12.16 (Bounded Algebraic
Sampling) from {\cite{BPRbook2}} in the ring $\D_{t} [ \theta ]$ again with
input
$\{ h,e\}$ 
to obtain a real univariate representation 
$u= ( e',\tau_{e'} ,E' )$
over $( \mathcal{T}, \tau )$ with 
$e' \in \D_{t+1} [T_{\mathrm{fix} ( s )} ,T_{t+1} ]$. 
Substitute the rational functions, in 
$E'$
corresponding to $U,V$ into the polynomials in 
$E$
to obtain 
$E_{u}$.
Output the resulting set of real univariate representations 
$( e', \tau_{e'},E_{u} )$ 
over $( \mathcal{T}, \tau )$.

\item {\tmstrong{Step 5}}. Compute $N$ as follows. First compute $\tilde{A}$ by
applying Algorithm \ref{alg:closest-point} (Closest Point over a Triangular
Thom Encoding) with input $( ( \mathcal{T}, \tau ) , ( \tilde{\mathcal{P}} ,
\tilde{\mathcal{Q}} ) ,u )$ for each $u \in A$ and Algorithm
\ref{alg:closest-pair} (Closest Pairs over a Triangular Thom Encoding) with
input $( ( \mathcal{T}, \tau ) , ( \tilde{\mathcal{P}} , \tilde{\mathcal{Q}}
) , ( \tilde{\mathcal{P}} , \tilde{\mathcal{Q}} ) )$. Keeping the first
$p/2$ coordinates of these real univariate representations, obtain a set of
real univariate representations, $N$, over $( \mathcal{T}, \tau )$, with
associated set of points $\mathcal{N} = \pi_{[ \mathrm{Fix} ( s ) +1,
\mathrm{Fix} ( s ) +p/2 ]} (\tilde{\mathcal{M}} \cup
\mathcal{M}^{0} \cup \tilde{\mathcal{A}} )$ (identifying those which are
equal). For each $w \in \mathcal{N}$, with corresponding real univariate
representation $( f, \sigma ,F )$, let $( \mathcal{T}_{w} , \tau_{w} )$
denote the real triangular Thom encoding $( ( \mathcal{T},f ) , ( \tau ,
\sigma ) )$.

\item {\tmstrong{Step 6}}. Compute $B$ as follows. For each univariate
representation $u= ( e, \tau_{e} ,E ) \in N$, substitute the rational
functions in $u$, for the block of variables $X_{\mathrm{Fix} ( s ) +1} ,
\ldots ,X_{\mathrm{Fix} ( s ) +p/2}$, in the polynomials
$F$,$\tilde{\mathcal{Q}}$ to obtain $F_{u} , \tilde{\mathcal{Q}}_{u}$. Now
apply Algorithm 12.16 (Bounded Algebraic Sampling) from {\cite{BPRbook2}} in
the ring $\D_{t} [ \theta'' ]$ (where $\theta''$ is specified by $( (
\mathcal{T},e ) , ( \tau , \tau_{e} ) )$) with input the polynomials $F_{u}
, \tilde{\mathcal{Q}}_{u}$, and project to the co-ordinates $X_{\mathrm{Fix} (
s ) +p/2+1} , \ldots ,X_{k}$ to obtain $B ( u )$.

\item {\tmstrong{Step 7}}. For every $\alpha = ( \tilde{\mathcal{Q}}'
,r,J,J' ) \in \mathcal{I} ( \tilde{\mathcal{P}} , \tilde{\mathcal{Q}} ,p/2
)$, compute
\begin{eqnarray*}
\mathcal{P}^{0} ( \alpha ) & \assign & \tilde{\mathcal{P}} \cup
\tilde{\mathcal{Q}}' \cup \bigcup_{i \in [ \mathrm{Fix} ( s ) +p/2+1,k ]
\setminus \mathrm{fix} ( s )} \{ \mathrm{jac} ( \alpha ,i ) \} ,\\
\mathcal{Q}^{0} ( \alpha ) & \assign & \tilde{\mathcal{Q}} \cup \{
 \mathrm{jac} ( \alpha ,i)^{2} - \gamma \} .
\end{eqnarray*}
(see Notation \ref{not:covering}).

\item {\tmstrong{Step 8}}. Compute $A ( \alpha )$ by applying for each
$\beta \in \mathcal{I} ( \tilde{\mathcal{P}} , \tilde{\mathcal{Q}} ,p/2 )$,
Algorithm \ref{alg:closest-point} (Closest Point over a Triangular Thom
Encoding) with input 
\[
( ( \mathcal{T}, \tau ) ,\mathcal{P}^{0} ( \alpha )
,\mathcal{Q}^{0} ( \alpha ) ,u )
\] for each $u \in B$ computed in Step 6, and
Algorithm \ref{alg:closest-pair} (Closest Pairs over a Triangular Thom
Encoding) with input 
\[
( ( \mathcal{T}, \tau ) ,\mathcal{P}^{0} ( \alpha )
,\mathcal{Q}^{0} ( \alpha ) ,\mathcal{P}^{0} ( \beta ) ,\mathcal{Q}^{0} (
\beta ) ).
\]
\end{itemize}

\begin{proof}[Proof of correctness]
 The correctness of the algorithm follows from
the correctness of the various algorithms called inside the algorithm, and
Propositions \ref{prop:limtilde}, \ref{prop:closesta}, \ref{prop:closestb},
\ref{prop:special}, and \ref{prop:Cramer}. 
\end{proof}

\begin{proof}[Proof of complexity and degree bounds]
We first prove that the bounds stated in (\ref{item:complexity-divide1}), (\ref{item:complexity-divide2}), and (\ref{item:complexity-divide3}) are true.

\ref{item:complexity-divide1}. 
It is clear from Step 1 and Notation \ref{not:notationtilde}, that the
degrees of the polynomials in $\tilde{\mathcal{P}}$ (respectively,
$\tilde{\mathcal{Q}}$) are at most twice the degrees of the polynomials in
$\mathcal{P}$ (respectively, $\mathcal{Q}$). It follows from the assumptions
on the input that $ \deg_{X} ( \tilde{\mathcal{P}} ) , \deg_{X} (
\tilde{\mathcal{Q}} ) \leq2( 2 k )^{t } d$, and $\deg_{T_{\mathrm{fix}
( s )}} ( \tilde{\mathcal{P}} ) , \deg_{T_{\mathrm{fix} ( s )}} (
\tilde{\mathcal{Q}} ) , \deg_{\eta} ( \tilde{\mathcal{P}} ) , \deg_{\eta} (
\tilde{\mathcal{Q}} ) \leq2D^{t} ( ( 2k )^{t} d )^{c k t}$. It also
follows from Notation \ref{not:notationtilde}, that $\deg_{\eta_{t+1}} (
\tilde{\mathcal{P}} ) , \deg_{\eta_{t+1}} ( \tilde{\mathcal{Q}} )=1$. This
proves Part (1) of the complexity estimate of the output.

\ref{item:complexity-divide2}. Part (\ref{item:complexity-divide2}) 
is an easy consequence of the degree bounds on
$\tilde{\mathcal{P}}$ and $\tilde{\mathcal{Q}}$ proved above in (\ref{item:complexity-divide1}), and the
definitions of $\mathcal{P}^{0} ( \alpha )$ and $\mathcal{Q}^{0} ( \alpha
)$.

\ref{item:complexity-divide3}.
 We now bound the degrees of the univariate representations in
$\tilde{M}$,$D^{0} ,M^{0} , \tilde{A} ,N$. They have degrees in the new
variable, as well as in $\eta_{t+1}$, bounded by $( ( 2k )^{t+1} d )^{ c
k}$, and have degrees at most $D^{t} ( ( 2k )^{t+1} d )^{c k( t+1 )}$ in
the variables $T_{\mathrm{fix} ( s )} , \eta$.

i. The univariate representations in $\tilde{M}$ are obtained by applying
Algorithm 12.16 (Bounded Algebraic Sampling) from {\cite{BPRbook2}} to the
set of equations in $\mathrm{CritEq}_{p/2} ( \tilde{\mathcal{P}} \cup
\tilde{\mathcal{Q}}' ,G )$, for each subset $\tilde{\mathcal{Q}}' \subset
\tilde{\mathcal{Q}}$, and then projecting the real univariate
representations to the first $k$ coordinates. The number of variables
(including the Lagrangian variables $\lambda_{i}$'s) is at most $2k$. The
degrees in $X$ of the polynomials in $\mathrm{CritEq}_{\ell} (
\tilde{\mathcal{P}} \cup \tilde{\mathcal{Q}}' ,G )$ are bounded by the
degrees in $X$ of the polynomials in $\tilde{\mathcal{P}}$ and
$\tilde{\mathcal{Q}}$ which are at most $ 2( 2 k )^{t } d$ (using the bounds
in (1)), and the degrees in the Lagrangian variables are all equal to $1$.
The degrees in $T_{\mathrm{fix} ( s )}$ and $\eta$ in $\mathrm{CritEq}_{p/2}
( \tilde{\mathcal{P}} \cup \tilde{\mathcal{Q}}' ,G )$ are bounded by their
degrees in $\tilde{\mathcal{P}}$ and $\tilde{\mathcal{Q}}$ which are at most
$2D^{t} ( ( 2k )^{t} d )^{c k t}$ (using the bounds in (1)). Finally, the
degrees in $\eta_{t+1}$ of the polynomials in $\mathrm{CritEq}_{p/2} (
\tilde{\mathcal{P}} \cup \tilde{\mathcal{Q}}' ,G )$ are at most $1$. Now
using the complexity analysis of Algorithm 12.16 (Bounded Algebraic
Sampling) from {\cite{BPRbook2}}, we get the following bounds.

\begin{itemize}
\item
The degrees in the new variable $T_{t+1}$ and the new
infinitesimals $\eta_{t+1}$ are bounded by
\[ (2( 2 k )^{t } d )^{2c_{1} k} , \]
where $c_{1} >0$ is a constant; choosing $c$ to be sufficiently large
compared to $c_{1}$,
\begin{eqnarray*}
(2( 2 k )^{t } d )^{2c_{1} k} & \leq & ( ( 2 k )^{t+1 } d )^{c k} 
.
\end{eqnarray*}
\item
The degrees in $T_{\mathrm{fix} ( s )}$ and $\eta$, are bounded by
\[ 2D^{t+1} ( ( 2k )^{t} d )^{c k t} (2( 2 k )^{t } d )^{2c_{1} k} \leq
 D^{t+1} ( ( 2k )^{t+1} d )^{c k( t+1 )} , \]
given the choice of $c$.
\end{itemize}

ii. The real Thom encodings, $u \in D^{0}$ over $\mathcal{T}$, are computed
using Algorithm \ref{alg:special} ($( B,G )$-pseudo-critical values over a
Triangular Thom Encoding), with the polynomial

\begin{equation}
F=\prod_{\tilde{\mathcal{Q}}' \subset \tilde{\mathcal{Q}}} F (
\tilde{\mathcal{Q}}' ) , \label{eqn:product}
\end{equation}
as input, where
\[ F ( \tilde{\mathcal{Q}}' ) = \sum_{P \in \mathrm{CrEq}_{\ell} (
 \tilde{\mathcal{P}} \cup \tilde{\mathcal{Q}}' ,G )} P^{2} . \]
The number of polynomials, $F ( \tilde{\mathcal{Q}}' )$, appearing in the
product in \eqref{eqn:product} is bounded by $2^{\mathrm{card} (
\tilde{\mathcal{Q}} )} \leq 2^{t} \leq k$. Using the facts noted about the
degrees in the various variables of the polynomials in
$\tilde{\mathcal{P}}$,$\tilde{\mathcal{Q}}$ we obtain that the degree in $X$
of $F$ is bounded by $ 2^{t+1} ( 2 k )^{t } d$. The degrees in the
Lagrangian variables are bounded by $2^{t}$. The degrees in the variables
$T_{\mathrm{fix} ( s )}$, and $\eta$ are bounded by $2^{t+1} D^{t} ( ( 2k
)^{t} d )^{c k t}$ . Finally, the degree in $\eta_{t+1}$ in $F$ is at most
$2^{t}$. The number of variables is at most $2k$.

Using the complexity analysis of Algorithm \ref{alg:special} ($( B,G
)$-pseudo-critical values over a Triangular Thom Encoding) we get the
following bounds.

\begin{itemize}
\item
The degree in the new variable $U$ is bounded by
\[ ( 2^{t+1} ( 2 k )^{t } d )^{2c_{2} k} \]
where $c_{2} >0$ is a constant, while the degree in $\eta_{t+1}$ is bounded by
\[ 2^{t}( 2^{t+1} ( 2 k )^{t } d )^{2c_{2} k}; \]
choosing $c$ to be sufficiently large compared to $c_{2}$, and noting that
$2^{t} \leq k$,
\begin{eqnarray*}
2^{t}( 2^{t+1} ( 2 k )^{t } d )^{2c_{2} k} & \leq & ( ( 2 k )^{t+1
} d )^{c k}.
\end{eqnarray*}
\item The degrees in $T_{\mathrm{fix} ( s )}$ and $\eta$ are bounded by
\[ 
2^{t+1} D^{t} ( ( 2k )^{t} d )^{c k t} (2^{t+1} ( 2 k )^{t } d
 )^{2c_{2} k} \leq D^{t+1} ( ( 2k )^{t+1} d )^{c k( t+1 )} , \]
given the choice of $c$.
\end{itemize}

iii. In Step 4 (computation of $M^{0}$), the degrees of the polynomials in
the real univariate representation 
$(e,\sigma_{e},E)$
computed are bounded as follows.

\begin{itemize}
\item
The degrees in the new variable $V$and $\eta_{t+1}$ are
bounded by
\[
(t 2^{t+1} (2k)^t d)^{c_3(2k+1)}.
\]
using the complexity of
Algorithm 13.3 (Sampling on an Algebraic Set) from
\cite{BPRbook2}.
\item
 The degrees in $T_{\mathrm{fix} ( s )}$, and $\eta$ are
bounded by
\[ 
2D^{t} ( ( 2k )^{t} d )^{c k t} (t 2^{t+1} (2k)^t d)^{c_3(2k+1)}.
\]
\end{itemize}
Using again the complexity analysis of Algorithm 12.16 (Bounded Algebraic
Sampling) from {\cite{BPRbook2}} we obtain that the degrees of the
polynomials in $u$ in the various variables are bounded as follows.

\begin{itemize}
\item
The degrees in the new variable $T_{t+1}$ and in $\eta_{t+1}$
are bounded by
\[
 ( \max ( 2 \cdot2^{t} ( 2^{t+1} ( 2 k )^{t } d )^{2c_{2} k} ,
 (t 2^{t+1} (2k)^t d)^{c_3(2k+1)})^{2c_{1}} \leq ( ( 2 k )^{t+1 } d )^{ck} 
 \]
using the complexity of Algorithm 12.16 (Bounded Algebraic Sampling) from
{\cite{BPRbook2}} and the degree bounds in $U$ and $V$ of the polynomials
$D^{0}$ and 
$e$,
and choosing $c$ sufficiently large.

\item
The degrees in $T_{\mathrm{fix} ( s )}$ and $\eta$ are bounded
by the maximum of the degrees in $T_{\mathrm{fix} ( s )}$, and $\eta$ in the
polynomials $D^{0}$ and $f$ multiplied by $( ( 2 k )^{t+1 } d )^{c k}$. It
follows that these degrees are bounded by
\[ D^{t} ( ( 2 k )^{t+1 } d )^{c k ( t+1 )} . \]
\end{itemize}

iv. Using the complexity of Algorithm \ref{alg:closest-pair} (Closest Pairs
over a Triangular Thom Encoding), and Algorithm \ref{alg:closest-point}
(Closest Point over a Triangular Thom Encoding) and the degree estimates of
$\tilde{\mathcal{P}}$ and $\tilde{\mathcal{Q}}$ and of the univariate
representations in $A$, we obtain that the degrees in the univariate
representations in $\tilde{A}$ are bounded as follows.
\begin{itemize}
\item The degrees in the new variable $T_{t+1}$ and $\eta_{t+1}$ are
bounded by
\[ D (2( 2 k )^{t } d )^{2c_{1} k} , \]
where $c_{1} >0$ is a constant; and given the choice of $c$, we have that
\begin{eqnarray*}
(2( 2 k )^{t } d )^{2c_{1} k} & \leq & ( ( 2 k )^{t+1 } d )^{c
k}.
\end{eqnarray*}
\item The degrees in $T_{\mathrm{fix} ( s )}$, and $\eta$, are bounded by
\[ 2D^{t+1} ( ( 2k )^{t} d )^{c k t} (2( 2 k )^{t } d )^{2c_{1} k}
 \leq D^{t+1} ( ( 2k )^{t+1} d )^{c k( t+1 )} , \]
given the choice of $c$.

\item Finally, the cardinality of $\tilde{A}$ is bounded by $( \mathrm{card}
( \mathcal{A} ) +1 )( ( 2 k )^{t+1 } d )^{c k}$ using the complexity
analysis of Algorithm \ref{alg:closest-pair} (Closest Pairs over a
Triangular Thom Encoding) and Algorithm \ref{alg:closest-point} (Closest
Point over a Triangular Thom Encoding).
\end{itemize}
Together, (i),(ii) (iii) and (iv) above imply that the univariate
representations in $\tilde{M}$,$D^{0} ,M^{0} , \tilde{A} ,N$ have degrees in
the new variable, as well as in $T_{t+1} , \eta_{t+1}$, bounded by $D ( ( 2k
)^{t+1} d )^{ c k}$, and have degrees at most $D^{t+1} ( ( 2k )^{t+1} d )^{c
k( t+1 )}$ in $T_{\mathrm{fix} ( s )} , \eta$. Moreover, the
cardinalities of $\tilde{M}$, $D^{0} ,M^{0}$ are bounded by $( ( 2k )^{t+1}
d )^{ c k}$ (taking into account that the number of polynomials in the call
to Algorithm \ref{alg:special} ($( B,G )$-pseudo-critical values over a
Triangular Thom Encoding) in Step 3 is bounded by $t+1$). The cardinalities
of $\tilde{A} ,N$ are bounded by $( \mathrm{card} ( \mathcal{A} ) +1 )( (
2k )^{t+1} d )^{ c k}$.

Using the bound on the degrees of $F$ and the univariate representations in
$N$ obtained above, and the degree estimates of the output of Algorithm
12.16 (Bounded Algebraic Sampling) in {\cite{BPRbook2}}, we get that the
degrees of the polynomials appearing in $B$ in the new variable are bounded
by 
\begin{eqnarray*}
(2( 2 k )^{t } d )^{2c_{1} k} & \leq & ( ( 2 k )^{t+1 } d )^{c k},
\end{eqnarray*}
while the degrees in $T_{\mathrm{fix} ( s )} , \eta$ are bounded by
\begin{eqnarray*}
D^{t+1} ( ( 2k )^{t+1} d )^{c k( t+1 )} ( 2( 2 k )^{t } d ) (2(
2 k )^{t } d )^{2c_{1} k} \leq D^{t+1} ( ( 2k )^{t+1} d )^{c k( t+1 )} .
&& 
\end{eqnarray*}
Finally, the degrees in $T_{t+1} , \eta_{t+1}$ are bounded by
\begin{eqnarray*}
D (2( 2 k )^{t } d )^{2c_{1} k} (2( 2 k )^{t } d )^{2c_{1} k} &
\leq & D ( ( 2 k )^{t+1 } d )^{c k}.
\end{eqnarray*}
The cardinality of $B$ is bounded by $( \mathrm{card} ( \mathcal{A} ) +1 ) (
( 2 k )^{t+1 } d )^{c k}$.

The degree estimates, as well as the estimates on the cardinality of $A (
\alpha )$ are now a consequence of the bounds on the degrees of
$\mathcal{P}^{0} ( \alpha )$, $\mathcal{Q}^{0} ( \alpha )$ and $B$, and the
cardinality of $B$ proved above, and the complexity of Algorithm
\ref{alg:closest-pair} (Closest Pairs over a Triangular Thom encoding), and
Algorithm \ref{alg:closest-point} (Closest Point over a Triangular Thom
encoding)). This proves Part (\ref{item:complexity-divide3}) of the complexity of the output.

It follows from the complexity estimates of the algorithms used in various
steps of Algorithm \ref{algo:divide}, namely  Algorithm 12.16 (Bounded
Algebraic Sampling) in {\cite{BPRbook2}}, 
Algorithm 13.3 (Sampling on an Algebraic Set) in \cite{BPRbook2},
Algorithm 14.9 (Global
Optimization) in {\cite{BPRbook2}},  
Algorithm \ref{alg:special} ($( B,G
)$-pseudo-critical values over a Triangular Thom Encoding), Algorithm
\ref{alg:closest-pair} (Closest Pairs over a Triangular Thom Encoding),  and
Algorithm \ref{alg:closest-point} (Closest Point over a Triangular Thom
Encoding)), and 
Remark \ref{rem:complexityofring},
as well as the degree estimates proved above, that the complexity of
the whole algorithm is bounded by $( \mathrm{card} ( \mathcal{A} ) +1 ) D^{O (
t^{2} )} ( k^{t } d )^{O ( t^{2 } k )}$.
\end{proof}

\begin{remark}
\label{rem:aux}Notice that we never reduce any intermediate polynomial
obtained in the computation, modulo $\mathcal{T}$, and that $( \mathcal{T,
\tau} )$ is used only if the sign of an element of $\D_{t} [ \theta ]$,
represented by a polynomial, is required. As a result the degrees in the
$T$'s and also in the infinitesimals occurring in $\mathcal{T}$, grow. We
analyzed this growth carefully in the complexity analysis of Algorithm
\ref{algo:divide} (Divide). This is a point of difference between the
algorithm presented in the current paper, and that in {\cite{BRMS10}}. In
the Baby-step Giant-step algorithm presented in {\cite{BRMS10}} a process of
pseudo-reduction was necessary since the degree growth in $\mathcal{T}$
would have spoiled the overall complexity of the algorithm. This phenomenon
does not occur here because the number of different blocks of variables (and
hence the size of the triangular systems) in the algorithm of this paper is
much smaller ($O ( \log ( k ) )$) compared to $O( \sqrt{k})$
in the Baby-step Giant-step algorithm) and hence we can tolerate the growth
in degree in the current paper without resorting to reducing in each step.
This is fortunate, since pseudo-reduction is not anymore an option for us,
as the growth in the degrees in the various infinitesimals in this
divide-and-conquer approach would reach $d^{ O ( k^{ 2} )}$, and will be
unacceptably large.
\end{remark}

\subsection{Computation of the tree $\mathrm{Tree} ( V,\mathcal{A} )$}

In this subsection we describe an algorithm computing the tree $\mathrm{Tree} (
V,\mathcal{A} )$, using in a recursive way Algorithm \ref{algo:divide}
(Divide) and analyze the complexity of this algorithm.

The description of the algorithm will use the following notation.

\begin{notation}
\label{not:node} A {\tmem{node}} $\mathfrak{n}$ of level $t= \mathrm{level} (
\mathfrak{n} )$ is a tuple 
\[
( s ( \mathfrak{n} )
,\mathcal{T} ( \mathfrak{n} ) , W ( \mathfrak{n} ) ,\mathcal{P} (
\mathfrak{n} ) ,\mathcal{Q} ( \mathfrak{n} ) ,A ( \mathfrak{n} ) 
),
\] 
where
\begin{enumerate}
\item $s ( \mathfrak{n} ) \in \{ 0,1 \}^{t}$;

\item $\mathcal{T} ( \mathfrak{n} ) , \tau ( \mathfrak{n} ) ,W (
\mathfrak{n} )$ is a block triangular system fixing a point $\theta (
\mathfrak{n} ) \in \R_{t}^{\mathrm{card} ( \mathrm{fix} ( \mathfrak{n} ) )}$,
and $w ( \mathfrak{n} ) \in \R_{t}^{\mathrm{Fix} ( \mathfrak{n} )}$, where
\begin{eqnarray*}
\mathrm{fix} ( \mathfrak{n} ) & =& \{ i \mid s ( \mathfrak{n} )_{i} =1
\} ,\\
\mathrm{Fix} ( \mathfrak{n} ) & = & \sum_{i=1}^{t} s ( \mathfrak{n} )_{i} 
k'/2^{i} ;
\end{eqnarray*}
\item $\mathcal{P} ( \mathfrak{n} ) \subset \R_{t} [ T_{\mathrm{fix} (
\mathfrak{n} )},  X_{\mathrm{Fix} ( \mathfrak{n} ) +1} , \ldots ,X_{k} ]
$;

\item $\mathcal{Q} ( \mathfrak{n} ) \subset \R_{t} [ T_{\mathrm{fix} (
\mathfrak{n} )} ,X_{\mathrm{Fix} (\mathfrak{n} )+1}
, \ldots ,X_{k} ]$, $\mathrm{card} ( \mathcal{Q} (
\mathfrak{n} ) )=t- \mathrm{card} ( \mathrm{fix} ( \mathfrak{n} ) )$;

\item $A ( \mathfrak{n} )$ is a set of real univariate representations
over $\mathcal{T} ( \mathfrak{n} )$, with associated points $\mathcal{A} (
\mathfrak{n} ) \subset \mathrm{Bas} ( \mathcal{P} ( \mathfrak{n} ) ( \theta
( \mathfrak{n} ) , \cdot ) ,\mathcal{Q} ( \mathfrak{n} ) ( \theta (
\mathfrak{n} ) , \cdot ) )$.
\end{enumerate}
\end{notation}

{\algorithm{\label{alg:bounded}
[Computation of the tree $\mathrm{Tree} (
V,\mathcal{A} )$]
\begin{itemize}
\item {\tmstrong{Input}}: A polynomial $P \in \D [ X_{1} , \ldots ,X_{k} ]$,
such that $V= \ZZ ( P, \R^{k} )$ is bounded and strongly of dimension $\leq k'$, 
and a set $A$ of
real univariate representations with associated set of points $\mathcal{A}
\subset \ZZ ( P, \R^{k} )$.

\item {\tmstrong{Output}}: the tree $\mathrm{Tree} ( V,\mathcal{A} )$.

\item {\tmstrong{Complexity and degree bounds}}: Let $d = \deg ( P )$ and
suppose that the degrees of the polynomials appearing in the real univariate
representations in $A$ are bounded by $D$. The complexity is bounded by 
\[(
\card( \mathcal{A} ) +1 ) D^{O ( \log^{2} ( k' ) )} ( k^{\log ( k' )} d
)^{O ( k \log^{2} ( k' ) }.
\]

For each leaf node $\mathfrak{n}$ of the tree $\mathrm{Tree} ( V,\mathcal{A}
)$ output by the algorithm, the degrees in $T_{\mathrm{Fix} ( \mathfrak{n}
)}$, as well as in the variables $\eta$'s, of the polynomials in
$\mathcal{T} ( \mathfrak{n} )$, as well as those in $\mathcal{P} (
\mathfrak{n} ) $ and $\mathcal{Q} ( \mathfrak{n} )$, are all bounded
by 
\[
D^{\log ( k' ) +1} ( k^{\log ( k' )} d )^{O ( k\log ( k' ) )}.
\] 
The
degrees in $X_{\mathrm{Fix} ( \mathfrak{n} ) +1} , \ldots ,X_{k}$ of
$\mathcal{P} ( \mathfrak{n} ) $ and $\mathcal{Q} ( \mathfrak{n} )$
are bounded by 
\[
(O ( k ))^{\log ( k' )} d.
\]

\item {\tmstrong{Procedure}}:

\item {\tmstrong{Step 1}}. Initialize $\mathfrak{r}$ to be the node with
\begin{enumerate}
\item $s ( \mathfrak{r} ):= ( )$;

\item $\mathcal{T} ( \mathfrak{r} ) , \tau ( \mathfrak{r} ) ,W (
\mathfrak{r} )$ is empty;

\item $\mathcal{P} ( \mathfrak{r} ) := \{ P \}$;

\item $\mathcal{Q} ( \mathfrak{r} ) := \emptyset$;

\item $A ( \mathfrak{r} ) :=A$.
\end{enumerate}
Initialize the set $\mathrm{Nodes} := \{ \mathfrak{r} \}$.

\item {\tmstrong{Step 2}}. Repeat until $\mathrm{level} ( \mathfrak{n} ) = 
\log ( k' )$ for all $\mathfrak{n} \in \mathrm{Nodes}$;
\begin{itemize}
\item Select $\mathfrak{n} \in \mathrm{Nodes}$, such that $\mathrm{level} (
\mathfrak{n} ) < \log ( k' )$.

\item Remove $\mathfrak{n}$ from $\mathrm{Nodes}$.

\item Call Algorithm \ref{algo:divide} (Divide) with input $( s (
\mathfrak{n} ) ,\mathcal{T} ( \mathfrak{n} ) ,\tau ( \mathfrak{n} )
,\mathcal{P} ( \mathfrak{n} ) ,\mathcal{Q} ( \mathfrak{n} ) ,A (
\mathfrak{n} ) )$.
\begin{itemize}
\item For each $\alpha$ output  by Algorithm
\ref{algo:divide} (Divide) add a node $\mathfrak{m}$ to $\mathrm{Nodes}$
with
\begin{enumerate}
\item $s ( \mathfrak{m} ) := ( s ( \mathfrak{n} ) ,0 )$;

\item $( \mathcal{T} ( \mathfrak{m} ) , \tau ( \mathfrak{m} ) ) := (
\mathcal{T} ( \mathfrak{n} ), \tau ( \mathfrak{n} ) )$; $W (
\mathfrak{m} ) := W ( \mathfrak{n} )$;

\item $\mathcal{P} ( \mathfrak{m} ) :=\mathcal{P}^{0} ( \alpha )
\subset \R_{t+1} [ T_{\mathrm{fix}( \mathfrak{m} 
)} ,X_{\mathrm{Fix} ( \mathfrak{m} ) +1} , \ldots ,X_{k} ]$;

\item $\mathcal{Q} ( \mathfrak{m} ) :=\mathcal{Q}^{0} ( \alpha )
\subset \R_{t+1} [ T_{\mathrm{fix} ( \mathfrak{m} )} ,X_{\mathrm{Fix} (
 \mathfrak{m} )+1} , \ldots ,X_{k} ]$;

\item $A ( \mathfrak{m} ) :=A ( \alpha )$.
\end{enumerate}
\item For each real univariate representation $u_{w}=( f_{w} ,F_{w}
) , \tau$, in $N$, representing a point $w \in \mathcal{N}$, with
$f_{w,} ,F_{w} \subset \D_{t} [ T_{\mathrm{fix}( \mathfrak{n}
 )} ,T_{t+1} ]$, output by the algorithm, add a node
$\mathfrak{m}$ to $\mathrm{Nodes}$ with
\begin{enumerate}
\item $s ( \mathfrak{m} ) := ( s ( \mathfrak{n} ) ,1 )$;

\item $( \mathcal{T} ( \mathfrak{m} ) , \tau ( \mathfrak{m} ) ) := (
\mathcal{T} ( \mathfrak{n} ) ,f_{w} ), ( \tau ( \mathfrak{n}
) , \tau )$; $W ( \mathfrak{m} ) := ( W ( \mathfrak{n} ) ,u_{w} )$;

\item $\mathcal{P} ( \mathfrak{m} ) :=\mathcal{P} ( \mathfrak{n} )_{w}
\subset \R_{t} [ T_{\mathrm{fix} ( \mathfrak{m} )} ,X_{\mathrm{Fix} (
\mathfrak{m} ) +1} , \ldots ,X_{k} ]$;

\item $\mathcal{Q} ( \mathfrak{m} ) :=\mathcal{Q} ( \mathfrak{n}
)_{w} \subset \R_{t} [ T_{\mathrm{fix} ( \mathfrak{m} )}
,X_{\mathrm{Fix} (\mathfrak{m} )+1} , \ldots
,X_{k} ]$;

\item $A ( \mathfrak{m} ) := \tilde{A} ( \mathfrak{n} )_{w} \cup B (
\mathfrak{n} )_{w}$.
\end{enumerate}
\item Denote by $\mathcal{A} ( \mathfrak{m} )$ (respectively,
$\mathcal{B} ( \mathfrak{m} ) , \tilde{\mathcal{A}} ( \mathfrak{m} )$)
the set of points associated to $A ( \mathfrak{m} )$ (respectively, $B (
\mathfrak{m} ) , \tilde{A} ( \mathfrak{m} )$).
\end{itemize}
\end{itemize}
\end{itemize}}

\begin{proof}[Proof of correctness]
The correctness of the algorithm follows from
Theorem \ref{thm:correctness} and the correctness of Algorithm
\ref{algo:divide}.
\end{proof}

\begin{proof}[Proof of complexity and degree bounds]
 We first observe that for
each call to Algorithm \ref{algo:divide} (Divide) in Algorithm
\ref{alg:bounded} the input satisfies the estimates used in the complexity
analysis of Algorithm \ref{algo:divide} (Divide). This is obvious when
Algorithm \ref{algo:divide} (Divide) is called with
$\mathfrak{n}=\mathfrak{r}$, and in the other cases it follows inductively
from the bound on the degrees proved in the complexity analysis of Algorithm
\ref{algo:divide} (Divide).

Using the complexity analysis of Algorithm \ref{algo:divide} (Divide) the
number of right children of any node $\mathfrak{n}$ of level $t$ in the tree
created by the algorithm is bounded by the cardinality of $N (
\mathfrak{n} )$, which in turn is bounded by $( \mathrm{card} ( \mathcal{A} (
\mathfrak{n} ) ) +1 )( ( 2k )^{t+1} d )^{ c k}$. The number of left
children of $\mathfrak{n}$ is bounded by $k \times \mathrm{card} ( \mathcal{I}
( \mathcal{P} ( \mathfrak{n} ) ,\mathcal{Q} ( \mathfrak{n} ) , k'/2^{\mathrm{level} ( \mathfrak{n} )} ) ) =O ( 1 )^{k}$. Thus, the total number
of children of a node $\mathfrak{n}$ of level $t$ is bounded by $(
\mathrm{card} ( \mathcal{A} ( \mathfrak{n} ) ) +1 ) ( ( 2k )^{t+1} d )^{ c
k}$.

Now let $\mathfrak{m},\mathfrak{m}'$ be two distinct right children of
$\mathfrak{n}$. Then, clearly $\mathcal{A} ( \mathfrak{m} ) \cap \mathcal{A}
( \mathfrak{m}' )= \emptyset$, and using the complexity analysis of
Algorithm \ref{algo:divide} (Divide) we have
\begin{eqnarray*}
\sum_{\mathfrak{m}\text{ right child of }\mathfrak{n}}
\mathrm{card} ( \mathcal{A} ( \mathfrak{m} ) ) & \leq & \mathrm{card} (
\mathcal{B} ( \mathfrak{n} ) ) + \mathrm{card} (
\tilde{\mathcal{A}} ( \mathfrak{n} ) )\\
& \leq & 2 ( \mathrm{card} ( \mathcal{A} ( \mathfrak{n} ) ) +1 )( ( 2k
)^{t+1} d )^{ c k} .
\end{eqnarray*}
Moreover, for each left child $\mathfrak{m}$ of $\mathfrak{n}$, $\mathrm{card}
( \mathcal{A} ( \mathfrak{m} ) ) \leq \mathrm{card} ( \mathcal{A} (
\mathfrak{n} ) ) ( ( 2k )^{t+1} d )^{ c k}$ again using the complexity
analysis of Algorithm \ref{algo:divide} (Divide). But since there are only
$O ( 1 )^{k}$ left children of $\mathfrak{n}$ we obtain that \
\begin{eqnarray}
\sum_{\mathfrak{m} \text{ child of }\mathfrak{n}} ( \mathrm{card}
( \mathcal{A} ( \mathfrak{m} ) ) +1 ) & \leq & 3 ( \mathrm{card} (
\mathcal{A} ( \mathfrak{n} ) ) +1 )( ( 2k )^{t+1} d )^{ c k},
\nonumber\\
& \leq & ( \mathrm{card} ( \mathcal{A} ( \mathfrak{n} ) ) +1 )( ( 2k
)^{t+1} d )^{ c'k} .\label{eqn:numberofchildren}
\end{eqnarray}
for some $c' >0$.

We now prove by induction on $t$, with base case $t=0$, that
\begin{eqnarray}
\sum_{\mathrm{level} ( \mathfrak{n} ) =t} ( \mathrm{card} ( \mathcal{A} (
\mathfrak{n} ) ) +1 ) & \leq & ( \mathrm{card} ( \mathcal{A} ) +1 ) ( ( 2k
)^{t+1} d )^{ c''k ( t+1 )} .\label{eqn:total}
\end{eqnarray}
for some $c'' >0$.

For the base case,
\begin{eqnarray*}
\mathrm{card} ( \mathcal{A} ( \mathfrak{r} ) ) +1 & \leq & ( \mathrm{card} (
\mathcal{A} ) +1 ) ( 2k d )^{ c''k} .
\end{eqnarray*}
For the inductive step (from $t-1$ to $t$) we have
\begin{eqnarray*}
\sum_{\mathrm{level} ( \mathfrak{m} ) =t} ( \mathrm{card} ( \mathcal{A} (
\mathfrak{m} ) ) +1 ) & = & \sum_{\mathrm{level} ( \mathfrak{n} ) =t-1}
\sum_{\mathfrak{m} \text{ child of }\mathfrak{n}} ( \mathrm{card}
( \mathcal{A} ( \mathfrak{m} ) ) +1 )\\
& \leq & \sum_{\mathrm{level} ( \mathfrak{n} ) =t-1} ( \mathrm{card} (
\mathcal{A} ( \mathfrak{n} ) ) +1 )( ( 2k )^{t} d )^{ c'k}
\text{(using (\ref{eqn:numberofchildren}))}\\
& \leq & ( \mathrm{card} ( \mathcal{A} ) +1 ) ( ( 2k )^{t} d )^{ c''k t} 
( ( 2k )^{t} d )^{ c'k} \\
&&\text{ (using the induction hypothesis)}\\
& \leq & ( \mathrm{card} ( \mathcal{A} ) +1 ) ( ( 2k )^{t+1} d )^{ c''k (
t+1 )} ,
\end{eqnarray*}
taking $c''$ large compared to $c'$.

Using the complexity analysis of Algorithm \ref{algo:divide} (Divide) and
\eqref{eqn:total}, we now obtain that the total cost of the calls to
Algorithm \ref{algo:divide} (Divide) for all nodes of the tree at level $t$
is bounded by
\[ ( \mathrm{card} ( \mathcal{A} ) +1 ) ( ( 2k )^{t+1} d )^{ c''k ( t+1 )} 
 D^{O ( t^{2} )} ( k^{t}d )^{O ( t^{2} k )} = ( \mathrm{card} (
 \mathcal{A} ) +1 ) D^{O ( t^{2} )} ( k^{t}d )^{O ( t^{2} k )} . \]
Since the tree has depth $\log ( k' )$ the total cost of the calls to \
Algorithm \ref{algo:divide} (Divide) is bounded by
\[ ( \mathrm{card} ( \mathcal{A} ) +1 ) D^{O ( \log^{2} ( k' ) )} ( k^{\log ( k' )}
 d )^{O ( k\log^{2} ( k' ) )} . \]
From the complexity analysis of Algorithm \ref{algo:divide} (Divide) we also
get that for each $\mathfrak{n} \in \mathrm{Leav}$, the degrees in
$T_{\mathrm{Fix} ( \mathfrak{n} )}$, ad  the variables $\eta$'s, of
the polynomials in $\mathcal{T} ( \mathfrak{n} )$, as well as those in
$\mathcal{P} ( \mathfrak{n} ) $ and $\mathcal{Q} ( \mathfrak{n} )$,
are all bounded by 
\[
D^{\log ( k' ) +1} ( k^{\log ( k' )} d )^{O ( k\log ( k'
) )}.
\]
 The degrees in $X_{\mathrm{Fix} ( \mathfrak{n} ) +1} , \ldots ,X_{k}$
of $\mathcal{P} ( \mathfrak{n} ) $ and $\mathcal{Q} ( \mathfrak{n}
)$ are bounded by $O ( k )^{\log ( k' )} d$. 
\end{proof}

\begin{remark}
\label{rem:middlepathisbad} Note that in the above analysis of the degrees
in the variables $T$'s and the infinitesimals $\eta$'s of the
polynomials in $\mathcal{P} ( \mathfrak{n} ) ,\mathcal{Q} ( \mathfrak{n} )$
depend on $s ( \mathfrak{n} )$. It is instructive to work out the actual
bound on the degrees in the following three cases (in the special case where
$\mathcal{A}= \emptyset$):

{\noindent}1. $s ( \mathfrak{n} ) =0^{t}$: In this case, the polynomials in
$\mathcal{P} ( \mathfrak{n} ) ,\mathcal{Q} ( \mathfrak{n} )$ do not have any
$T' s$ in them, and the degrees in $\eta$ is bounded by $O ( k )^{t}
d$. It is not difficult to see that the complexity of Algorithm
\ref{algo:divide} (Divide) at such a node is bounded by $d^{O ( k )} k^{O (
k t )}= d^{O ( k )} k^{O ( k\log ( k' ) )}$.

{\noindent}2. $s ( \mathfrak{n} ) =1^{t}$: In this case, the degrees of the
polynomials in $\mathcal{P} ( \mathfrak{n} ) ,\mathcal{Q} ( \mathfrak{n} )$
in $T$'s and $\eta$'s are bounded by $O ( d )^{k'/2^{t-1}}$. As a
consequence, it is not difficult to see that the complexity of Algorithm
\ref{algo:divide} (Divide) at such a node is bounded by $d^{O ( k )} k^{O (
k\log ( k' ) )}$.

If these were the only types of nodes in the tree computed by Algorithm
\ref{alg:bounded} then we would obtain an algorithm with complexity $d^{O (
k\log ( k' ) )} k^{O ( k\log^{2} ( k' ) )}$. In fact the complexity is
worse and this is caused by paths in the tree which are away from the
extreme left and right ones. For example consider a node $\mathfrak{n}$ with
level $t$, and with $s ( \mathfrak{n} ) =0101 \ldots$. The polynomials in
$\mathcal{P} ( \mathfrak{n} ) ,\mathcal{Q} ( \mathfrak{n} )$ will depend on
$T_{2} ,T_{4}, \ldots$ while since $\mathrm{Free} ( \mathfrak{m} )
\geq \frac{k}{2}$ for each node $\mathfrak{m}$ along the path from the root
to $\mathfrak{n}$, the degrees in each of the $T_{2i}$ can only be bounded
by $( k^{i} d )^{O ( k i )}$, and is thus $( k^{t} d )^{O ( k t )}$ in the
worst case. As a result the complexity of the call to Algorithm
\ref{algo:divide} (Divide) at the node $\mathfrak{n}$ can only be bounded by
$( k ^{\log ( k' )}d )^{O ( k\log^{2} ( k' ) )}$, and this dominates the
complexity of all calls to Algorithm \ref{algo:divide} (Divide) in Algorithm
\ref{alg:bounded}.

\end{remark}

\subsection{Divide and Conquer Roadmap}

We are going to construct the
roadmap of $V$ from the tree $\mathrm{Tree} ( V,\mathcal{A} )$,  by taking limits of the basic semi-algebraic sets  associated to the leaves,  which are strongly of  
dimension $\leq 1 $. Theorem \ref{thm:correctness}
then guarantees the correctness of the algorithm. Thus, we need to know how to
compute limits of points and curve segments, and this what we explain below.

In the following $\overline{\eps} = \left( \eps_{1} , \ldots ,
\eps_{t} \right)$ is a tuple of infinitesimals.
{\algorithm{\label{alg:limitofaboundedpoint}
[Limit of a Bounded Point]
\begin{itemize}
\item {\tmstrong{Input}}: 
\begin{enumerate}
\item A Thom encoding $\left( f_{\overline{\eps}} ,
\sigma_{\overline{\eps}} \right)$ , $f_{\overline{\eps}} \in \D \left[
\overline{\eps} ,U \right]$, representing $x_{\overline{\eps}} \in \R
\la \overline{\eps} \ra$ bounded over $\R$.

\item A real univariate representation $\left( g_{\overline{\eps}} ,
\tau_{\overline{\eps}} ,G_{\overline{\eps}} \right)$ over $\left(
f_{\overline{\eps}} , \sigma_{\overline{\eps}} \right)$, where
$g_{\overline{\eps}} ,G_{\overline{\eps}} \subset \D [ \overline{\eps}
,U,V]$, representing a point $z_{\overline{\eps}} \in \R \la
\overline{\eps} \ra^{p}$ bounded over $\R$.
\end{enumerate}
\item {\tmstrong{Output}}: a real univariate representation $(g, \tau ,G)$
representing
\[ z= \lim_{\eps_{1}} ( z_{\bar{\epsilon}} ) \in \R^{p} . \]
\item {\tmstrong{Complexity and degree bounds}}: If $D$ is a bound on the
degrees of the polynomials in $f_{\overline{\eps}}
,g_{\overline{\eps}}$ and $G_{\eps}$ with respect to $U,V$ and
$\overline{\eps}$, then the degrees of the polynomials appearing in the
output are bounded by $D^{O ( 1 )}$, and the number of arithmetic operations
in $\D$ is bounded by $p^{O ( 1 )} D^{O (t)}$.

\item {\tmstrong{Procedure}}:

\item {\tmstrong{Step 1}}. Using Algorithm 12.16 (Bounded Algebraic
Sampling) from {\cite{BPRbook2}} in the ring $\D \left[ \overline{\eps}
\right]$ with input $\left\{ f_{\overline{\eps}} ,g_{\overline{\eps}}
\right\}$ obtain a set of univariate representation $u= \left(
h_{\overline{\eps}} ,H= ( h_{0} ,h_{U} ,h_{V} ) \right)$. For each such $u$,
substitute the rational function $\frac{h_{U}}{h_{0}}$ for $U$ in
$f_{\overline{\eps}}$ and its derivatives with respect to $U$. Similarly,
substitute the rational functions $\frac{h_{U}}{h_{0}} ,
\frac{h_{V}}{h_{0}}$ for $U,V$ in $g_{\overline{\eps}}$ and its derivatives
with respect to $V$ to obtain $f_{\overline{\eps} ,u} ,g_{\overline{\eps}
,u}$ and $\mathrm{Der} \left( f_{\overline{\eps}} \right)_{u}$,$\mathrm{Der}
\left( g_{\overline{\eps}} \right)_{u}$.

\item {\tmstrong{Step 2}}. Using Algorithm 10.13 (Univariate Sign
Determination) from \cite{BPRbook2} with input $h_{\eps}$,
$\mathrm{Der} \left( h_{\eps} \right)$ and $\mathrm{Der} \left(
f_{\overline{\eps}} \right)_{u}$,$\mathrm{Der} \left( g_{\overline{\eps}}
\right)_{u}$, determine a real univariate representation $u= \left(
h_{\overline{\eps}} , \tau'_{\overline{\eps}} ,E= ( h_{0} ,h_{U} ,h_{V} )
\right)$ whose associated point is $\left( u_{\overline{\eps}}
,v_{\overline{\eps}} \right)$ where $u_{\overline{\eps}}$ is associated to
the Thom encoding $\left( f_{\overline{\eps}} , \sigma_{\overline{\eps}}
\right)$ and $v_{\overline{\eps}}$ is associated to the Thom encoding
$\left( g_{\overline{\eps}} , \tau_{\overline{\eps}} \right)$ over $\left(
f_{\overline{\eps}} , \sigma_{\overline{\eps}} \right)$. Substitute the
rational functions $\frac{h_{U}}{h_{0}} , \frac{h_{V}}{h_{0}}$ for $U,V$ in
$G_{\overline{\eps}}$, to obtain $G_{\overline{\eps} ,u}$ and replace
$\left( g_{\overline{\eps}} , \tau_{\overline{\eps}} ,G_{\overline{\eps}}
\right)$ by the new real univariate representation $\left( e, \tau_{e}
,G_{\overline{\eps} ,u} \right)$, where $e \in \D \left[ \overline{\eps , }
T \right]$.

\item {\tmstrong{Step 3}}. Replace (see Notation \ref{not:order})
$g_{\overline{\eps}}$ by $\overline{\eps}^{-o_{\overline{\eps}} \left(
g_{\overline{\eps}} \right)}g_{\overline{\eps}}$. Denote by $g (T)$ the
polynomial obtained by substituting successively $\eps_{t}$ by $0$, and then
$\eps_{t-1}$ by 0, and so on, and finally $\eps_{1}$ by $0$, in
$g_{\overline{\eps}}$. Similarly denote by $G (T)$ the polynomials obtained
by substituting successively $\eps_{t}$ by $0$, and then $\eps_{t-1}$ by 0,
and so on, and finally $\eps_{1}$ by $0$, in $G_{\overline{\eps}}$.

\item {\tmstrong{Step 4}}. Compute the set $\Sigma$ of Thom encodings of
roots of $g (T)$ using Algorithm 10.13 (Univariate Sign Determination) from
{\cite{BPRbook2}}. Denoting by $\mu_{\sigma}$ the multiplicity of the root
of $g (T)$ with Thom encoding $\sigma$, define $G_{\sigma}$ as the $(
\mu_{\sigma} -1 )$-st derivative of $G$ with respect to $T$.

\item {\tmstrong{Step 5}}. Identify the Thom encoding $\sigma$ and
$G_{\sigma}$ representing $z$ using Algorithm 11.13 (Univariate Sign
Determination) from {\cite{BPRbook2}}, by checking whether a ball of
infinitesimal radius $\delta$ ($1 \gg \delta \gg \overline{\eps} >0$) around
the point $z$ represented by the real univariate representation $g, \sigma
,G_{\sigma}$ contains $z_{\overline{\eps}}$.
\end{itemize}}}

\begin{proof}[Proof of correctness]
The correctness of the algorithm follows from
the correctness of Algorithm 12.16 (Bounded Algebraic Sampling)  and
Algorithm 10.13 (Univariate Sign Determination) from {\cite{BPRbook2}}.
\end{proof}

\begin{proof}[Proof of complexity and degree bounds]
 Using the complexity analysis
of Algorithm 12.16 (Bounded Algebraic Sampling) from {\cite{BPRbook2}} the
number of arithmetic operations in the ring $\D \left[ \overline{\eps}
\right]$ is bounded by $p^{O ( 1 )} D^{O ( 1 )}$. The degrees in $T,
\overline{\eps}$ of the polynomials in $H$, as well as $f_{\overline{\eps}
,u} ,g_{\overline{\eps} ,u}$, are also bounded by $D^{O ( 1 )}$. From the
complexity analysis of Algorithm 10.13 (Univariate Sign Determination) from
{\cite{BPRbook2}}, the number of arithmetic operations in $\D \left[
\overline{\eps} \right]$ in Step 2 is bounded by $D^{O ( 1 )}$. It also
follows that the number of arithmetic operations in $\D$ of these two steps
is bounded by $D^{O ( t )}$. From the bound on the degrees in
$\overline{\eps}$ it follows that the complexity of Step 3 is bounded by
$p^{O ( 1 )} D^{O ( t )}$. Since the degrees do not increase in Step 3, it
follows again from the complexity analysis of Algorithm 10.13 (Univariate
Sign Determination) from {\cite{BPRbook2}} that the number of arithmetic
operations in $\D$ in Steps 4 and 5 is bounded by $p^{O ( 1 )} D^{O ( t )}$.
Thus, the total complexity is bounded by $p^{O ( 1 )} D^{O ( t )}$, and the
degrees of the polynomials appearing in the output are bounded by $D^{O ( 1
)}$.
\end{proof}

\begin{definition}
Let $({g}_{1} , \tau_{1} )$, $({g}_{2} , \tau_{2} )$ be
Thom encodings above a Thom encoding $( h, \sigma )$. We denote by $z \in
\R$ the point specified by $( h, \sigma )$, and by $(z,a)$,
$(z,b)$ the points specified by $({g}_{1} , \tau_{1} )$ and $(
{g}_{2} , \tau_{2} )$.

A {\tmem{curve segment representation}}$\left(u, \rho \right)$ on $(
{g}_{1} , \tau_{1} ) , ({g}_{2} , \tau_{2} )$ over $( h,
\sigma )$ {\subindex{Curve segment}{representation}} is:
\begin{itemize}
\item a parametrized univariate representation with parameters $(X_{\le i}
)$,  
i.e.,
\[ u= (f(Z,X,U),f_{0} (Z,X,U),f_{1} (Z,X,U), \ldots ,f_{k} (Z,X,U))
 \subset \D [ Z,X,U ] , \]
\item a sign condition $\rho$ on $\Der (f)$ such that for every $x \in
(a,b)$ there exists a real root $u (x)$ of $f (z,x,U)$ with Thom encoding
$\rho$ and $f_{0} (z,x,u ( x ) ) \ne 0$.
\end{itemize}
The {\tmem{curve segment associated to $u, \rho$}} is the semi-algebraic
function $\upsilon$ which maps a point $x$ of $(a,b)$ to the point of
$\R^{k}$ defined by
\[ \upsilon (x) = \left( x, \frac{f_{1} (z,x,u ( x ) )}{f_{0} (z,x,u ( x )
 )} , \ldots , \frac{f_{k} (z,x,u ( x ) )}{f_{0} (z,x,u ( x ) )} \right) .
\]
It is a continuous injective semi-algebraic function.
\end{definition}

In the following algorithms we will need to compute descriptions of the limits
of certain curve segments. These limits are computed using a slight
modification of Algorithm (Limit of a Curve) from {\cite{BRMS10}}.}}

{\algorithm{\label{alg:bounded_refined}
[Divide and Conquer Roadmap Algorithm
for Bounded Algebraic Sets]
\begin{itemize}
\item {\tmstrong{Input}}: A polynomial $P \in \D [ X_{1} , \ldots ,X_{k} ]$
such that $\ZZ ( P, \R^{k} )$ is bounded, and a set $A= \{
u_{1} , \ldots ,u_{m} \}$ of real univariate representations with associated
set of points $\mathcal{A} \subset \ZZ ( P, \R^{k} )$.

\item {\tmstrong{Output}}: A  roadmap, $\mathrm{DCRM} ( \ZZ ( P , \R^{k} )
,\mathcal{A})$,  of $\ZZ ( P, \R^{k} )$
containing $\mathcal{A}$.

\item {\tmstrong{Complexity and degree bounds}}: Let $d = \deg ( P )$ and
suppose that the degrees of the polynomials appearing in the real univariate
representation in $A$ with associated point $p_{i}$ be bounded by $D_{i}$. \
The complexity is bounded by
\[ \left( 1+ \sum_{i=1}^{m} D_{i}^{O ( \log^{2} ( k' ) )} \right) ( k^{\log
 ( k' )} d )^{O ( k \log ( k' )^{2} } . \]
Moreover, the degrees of the polynomials appearing in the descriptions of
the curve segments and points in the output are bounded by
\[ ( \max_{1 \leq i \leq k}D_{i} )^{O ( \log ( k' ) )} ( k^{\log ( k' )} d
 )^{O ( k\log ( k' ) )} . \]

\item {\tmstrong{Procedure}}:

\item {\tmstrong{Step 1}}. For each $i,1 \leq i \leq m$, call Algorithm
\ref{alg:bounded} with input $P$ and $\{ u_{i} \}$, and denote by
$\mathrm{Leav}$ the collection of the leaves of $\mathrm{Tree} ( V, \{ u_{i} \}
),i=1, \ldots ,m$.

\item {\tmstrong{Step 2}}. Define
\begin{eqnarray*}
\Gamma & := & \bigcup_{\mathfrak{n} \in \mathrm{Leav}} \mathrm{Bas} (
 \mathfrak{n} ) ) .
\end{eqnarray*}
Compute $\lim_{\zeta_{1}} ( \Gamma )$ as follows.
\begin{enumerate}
\item For each $\mathfrak{n} \in \mathrm{Leav}$, let
\[ F ( \mathfrak{n} ):= \sum_{f \in \mathcal{T} ( \mathfrak{n} )} f^{2}
 \in \D_{\log ( k' )} [ T_{\mathrm{fix} ( \mathfrak{n} )} ]
 . \]
\item Compute using Algorithm 12.16 (Bounded Algebraic Sampling) from
{\cite{BPRbook2}} in the ring $\D_{\log ( k' )} [ \theta (
\mathfrak{n} ) ]$ and $F ( \mathfrak{n} )$ as input, and compute a real
univariate representation $u_{\mathfrak{n}} = ( h_{\mathfrak{n}} ,
\sigma_{\mathfrak{n} ,} H_{\mathfrak{n}} )$ whose associated point $w (
\mathfrak{n} ) \in \R_{\log ( k' )}^{\mathrm{Fix} ( \mathfrak{n} )}$.

\item For every $\mathcal{Q}' \subset \mathcal{Q} ( \mathfrak{n} )$, apply
Algorithm 15.2 (Curve Segments) from {\cite{BPRbook2}} with input the Thom
encoding $( h_{\mathfrak{n}} , \sigma_{\mathfrak{n} ,} )$ specifying
$c_{\mathfrak{n}} \in \R_{\log( k' )}$ and the polynomial
\[ G ( \mathfrak{n} )= \sum_{g \in \mathcal{P} ( \mathfrak{n}
 )_{u_{\mathfrak{n}}} \cup \mathcal{Q}'_{u_{\mathfrak{n}}}} g^{2} \in
 \D_{\log ( k' )} [ U,X_{\mathrm{Fix} ( \mathfrak{n} ) +1} , \ldots
 ,X_{k} ] . \]
with parameter $X_{\mathrm{Fix} ( \mathfrak{n} ) +1}$ to obtain a set,
$\Gamma_{\mathfrak{n}}$, of curve segments with associated sets contained
in $\{ w ( \mathfrak{n} ) \} \times \R^{k- \mathrm{Fix} ( \mathfrak{n}
)}_{\log ( k' )}$. Subdivide the interval of definition of each curve
segment into pieces above which the sign of $\mathcal{Q} ( \mathfrak{n} )$
remains fixed on the curve segment, and retain only those contained
$\mathrm{Bas} ( \mathcal{P} ( \mathfrak{n} ) ,\mathcal{Q} ( \mathfrak{n} ) )$ using
Algorithm 11.19 (Restricted Elimination) from {\cite{BPRbook2}}.

\item Now apply Algorithm (Limit of a Curve) from {\cite{BRMS10}} to the
curve segments output in Step 3, with the following modifications : we use
Algorithm \ref{alg:limitofaboundedpoint} (Limit of a Bounded Point)
instead of the corresponding algorithm in {\cite{BRMS10}} and replace the
various instances of substituting $\eps$ by $0$ by substituting
successively $\eta_{t}$ by $0$, and then $\eta_{t-1}$ by 0, and so on, and
finally $\eta_{1}$ by $0$.
\end{enumerate}
\end{itemize}}}

\begin{proof}[Proof of correctness]
Follows from the correctness of Algorithm
\ref{alg:bounded}, Algorithm \ref{alg:limitofaboundedpoint}, Algorithm
(Limit of a Curve) from {\cite{BRMS10}}, and Proposition \ref{prop:additive}.
\end{proof}

\begin{proof}[Proof of complexity and degree bounds]
 The complexity of Step 1 is
\[ \left( 1+ \sum_{i=1}^{m} D_{i}^{O ( \log^{2} ( k' ) )} \right) ( k^{\log (
 k' )} d )^{O ( k \log^{2} ( k' ) )} \]
by the complexity of Algorithm \ref{alg:bounded}. Now using the complexity
analysis of Algorithm 12.16 (Bounded Algebraic Sampling) from
{\cite{BPRbook2}}, as well as those of {\tmname{}}Algorithm 15.2 (Curve
Segments) from {\cite{BPRbook2}}, and Algorithm 11.19 (Restricted
Elimination) from {\cite{BPRbook2}}, the number of arithmetic operations in
$\D [ \eta ]$ in Steps 2(i), 2(ii), and 2(iii) is bounded by
\[ \left( 1+ \sum_{i=1}^{m} D_{i}^{O ( \log ( k' ) )} \right) ( k^{\log (
 k' )} d )^{O ( k \log ( k' ) )} . \]
So using the fact that the degrees in $\eta$ are bounded by $D_{i}^{O (
\log ( k' ) )} ( k^{\log ( k' )} d )^{O ( k\log ( k' ) )}$ in the complexity
analysis of Algorithm \ref{alg:bounded}, the number of arithmetic operations
in $\D$ is bounded by
\[ \left( 1+ \sum_{i=1}^{m} D_{i}^{O ( \log^{2} ( k' ) )} \right) ( k^{\log (
 k' )} d )^{O ( k \log^{2} ( k' ) )} . \]
Note also that the degrees (in the parameters as well as in $\eta$) of the
polynomials used to describe the curve segments output in Step 2(iii) are
bounded by $D_{i}^{O ( \log ( k' ) )} ( k^{\log ( k' )} d )^{O ( k\log ( k' )
)}$. Finally using the complexity analysis of Algorithm (Limit of a
Curve) from {\cite{BRMS10}}, as well as that of Algorithm
\ref{alg:limitofaboundedpoint} (Limit of a Bounded Point), we get that the
complexity of Step 2(iv) is bounded by
\[ \left( 1+ \sum_{i=1}^{m} D_{i}^{O ( \log^{2} ( k' ) )} \right) ( k^{\log (
 k' )} d )^{O ( k \log^{2} ( k' ) )} \]
as well. Thus, the total complexity is bounded by
\[ \left( 1+ \sum_{i=1}^{m} D_{i}^{O ( \log^{2} ( k' ) )} \right) ( k^{\log (
 k' )} d )^{O ( k \log^{2} ( k' ) )} . \]
Moreover, it follows from above and the complexity of Algorithm (Limit of
a Curve) from {\cite{BRMS10}}, and Algorithm
\ref{alg:limitofaboundedpoint} (Limit of a Bounded Point), that the degrees
of the polynomials appearing in the descriptions of the curve segments and
points in the output are bounded by $( \max_{1 \leq i \leq k}D_{i} )^{O (
\log ( k' ) )} ( k^{\log ( k' )} d )^{O ( k\log ( k' ) )}$. 
\end{proof}

We are now in a position to describe a divide-and-conquer algorithm for
computing a roadmap of a general (i.e.,  possibly unbounded) algebraic set. The
procedure of passing from the bounded case to the unbounded one is the same as
that used in {\cite{BPR99}} as well as in {\cite{BRMS10}}. We include it here
for the sake of completeness.

We first need a notation.

\begin{notation}
\label{not:bigcauchy} Let $F \in \D [ X ]$ be given by $F= a_{p} X^{p} +
\cdots +a_{q} X^{q}$. We denote
\begin{eqnarray*}
c ( P ) & = & \left( \sum^{p}_{i=q} \left\vert \tfrac{a_{i}}{a_{q}} \right\vert
\right)^{-1} .
\end{eqnarray*}
\end{notation}

{\algorithm{\label{alg:main} [Divide and Conquer Roadmap Algorithm for General
Algebraic Sets]
\begin{itemize}
\item {\tmstrong{Input}}: A polynomial $P \in \D [ X_{1} , \ldots ,X_{k} ]$
such that $\ZZ ( P, \R^{k} )$ is bounded and strongly of dimension
$\leq k'$, and a set $A= \{
u_{1} , \ldots ,u_{m} \}$ of real univariate representations with associated
set of points $\mathcal{A} \subset \ZZ ( P, \R^{k} )$.

\item {\tmstrong{Output}}: A roadmap, $\mathrm{DCRM} ( \ZZ ( P , \R^{k} )
,\mathcal{A})$,  of $\ZZ ( P, \R^{k} )$
containing $\mathcal{A}$.

\item {\tmstrong{Complexity and degree bounds}}: Let $d = \deg ( P )$, and
suppose that the degrees of the polynomials appearing in the real univariate
representation in $A$ with associated point $p_{i}$ are bounded by $D_{i}$.
The complexity is bounded by
\[ \left( 1+ \sum_{i=1}^{m} D_{i}^{O ( \log^{2} ( k' ) )} \right) ( k^{\log
 ( k' )} d )^{O ( k \log ( k' )^{2} } . 
 \]
The degrees of the polynomials appearing in the descriptions of the curve
segments and points in the output are bounded by 
\[
( \max_{1 \leq i \leq k} 
D_{i} )^{O ( \log ( k' ) )} ( k^{\log ( k' )} d )^{O ( k\log ( k' ) )}.
\]

\item {\tmstrong{Procedure}}:

\item {\tmstrong{Step 1}}. Introduce new variables $X_{k+1}$ and $\eps$ and
replace $P$ by the polynomial
\[ P_{\eps} =P^{2} + \left( \eps^{2} \left( \sum_{i=1}^{k+1} X_{i}^{2}
 \right) -1 \right)^{2} . \]
\item {\tmstrong{Step 2}}. Replace $A$ by $A_{\eps}$, the set of real
univariate representations representing the elements $\mathcal{A}_{\eps}$ of
$\ZZ ( P_{\eps} , \R \la \eps \ra^{k} )$ above
the points represented by $\mathcal{A}$ using Algorithm 12.16 (Bounded
Algebraic Sampling) {\cite{BPRbook2}}.

\item {\tmstrong{Step 3}}. Call Algorithm \ref{alg:bounded_refined} (Divide
and Conquer Roadmap Algorithm for Bounded Algebraic Sets) with input
$P_{\eps} ,A$, performing arithmetic operations in the domain $\D \left[
\eps \right]$. The algorithm outputs a roadmap 
\[
\mathrm{DCRM} \left( \ZZ ( P_{\eps} , \R \la \eps \ra^{k+1} )
,\mathcal{A}_{\eps} \right)
\] 
composed of points and curves whose description involves $\eps$.

\item {\tmstrong{Step 4}}. Denote by $\mathcal{L}$ the set of polynomials in
$\D \left[ \eps \right]$ whose signs have been determined in the preceding
computation and take
\[ a= \min_{F \in \mathcal{L}}c ( F ) \]
using Notation \ref{not:bigcauchy}.

Replace $\eps$ by $a$ in the polynomial $P_{\eps}$ to get a polynomial
$P_{a}$. Replace $\eps$ by $a$ in the output roadmap to obtain a roadmap
which when projected to $\R^{k}$ gives a roadmap of $\ZZ ( P ,
\R^{k} ) \cap \bar{B}_{k} \left( 0, \tfrac{1}{a} \right)$, (where
$\bar{B}_{k} (x,r)$ is the $k$-dimensional closed ball of center $x$ and
radius $r$) containing the finite set of points $\mathcal{A}$.

\item {\tmstrong{Step 5}}. In order to extend the roadmap outside
$\bar{B}_{k} \left( 0, \tfrac{1}{a} \right)$ collect all the points $( y_{1}
, \ldots ,y_{k} ,y_{k+1} ) \in \R \la \eps \ra^{k+1}$ in
the roadmap 
\[
\mathrm{DCRM} ( \ZZ ( P_{\eps} , \R \la \eps
\ra^{k+1} ) ,\mathcal{A}_{\eps})
\]
 which satisfies
$\eps^{2} ( y_{1}^{2} + \cdots +y_{k+1}^{2} ) =1$. Each such point is
described by a real univariate representation involving $\eps$. Add to the
roadmap the curve segment obtained by first forgetting the last coordinate
and then treating $\eps$ as a parameter which varies over $( 0,a ]$ to
get a roadmap $\mathrm{DCRM} ( \ZZ ( P , \R^{k} )
,\mathcal{A})$.
\end{itemize}}}

\begin{proof}[Proof of correctness] The correctness follows from the correctness
of Algorithm \ref{alg:bounded_refined} (Divide and Conquer Roadmap Algorithm
for Bounded Algebraic Sets), taking into account that a bounded algebraic set is always
strongly of dimension $\leq k-1$.
\end{proof}

\begin{proof}[Proof of complexity and degree bounds]
 It is clear that the
complexity is dominated by that of the third step. The degree bounds on the
polynomials in the output follow from those of Algorithm
\ref{alg:bounded_refined} (Divide and Conquer Roadmap Algorithm for Bounded
Algebraic Sets).
\end{proof}

\subsection{Proofs of Theorem \ref{thm:mainbis}, Theorem \ref{thm:main} and Theorem \ref{thm:geodesic}}

\begin{proof}[Proof of Theorem \ref{thm:mainbis}]
Clear from the 
the proofs of correctness, complexity and degree bounds
of Algorithm \ref{alg:bounded_refined}
(Divide and Conquer Roadmap Algorithm
for Bounded Algebraic Sets).
\end{proof}

\begin{proof}[Proof of Theorem \ref{thm:main}]
Clear from the 
proofs of correctness, complexity and degree bounds
of Algorithm \ref{alg:main} (Divide and Conquer Roadmap Algorithm for
General Algebraic Sets).
\end{proof}

\begin{proof}[Proof of Theorem \ref{thm:geodesic}]
 Using Theorem \ref{thm:main},
we have that $x,y$ can be connected by a semi-algebraic path, consisting of
at most $( k ^{\log ( k )} d )^{O ( k\log ( k ) )}$ curve segments, and
each curve segment has degree bounded by $( k ^{\log ( k )} d )^{O ( k 
\log ( k ) )}$. It is clear that a curve segment of degree bounded by $D$
meets a generic hyperplane in at most $O ( D^{2} )$ points. It now follows
immediately from the Cauchy-Crofton formula {\cite{Santalo}} that the length
of each curve segment appearing in the path is bounded by $( k ^{\log (k
)} d )^{O ( k\log ( k ) )}$, and finally that the total length of the
path is also bounded by $( k ^{\log ( k )} d )^{O ( k\log ( k ) )}$.
\end{proof}

\section{Annex : Auxiliary proofs}\label{sec:aux-proofs}\label{auxproof}

The main purpose of this Annex is to prove Lemma \ref{lemmafora}, Lemma
\ref{lemmaforb} and Proposition \ref{prop:properties-of-special} which were
stated earlier in the paper. We first proof an auxiliary proposition that will
be needed in the proofs of Lemma \ref{lemmaforb} and Proposition
\ref{prop:properties-of-special}.

\begin{proposition}
\label{prop:morsegeneral}Let $\mathcal{P},\mathcal{Q} \subset \mathbb{R} [
X_{1} , \ldots ,X_{k} ]$ be a finite set of polynomials and let $G \in
\mathbb{R} [ X_{1} , \ldots ,X_{k} ]$. Suppose that $[ a,b ] \subset
\mathbb{R}$ is such that $G ( \mathrm{Crit} ( \mathcal{P},\mathcal{Q},G ) )
\cap ( a,b )$ is empty. Let $S= \mathrm{Bas} ( \mathcal{P},\mathcal{Q} )$ be
bounded. Then, for any $c \in ( a,b )$ the semi-algebraic set $S_{a<G<b}$
is homeomorphic to $S_{G=c} \times ( a,b )$ by a fiber preserving
homeomorphism. In particular, for each semi-algebraically connected
component $C$ of $S_{a<G<b}$, $C_{G=c}$ is non-empty and semi-algebraically
connected.
\end{proposition}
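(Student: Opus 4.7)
The plan is to establish this as a stratified Morse-theoretic statement, following the standard gradient flow argument but adapted to handle the inequalities defining the basic semi-algebraic set $S$. First I would consider the natural stratification of $S$ indexed by subsets $\mathcal{Q}' \subseteq \mathcal{Q}$: for each such $\mathcal{Q}'$, the corresponding stratum is the locally closed set
\[
S(\mathcal{Q}') = \{ x \in \R^k \mid P(x)=0 \text{ for all } P \in \mathcal{P}, \; Q(x)=0 \text{ for } Q\in \mathcal{Q}', \; Q(x)>0 \text{ for } Q\in \mathcal{Q}\setminus \mathcal{Q}'\}.
\]
By the definition of $\mathrm{Crit}(\mathcal{P},\mathcal{Q},G)$, the hypothesis says that on each stratum $S(\mathcal{Q}')$ the restriction of $G$ has no critical points whose value lies in $(a,b)$.

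Next I would construct a continuous vector field $v$ on $S_{a<G<b}$ which is tangent to every stratum and satisfies $\langle v, \nabla G\rangle = 1$. On each stratum this can be achieved locally by taking the gradient of $G$ restricted to the stratum (the Lagrange-multiplier description of $\mathrm{Crit}(\mathcal{P},\mathcal{Q},G)$ guarantees this stratum-restricted gradient is non-zero in the relevant $G$-range), normalized so that $dG(v)=1$. One then patches these local vector fields together using a semi-algebraic partition of unity, taking care that the patching respects the stratification (so the field remains tangent to each stratum at points of that stratum). Because $G$ is increased at unit speed by $v$, the integral curves are parameterized by $G$-values.

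Since $S$ is closed and bounded and $v$ has uniformly bounded norm on every compact sub-level, the flow $\varphi_t$ of $v$ is defined on the whole of $S_{a<G<b}$ for all times that keep $G\circ\varphi_t$ in $(a,b)$. For any $c\in (a,b)$, the map
\[
\Phi : S_{G=c}\times (a,b) \longrightarrow S_{a<G<b}, \qquad (x,t)\mapsto \varphi_{t-c}(x),
\]
is then a fiber-preserving semi-algebraic homeomorphism with inverse $y\mapsto (\varphi_{c-G(y)}(y), G(y))$. The ``in particular'' conclusion is immediate: the connected components of $S_{a<G<b}$ are in bijection with those of $S_{G=c}$ via $\Phi$, and each fiber of the projection to $(a,b)$ is homeomorphic to the corresponding slice of $S_{G=c}$, so $C_{G=c}$ is non-empty and connected for each component $C$.

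The main obstacle will be constructing the global tangent vector field $v$ with unit $G$-increment that is genuinely continuous across the boundaries between strata. Near a point of $S(\mathcal{Q}')$ that is a limit of points in a lower-codimension stratum $S(\mathcal{Q}'')$ with $\mathcal{Q}''\subsetneq \mathcal{Q}'$, one must ensure the vector fields from the two strata glue continuously; this is where the hypothesis that no critical value of any stratum-restriction of $G$ lies in $(a,b)$ is essential, since it guarantees that the stratum-gradients do not degenerate as one approaches the boundary. The rest of the argument is then a direct semi-algebraic analogue of the standard argument (cf. Proposition 7.23 in \cite{BPRbook2} for the algebraic prototype).
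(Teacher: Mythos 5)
Your argument is essentially the paper's: the paper observes that the hypothesis makes $S_{a<G<b}$ a Whitney-stratified set with strata indexed by the subsets $\mathcal{Q}'\subset\mathcal{Q}$ on which $G$ has no stratified critical values in $(a,b)$, and then simply cites Theorem SMT Part A of Goresky--MacPherson, whose proof is exactly the controlled-vector-field/flow construction you sketch. One small technical remark: in Thom--Mather theory the stratumwise vector field need only be \emph{controlled} (compatible with the tubular control data), not globally continuous across strata --- only the resulting flow is continuous --- so you should not insist on continuity of $v$ itself when gluing.
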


\begin{proof} The condition that $G ( \mathrm{Crit} ( \mathcal{P},\mathcal{Q},G ) ) \cap [
a,b ]$ is empty implies that $S_{a<G<b}$ is a Whitney-stratified set with
strata $\ZZ ( \mathcal{P} \cup \mathcal{Q}' ,\mathbb{R}^{k} )_{a<G<b} \cap
S$, $\mathcal{Q}' \subset \mathcal{Q}$, where the dimension of $\ZZ (
\mathcal{P} \cup \mathcal{Q}' ,\mathbb{R}^{k} )_{a<G<b}$ is equal to $k- (
\mathrm{card} ( \mathcal{P} \cup \mathcal{Q}')$ if non-empty. The
proposition now follows from a basic result in stratified Morse theory (see
for example, Theorem SMT Part A in {\cite{GM}}).
\end{proof}

\subsection{Proof of properties of $G$-critical values}

\begin{proof}[Proof of Lemma \ref{lemmafora}]
Let $x$ and $y$ be two points of
$C_{G \le a}$ and $\gamma : [0,1] \to C$ be a semi-algebraic path connecting
$x$ to $y$ inside $C$. We want to prove that there is a semi-algebraic path
connecting $x$ to $y$ inside $C_{G \le a}$.

If $\mathrm{Im}( \gamma ) \subset C_{G \le a}$ there is nothing to prove.
If $\mathrm{Im}( \gamma ) \not\subset C_{G \le a}$,
\[ \exists c \in \R , \forall a<d<c, \mathrm{Im}( \gamma ) \cap
 S_{G=d} \neq \emptyset . \]
Let $\eps$ be a positive infinitesimal. Then
\[ \Ext ( \gamma ([0,1]), \R \la \eps\ra
 ) \cap \Ext \left( S, \R \la \eps \ra \right)_{G=a+ \eps} \neq
 \emptyset 
 \]
using {\cite[Proposition 3.17]{BPRbook2}}.

Since
\[ \{u \in [0,1] \subset \R \la \eps \ra \mid \Ext ( \gamma ,
 \R \left \la \eps \ra \right )(u) \in \Ext
 \left( S, \R \la \eps \ra \right)_{G<a+ \eps} \} \]
and
\[ \{u \in [0,1] \subset \R \la \eps \ra \mid \Ext (
 \gamma , \R \left \la \eps \ra \right )(u) \in \Ext \left( S ,
 \R \la \eps \ra \right)_{[a+ \eps \leq G \leq b]} \} \]
are semi-algebraic subsets of $[0,1] \subset \R\la \eps \ra$
there exists by {\cite[Corollary 2.79]{BPRbook2}} a finite partition
$\mathfrak{P}$ of $[0,1] \subset \R \la \eps \ra$ such that
for each open interval $(u,v)$ of $\mathfrak{P}$, $\Ext ( \gamma , \R \left
\la \eps \ra \right ) (u,v)$ is either contained in
\[
\Ext \left( S, \R \la \eps \ra \right)_{G<a+ \eps},
\] 
or in
\[
\Ext \left( S, \R \la \eps \ra \right)_{[a+ \eps
\leq G \leq b]},
\] 
with $\gamma (u)$ and $\gamma (v)$ in $C_{G=a+\varepsilon}$.

If $\Ext ( \gamma , \R \left \la \eps \ra \right ) (u,v)$ is contained in
\tmtextrm{$\Ext \left( S, \R \la \eps \ra \right)$}\tmrsub{$[a+ \eps
\leq G \leq b]$}, we can replace $\gamma$ by a semi-algebraic path
$\gamma'_{[a,b]}$ connecting $\gamma (u)$ to $\gamma (v)$ inside $C_{G<a+
\eps}$. Note that there is no critical point of $G$ in $\Ext \left( S,
\R \la \eps \ra \right)_{[a+ \varepsilon \leq G \leq b]}$ by
{\cite[Proposition 3.17]{BPRbook2}}.

By Proposition \ref{prop:morsegeneral}, if $D$ is a semi-algebraically
connected component of 
\[
\Ext (S, \R \left \la \eps \ra \right )_{a+ \eps
\leq G \leq b},
\] 
$D_{G=a+ \eps}$ is a semi-algebraically connected
component of $\Ext (S, \R \left \la \eps \ra \right )_{G=a+ \eps}$.

Construct a semi-algebraic path $\gamma'$ from $x$ to $x'$ inside $C_{G \le
a+ \eps}$, obtained by concatenating pieces of $\gamma$ inside $\Ext
\left( S, \R \la \eps \ra \right)_{G<a+ \eps}$ and the paths
$\gamma'_{(u,v)}$ connecting $\gamma (u)$ to $\gamma (v)$ for $(u,v)$ such
that $\Ext ( \gamma , \R \left \la \eps \ra \right ) (u,v) \subset
\Ext \left( S, \R \la \eps \ra \right)_{a+ \eps \leq G \leq
b}$. Note that such a semi-algebraically connected path $\gamma'$ is
closed and bounded. Applying {\cite[Proposition 12.43]{BPRbook2}},
\tmtextrm{lim}$_{\eps} ( \gamma' ([0,1]))$ is semi-algebraically connected,
contains $x$ and $x'$ and is contained in \tmtextrm{lim}$_{\eps} (C_{G \le
a+ \eps} ) =C_{G \le a}$. This is enough to prove the lemma.
\end{proof}

\begin{proof}[Proof of Lemma \ref{lemmaforb}]
We are going to prove the lemma
assuming $\R = \mathbb{R}$. The general case follows from a standard
transfer argument that we omit.

 Part (\ref{item:lemmaforb1}) follows immediately from Proposition
\ref{prop:morsegeneral}. We now prove Part (\ref{item:lemmaforb2}). Since $\mathcal{M}$ is
finite, there is a point $x \in C_{G=b}$ which is not a critical point of
$G$ on $S$. Let $\mathcal{P}_{x}=\{ P\in\mathcal{P} \cup \mathcal{Q}
\mid P ( x )= 0 \}$. Then, since $x$ is not a $G$-critical point of $\ZZ
( \mathcal{P}_{x} , \R^{k} )$, it follows that $T_{x}\ZZ (
\mathcal{P}_{x} , \R^{k} )$ is not tangent to the level surface of $G$
defined by $G=b$, and hence for $\eps >0$ infinitesimal, $B_{k} \left( x,
\eps \right)_{G<b} \cap T_{x}\ZZ ( \mathcal{P}_{x} , \R \la \eps
\ra^{k} )$ is not empty (where $B_{k} (x,r)$ is the $k$-dimensional
open ball of center $x$ and radius $r$), and hence $B_{k} \left( x, \eps
\right)_{G<b} \cap$ $\ZZ ( \mathcal{P}_{x} , \R \la \eps \ra^{k}
)$ is not empty either. Let $y\in B_{k} \left( x, \eps
\right)_{G<b} \cap \ZZ ( \mathcal{P}_{x} , \R \la \eps \ra^{k}
)$. Then, since $\lim_{\eps} y =x $ and $y \in \ZZ (
\mathcal{P}_{x} , \R \la \eps \ra^{k} )$, we have that for each
polynomial $P\in \mathcal{P} \cup \mathcal{Q}$, $P ( x )$ and $P ( y )$
have the same signs, and hence $y\in S$. Moreover, since $S$ is closed and
$\lim_{\eps} y = x\in C$, we have that $y\in \Ext \left( C, \R \la
\eps \ra \right)$. Now using the transfer principle it follows $C_{G<b}$
is non-empty.

Parts (\ref{item:lemmaforb2a}) and (\ref{item:lemmaforb2b}) are immediate consequences of Proposition
\ref{prop:morsegeneral}.

We prove (\ref{item:lemmaforb2c}). Clearly, $\cup_{i=1}^{r} \overline{B_{i}} \subset C$.
Suppose that $x \in A=C \setminus \cup_{i=1}^{r} \overline{B_{i}}$. For
$r>0$ and small enough, $B_{k} (x,r) \cap C_{G<b} = \emptyset$ . Note that
$G ( b ) =b$, since otherwise $x$ belongs to $C_{G<b}$, and thus to one of
the $B_{i}$'s.

Applying Proposition \ref{prop:morsegeneral}, we deduce from the fact that
$B_{k} (x,r) \cap C_{G<b} =B_{k} (x,r)_{G<b} \cap C= \emptyset$ that $x$ is
a $G$-critical point of $\ZZ ( \mathcal{P}_{x} , \R^{k} )$. In
other words $x \in \mathcal{M}$. But since by assumption $\mathcal{M}$
is finite, this implies that $A$ is a finite set and is thus closed. Since
$C$ is semi-algebraically connected and $\cup_{i=1}^{r} \overline{B_{i}}$
is closed and non-empty, $A$ must be empty.
\end{proof}

\subsection{Proof of properties of $( B,G )$-pseudo-critical values}

\begin{proof}[Proof of Proposition \ref{prop:properties-of-special}]
We are going
to prove the proposition assuming $\R = \mathbb{R}$. The general case
follows from a standard transfer argument that we omit. Let $\mathcal{F}= \{
F_{1} , \ldots ,F_{s} \}$, where $\mathcal{P}= \{ F_{1} , \ldots ,F_{m} \}
,\mathcal{Q}= \{ F_{m+1} , \ldots ,F_{s} \}$.
\begin{enumerate}
\item It follows from the good rank property of the matrix $B$ and
Proposition \ref{prop:parametrized-dimension} that for any $I \subset [
1,s ]$, $\sigma \in \{ -1,1 \}^{I}$, the algebraic sets $\ZZ (
\tilde{\mathcal{F}}_{I, \sigma ,B} , \R \langle \gamma \rangle^{k}
)$ have at most isolated singularities. It now follows by the
semi-algebraic Sard's theorem {\cite{BCR}}, that the set of critical
values of $G$ restricted to the various $\ZZ (
\tilde{\mathcal{F}}_{I, \sigma ,B} , \R \langle \gamma \rangle^{k}
)$ is a finite set. This proves that the set $\mathcal{D} (
\mathcal{P} \cup \mathcal{Q},B,G )$ is finite.

\item Let
\begin{eqnarray*}
\tilde{\mathcal{F}} & =& \bigcup_{i=1}^{m} \{ \pm F_{i} + \gamma H_{i}
\} \cup \bigcup^{s}_{i=m+1} \{ F_{i} + \gamma H_{i} \}
\end{eqnarray*}
and $\tilde{S} = \mathrm{Bas} ( \emptyset , \tilde{\mathcal{F}} ) \subset \R
\langle \gamma \rangle^{k}$.Clearly, $\Ext \left( S, \R \langle
\gamma \rangle \right) \subset \tilde{S}$, and since $S$ is bounded, there
exists a unique semi-algebraically connected component, $\tilde{D}$ of
$\tilde{S}_{a \leq G \leq b}$, such that $\tilde{D}$ is bounded over $\R$,
 $\Ext \left( D, \R \langle \gamma \rangle \right) \subset \tilde{D}$,
and $\lim_{\gamma} \tilde{D} =D$. It is also clear from the definition of
$\tilde{S}$ and the fact that $c \in \R$, that $\Ext \left( D_{G=c} , \R
\langle \gamma \rangle \right) \subset \tilde{D}_{G=c}$, and $D_{G=c} =
\lim_{\gamma} ( \tilde{D}_{G=c} )$. Since, $\tilde{D}$ is bounded
over $\R$, in order to prove that $D_{G=c}$ is non-empty and
semi-algebraically connected, it suffices to prove (using Proposition
12.43 in {\cite{BPRbook2}}) that $\tilde{D}_{G=c}$ is non-empty and
semi-algebraically connected. Since, $\lim_{\gamma} ( \mathrm{Crit} (
\tilde{\mathcal{F}}_{I, \sigma ,B} ,G ) ) \cap$ $[a,b] \setminus \{c\}$ is
empty for all $I \subset [ 1,s ]$ and $\sigma \in \{ -1,1 \}^{I}$, it
follows that for all $I \subset [ 1,s ]$ and $\sigma \in \{ -1,1 \}^{I}$,
$( \mathrm{Crit} ( \tilde{\mathcal{F}}_{I, \sigma ,B} ,G ) ) \cap [ a,b ]$
belong to the interval $[c- \delta ,c+ \delta ]$ (respectively, $[ a,a+
\delta ]$ if $c=a$, and $[ b- \delta ,b ]$ if $c=b$), where $\delta >0$ is
a new infinitesimal.

We claim that $\Ext( \tilde{D} , \R \langle \gamma , \delta \rangle
)_{c- \delta \leq G \leq c+ \delta}$ in case $c \neq a,b$
(respectively, $\Ext ( \tilde{D} , \R \langle \gamma , \delta
\rangle )_{a \leq G \leq a+ \delta}$ if $c=a$, and $\Ext \left(
\tilde{D} , \R \langle \gamma , \delta \rangle \right)_{b- \delta \leq G
\leq b}$ if $c=b$) is non-empty and semi-algebraically connected. We prove
the statement only in case $c \neq a,b$; the proof in the other two cases
being very similar.

Let $x,y$ be any two points in $\Ext( \tilde{D} , \R \langle \gamma
, \delta \rangle)_{c- \delta \leq G \leq c+ \delta}$. 

We show that
there exists a semi-algebraic path connecting $x$ to $y$ lying within
$\Ext ( \tilde{D} , \R \langle \gamma , \delta \rangle)_{c-
\delta \leq G \leq c+ \delta}$. Since, $\tilde{D}$ itself is
semi-algebraically connected, there exists a semi-algebraic path, $\gamma
: [0,1] \rightarrow \Ext( \tilde{D} , \R \langle \gamma , \delta
\rangle)$, with $\gamma (0) =x, \gamma (1) =y$, and $\gamma (t) \in
\Ext ( \tilde{D} , \R \langle \gamma , \delta \rangle) ,0 \leq
t \leq 1$. If $\gamma (t) \in \Ext ( \tilde{D} , \R \langle \gamma ,
\delta \rangle)_{c- \delta \leq G \leq c+ \delta}$ for all~$t \in
[0,1]$, we are done. Otherwise, the semi-algebraic path $\gamma$ is the
union of a finite number of closed connected pieces $\gamma_{i}$ lying
either in $\Ext ( \tilde{D} , \R \langle \gamma , \delta \rangle
)_{a \leq G \leq c- \delta} $, $\Ext ( \tilde{D} , \R
\langle \gamma , \delta \rangle)_{c- \delta \leq G \leq c+
\delta}$, or $\Ext ( \tilde{D} , \R \langle \gamma , \delta \rangle
)_{c+ \delta \leq G \leq b}$.

By Lemma \ref{lemmafora} the semi-algebraically connected components of
\[
\Ext( \tilde{D} , \R \langle \gamma , \delta \rangle)_{G=c-
\delta}
\] 
(resp. 
$
\Ext( \tilde{D} , \R \langle \gamma , \delta
\rangle)_{G=c+ \delta}
$
) are in 1-1 correspondence with the
semi-algebraically connected components of 
\[
\Ext( \tilde{D} , \R
\langle \gamma , \delta \rangle)_{a \leq G \leq c- \delta}\]
 (resp.
$\Ext (\tilde{D} , \R \langle \gamma , \delta \rangle)_{c+
\delta \leq G \leq b}$) containing them. In particular, notice that since
$D$ is non-empty being a semi-algebraically connected component of the
semi-algebraic set $S_{a \leq G \leq b}$, $\tilde{D}$ is also non-empty,
and hence at least one of the sets 
\[
\Ext ( \tilde{D} , \R \langle
\gamma , \delta \rangle)_{a \leq G \leq c- \delta},
\]
\[
\Ext (
\tilde{D} , \R \langle \gamma , \delta \rangle)_{c- \delta \leq G
\leq c+ \delta},
\]
\[
\Ext ( \tilde{D} , \R \langle \gamma , \delta
\rangle)_{c+ \delta \leq G \leq b}
\]
is not empty. 
Thus, 
$\Ext
( \tilde{D} , \R \langle \gamma , \delta \rangle)_{c- \delta
\leq G \leq c+ \delta} \neq \emptyset.
$ 
But, since 
\[
\lim_{\gamma}
\left( \Ext( \tilde{D} , \R \langle \gamma , \delta \rangle
)_{c- \delta \leq G \leq c+ \delta} \right) =D_{c},
\] 
this proves
that $D_{c}$ is not empty. 

We can now replace using Proposition
\ref{prop:morsegeneral} each of the $\gamma_{i}$ lying in 
\[
\Ext(
\tilde{D} , \R \langle \gamma , \delta \rangle)_{a \leq G \leq c-
\delta}
\]
(resp. $\Ext ( \tilde{D} , \R \langle \gamma , \delta
\rangle)_{c+ \delta \leq G \leq b}$) with endpoints in 
$
\Ext
( \tilde{D} , \R \langle \gamma , \delta \rangle)_{G=c-
\delta} 
$
(resp. $\Ext ( \tilde{D} , \R \langle \gamma , \delta
\rangle )_{G=c+ \delta}$) by another segment with the same endpoints
but lying completely in 
$\Ext ( \tilde{D} , \R \langle \gamma ,
\delta \rangle)_{G=c- \delta}
$ 
(resp. $\Ext ( \tilde{D} , \R
\langle \gamma , \delta \rangle )_{G=c+ \delta}$). We thus obtain a
new semi-algebraic path $\gamma'$ connecting $x$ to $y$ and lying inside
$
\Ext ( \tilde{D} , \R \langle \gamma , \delta \rangle)_{c-
\delta \leq G \leq c+ \delta}.
$
 This proves that 
 $
 \Ext ( \tilde{D} ,
\R \langle \gamma , \delta \rangle)_{c- \delta \leq G \leq c+
\delta}
$
 is semi-algebraically connected and hence so is $\tilde{D}_{c} =
\lim_{\delta} \tilde{D}_{c- \delta \leq G \leq c+ \delta}$ (by Proposition
12.43 in {\cite{BPRbook2}}).
\end{enumerate}

\end{proof}

\section{Acknowledgments}
The authors are grateful to Purdue
University, Universit{\'e} de Rennes 1, the Institute of Mathematical
Sciences, National University of Singapore, the RIP program in Oberwolfach,
and IPAM, UCLA for hosting them on several occasions during which time this
work was completed.

\bibliographystyle{abbrv}
\bibliography{master}

\end{document}